\newif\iffinal
\finaltrue	

\documentclass[11pt,a4paper,reqno, oneside]{amsart}
\usepackage[foot]{amsaddr}

\usepackage[utf8]{inputenc}
\usepackage{amsfonts, amsmath, amssymb, amsthm, color, enumerate, graphicx, mathtools, tikz, hyperref, relsize}
\usepackage[numbers,sort]{natbib}

\usepackage[sc]{mathpazo}
\usepackage[T1]{fontenc}
\usepackage{eulervm}
\usepackage[sc]{mathpazo}
\usepackage[a4paper,includeheadfoot, total={6.5in,10in}]{geometry}

\usepackage{environ}
\NewEnviron{eq}{%
\begin{equation}\begin{split}
  \BODY
\end{split}\end{equation}
}

\usepackage{subcaption} 

\usepackage{tikz-3dplot}
\usetikzlibrary{decorations.text, decorations.markings, mindmap,trees,shadows, decorations.pathreplacing}
\tikzstyle{doublearr}=[latex-latex,red, line width=0.5pt]
\tikzstyle{doublearr2}=[latex-latex,green!80!black, line width=0.5pt]
\usepackage{xcolor}

\setcounter{tocdepth}{2}
\makeatletter
\renewcommand{\tocsection}[3]{%
  \indentlabel{\@ifnotempty{#2}{\bfseries\ignorespaces#1 #2\quad}}\bfseries#3}
\def\l@subsection{\@tocline{2}{0pt}{2.5pc}{5pc}{}}
\renewcommand\tocchapter[3]{%
  \indentlabel{\@ifnotempty{#2}{\ignorespaces#2.\quad}}#3%
}
\newcommand\@dotsep{4.5}
\def\@tocline#1#2#3#4#5#6#7{\relax
  \ifnum #1>\c@tocdepth 
  \else
    \par \addpenalty\@secpenalty\addvspace{#2}%
    \begingroup \hyphenpenalty\@M
    \@ifempty{#4}{%
      \@tempdima\csname r@tocindent\number#1\endcsname\relax
    }{%
      \@tempdima#4\relax
    }%
    \parindent\z@ \leftskip#3\relax \advance\leftskip\@tempdima\relax
    \rightskip\@pnumwidth plus1em \parfillskip-\@pnumwidth
    #5\leavevmode\hskip-\@tempdima{#6}\nobreak
    \leaders\hbox{$\m@th\mkern \@dotsep mu\hbox{.}\mkern \@dotsep mu$}\hfill
    \nobreak
    \hbox to\@pnumwidth{\@tocpagenum{#7}}\par
    \nobreak
    \endgroup
  \fi}
\makeatother
\AtBeginDocument{%
\makeatletter
\expandafter\renewcommand\csname r@tocindent0\endcsname{0pt}
\makeatother
}
\def\l@subsection{\@tocline{2}{0pt}{2.5pc}{5pc}{}}

\linespread{1.1}

\newcommand{\QQ}{\mathbf{Q}}
\newcommand{\bQ}{\bar{Q}}



\newcommand{\N}{\mathbb{N}}
\newcommand{\R}{\mathbb{R}}

\newcommand{\dif}{\ensuremath{\mbox{d}}}  
\newcommand{\e}{\mathrm{e}}

\newcommand{\pto}{\ensuremath{\xrightarrow{\mathbb{P}}}}  
\newcommand{\dto}{\ensuremath{\xrightarrow{\mathrm{d}}}}  

\newcommand{\expt}{\mathbb{E}}
\newcommand{\prob}{\mathbb{P}}

\newtheorem{theorem}{Theorem}
\newtheorem{corollary}[theorem]{Corollary}
\newtheorem{lemma}[theorem]{Lemma}
\newtheorem{proposition}[theorem]{Proposition}
\newtheorem{remark}{Remark}
\newtheorem{claim}{Claim}

\let\plainqed\qedsymbol
\newcommand{\claimqed}{$\lrcorner$}
\newenvironment{claimproof}{\begin{proof}\renewcommand{\qedsymbol}{\claimqed}}{\end{proof}\renewcommand{\qedsymbol}{\plainqed}}

\hypersetup{
 colorlinks=true,
 linkcolor=blue,          
 citecolor=red,       
 filecolor=red,   
 urlcolor=red, 
 pdftitle={},
 pdfauthor={},
 pdfcreator={Sayan Banerjee, Debankur Mukherjee},
 pdfsubject={},
 pdfkeywords={},
 linktocpage=true
}

\usepackage{fancyhdr}
 

\tikzstyle{mybox} = [draw=red, fill=yellow!20, thick, minimum height=.4cm,
    rectangle, rounded corners]
\tikzstyle{fancytitle} =[fill=blue, text=white]
\usetikzlibrary{mindmap,trees,shadows}

\numberwithin{equation}{section}
\numberwithin{theorem}{section}

\tikzstyle{mybox} = [draw=black, thick, minimum height=.6cm,
    rectangle,text centered]
\tikzstyle{fancytitle} =[fill=blue, text=white]

\title[Join-the-Shortest Queue Diffusion Limit in Halfin-Whitt Regime]{Join-the-Shortest Queue Diffusion Limit in Halfin-Whitt Regime: Sensitivity on the Heavy-traffic Parameter}


\author{Sayan Banerjee}
\address{University of North Carolina, Chapel Hill}
\email{sayan@email.unc.edu}
\author{Debankur Mukherjee}
\address{Brown University}
\email{debankur\_mukherjee@brown.edu}

\date{\today}

\begin{document}

\begin{abstract}
   Consider a system of $N$~parallel single-server queues with unit-exponential service time distribution and a single dispatcher where tasks arrive as a Poisson process of rate $\lambda(N)$. When a task arrives, the dispatcher assigns it to one of the servers according to the Join-the-Shortest Queue (JSQ) policy. Eschenfeldt and Gamarnik (2018)~\cite{EG15} identified a novel limiting diffusion process that arises as the weak-limit of the appropriately scaled occupancy measure of the system under the JSQ policy in the Halfin-Whitt regime, where $(N - \lambda(N)) / \sqrt{N} \to \beta > 0$ as $N \to \infty$. The analysis of this diffusion goes beyond the state of the art techniques, and even proving its ergodicity is non-trivial, and was left as an open question. Recently, exploiting a generator expansion framework via the Stein's method, Braverman (2018)~\cite{Braverman18} established its exponential ergodicity, and adapting a regenerative approach, Banerjee and Mukherjee (2018)~\cite{BM18} analyzed the tail properties of the stationary distribution and path fluctuations of the diffusion.
    
    However, the analysis of the bulk behavior of the stationary distribution, viz., the moments, remained intractable until this work. In this paper, we perform a thorough analysis of the bulk behavior of the stationary distribution of the diffusion process, and discover that it exhibits different qualitative behavior, depending on the value of the heavy-traffic parameter $\beta$. Moreover, we obtain precise asymptotic laws of the centered and scaled steady state distribution, as $\beta$ tends to 0 and $\infty$.
Of particular interest, we also establish a certain intermittency phenomena in the $\beta\to \infty$ regime and a surprising distributional convergence result in the $\beta\to 0$ regime.
    \\

\noindent {\em Keywords and phrases:} Join the shortest queue; diffusion limit; steady state analysis; local time; non-elliptic diffusion; Halfin-Whitt regime; regenerative processes.\\

\noindent {\em 2010 Mathematics Subject Classification:} Primary 60K25, 60J60; secondary 60K05, 60H20.
\end{abstract}

\maketitle 

 \tableofcontents

\section{Introduction}
\subsection{Background and motivation.}
For any $\beta>0$, consider the following diffusion process with state space $(-\infty, 0] \times (0, \infty)$
\begin{equation}\label{eq:diffusionjsq}
\begin{split}
Q_1(t) &= Q_1(0) + \sqrt{2} W(t) - \beta t +
\int_0^t (- Q_1(s) + Q_2(s)) \dif s - L(t), \\
Q_2(t) &= Q_2(0) + L(t) - \int_0^t Q_2(s)\dif s
\end{split}
\end{equation}
for $t \geq 0$, where $W$ is the standard Brownian motion, $L$ is
the unique nondecreasing nonnegative process in~$D_\R[0,\infty)$ satisfying
$\int_0^\infty \mathbf{1}_{[Q_1(t) < 0]} \dif L(t) = 0$,
and $(Q_1(0), Q_2(0)) \in (-\infty, 0] \times [0, \infty)$.
In this paper, we consider the stationary distribution of the above diffusion process.
In particular, we analyze the bulk behavior of the steady state for all fixed $\beta$ sufficiently large and small, and identify its scaling behavior as $\beta\to 0$ and $\beta\to\infty$.

In the context of task allocation in many-server systems, the diffusion process in~\eqref{eq:diffusionjsq} arises as the weak limit of the sequence of scaled occupancy measures of systems under the classical Join-the-Shortest Queue (JSQ) policy, as the system size (number of servers in the system) becomes large. 
Specifically, consider a system with $N$~parallel identical single-server queues and a single dispatcher.
Tasks with unit-mean exponential service requirements arrive at the
dispatcher as a Poisson process of rate $\lambda(N)$,
and are instantaneously forwarded to one of the servers with the shortest queue length (ties are broken arbitrarily).
For $t\geq 0$, let
$$\QQ^N(t): =
\left(Q_1^N(t), Q_2^N(t), \dots\right)$$ 
denote the
system occupancy measure, where $Q_i^N(t)$ is the number of servers
under the JSQ policy with a queue length of~$i$ or larger,
at time~$t$, including the possible task in service, $i = 1, 2,\dots$.
Note that due to exchangeability of the servers and the Markovian service requirements, $\QQ^N(\cdot)$ is a Markov process.
In fact, it can also be seen that if $\lambda(N)<N$ (i.e., load per server $\lambda(N)/N$ is less than 1), then $\QQ^N$ is positive recurrent and has a unique stationary distribution.
Now consider an asymptotic regime where the number of servers grows large, and additionally assume that
$$\frac{N - \lambda(N)}{ \sqrt{N}} \to \beta\quad\text{as}\quad N \to \infty$$
for some positive coefficient $\beta > 0$, i.e., the load per server approaches unity as $1 - \beta / \sqrt{N}$. 
In terms of the aggregate traffic load and total service capacity, this scaling corresponds to the so-called Halfin-Whitt heavy-traffic regime which was introduced in the seminal paper~\cite{HW81} and has been extensively studied since. 
The set-up in~\cite{HW81}, as well as the numerous model extensions in the literature (see~\cite{GG13a, GG13b, HW81, LK11, LK12, FKL14, LMZ17}, and the references therein), primarily considered a single centralized queue and server pool (M/M/N), rather than a scenario with parallel queues.
Eschenfeldt and Gamarnik~\cite{EG15} initiated the study of the scaling behavior for parallel-server systems in the Halfin-Whitt heavy-traffic regime.
Define the centered and scaled system occupancy measures as $\bar{\QQ}^N(t) =
\big(\bar{Q}_1^N(t), \bar{Q}_2^N(t), \dots\big)$, with 
$$\bar{Q}_1^N(t) = - \frac{N-Q_1^N(t)}{ \sqrt{N}}\leq 0,\qquad \bar{Q}_i^N(t) =\frac{ Q_i^N(t)}{\sqrt{N}}\geq 0,\quad i = 2, 3\dots.$$
The reason why $Q_1^N(t)$ is centered around~$N$ while $Q_i^N(t)$,
$i = 2, \dots$, are not, is because the fraction of servers at time $t$ with a queue
length of exactly one tends to $1$, whereas the fraction of servers with a queue length of two or more tends to zero as $N\to\infty$. 
For each fixed~$N$, $\bar{\QQ}^N$ is a positive recurrent continuous time Markov chain, and has a stationary distribution as $t \rightarrow \infty$. 
Denote by $\bar{\QQ}^N(\infty)$ a random variable distributed as the steady state of the process $\bar{\QQ}^N(t)$.
Assuming $(\bQ_i^N(0))_{i\geq 1} \to (Q_i(0)))_{i\geq 1}$ with  $Q_i(0)= 0$ for $i \ge 3$, it was shown by Eschenfeldt and Gamarnik~\cite{EG15} that on any finite time interval $[0,T]$,
the sequence of processes $\big\{(\bar{Q}_1^N(t), \bar{Q}_2^N(t), \ldots)\big\}_{0 \le t \le T}$ 
converges weakly to the limit
 $\big\{(Q_1(t), Q_2(t),\ldots)\big\}_{0 \le t \le T}$, where $(Q_1, Q_2)$ is given by~\eqref{eq:diffusionjsq} and $Q_i(\cdot)\equiv 0$ for~$i\geq 3$. 
 Subsequently, a broad class of other schemes were shown to exhibit the same scaling behavior in this regime~\cite{MBLW16-3, MBLW16-1, MBL17}.
 See~\cite{BBLM18} for a recent survey.

In all the above works, the convergence of the scaled occupancy measure was established in the transient regime on any finite time interval.
Long time asymptotic properties of the new diffusion process in~\eqref{eq:diffusionjsq} thus discovered in~\cite{EG15} is technically hard to analyze.
In fact, even establishing its ergodicity is non-trivial and was left as an open question in~\cite{EG15}.
The tightness of the diffusion-scaled occupancy measure under the JSQ policy, exponential ergodicity of the diffusion process, and the interchange of limits were established by Braverman~\cite{Braverman18} via a sophisticated generator expansion framework using the Stein's method.
There, it was shown that the steady state of the $N$-server system $\bar{\QQ}^N(\infty)$ converges weakly to $(Q_1(\infty), Q_2(\infty), 0, 0,\ldots)$ as $N\to\infty$, where $(Q_1(\infty), Q_2(\infty))$ is distributed as the steady state of the diffusion process $(Q_1, Q_2)$.
Thus, the steady state of the diffusion process in~\eqref{eq:diffusionjsq} captures the asymptotic behavior of large-scale systems under the JSQ policy.
Recently, Banerjee and Mukherjee~\cite{BM18} considered the tail asymptotics of $(Q_1(\infty), Q_2(\infty))$, and established that for each fixed $\beta>0$, $Q_1(\infty)$ has a Gaussian tail and $Q_2(\infty)$ has an exponential tail.
A high-level heuristic for such tail behavior is that for any fixed $\beta>0$, when $-Q_1$ is large enough, it behaves as an Ornstein-Uhlenbeck (OU) process (giving rise to the Gaussian tail for $Q_1$), and when $Q_2$ is large it behaves as a Brownian motion with a negative drift (giving rise to the exponential tail for $Q_2$).
However, in order to characterize the bulk behavior of the stationary distribution, such as its mean, one needs precise control over the diffusion paths not only when $-Q_1$ or $Q_2$ is large, but also near the origin. 


\subsection{Key contributions and our approach}
In this paper, we perform a thorough analysis of the bulk behavior of the stationary distribution
and, quite surprisingly, find that its qualitative behavior is sensitive to the heavy-traffic parameter $\beta$.
In particular, we show that $$\e^{-C_1\beta^2} \le \mathbb{E}_{\pi}\left(Q_2(\infty)\right) \le \e^{-C_2\beta^2}$$ for all large enough $\beta$ and $$C_1\beta^{-1} \le \mathbb{E}_{\pi}\left(Q_2(\infty)\right) \le C_2\beta^{-1}$$ for all small enough $\beta$, where $C_1, C_2$ are positive constants that do not depend on $\beta$.
Moreover, $Q_2$ exhibits an {\em intermittency phenomenon} for large $\beta$ in the sense that most of the steady-state mass of $Q_2$ is concentrated in the region $(0, \e^{-\e^{\mathcal{C}^*\beta^2}})$, i.e., 
$$\prob(Q_2(\infty) \ge \e^{-\e^{\mathcal{C}^*\beta^2}}) \le \e^{-D\beta^2},$$ 
for positive constants $\mathcal{C}^*, D$ (that do not depend on $\beta$). However, as we just saw, the expected value decays only exponentially in $\beta^2$.
This indicates that in the steady-state dynamics, $Q_2$ usually remains very close to zero, but in the rare events when it becomes large, it takes a long time to become small again. 
A more detailed discussion on this behavior is given in Remark~\ref{rem:intermit}.
We also show that $Q_1(\infty) + \beta$ converges weakly to a standard normal distribution and $Q_2(\infty)$ converges to zero in $L^p$ for any $p > 0$ as $\beta \rightarrow \infty$. Furthermore, as $\beta \rightarrow 0$, the random variable $\beta Q_2(\infty)$ converges weakly to a $\operatorname{Gamma}(2)$ distribution (i.e., sum of two independent unit-mean exponential random variables) and $Q_1(\infty)$ converges to zero in $L^p$ for any $p > 0$.
The distributional convergence result for $\beta Q_2(\infty)$ is quite surprising, and reveals an important feature about large-scale parallel-server systems, namely, although JSQ achieves economies of scale in the Halfin-Whitt regime, it is a factor 2 worse compared to the completely pooled (or centralized queueing) system. 
This is further discussed in Remark~\ref{rem:mmncomp}.

Understanding bulk behavior of stationary distributions of diffusion processes has always been a challenging problem. 
State-of-the-art probabilistic tools to analyze stationary distributions \cite{ABD01, BL07,DW94, HM09} identify a large enough `small set' in the state space along with a Lyapunov type drift criterion which gives good control on the exponential moments of return times to the small set \cite{MT93}. 
These exponential moment bounds translate to exponential ergodicity as well as exponential tail bounds for the stationary measure. 
However, this approach sheds little light on the behavior of the diffusion paths inside the small set, which essentially determines the bulk behavior of the stationary distribution. 
In this article, we achieve control inside the small set by
exploiting an idea of using the theory of regenerative processes, which was introduced in \cite{BM18} (see Chapter 10 of \cite{Thorisson}, also~\cite{BBD15}, for its usage in a somewhat related scenario).
In this approach, we identify regeneration times in the diffusion path (random times when the diffusion starts afresh, see Section~\ref{sec:reg} for further details) and performing a detailed analysis of the excursions between two successive regeneration times. 
A key idea used in the analysis and control of these excursions is to define
various stopping times and bound them by the hitting times of some (reflected) Brownian motion with appropriate drift or (reflected) OU process, which are analytically more tractable.
The construction of these bounding processes depends on understanding the specific dynamics of the process in different parts of the state space, and in particular, on whether the heavy-traffic parameter $\beta$ is large or small.
The hitting time estimates provide key insights into how the behavior of the process changes depending on the value of $\beta$.
Consequently, we uncover the sensitivity of the stationary distribution on $\beta$.

\subsection{Organization and Notation}
Rest of the article is arranged as follows.
In Section~\ref{sec:main} we state the main results.
Section~\ref{sec:reg} contains a brief overview of the regenerative approach as introduced in~\cite{BM18}.
The proofs of the main results in the large-$\beta$ regime is presented in Section~\ref{sec:analysis-large}, while proofs of many intermediate lemmas in this regime are deferred till Appendices~\ref{app:large-aux} and~\ref{app:lemma4.8}. 
The proofs of the main results in the small-$\beta$ regime is presented in Section~\ref{sec:small-beta}, while proofs of many intermediate lemmas in this regime are deferred till Appendices~\ref{app:small-aux} and~\ref{app:lem5.6}. 

For any two real numbers $x, y$, we denote by $x\vee y$ and $x\wedge y$, $\max\{x,y\}$ and $\min\{x,y\}$, respectively.
We adopt the usual notations to describe asymptotic comparisons: For two functions $f, g:\N\to\R$,  we say $f(n) = O(g(n)), \Omega(g(n)), \Theta(g(n)), o(g(n))$, and $\omega(g(n))$ if for some fixed positive constants $c_1$ and $c_2$,
$f(n) \leq c_1 g(n)$, $f(n) \geq c_2 g(n)$, $c_2g(n) \leq f(n) \leq c_1g(n)$, $f(n)/g(n) \to 0$ as $n\to\infty$, and $f(n)/g(n) \to \infty$ as $n\to\infty$, respectively.
Convergence in distribution and in probability are denoted by `$\dto$' and `$\pto$', respectively.

\section{Main results}\label{sec:main}

In this section we will state the main results and discuss their ramifications. 
Recall the diffusion process $\{(Q_1(t), Q_2(t))\}_{t \ge 0}$ defined by Equation~\eqref{eq:diffusionjsq}.
As mentioned in the introduction, it is known~\cite{Braverman18} that for any $\beta>0$, $(Q_1, Q_2)$ is an ergodic continuous-time Markov process.
Let $(Q_1(\infty), Q_2(\infty))$ denote a random variable distributed as the unique stationary distribution~$\pi$ of the process.
In Subsections~\ref{ssec:large} and~\ref{ssec:small} we will consider $\pi$ for all $\beta$ large and small enough, respectively.

\subsection{Large-$\beta$ regime and asymptotics}\label{ssec:large}

All the results stated in this subsection are proved in Subsection~\ref{ssec:proof-large}.
Our first main result concerns the steady state of $Q_2$.
As we will see, although the tail of the steady state of $Q_2$ decays exponentially and the exponent is linear in $\beta$, the prefactor decays exponentially in $\beta^2$. 
\begin{theorem}\label{largestat}
There exist $\beta_0 \ge 1$ and positive constants $C_1^+, C_2^+, C_1^-, C_2^-, \mathcal{C}_1^-, \mathcal{C}_2^-$, such that for all $\beta \ge \beta_0$ and $y \ge 4\beta \e^{-\mathcal{C}_1^-\e^{\mathcal{C}_2^-\beta^2}}$, 
\begin{align*}
\pi(Q_2(\infty) \ge y) &\le C_1^+\e^{-C_2^+\beta^2}\left(1+\log\left(\frac{1}{\beta y}\right) \mathbf{1}_{[y \le \beta^{-1}]}\right)\e^{-C_2^+ \beta y},\\
\pi(Q_2(\infty) \ge y) &\ge C_1^-\e^{-C_2^-\beta^2}\left(1+\log\left(\frac{1}{\beta y}\right) \mathbf{1}_{[y \le \beta^{-1}]}\right)\e^{-C_2^- \beta y}.
\end{align*}
In particular, for any $p>0$, $Q_2(\infty)$ converges in $L^p$ to zero as $\beta \rightarrow \infty$.
\end{theorem}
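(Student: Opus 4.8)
\medskip
\noindent\textbf{Proof plan.}
The idea is to run the renewal--reward machinery on the regenerative structure of $(Q_1,Q_2)$ described in Section~\ref{sec:reg}. Let $0\le T_0<T_1<\cdots$ be the regeneration times, so that the path segments over the cycles $[T_k,T_{k+1})$ together with the cycle lengths $\tau_k:=T_{k+1}-T_k$ form an i.i.d.\ sequence; write $(\tau,\{(Q_1(s),Q_2(s))\}_{0\le s<\tau})$ for a generic cycle started from the regeneration distribution. Since $\expt[\tau]<\infty$, the renewal--reward theorem gives
\[
\pi\big(Q_2(\infty)\ge y\big)=\frac{\expt\!\Big[\int_0^{\tau}\mathbf{1}_{[Q_2(s)\ge y]}\,\dif s\Big]}{\expt[\tau]}.
\]
It therefore suffices to produce two--sided bounds, matching up to the constants appearing in the exponents, on the denominator $\expt[\tau]$ and on the occupation--time numerator $\mathcal{N}(y):=\expt[\int_0^\tau\mathbf 1_{[Q_2(s)\ge y]}\,\dif s]$. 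All of the $\beta$-- and $y$--dependence in the statement will come from $\mathcal N(y)$; the denominator will be shown to be bounded above and below by constants whose dependence on $\beta$ is at most polynomial, which is harmless since such factors get absorbed into $\e^{\pm C\beta^2}$ for $\beta$ large.

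For the denominator, the relevant observation is that $Q_2$ is nonnegative, is pushed upward only through the local time $L$ (which grows only on the zero set of $Q_1$), and satisfies $\dot Q_2=-Q_2$ off that set, so that $Q_2\equiv0$ is absorbing on $\{Q_1<0\}$. When $Q_2$ is small, $Q_1$ evolves essentially as the reflected Ornstein--Uhlenbeck process $\dif X=\sqrt2\,\dif W-(X+\beta)\,\dif t$ on $(-\infty,0]$, whose stationary law concentrates near $-\beta$ and which returns from near $-\beta$ to the regeneration region in time of order one with overwhelming probability, while visits of $Q_1$ to $0$ are rare. Comparing the excursions of $Q_1$ with those of this reflected OU process (and, for an upper bound on excursion lengths, with a reflected Brownian motion with the corresponding drift), and using the minimal duration of the regeneration event for a lower bound, one gets $c(\beta)\le\expt[\tau]\le C(\beta)$ with $c(\beta),C(\beta)$ polynomial in $\beta$; crucially, the excursions of $Q_2$ away from $0$ last only $O(\log(1/\text{push size}))$, hence they do not inflate the cycle length.

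The numerator $\mathcal N(y)$ is the crux, and is analyzed through the visits of $Q_1$ to $0$ and the local--time increments they produce in $Q_2$; I would split it, up to constants, into three ingredients, each accompanied by a matching lower bound obtained by exhibiting the corresponding favorable event. First, the probability that $Q_1$ reaches $0$ at all within a cycle is of order $\e^{-c\beta^2}$, by the OU/Gaussian comparison above (the stationary density of the reflected OU at $0$ is of order $\e^{-\beta^2/2}$). Second, conditionally on $Q_1$ reaching $0$, the total local time accrued by $Q_1$ at $0$ over the cycle has tails comparable to those of an exponential random variable of mean of order $\beta^{-1}$ (near $0$, $Q_1$ behaves like a Brownian motion with drift of order $-\beta$, whose local time at the reflecting boundary is exponentially distributed with rate of order $\beta$); since $Q_2$ is driven up by this local time, the probability that $Q_2$ is pushed to a level $\ge y$ is of order $\e^{-c\beta y}$ when $y\gtrsim\beta^{-1}$ and of order $1$ when $y\lesssim\beta^{-1}$, the typical push being of order $\beta^{-1}$. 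Third, once $Q_2$ reaches a level $m\ge y$, the rate--$1$ decay keeps it above $y$ for a time of order $\log(m/y)$; for small $y$ (the range $\beta y\le1$) the relevant $m$ is the typical push of order $\beta^{-1}$, contributing the factor $\log(1/(\beta y))$, while for $\beta y\ge1$ the conditional push is of order $y$ and the contribution is $O(1)$. Multiplying the three factors and dividing by $\expt[\tau]$ reproduces
\[
\pi\big(Q_2(\infty)\ge y\big)\asymp \e^{-c\beta^2}\Big(1+\log\big(\tfrac1{\beta y}\big)\,\mathbf 1_{[y\le\beta^{-1}]}\Big)\,\e^{-c\beta y}
\]
in both directions, which is the claim; the hypothesis $y\ge 4\beta\e^{-\mathcal C_1^-\e^{\mathcal C_2^-\beta^2}}$ is the scale below which the residual value of $Q_2$ carried over from the previous cycle stops being negligible relative to $y$, and below which the logarithmic factor would make the right--hand side of the lower bound exceed $1$.

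The main obstacle is the excursion analysis near $\{Q_1=0\}$ that underlies the second and third ingredients with matching constants. There the drift of $Q_1$ is $-\beta+Q_2$ rather than $-\beta$, so the feedback of $Q_2$ on $Q_1$ must be controlled --- a larger $Q_2$ makes $Q_1$ linger longer and accrue more local time --- which I would handle by running the comparisons on time windows short enough that $Q_2$ cannot change appreciably and then patching them; moreover $Q_1$ makes infinitely many short touches of $0$ within a single sojourn, so the local--time increments of $Q_2$ must be treated via excursion theory and comparison with reflected Brownian motions with drift (as both upper and lower bounding processes) rather than through a single stopping time, and one must check that the favorable event used for the lower bound is not over--constrained, i.e.\ still has probability of order at least $\e^{-C\beta y}$. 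Finally, the $L^p$ assertion follows at once from the tail bound: for $p>0$,
\[
\expt\big[Q_2(\infty)^p\big]=\int_0^\infty p\,y^{p-1}\,\pi\big(Q_2(\infty)>y\big)\,\dif y,
\]
and splitting this integral at $y_\beta:=4\beta\e^{-\mathcal C_1^-\e^{\mathcal C_2^-\beta^2}}$ and at $\beta^{-1}$, bounding $\pi(Q_2(\infty)>y)\le1$ on $[0,y_\beta]$ and using the upper bound of the theorem on $[y_\beta,\infty)$ together with $\int_0^1 u^{p-1}(1+\log(1/u))\,\dif u<\infty$ and $\int_0^\infty u^{p-1}\e^{-u}\,\dif u<\infty$, one obtains $\expt[Q_2(\infty)^p]\le y_\beta^{\,p}+C_p\,\beta^{-p}\,\e^{-C_2^+\beta^2}\to0$ as $\beta\to\infty$.
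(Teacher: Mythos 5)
Your renewal--reward skeleton is the same as the paper's (representation \eqref{eq:pi} with $B=\beta^{-1}$, numerator $=$ expected occupation time of $\{Q_2\ge y\}$ in one cycle, denominator $=$ expected cycle length), but your quantitative accounting of the two pieces is wrong, and in a way that cannot be repaired within the setup you describe. With the regeneration times of Section~\ref{sec:reg}, a cycle starts at the state $(0,2\beta^{-1})$ and cannot end until $Q_2$ climbs from $\beta^{-1}$ back to $2\beta^{-1}$; since $Q_2$ increases only through the local time $L$, which grows only on $\{Q_1=0\}$, \emph{every} cycle necessarily contains visits of $Q_1$ to $0$. So your first ingredient --- ``the probability that $Q_1$ reaches $0$ at all within a cycle is of order $\e^{-c\beta^2}$'' --- is false (that probability is $1$). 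Correspondingly, your claim that $\expt[\tau]$ is bounded above and below by quantities polynomial in $\beta$ is also false: after $Q_2$ drops to $\beta^{-1}$, with probability bounded away from $0$ the coordinate $Q_1$ dives to depth $\Theta(\beta)$, and its return to $0$ is comparable to an Ornstein--Uhlenbeck hitting time of level $\beta$, which is of order $\e^{\Theta(\beta^2)}$ (Lemma~\ref{OUhit}); this is exactly why Lemma~\ref{renexp} gives $C_1\e^{C_2\beta^2}\le \mathbb{E}_{(0,2\beta^{-1})}(\Xi_0)\le C_1'\e^{C_2'\beta^2}$. In the correct bookkeeping the factor $\e^{-C\beta^2}$ in the theorem comes from the \emph{denominator} (long cycles), while the numerator carries only the $\e^{-C\beta y}$ probability of an upcrossing to level $y$ (Lemma~\ref{largebeta}, itself a nontrivial patching of the regions $-Q_1+Q_2\lessgtr\beta$) times hitting/occupation times that are at most $C(y/\beta+\log\beta)$ for $y\ge 4\beta^{-1}$ and $C\log(1/(\beta y))$ for smaller $y$ (Lemmas~\ref{linfall}--\ref{smallval}), with matching lower bounds. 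Your two misattributions would formally cancel in the final exponent, but as stated neither intermediate claim is provable, so the proposed proof has a genuine gap rather than merely a different route.

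A secondary point: your reading of the threshold $y\ge 4\beta\e^{-\mathcal{C}_1^-\e^{\mathcal{C}_2^-\beta^2}}$ as the scale where ``carry-over'' of $Q_2$ between cycles matters is not how it enters; it is the condition ensuring that the deterministic decay time $\log(2/(\beta y))$ from level $2\beta^{-1}$ down to $y$ is shorter than the typical $\e^{\Theta(\beta^2)}$ sojourn of $Q_1$ away from $0$ (via \eqref{problb}), which is what makes the lower bound with the $\log(1/(\beta y))$ factor work. Your final $L^p$ argument from the upper tail bound is fine.
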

Theorem \ref{largestat} gives detailed characterization of the shape of the stationary distribution of $Q_2$. 
It not only captures the tail behavior, but also characterizes the distribution near zero. 
It is worthwhile to point out that Theorem \ref{largestat} provides several key insights that cannot be captured by only the tail asymptotics. 
Consequently, the value of $y$ for which the tail behavior kicks in and the precise form of the prefactor in the tail probabilities become crucial in understanding the bulk behavior of the steady state of $Q_2$.
This is elaborated in Remark~\ref{rem:intermit} below.

\begin{remark}[{Condensation of steady state and intermittency}]\label{rem:intermit}
\normalfont
Observe that Theorem~\ref{largestat} can be used to obtain sharp bounds on the steady-state mean of $Q_2$.
In fact, it shows that despite having an exponentially decaying tail, $Q_2(\infty)$ exhibits a \textit{condensation of steady-state mass}, namely, most of the steady-state mass of $Q_2$ is concentrated in the region $(0, \e^{-\e^{C\beta^2}})$ although the mean is of the order of $\e^{-C'\beta^2}$ (where $C, C'$ are positive constants not depending on $\beta$). 
This indicates an \textit{intermittency} phenomenon, i.e., at most times, $Q_2$ is very close to zero, but in the rare occasions when $Q_2$ gets to an appreciable positive level, it takes a while to get back to near zero. 
From a high level, this can be understood as follows.
First note that for any $\beta>0$, $\expt(Q_1(\infty)) = -\beta$, i.e., $Q_1$ fluctuates around $-\beta$.
Also, when $Q_2$ is small, $Q_1$ behaves as an OU process with mean reverting towards $-\beta$.
Now, when $\beta$ is large, usually $Q_2$ is very small (of order $\e^{-\e^{C\beta^2}}$), and the rare occasions when $Q_2$ gets to an appreciable positive level are precisely the times when $Q_1$ hits 0 and gathers some local time.
In turn, this can be thought of as hitting times of an OU process to level $\beta$, which is exponential in $\beta^2$.
Further, since the rate of decrease of $Q_2$ is proportional to itself, whenever $Q_2$ becomes much higher than usual it takes exponentially long time to return to the level $\e^{-\e^{C\beta^2}}$. 
This explains the condensation and intermittency of $Q_2$.
\end{remark}
The observations in Remark~\ref{rem:intermit} are formalized in the following corollary.
We will use $\mathbb{E}_{\pi}$ to denote the expectation with respect to the stationary distribution $\pi$.
\begin{corollary}\label{intermit}
There exist $\beta_0 \ge 1$ and positive constants $C_1, C_2, \mathcal{C}^*, D$ such that for all $\beta \ge \beta_0$,
\begin{align*}
\e^{-C_1\beta^2} \le \mathbb{E}_{\pi}\left(Q_2(\infty)\right) \le \e^{-C_2\beta^2},\\
\pi\Big(Q_2(\infty) \ge \e^{-\e^{\mathcal{C}^*\beta^2}}\Big) \le \e^{-D\beta^2}.
\end{align*}
\end{corollary}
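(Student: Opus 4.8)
The plan is to derive both displays in Corollary~\ref{intermit} directly from Theorem~\ref{largestat} by elementary integration of the tail bounds. Throughout, write $y_\beta := 4\beta\,\e^{-\mathcal{C}_1^-\e^{\mathcal{C}_2^-\beta^2}}$ for the threshold beyond which the estimates of Theorem~\ref{largestat} are valid; the key observation, used repeatedly, is that $y_\beta$ --- and even $\beta y_\beta = 4\beta^2\e^{-\mathcal{C}_1^-\e^{\mathcal{C}_2^-\beta^2}}$ --- is doubly-exponentially small in $\beta^2$, hence tends to $0$ as $\beta\to\infty$ and is negligible in every estimate below.

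For the upper bound on $\mathbb{E}_{\pi}(Q_2(\infty))$ I would write $\mathbb{E}_{\pi}(Q_2(\infty)) = \int_0^\infty \pi(Q_2(\infty)\ge y)\,\dif y$ and split the range at $y_\beta$ and at $\beta^{-1}$. On $(0,y_\beta)$ bound the integrand by $1$, contributing at most $y_\beta$. On $(y_\beta,\beta^{-1})$ insert the upper bound of Theorem~\ref{largestat}, use $\e^{-C_2^+\beta y}\le 1$, substitute $u=\beta y$, and invoke $\int_0^1(1+\log(1/u))\,\dif u = 2$, so that this piece is at most $2C_1^+\beta^{-1}\e^{-C_2^+\beta^2}$. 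On $(\beta^{-1},\infty)$ the logarithmic term is absent and $\int_{\beta^{-1}}^\infty \e^{-C_2^+\beta y}\,\dif y = (C_2^+\beta)^{-1}\e^{-C_2^+}$, a contribution of order $\beta^{-1}\e^{-C_2^+\beta^2}$. Summing, $\mathbb{E}_{\pi}(Q_2(\infty)) = O\big(\beta^{-1}\e^{-C_2^+\beta^2}\big)$, which is at most $\e^{-C_2\beta^2}$ for any fixed $C_2<C_2^+$ once $\beta$ is large. For the lower bound, since $\pi(Q_2(\infty)\ge y)\ge C_1^-\e^{-C_2^-\beta^2}\e^{-C_2^-\beta y}$ for $y\ge y_\beta$ (dropping the nonnegative logarithmic term), integrating over $(y_\beta,\infty)$ gives
\[
\mathbb{E}_{\pi}(Q_2(\infty)) \;\ge\; \frac{C_1^-}{C_2^-\beta}\,\e^{-C_2^-\beta^2}\,\e^{-C_2^-\beta y_\beta} \;\ge\; \frac{C_1^-}{2C_2^-\beta}\,\e^{-C_2^-\beta^2}
\]
for $\beta$ large (using $\beta y_\beta\to 0$), which exceeds $\e^{-C_1\beta^2}$ for any fixed $C_1>C_2^-$ and $\beta$ large. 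Enlarging $C_1$ and shrinking $C_2$ if needed so that $C_1\ge C_2$, and taking $\beta_0$ correspondingly large, yields the first display.

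For the condensation bound I would apply the upper tail estimate of Theorem~\ref{largestat} at the single point $y=\e^{-\e^{\mathcal{C}^*\beta^2}}$, choosing $\mathcal{C}^*$ with $0<\mathcal{C}^*<\min\{\mathcal{C}_2^-,\,C_2^+\}$. With this choice and $\beta$ large, the admissibility condition $y\ge y_\beta$ holds: taking logarithms it reads $\mathcal{C}_1^-\e^{\mathcal{C}_2^-\beta^2}-\e^{\mathcal{C}^*\beta^2}\ge\log(4\beta)$, and the left side grows like $\mathcal{C}_1^-\e^{\mathcal{C}_2^-\beta^2}$ because $\mathcal{C}^*<\mathcal{C}_2^-$. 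Also $y<\beta^{-1}$, so the indicator is active and $\log(1/(\beta y))=\e^{\mathcal{C}^*\beta^2}-\log\beta$. Plugging in and using $\e^{-C_2^+\beta y}\le 1$,
\[
\pi\big(Q_2(\infty)\ge \e^{-\e^{\mathcal{C}^*\beta^2}}\big) \;\le\; C_1^+\,\e^{-C_2^+\beta^2}\big(1+\e^{\mathcal{C}^*\beta^2}-\log\beta\big) \;\le\; 2C_1^+\,\e^{-(C_2^+-\mathcal{C}^*)\beta^2},
\]
which is at most $\e^{-D\beta^2}$ for any fixed $0<D<C_2^+-\mathcal{C}^*$ once $\beta$ is large; taking $\beta_0$ to be the larger of the two thresholds completes the argument.

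I do not expect a serious obstacle; this is a routine one-dimensional integration. The points that need care are (i) checking that the doubly-exponential threshold $y_\beta$ from Theorem~\ref{largestat} is genuinely negligible in all three estimates, and in particular that $\e^{-\e^{\mathcal{C}^*\beta^2}}$ lies in the admissible range $[y_\beta,\infty)$; and (ii) picking $\mathcal{C}^*$ strictly below both $\mathcal{C}_2^-$ and $C_2^+$ so that the admissibility condition and the positivity of the final exponent $C_2^+-\mathcal{C}^*$ hold simultaneously.
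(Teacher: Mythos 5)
Your proof is correct and follows essentially the same route as the paper: all three bounds come from elementary integration/evaluation of the tail estimates in Theorem~\ref{largestat}, with the doubly-exponentially small admissibility threshold $y_\beta$ checked to be negligible and $\mathcal{C}^*$ chosen below $\mathcal{C}_2^-$ and $C_2^+$ (the paper imposes an equivalent explicit condition on $\e^{-\e^{\mathcal{C}^*\beta^2}}$). The only divergence is in the mean's upper bound over the middle range: the paper first proves the condensation bound and then applies it crudely on $(\e^{-\e^{\mathcal{C}^*\beta^2}},\beta^{-1})$, paying a factor $\e^{-C_2^+\beta^2/2}$, whereas you integrate the logarithmic prefactor directly via $\int_0^1\big(1+\log(1/u)\big)\,\dif u=2$, which is marginally sharper but immaterial to the statement.
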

From Theorem \ref{largestat} and Corollary \ref{intermit}, it is clear that in the large-$\beta$ regime, $Q_2$ spends most times near zero and \eqref{eq:diffusionjsq} indicates that during these times, $Q_1+ \beta$ behaves like an Ornstein-Uhlenbeck process reflected downwards at $\beta$. 
So, we would expect $Q_1 + \beta$ to have a steady-state distribution close to a standard Gaussian for large $\beta$. 
We formalize this notion by proving in Theorem \ref{betainfty} that $Q_1(\infty) + \beta$ converges in distribution to the standard normal distribution as $\beta \rightarrow \infty$.
Proposition~\ref{Q1fin} provides moment bounds for $Q_1(\infty)$, which is of independent interest.
Proposition~\ref{Q1fin} will be used to prove Theorem \ref{betainfty}.

\begin{proposition}\label{Q1fin}
For any $n \ge 1$,
$$
\limsup_{\beta \rightarrow \infty}\mathbb{E}_{\pi}((Q_1(\infty) + \beta)^{2n}) < \infty.
$$
\end{proposition}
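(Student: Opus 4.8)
The plan is to work directly with the stationary equation for $Q_1$ and obtain a closed recursion (or inequality) on the even moments $m_{2n}(\beta) := \mathbb{E}_\pi((Q_1(\infty)+\beta)^{2n})$ via Itô's formula, then show that the $\beta$-dependent correction terms are negligible for large $\beta$ because the coupling to $Q_2$ is so weak. Writing $X := Q_1 + \beta$, the first line of~\eqref{eq:diffusionjsq} becomes $\dif X = \sqrt{2}\,\dif W - X\,\dif t + Q_2\,\dif t - \dif L$, where $L$ increases only when $X = \beta$ (i.e. $Q_1 = 0$). Apply Itô to $X^{2n}$, take expectations in stationarity (so the drift of $\mathbb{E}_\pi(X^{2n})$ vanishes), and use that $\int \mathbf{1}_{[X<\beta]}\dif L = 0$ to get, schematically,
\begin{equation*}
2n(2n-1)\,\mathbb{E}_\pi(X^{2n-2}) - 2n\,\mathbb{E}_\pi(X^{2n}) + 2n\,\mathbb{E}_\pi(X^{2n-1}Q_2) - 2n\beta^{2n-1}\mathbb{E}_\pi\big(\tfrac{\dif L}{\dif t}\big) = 0.
\end{equation*}
The reflection term has the right sign (it is $\le 0$ since $\beta^{2n-1}>0$ and $L$ is nondecreasing), so it can be dropped from the upper bound, giving
\begin{equation*}
\mathbb{E}_\pi(X^{2n}) \le (2n-1)\,\mathbb{E}_\pi(X^{2n-2}) + \mathbb{E}_\pi(X^{2n-1}Q_2).
\end{equation*}
A routine induction on $n$ then closes the estimate, \emph{provided} the cross term $\mathbb{E}_\pi(X^{2n-1}Q_2)$ is controlled uniformly in $\beta$.

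The key step, and the main obstacle, is bounding $\mathbb{E}_\pi(X^{2n-1}Q_2)$ uniformly in $\beta$ for large $\beta$. Here I would invoke Theorem~\ref{largestat} / Corollary~\ref{intermit}: by Cauchy--Schwarz, $\mathbb{E}_\pi(X^{2n-1}Q_2) \le \mathbb{E}_\pi(X^{2(2n-1)})^{1/2}\,\mathbb{E}_\pi(Q_2^2)^{1/2}$, and $Q_2(\infty)\to 0$ in $L^2$ as $\beta\to\infty$ (indeed $\mathbb{E}_\pi(Q_2^2) \le \e^{-c\beta^2}$, readily extracted by integrating the tail bounds in Theorem~\ref{largestat}), so this term is superexponentially small in $\beta$ once we have \emph{any} polynomial-in-$\beta$ a priori bound on the higher moments of $X$. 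To get such an a priori bound without circularity, note that $X$ is dominated in a suitable sense by a reflected Ornstein--Uhlenbeck process plus the nonnegative perturbation $\int_0^\cdot Q_2(s)\,\dif s$; alternatively one can first establish Proposition~\ref{Q1fin} with a crude $\beta$-dependent constant (e.g. polynomial growth) by the same Itô argument but keeping the cross term estimated via $\mathbb{E}_\pi(X^{2n-1}Q_2)\le \tfrac{1}{2}\mathbb{E}_\pi(X^{2n})\cdot 1 + \tfrac12\mathbb{E}_\pi(X^{2n-2}Q_2^2)$ and iterating. The cleanest route is a two-stage bootstrap: (i) show $\sup_{\beta\ge\beta_0}\mathbb{E}_\pi(X^{2n}) < \infty$ follows from finiteness of $\mathbb{E}_\pi(Q_2^{2n})$ (which is uniformly bounded, in fact vanishing, by Theorem~\ref{largestat}) together with the recursion, treating the cross term with Young's inequality $ab \le \tfrac{1}{2n}a^{2n} + \tfrac{2n-1}{2n}b^{2n/(2n-1)}$ applied to $a = X^{2n-1}$, $b = Q_2$, which gives $\mathbb{E}_\pi(X^{2n-1}Q_2) \le \tfrac{2n-1}{2n}\mathbb{E}_\pi(X^{2n}) + \tfrac{1}{2n}\mathbb{E}_\pi(Q_2^{2n})$; substituting back, the $\mathbb{E}_\pi(X^{2n})$ terms on both sides combine but the coefficient $\tfrac{2n-1}{2n}\cdot 2n = 2n-1$ is too large, so instead one splits the exponent unevenly, $X^{2n-1}Q_2 = X^{2n-1}Q_2$ with weights $(2n/(2n-1))$ and $2n$ scaled by a small $\varepsilon$: $ab \le \varepsilon a^{2n/(2n-1)} + C_\varepsilon b^{2n}$, absorbing $\varepsilon\,\mathbb{E}_\pi(X^{2n})$ into the left side. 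This makes the recursion genuinely closed.

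Concretely, the steps in order are: (1) set $X = Q_1+\beta$, rewrite~\eqref{eq:diffusionjsq}, and justify the stationary Itô identity for $X^{2n}$ (finiteness of all moments in stationarity is known from~\cite{Braverman18}/\cite{BM18}, so every term is finite and the martingale part has zero expectation); (2) drop the nonnegative reflection term to obtain the moment inequality $\mathbb{E}_\pi(X^{2n}) \le (2n-1)\mathbb{E}_\pi(X^{2n-2}) + \mathbb{E}_\pi(X^{2n-1}Q_2)$; (3) bound the cross term using Young's inequality with a small parameter $\varepsilon$, absorbing $\varepsilon\,\mathbb{E}_\pi(X^{2n})$ on the left and leaving a term $C_\varepsilon\,\mathbb{E}_\pi(Q_2^{2n})$; (4) invoke Theorem~\ref{largestat} to bound $\mathbb{E}_\pi(Q_2^{2n}) \le \e^{-c\beta^2} \le 1$ for $\beta \ge \beta_0$ (this is the step that uses the large-$\beta$ hypothesis, and without it the statement is false in general); (5) induct on $n$, with base case $n=0$ trivial and $n=1$ giving $\sup_\beta \mathbb{E}_\pi(X^2) < \infty$ directly, so that $\limsup_{\beta\to\infty}\mathbb{E}_\pi(X^{2n}) \le (2n-1)\limsup_{\beta\to\infty}\mathbb{E}_\pi(X^{2n-2}) + \text{(o(1))} < \infty$. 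The only genuinely delicate point is making sure the cross-term absorption in step (3) does not require knowing $\sup_\beta\mathbb{E}_\pi(X^{2n}) < \infty$ in advance — this is handled because each appearance of $\mathbb{E}_\pi(X^{2n})$ on the right carries the small factor $\varepsilon$, so it can be moved to the left side where it has coefficient $1$, yielding $(1-\varepsilon)\mathbb{E}_\pi(X^{2n}) \le (2n-1)\mathbb{E}_\pi(X^{2n-2}) + C_\varepsilon$, a bona fide recursion; one then also needs a crude a priori finiteness of $\mathbb{E}_\pi(X^{2n})$ (not uniformity) to make the absorption legitimate, which is exactly what~\cite{Braverman18} supplies.
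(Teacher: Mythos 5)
Your proposal is correct and follows essentially the same route as the paper: the stationary It\^o identity for $(Q_1+\beta)^{2n}$, exploiting that $L$ increases only when $Q_1=0$ and $L\ge 0$ to discard the reflection term, and induction on $n$ once the cross term $\mathbb{E}_\pi\left((Q_1+\beta)^{2n-1}Q_2\right)$ is shown to vanish as $\beta\to\infty$. The only difference is minor: the paper bounds the cross term pointwise via $(Q_1+\beta)^{2n-1}\le \beta^{2n-1}$ (since $Q_1\le 0$) together with Corollary~\ref{intermit}, giving $\beta^{2n-1}\e^{-C_2\beta^2}$, which avoids your Young-inequality absorption and the a priori finiteness of $\mathbb{E}_\pi\left((Q_1+\beta)^{2n}\right)$ it requires (that finiteness is in any case available from the Gaussian tail of $Q_1(\infty)$ in \cite{BM18}).
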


\begin{theorem}\label{betainfty}
As $\beta \rightarrow \infty$, $Q_1(\infty) + \beta$ converges weakly to the standard normal distribution.
\end{theorem}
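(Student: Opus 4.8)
The plan is to characterise every subsequential weak limit of $X_\beta := Q_1(\infty)+\beta$ (as $\beta\to\infty$) through a stationary balance relation and to recognise that relation as Stein's characterisation of $N(0,1)$. The heuristic behind the target is that when $Q_2$ is negligible, $X_\beta$ solves the Ornstein--Uhlenbeck equation $\dif X = \sqrt{2}\,\dif W - X\,\dif t$ reflected downward at the level $\beta$, whose invariant law converges to the standard Gaussian as the reflecting barrier is pushed to $+\infty$; Theorem~\ref{largestat} guarantees that $Q_2(\infty)$ is negligible in the relevant sense, and Proposition~\ref{Q1fin} supplies the compactness that makes the argument rigorous. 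Indeed, by Proposition~\ref{Q1fin}, $\sup_{\beta\ge\beta_0}\mathbb{E}_{\pi}(X_\beta^{2n})<\infty$ for every $n\ge 1$, so $\{X_\beta\}_\beta$ is tight and uniformly integrable (along with all its powers); it therefore suffices to show that any subsequential limit $X_\infty$, along some $\beta_k\to\infty$, is $N(0,1)$.

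The next step is to derive the stationary (basic adjoint) relation. From~\eqref{eq:diffusionjsq}, $X=Q_1+\beta$ satisfies $\dif X = \sqrt{2}\,\dif W - (X-Q_2)\,\dif t - \dif L$, with $L$ increasing only on $\{Q_1=0\}=\{X=\beta\}$. For $g\in C^2(\R)$ with $g'$ and $g''$ bounded, Itô's formula gives
\[
\dif g(X(t)) = \big(g''(X(t)) - X(t)g'(X(t)) + Q_2(t)g'(X(t))\big)\,\dif t + \sqrt{2}\,g'(X(t))\,\dif W(t) - g'(\beta)\,\dif L(t).
\]
Starting the diffusion from $\pi$, the stochastic integral is a genuine martingale (since $g'$ is bounded), $t\mapsto\mathbb{E}_{\pi}(g(X(t)))$ is constant, and $t\mapsto\mathbb{E}_{\pi}(L(t))$ is linear with slope $\mathbb{E}_{\pi}(Q_2(\infty))$ — the latter by taking $\pi$-expectations in the second line of~\eqref{eq:diffusionjsq}. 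Dividing the integrated identity by $t$ yields
\[
\mathbb{E}_{\pi}\big(g''(X_\beta) - X_\beta g'(X_\beta)\big) = g'(\beta)\,\mathbb{E}_{\pi}(Q_2(\infty)) - \mathbb{E}_{\pi}\big(Q_2(\infty)\,g'(X_\beta)\big).
\]

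Now pass to the limit along $\beta_k\to\infty$ with $X_{\beta_k}\dto X_\infty$. Since $g''$ is bounded and continuous, $\mathbb{E}_{\pi}(g''(X_{\beta_k}))\to\mathbb{E}(g''(X_\infty))$; since $\{X_{\beta_k}\}$ is uniformly integrable and $g'$ is bounded continuous, $X_{\beta_k}g'(X_{\beta_k})\to X_\infty g'(X_\infty)$ in $L^1$. On the right-hand side, $|g'(\beta_k)|\le\|g'\|_\infty$ and $\mathbb{E}_{\pi}(Q_2(\infty))\to 0$ by Theorem~\ref{largestat} (equivalently the upper bound of Corollary~\ref{intermit}), and likewise $|\mathbb{E}_{\pi}(Q_2(\infty)g'(X_{\beta_k}))|\le\|g'\|_\infty\,\mathbb{E}_{\pi}(Q_2(\infty))\to 0$. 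Hence $\mathbb{E}(g''(X_\infty)-X_\infty g'(X_\infty))=0$ for all such $g$, i.e. $\mathbb{E}(h'(X_\infty))=\mathbb{E}(X_\infty h(X_\infty))$ for every $h\in C^1_b(\R)$. This is Stein's identity for the standard normal law, and $C^1_b$ is a determining class for it (solve Stein's ODE against bounded Lipschitz test functions and verify the solution lies in $C^1_b$), so $X_\infty\sim N(0,1)$. Since every subsequential limit equals $N(0,1)$ and $\{X_\beta\}$ is tight, $Q_1(\infty)+\beta\dto N(0,1)$.

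The main obstacle is the second step: justifying the basic adjoint relation in the presence of the reflecting boundary, i.e. that the local-time term contributes exactly $g'(\beta)\mathbb{E}_{\pi}(Q_2(\infty))$ — which needs $\mathbb{E}_{\pi}(L(t))<\infty$ and its linear growth (both following from $\mathbb{E}_{\pi}(Q_2(\infty))<\infty$, available from Theorem~\ref{largestat}), together with a true-martingale argument for the stochastic integral under $\pi$. Everything afterwards is routine given Proposition~\ref{Q1fin} and Theorem~\ref{largestat}. A variant that sidesteps the boundary entirely is to restrict to $g$ with $g'$ compactly supported: then $g'(\beta)=0$ for all large $\beta$, so the boundary term drops automatically, at the price of a smaller — but still distribution-determining — family of test functions.
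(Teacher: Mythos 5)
Your argument is correct, but it follows a genuinely different route from the paper. The paper proves Theorem~\ref{betainfty} by the method of moments: Ito's formula applied to $(Q_1(t)+\beta)^n$ under $\pi$ yields the recursion \eqref{cl1}, the local-time and cross terms are killed using $\mathbb{E}_{\pi}(L(t))/t=\mathbb{E}_{\pi}(Q_2(0))$, Corollary~\ref{intermit}, the $L^2$-decay of $Q_2(\infty)$ from Theorem~\ref{largestat}, Cauchy--Schwarz and the uniform moment bounds of Proposition~\ref{Q1fin}, and one concludes by induction that every moment converges to the corresponding Gaussian moment, invoking moment determinacy. You instead derive the same stationary identity (the basic adjoint relation, with the reflection term correctly reduced to $g'(\beta)\,\mathbb{E}_{\pi}(Q_2(\infty))$ by exactly the observation the paper uses, that $L$ grows only on $\{Q_1=0\}$) for a fixed class of $C^2$ test functions with bounded $g',g''$, show the right-hand side vanishes as $\beta\to\infty$, and identify every subsequential limit through Stein's characterization of $N(0,1)$, with tightness and uniform integrability supplied by Proposition~\ref{Q1fin}. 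What each approach buys: your route needs only the low-order moment bounds (tightness plus uniform integrability of $X_\beta g'(X_\beta)$, for which bounded second moments suffice), at the price of importing the standard but nontrivial fact that the Stein identity over $C^1$ functions with bounded derivative determines the standard normal (solving Stein's ODE for bounded Lipschitz targets and checking the solution lies in the admissible class); the paper's route avoids any characterization machinery beyond moment determinacy but requires uniform control of \emph{all} even moments, which is exactly what Proposition~\ref{Q1fin} was built for. The auxiliary facts you rely on --- $\mathbb{E}_{\pi}(L(t))/t=\mathbb{E}_{\pi}(Q_2(\infty))$, the true-martingale property of the stochastic integral, integrability of $X_\beta$ under $\pi$ for fixed $\beta$ (Gaussian tail of $Q_1(\infty)$ from \cite{BM18}), and $\mathbb{E}_{\pi}(Q_2(\infty))\to0$ --- are all established or used in the paper, so no genuine gap remains; your compactly-supported-$g'$ variant is also fine and, as you note, makes the boundary term vanish identically rather than merely asymptotically.
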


\subsection{Small-$\beta$ regime and asymptotics}\label{ssec:small}

Note that when $Q_2$ is small, $Q_1$ behaves like a reflected OU process but when $Q_2$ is large, it increases the drift of $Q_1$ towards zero and hence $Q_1$ behaves roughly like a reflected Brownian motion with a large drift. 
As indicated by Theorem \ref{smallbetathm} below, $Q_2$ has steady state mean of the order of $\beta^{-1}$ and hence, it spends considerable time taking large values, which draws $Q_1$ towards zero. 
This is also reflected by the fact that the steady-state mean of $Q_1$ is $-\beta$. The technical challenge that arises is to patch up the different behaviors of $Q_1$ for small and large $Q_2$ and to produce an estimate that unifies these effects. 
We achieve this in Theorem \ref{Q1statsmall} where we provide an upper bound on the steady-state upper tails of $-Q_1$ as a mixture of a Gaussian (from the OU behavior for small $Q_2$) and an exponential with mean $\sqrt{\beta}$ (from the reflected `Brownian motion with drift' behavior for large $Q_2$). 
Theorems \ref{smallbetathm} and \ref{Q1statsmall} will be crucial in proving the distributional convergence result for $Q_2(\infty)$ in Theorem \ref{betazero}, namely $\beta Q_2(\infty)$ converges in distribution to $\operatorname{Gamma}(2)$ as $\beta \rightarrow 0$. 
All the results stated in this subsection are proved in Subsection~\ref{ssec:proof-small}.

\begin{theorem}\label{smallbetathm}
There exist positive constants $M_0, C_s^{1+}, C_s^{2+}, C_s^{1-}, C_s^{2-}, \beta_s^*$ such that for all $\beta \le \beta_s^*$ and all $y \ge 8M_0\beta^{-1}$,
$$
C_s^{1-}\e^{-C_s^{2-}\beta y} \le \pi(Q_2(\infty) \ge y) \le C_s^{1+}\e^{-C_s^{2+}\beta y}.
$$
In particular,
$$
\bigg(\frac{C_s^{1-}\e^{-8C_s^{2-}M_0}}{C_s^{2-}}\bigg)\frac{1}{\beta} \le \mathbb{E}_{\pi}(Q_2(\infty)) \le \left(8M_0 + \frac{C_s^{1+}}{C_s^{2+}}\right)\frac{1}{\beta}.
$$
\end{theorem}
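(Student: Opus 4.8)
\noindent\textbf{Proof outline for Theorem~\ref{smallbetathm}.}
The plan is to run the regenerative machinery of Section~\ref{sec:reg}. Writing $\tau$ for the length of one regeneration cycle, $\nu$ for the entrance law at a regeneration epoch, and $\expt_\nu$ for the corresponding expectation, the renewal--reward identity gives
\[
\pi(Q_2(\infty)\ge y)=\frac{\expt_\nu\big[\int_0^\tau\ind{Q_2(s)\ge y}\,\dif s\big]}{\expt_\nu[\tau]},\qquad \mathbb{E}_{\pi}(Q_2(\infty))=\frac{\expt_\nu\big[\int_0^\tau Q_2(s)\,\dif s\big]}{\expt_\nu[\tau]}.
\]
The regenerative framework supplies $0<\underline c\le\expt_\nu[\tau]\le\bar c<\infty$ (uniformly over all small $\beta$, up to constants), so both tail inequalities reduce to matching upper and lower bounds of the form $(\text{const})\,\e^{-(\text{const})\beta y}$ on the occupation numerator $\expt_\nu[\int_0^\tau\ind{Q_2(s)\ge y}\dif s]$. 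The two displayed moment bounds are then immediate: for the lower one, $\mathbb{E}_{\pi}(Q_2(\infty))\ge\int_{8M_0\beta^{-1}}^\infty\pi(Q_2(\infty)\ge y)\,\dif y\ge C_s^{1-}\int_{8M_0\beta^{-1}}^\infty\e^{-C_s^{2-}\beta y}\dif y$; for the upper one, split $\int_0^\infty\pi(Q_2(\infty)\ge y)\dif y$ at $8M_0\beta^{-1}$, bound the first piece by $8M_0\beta^{-1}$ and the second by $\frac{C_s^{1+}}{C_s^{2+}}\beta^{-1}$.

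The analytic heart is a dynamical comparison, which I would isolate as a lemma. Using the first line of~\eqref{eq:diffusionjsq} to substitute for $L$ in the second line yields the identity
\[
Q_2(t)=\big(Q_1(0)+Q_2(0)\big)-Q_1(t)-\int_0^t Q_1(s)\,\dif s-\beta t+\sqrt2\,W(t).
\]
Since $Q_1\le0$, the two $Q_1$-terms are nonnegative, so $Q_2(t)\ge(Q_1(0)+Q_2(0))-\beta t+\sqrt2\,W(t)$ always. For an upper bound one must control $-Q_1(t)$ and $-\int_0^t Q_1$: when $Q_2$ is large (say $\ge M_0\beta^{-1}$) the drift $-Q_1+Q_2-\beta$ of $Q_1$ is large and positive, so $-Q_1=|Q_1|$ is dominated by a reflected OU process with strong mean reversion towards a large negative level, hence stays $O(\beta)$ on average on the relevant time scales; taking the reference level $4M_0\beta^{-1}$ with $M_0$ a sufficiently large absolute constant then forces the net drift of $Q_2$ on an excursion above $4M_0\beta^{-1}$ to lie in $[-2\beta,-\tfrac12\beta]$. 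Concretely, on such an excursion $Q_2$ is stochastically dominated (after a deterministic shift) by a Brownian motion of variance coefficient $2$ and drift $-\tfrac12\beta$ started at $4M_0\beta^{-1}$, and dominates from below the analogous Brownian motion with drift $-2\beta$.

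For the \textit{upper tail bound}, decompose the time $Q_2$ spends above $y$ during a cycle into the contributions of the successive excursions above $4M_0\beta^{-1}$. By the domination, one such excursion reaches level $y$ with probability at most $\e^{2M_0}\e^{-\beta y/2}$ (the running maximum of a drift-$(-\tfrac12\beta)$, variance-$2$ Brownian motion from $4M_0\beta^{-1}$ exceeds $y$ with this probability), and given it does, the expected further occupation time of $[y,\infty)$ is $O(\beta^{-1})$ by the Green's-function estimate for that Brownian motion. Since the expected number of excursions above $4M_0\beta^{-1}$ in a cycle is $O(\expt_\nu[\tau])$, the strong Markov property at excursion endpoints together with a Wald-type count gives $\expt_\nu[\int_0^\tau\ind{Q_2(s)\ge y}\dif s]\le C\,\expt_\nu[\tau]\,\e^{-C_s^{2+}\beta y}$ for $y\ge8M_0\beta^{-1}$. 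For the \textit{lower tail bound}, one shows that with probability bounded below in a cycle $Q_2$ makes an excursion above $4M_0\beta^{-1}$; conditionally, the lower domination by a drift-$(-2\beta)$, variance-$2$ Brownian motion (valid while $Q_2\ge M_0\beta^{-1}$, the discarded terms only helping) reaches $2y$ before returning to $M_0\beta^{-1}$ with probability $\ge c\,\e^{-2\cdot 2\beta y}$ and then spends expected time $\ge c\beta^{-1}$ above $y$, so $\expt_\nu[\int_0^\tau\ind{Q_2(s)\ge y}\dif s]\ge c\,\e^{-C_s^{2-}\beta y}$; dividing by $\expt_\nu[\tau]\le\bar c$ finishes it.

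\textit{Main obstacle.} The crux is the domination lemma: since $\beta$ is small, one must verify carefully that the upward ``escape'' drift of $Q_1$ (of order $Q_2-\beta$) genuinely dominates the mean reversion, so that $-Q_1$ and $\int_0^\cdot Q_1$ stay negligible compared to $\beta\times(\text{elapsed time})$ throughout an excursion above $4M_0\beta^{-1}$ --- this is exactly what dictates the choice of $M_0$ and $\beta_s^*$ and what pins the exponential rate to be $\asymp\beta$ on both sides. A secondary, more routine difficulty is making the ``number of excursions per cycle'' / ``occupation per excursion'' bookkeeping rigorous in the presence of dependence between excursions, which is handled by the strong Markov property at excursion endpoints and the uniform control on $\tau$ from the regenerative framework.
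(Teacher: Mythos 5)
Your skeleton (renewal--reward via \eqref{eq:pi} with regeneration level of order $\beta^{-1}$, then tail bounds integrated to get the moment bounds) is the same as the paper's, and the final integration step is verbatim what the paper does. But there is a genuine gap at the very place where you wave it through: the assertion that ``the regenerative framework supplies $0<\underline c\le \expt_\nu[\tau]\le \bar c<\infty$ uniformly over all small $\beta$'' is false and is not supplied by anything. Theorem~\ref{stationary} only gives finiteness of $\mathbb{E}_{(0,2B)}(\Xi_0)$ for each fixed $\beta$; in fact $\mathbb{E}_{(0,4M_0\beta^{-1})}(\Xi_0)\asymp\beta^{-2}$, and proving this two-sided bound is a substantial part of the paper's argument (Lemma~\ref{regsmall}, which in turn needs hitting-time estimates and a bound $\mathbb{E}_{(0,4M_0\beta^{-1})}(-Q_1(\alpha_1))\le C\beta^{-4}$ that itself uses the key tail lemma). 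This is not a cosmetic issue of constants: the correct occupation numerators carry prefactors of order $y/\beta\asymp\beta^{-2}$ (e.g.\ $\mathbb{E}_{(0,y)}\tau_2(2M_0\beta^{-1})\le Cy/\beta$ for the upper bound and $\mathbb{E}_{(0,2y)}\tau_2(y)\ge C'y/\beta$ for the lower), and it is exactly the cancellation of these against the $\beta^{-2}$ cycle length that yields $\beta$-independent constants. With your accounting --- a numerator of size $c\beta^{-1}\e^{-c\beta y}$ (or $c\,\e^{-c\beta y}$) divided by a ``constant'' cycle length --- inserting the true denominator $\Theta(\beta^{-2})$ leaves a lower bound of order $\beta\,\e^{-c\beta y}$, which is \emph{not} of the form $C_s^{1-}\e^{-C_s^{2-}\beta y}$ uniformly in $\beta$ (take $y=8M_0\beta^{-1}$ and let $\beta\to0$). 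So as written the lower half of the theorem does not follow.

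The second, acknowledged, gap is your ``domination lemma''. The heuristic is right and is essentially what the paper formalizes: while $Q_2$ stays above $M_0\beta^{-1}$ one controls $\int_0^t(-Q_1(s))\,\dif s\le \beta t/2$ with high probability (Lemma~\ref{lem:q1integral}), so $S=Q_1+Q_2$ behaves like a Brownian motion with drift of order $-\beta$, and the upper tail of one excursion follows from scale-function bounds; the lower tail uses only $Q_2\ge S\ge S(0)+\sqrt2 W-\beta t$, as you say. But the hard part, which your sketch does not address, is precisely the regime $y\in[8M_0\beta^{-1},\,C\beta^{-1}\log\beta^{-1}]$: the existing estimate from \cite{BM18} (Lemma~\ref{Q2gebeta2}) only covers $y\gtrsim\beta^{-1}\log\beta^{-1}$, and the paper's Lemma~\ref{smallbetafluc} (proved in Appendix~\ref{app:lem5.6}) has to handle the event where the drift control on $\int(-Q_1)$ fails or $Q_2$ dips toward the threshold, bound $Q_1$ above $-\sqrt\beta$ on the short time scale to control $S$, and then patch the two regimes by a strong-Markov iteration between the levels $2M_0\beta^{-1}$ and $C_S\beta^{-1}\log\beta^{-1}$. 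A correct write-up would need both this lemma and the $\Theta(\beta^{-2})$ cycle-length lemma before the renewal--reward division can be performed with constants independent of $\beta$.
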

Theorem~\ref{smallbetathm} should be contrasted with Corollary~\ref{intermit} in terms of the dependence on $\beta$ in the small-$\beta$ regime.
Note how the dependence of the prefactor on $\beta$ crucially governs the bulk behavior.
Unlike the large-$\beta$ regime, the steady-state expectation of $Q_2$ tends to depend inversely on $\beta$.

The next theorem bounds the lower tail of $Q_1(\infty)$.
As mentioned earlier, it captures the two effects Gaussian and exponential, rising from the dynamics when $Q_2$ is small and large, respectively.

\begin{theorem}\label{Q1statsmall}
There exist $\beta'_0 \in (0,1)$, and positive constants $R,C,C', C"$ such that for all $\beta \le \beta'_0$ and all $x \ge 2\beta^{1/4}$,
\begin{equation*}
\pi(Q_1(\infty) \le -x) \le C\Big(\beta^{1/4}\e^{-(x-2\beta)^2/8} + \beta^2\e^{-C'\frac{x}{\sqrt{\beta}}}\Big)\mathbf{1}_{\left[2\beta^{1/4}\leq x \le \frac{R}{\beta}\log \left(\frac{1}{\beta}\right)\right]} + \e^{-C"x^2}\mathbf{1}_{\left[x > \frac{R}{\beta}\log \left(\frac{1}{\beta}\right)\right]}.
\end{equation*}
In particular, for every $n > 0$, $\mathbb{E}_{\pi}(|Q_1(\infty)|^n) \rightarrow 0$ as $\beta \rightarrow 0$.
\end{theorem}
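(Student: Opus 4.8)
The plan is to use the regenerative representation of $\pi$ developed in Section~\ref{sec:reg}. Write $\{\tau_k\}_{k\ge 0}$ for the regeneration times and $\tau=\tau_1-\tau_0$ for a generic cycle length; then for every measurable $A\subseteq(-\infty,0]\times(0,\infty)$,
\[
\pi(A)=\frac{\mathbb{E}\Big[\int_{\tau_0}^{\tau_1}\ind{(Q_1(s),Q_2(s))\in A}\,\dif s\Big]}{\mathbb{E}[\tau]}.
\]
A lower bound on $\mathbb{E}[\tau]$, uniform over $\beta\le\beta'_0$, is available from the regenerative construction, so the task reduces to an upper bound on the expected occupation time $\Gamma(x):=\mathbb{E}\big[\int_{\tau_0}^{\tau_1}\ind{Q_1(s)\le-x}\,\dif s\big]$. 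I will work with $R:=-Q_1\ge0$, which by \eqref{eq:diffusionjsq} satisfies $\dif R=-\sqrt2\,\dif W+(\beta-R-Q_2)\,\dif s+\dif L$ with $L$ nondecreasing and flat off $\{R=0\}$; since $\dif Q_2=\dif L-Q_2\,\dif s$, the coordinate $Q_2$ is \emph{strictly decreasing} on every time interval on which $R>0$.

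Next I would fix a threshold $h=h(\beta)$ and split a cycle into the (random) set of times where $Q_2\ge h$ and its complement. On an interval where $Q_2\ge h$ the drift of $R$ is at most $\beta-h$, so there $R$ is pathwise dominated (driven by the ambient Brownian motion, started from the current $R$) by a reflected Brownian motion with diffusion coefficient $2$ and negative drift $\beta-h$, whose expected occupation time above level $x$ is of order $(h-\beta)^{-2}\e^{-(h-\beta)x}$. On an interval where $Q_2<h$ the drift of $R$ is at most $\beta-R$, so there $R$ is dominated by a reflected Ornstein--Uhlenbeck process with unit mean-reversion and diffusion coefficient $2$, whose occupation time above $x$ contributes a Gaussian factor $\asymp\e^{-(x-\beta)^2/2}$. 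The exponential mechanism is only in force while $Q_2$ is actually above $h$, and Theorem~\ref{smallbetathm} is exactly what prices this: by the representation applied to $A=\{Q_2\ge h\}$, the expected time per cycle with $Q_2\ge h$ equals $\pi(Q_2(\infty)\ge h)\,\mathbb{E}[\tau]\le C\e^{-C_s^{2+}\beta h}$ once $h\ge 8M_0/\beta$. Choosing $h$ of order $\beta^{-1/2}$ — which is where the balance between the reflected-Brownian rate $(h-\beta)x$ and the other sources of smallness falls — turns these estimates, after absorbing the slack of the comparisons and the factor $\mathbb{E}[\tau]^{-1}$, into the stated terms $\beta^2\e^{-C'x/\sqrt\beta}$ and $\beta^{1/4}\e^{-(x-2\beta)^2/8}$, valid as long as $x\le\tfrac{R}{\beta}\log(1/\beta)$; this last restriction marks the threshold beyond which keeping $Q_2$ high long enough for $R$ to climb to $x$ becomes, by Theorem~\ref{smallbetathm}, prohibitively costly.

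The step I expect to be the genuine obstacle is gluing the two regimes over a whole cycle, since a single excursion of $R$ above $x$ may contain both. Here the monotonicity noted above is the key simplification: during such an excursion $R>0$, hence $L$ is constant and $Q_2$ is strictly decreasing, so there is \emph{at most one} switch — an initial sub-phase with $Q_2\ge h$ followed by one with $Q_2<h$. I would then control the cycle by estimating the number of upcrossings of level $x$ by $R$ during times when $Q_2\ge h$ (together with the polynomially small expected duration above $x$ per upcrossing), invoking the strong Markov property at the threshold crossings and at these upcrossings, and using Theorem~\ref{smallbetathm} to bound how much total time per cycle $Q_2$ can spend above a given level. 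For $x>\tfrac{R}{\beta}\log(1/\beta)$ the relevant $h$ would exceed what $Q_2$ attains with non-negligible probability over an $O(1)$-length cycle, so the reflected-Brownian mechanism is effectively unavailable and a crude dichotomy suffices: either $Q_2$ stays below $\tfrac12\tfrac{R}{\beta}\log(1/\beta)$ throughout the cycle, whence $R$ is dominated by a mean-$\beta$ reflected OU process and (since $x\gg\beta$) contributes $\e^{-cx^2}$, or $Q_2$ exceeds that level, which by Theorem~\ref{smallbetathm} costs at most $\e^{-c\log^2(1/\beta)}\le\e^{-cx^2}$ in this range; this yields the term $\e^{-C''x^2}$.

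Finally, the $L^p$ assertion follows from $\mathbb{E}_\pi(|Q_1(\infty)|^n)=n\int_0^\infty x^{n-1}\,\pi(Q_1(\infty)\le-x)\,\dif x$, splitting the integral at $2\beta^{1/4}$ and at $\tfrac{R}{\beta}\log(1/\beta)$: the piece on $[0,2\beta^{1/4}]$ is at most $(2\beta^{1/4})^n$; the Gaussian term integrates to $O(\beta^{1/4})$; the substitution $x=\sqrt\beta\,u$ renders the contribution of $\beta^2\e^{-C'x/\sqrt\beta}$ as $O(\beta^{2+n/2})$; and the far tail $\e^{-C''x^2}$ over $x>\tfrac{R}{\beta}\log(1/\beta)$ is super-polynomially small. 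Hence $\mathbb{E}_\pi(|Q_1(\infty)|^n)\to0$ as $\beta\to0$ for every $n>0$.
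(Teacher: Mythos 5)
Your overall strategy -- the regenerative representation together with the $\beta^{-2}$ lower bound on the expected cycle length, a split of the cycle according to whether $Q_2$ lies above or below a threshold of order $\beta^{-1/2}$, reflected-OU domination of $-Q_1$ when $Q_2$ is small and drifted-Brownian domination when $Q_2$ is large -- is the same skeleton as the paper's proof (Lemmas \ref{qoneexc-i}--\ref{qoneexc-v} combined with Lemma \ref{regsmall}), and your observation that $Q_2$ decreases while $Q_1<0$, so each excursion of $Q_1$ below zero sees at most one downward crossing of the threshold, is correct and genuinely useful. But as written there are real gaps. The ``pricing'' of the phase $\{Q_2\ge h\}$ via Theorem \ref{smallbetathm} cannot work: that theorem only applies for $h\ge 8M_0\beta^{-1}$, and at your choice $h\asymp\beta^{-1/2}$ the would-be bound $\e^{-C\beta h}=\e^{-C\sqrt{\beta}}$ is of order one -- indeed $Q_2$ is typically of order $\beta^{-1}$, so the phase $\{Q_2\ge h\}$ occupies essentially the whole cycle. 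The smallness in $x$ must come entirely from the rarity and brevity of deep excursions of $Q_1$ under the strong restoring drift, and making that quantitative -- the expected number of excursion blocks per cycle (e.g.\ the bounds \eqref{lp6} and \eqref{p24}), the probability per block of reaching $-x$, the expected time spent below $-x$ per visit, and the control of the possibly deep value of $Q_1$ at each regime switch (handled in the paper via Cauchy--Schwarz and the estimate \eqref{three5}) -- is exactly the content of Lemmas \ref{qoneexc-i}--\ref{qoneexc-v}, which your sketch replaces by the stationary occupation fraction of a dominating reflected process over random comparison intervals with random initial conditions. In particular the prefactor $\beta^{1/4}$ on the Gaussian term (equivalently the $\beta^{-7/4}$ per-cycle occupation bound), without which the ``in particular'' $L^p$ statement fails, is asserted but never derived.

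The treatment of the far range $x>\frac{R}{\beta}\log(1/\beta)$ is incorrect as stated. On the branch where $Q_2$ exceeds $\tfrac12\frac{R}{\beta}\log(1/\beta)$ during the cycle, Theorem \ref{smallbetathm} gives an exponent $C\beta y\asymp\log(1/\beta)$, i.e.\ a bound that is only polynomially small in $\beta$ (of order $\beta^{c}$), not $\e^{-c\log^2(1/\beta)}$; and in this range $\e^{-C''x^2}\le \e^{-c\beta^{-2}\log^2(1/\beta)}$ is vastly smaller, so the dichotomy cannot produce the claimed Gaussian term (besides which one needs the expected occupation time on that event, not merely its probability, so a factor of the cycle length also enters). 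The paper does not attempt this step from scratch: it imports the Gaussian tail in this regime directly from \cite[Theorem 2.1]{BM18} and its proof. So while your route is structurally the paper's route, the two places where your argument diverges from it -- pricing the large-$Q_2$ phase by Theorem \ref{smallbetathm}, and the far-tail dichotomy -- are precisely where it breaks, and the hard occupation-time lemmas are left unproved.
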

The moment bounds obtained from Theorems~\ref{smallbetathm} and \ref{Q1statsmall} along with the application of Ito's formula leads to a somewhat surprising distributional convergence result as stated in the next theorem.
\begin{theorem}\label{betazero}
As $\beta \rightarrow 0$, the law of $\beta Q_2(\infty)$ converges weakly to a $\operatorname{Gamma}(2)$ distribution {\normalfont (}whose density is given by $f(x) = x\e^{-x}, \ x \ge 0${\normalfont)}.
\end{theorem}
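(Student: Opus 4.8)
The plan is to identify the stationary equation satisfied by $Q_2(\infty)$ via It\^o's formula, and then use the moment bounds from Theorems~\ref{smallbetathm} and~\ref{Q1statsmall} to show that, after scaling by $\beta$, this equation forces the limiting law to be $\operatorname{Gamma}(2)$. First I would apply It\^o's formula to suitable test functions of the stationary diffusion $(Q_1(\infty), Q_2(\infty))$. Applying the generator of \eqref{eq:diffusionjsq} to $f(q_2) = \e^{i\theta q_2}$ (or, to stay with real analysis, to $q_2^k$ for $k \ge 1$) and using stationarity, one gets $\mathbb{E}_\pi[(\mathcal{L}f)(Q_1(\infty), Q_2(\infty))] = 0$. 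Since the $Q_2$-dynamics reads $\dif Q_2 = \dif L - Q_2 \, \dif t$, the term involving $\dif L$ must be handled: the local time $L$ only increases when $Q_1 = 0$, and an independent stationary relation (again from It\^o applied to a function of $Q_1$, e.g. $q_1$ itself, which gives $\mathbb{E}_\pi[L'] = \beta - \mathbb{E}_\pi[Q_1(\infty)] \cdot(\text{something})$; more carefully, from the $Q_1$ equation and $\mathbb{E}_\pi[Q_1(\infty)] = -\beta$ one reads off the stationary rate of local time accumulation) pins down $\mathbb{E}_\pi[\dif L / \dif t]$ in terms of $\beta$ and $\mathbb{E}_\pi[Q_2(\infty)]$.

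Next I would pass to the limit $\beta \to 0$ in these stationary identities after rescaling. Set $Y_\beta := \beta Q_2(\infty)$. Theorem~\ref{smallbetathm} gives $\mathbb{E}_\pi[Q_2(\infty)] = \Theta(\beta^{-1})$, so $\mathbb{E}_\pi[Y_\beta] = \Theta(1)$; moreover the exponential tail bound $\pi(Q_2(\infty) \ge y) \le C_s^{1+}\e^{-C_s^{2+}\beta y}$ translates to a uniform (in $\beta$) exponential tail for $Y_\beta$, hence the family $\{Y_\beta\}$ is tight and all its moments are uniformly bounded. Theorem~\ref{Q1statsmall} gives $\mathbb{E}_\pi[|Q_1(\infty)|^n] \to 0$, so $Q_1(\infty) \to 0$ in every $L^p$; this is what lets the cross term $\mathbb{E}_\pi[Q_1(\infty) Q_2(\infty)^{k}]$ (which appears in the stationary moment identities through the drift $-Q_1 + Q_2$ in the $Q_1$-equation, and which couples the two coordinates) be controlled — by Cauchy--Schwarz it is dominated by $\|Q_1(\infty)\|_{L^2}\,\|Q_2(\infty)^k\|_{L^2} = o(1)\cdot O(\beta^{-k})$, which after the $\beta^k$ rescaling is $o(1)$. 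Thus in the limit the coupling to $Q_1$ decouples, and the rescaled stationary identities for $Y_\beta$ converge to the stationary moment identities of a one-dimensional diffusion: concretely, $\dif Y = \dif \ell - Y \, \dif t$ where the limiting boundary push $\ell$ accumulates at a constant rate (determined by $\lim_\beta \beta\,\mathbb{E}_\pi[\dif L/\dif t]$, which the $Q_1$-identity shows equals $1$ after normalization), giving the stationary equation of the process whose invariant law is $\operatorname{Gamma}(2)$ — indeed, a $\operatorname{Gamma}(2)$ random variable $Z$ satisfies $\mathbb{E}[Z^k] = (k+1)!$ and these match the recursion $k\,\mathbb{E}[Z^k] = (\text{constant})\cdot k\,\mathbb{E}[Z^{k-1}] + \ldots$ read off from the limiting identity. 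I would close the argument by the method of moments: show $\mathbb{E}_\pi[Y_\beta^k] \to (k+1)!$ for every $k$, and since $\operatorname{Gamma}(2)$ is determined by its moments (its moment generating function is finite near $0$), conclude $Y_\beta \dto \operatorname{Gamma}(2)$.

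The main obstacle I expect is the rigorous handling of the local-time term $L$ in the stationary It\^o identities and the verification that its scaled accumulation rate converges to the right constant. Unlike the smooth drift and diffusion terms, $L$ is singular (supported on $\{Q_1 = 0\}$), so one must argue that $\mathbb{E}_\pi$ of the relevant boundary integrals is well-defined and that, in the $\beta \to 0$ limit, the contribution of $Q_1$-excursions away from $0$ becomes negligible at the correct rate — this is precisely where the quantitative tail bound in Theorem~\ref{Q1statsmall} (with its explicit $\beta$-dependence, including the $\e^{-C''x^2}$ far-tail regime) is needed, rather than just qualitative $L^p$-convergence. A secondary technical point is justifying the interchange of the $\beta\to 0$ limit with the (infinitely many) stationary moment identities; this requires the uniform-in-$\beta$ moment bounds on $Y_\beta$ from Theorem~\ref{smallbetathm}, and care that the error terms in each fixed-$k$ identity are $o(1)$ uniformly enough to push through.
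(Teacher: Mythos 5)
Your overall strategy --- stationary It\^o identities plus the method of moments, with Theorem \ref{smallbetathm} giving uniform moment bounds for $\beta Q_2(\infty)$ and Theorem \ref{Q1statsmall} giving smallness of $Q_1(\infty)$ --- is the paper's strategy, but the route you describe has a gap at the decisive step: producing the moment recursion. Test functions of $q_2$ alone cannot close the system: since $dQ_2 = dL - Q_2\,dt$ has no martingale part, It\^o applied to $Q_2^{n+1}$ gives only $t^{-1}\mathbb{E}_{\pi}\int_0^t Q_2^n(s)\,dL(s) = \mathbb{E}_{\pi}(Q_2^{n+1}(0))$, and knowing the total local-time rate $\mathbb{E}_{\pi}(L(t))/t=\mathbb{E}_{\pi}(Q_2(0))$ does not determine the weighted integrals $\mathbb{E}_{\pi}\int Q_2^n\,dL$; no relation between successive moments of $Q_2$ emerges this way. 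The recursion comes from mixed test functions: It\^o applied to $Q_1Q_2^n$, combined with the identity above (which cancels the local-time term), yields the exact relation $\mathbb{E}_{\pi}(Q_1(0)Q_2^n(0)) = -\frac{\beta}{n+1}\mathbb{E}_{\pi}(Q_2^n(0))$; It\^o applied to $Q_1^2Q_2^n$ --- whose quadratic-variation correction contributes the crucial term $2\mathbb{E}_{\pi}(Q_2^n(0))$ --- then gives, after substituting the exact relation, $\mathbb{E}_{\pi}(Q_1^2(0)Q_2^n(0)) = -\frac{2\beta}{(n+2)^2}\mathbb{E}_{\pi}(Q_2^{n+1}(0)) + \left(\frac{2}{n+2}+\frac{2\beta^2}{(n+1)(n+2)}\right)\mathbb{E}_{\pi}(Q_2^n(0))$, from which $\mathbb{E}_{\pi}((\beta Q_2(0))^{n+1}) \to (n+2)\lim_{\beta\to 0}\mathbb{E}_{\pi}((\beta Q_2(0))^{n})$ once the left side is shown negligible by Cauchy--Schwarz, using $\mathbb{E}_{\pi}(Q_1^4(0))\to 0$ and the uniform bounds on $\mathbb{E}_{\pi}((\beta Q_2(0))^{2n})$. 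Your sketch never identifies this mechanism.

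Relatedly, your plan to dispose of the cross terms $\mathbb{E}_{\pi}(Q_1 Q_2^{k})$ by Cauchy--Schwarz would discard the main term: in the identity coming from $Q_1^2Q_2^n$, the term $2\mathbb{E}_{\pi}(Q_1(0)Q_2^{n+1}(0))$, after multiplying through by $\beta^n$, equals $-\frac{2}{n+2}\mathbb{E}_{\pi}((\beta Q_2(0))^{n+1})$ --- exactly the order-one quantity being solved for --- while your bound $\|Q_1\|_{L^2}\|Q_2^{n+1}\|_{L^2} = o(1)\,O(\beta^{-(n+1)})$ times $\beta^n$ is $o(1)\,\beta^{-1}$, not even small. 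The first-order cross moments must be computed exactly, not estimated; only the second-order term $\mathbb{E}_{\pi}(Q_1^2 Q_2^n)$ is treated as an error, which is why fourth-moment decay of $Q_1$ (not just $L^2$-smallness) is what Theorem \ref{Q1statsmall} is used for. Finally, your heuristic limit dynamics, a push $\ell$ accumulating at constant rate against the drift $-Y\,dt$, cannot explain the answer: such a deterministic mechanism has a degenerate stationary law, whereas the factor $(n+2)$ in the recursion is generated by the Brownian quadratic variation of $Q_1$ entering through the $Q_1^2Q_2^n$ test function.
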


\begin{remark}[{$\beta$ thresholds for small and large $\beta$ regimes}]
\normalfont
It is a natural and important question to ask how large $\beta$ needs to be for the `large $\beta$ regime' to actually manifest itself, and similarly for the `small $\beta$ regime'. Although we can obtain some thresholds for $\beta$ from the methods in the current article by making explicit choices of the bounds on $\beta$ required in our calculations and keeping track of these bounds, we believe that the obtained values will not be optimal. 
Figure~\ref{fig:num} gives numerical simulations for $-Q_1(\infty)$ and $Q_2(\infty)$ for different values of $\beta$ and visually depicts how the steady state behavior changes as $\beta$ varies.
All the figures in Figure~\ref{fig:num} are obtained by simulating sample paths of $(Q_1(t),Q_2(t))$ and plotting the histogram of occupancy measures of $Q_1$ and $Q_2$ over a time interval of length 1.5$\times 10^4$.
As can be observed, a `transition' occurs from one regime to the other as we vary $\beta$ from $0.1$ to $3$. 
Mathematically characterizing these thresholds and studying the `intermediate' regime is a challenging problem and we leave it as an open question.
\end{remark}

\begin{remark}[{Comparison with M/M/N}]\label{rem:mmncomp}
\normalfont
Theorem~\ref{betazero} should be contrasted with the corresponding result for the centralized queueing system.
Let $\bar{S}^N(t)$ denote the total number of tasks in an M/M/N system at time $t$. 
In that case, note that the total number of idle servers $\max\{N-\bar{S}^N, 0\}$ and the total number of waiting tasks $\max\{\bar{S}^N-N, 0\}$ are comparable to $-Q_1^N$ and $Q_2^N$ for the systems under the JSQ policy, respectively.
It is known that in case of M/M/N systems if the arrival rate $\lambda(N)$ scales as in the Halfin-Whitt regime~\cite[Theorem 2]{HW81}, then the centered and scaled total number of tasks in the system $(\bar{S}^N(t)-N)/\sqrt{N}$ converges weakly to a suitable diffusion process $\{\bar{S}(t)\}_{t\geq 0}$, and $\bar{S}^N(\infty)\dto \bar{S}(\infty)$,
where $\bar{S}(\infty)$ is the steady state of $\bar{S}$.
As $\beta\to 0$, \cite[Proposition 2]{HW81} implies that $\beta \bar{S}(\infty)$ for the M/M/$N$ queue converges weakly to a unit-mean exponential distribution. 
In contrast, Theorems~\ref{Q1statsmall} and \ref{betazero} shows that $\beta (Q_1(\infty) + Q_2(\infty))$ converges weakly to a Gamma$(2)$ random variable.
This indicates that in the Halfin-Whitt regime, 
although systems under the JSQ policy and the M/M/$N$ system have similar order of performance (in the sense that in both cases the total number of waiting tasks and  idle servers scale with $\sqrt{N}$), due to the distributed operation, in terms of the number of waiting tasks JSQ is a factor 2 worse in expectation than the corresponding centralized system.
\end{remark}

\begin{figure}
\begin{center}$
\begin{array}{ccc}
\includegraphics[width=80mm]{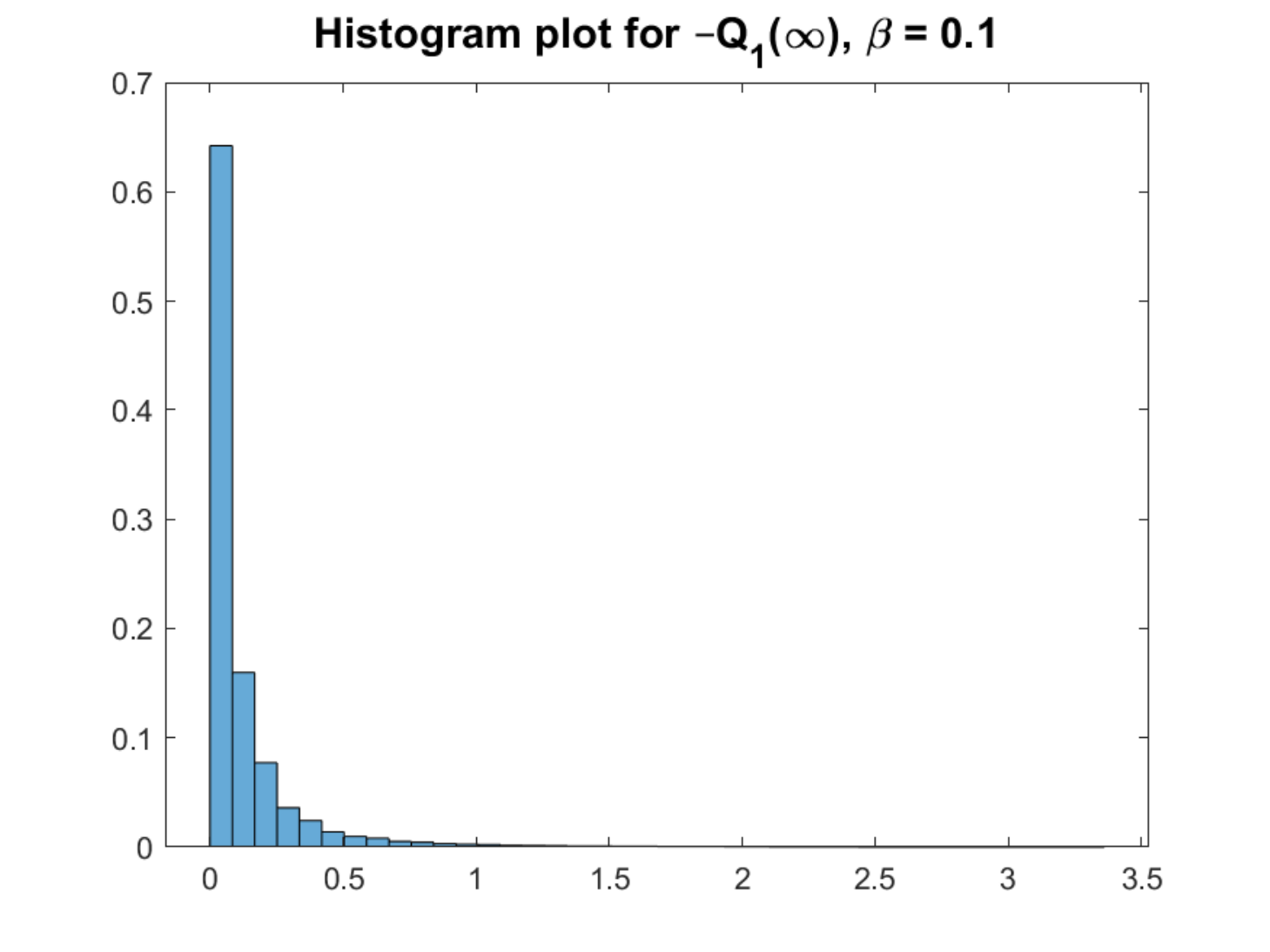}&
\includegraphics[width=80mm]{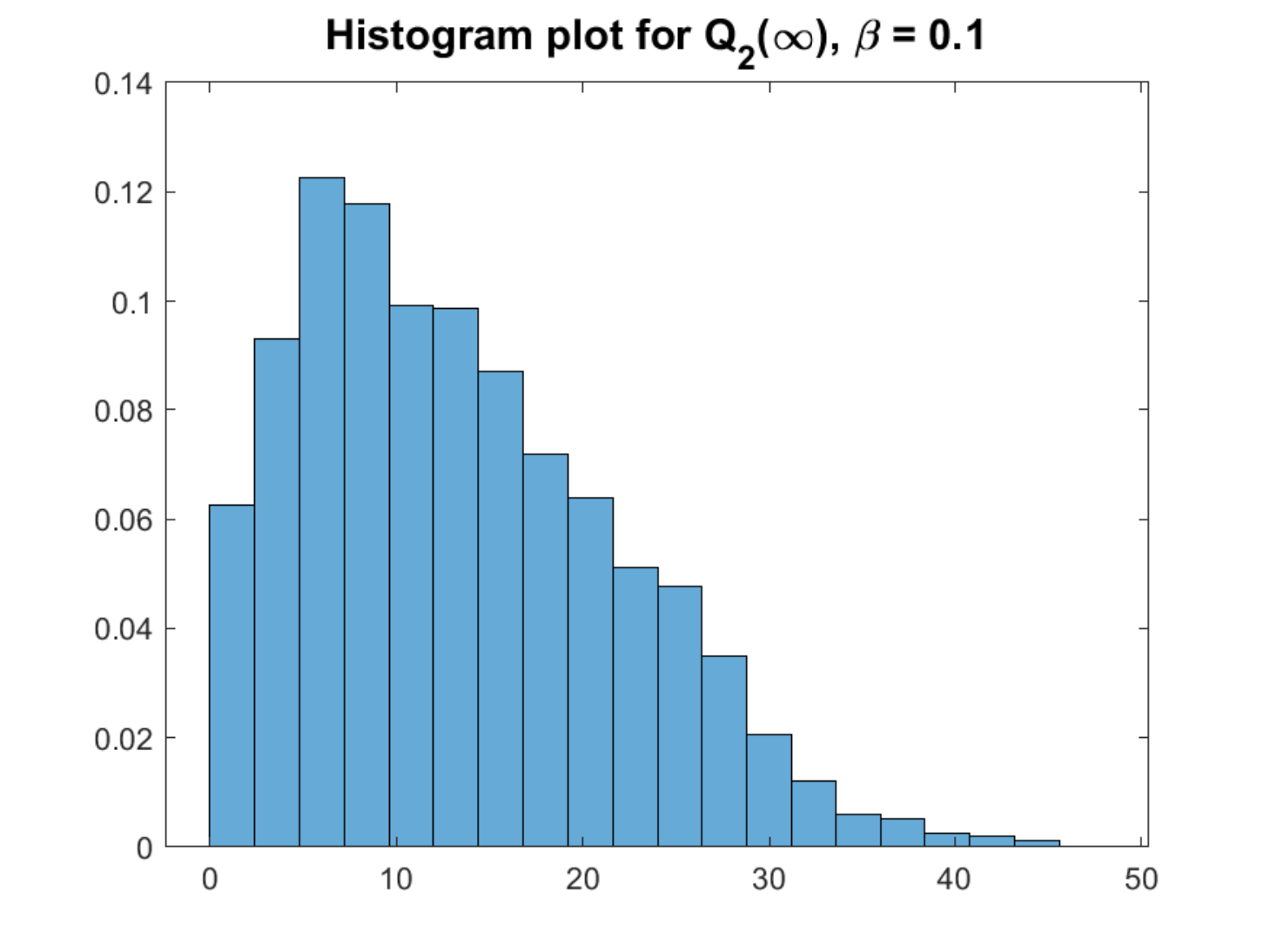}\\
\includegraphics[width=80mm]{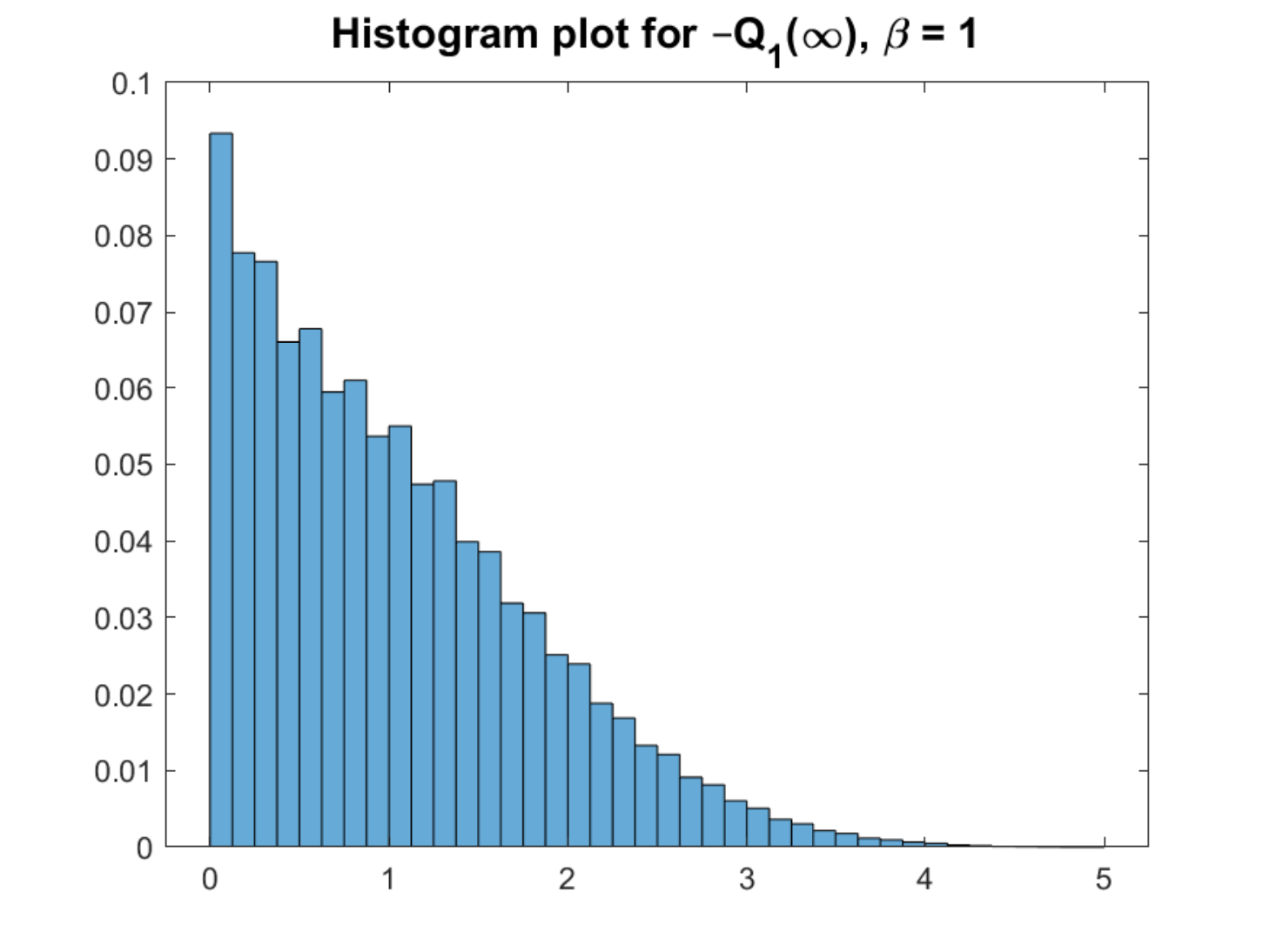}&
\includegraphics[width=80mm]{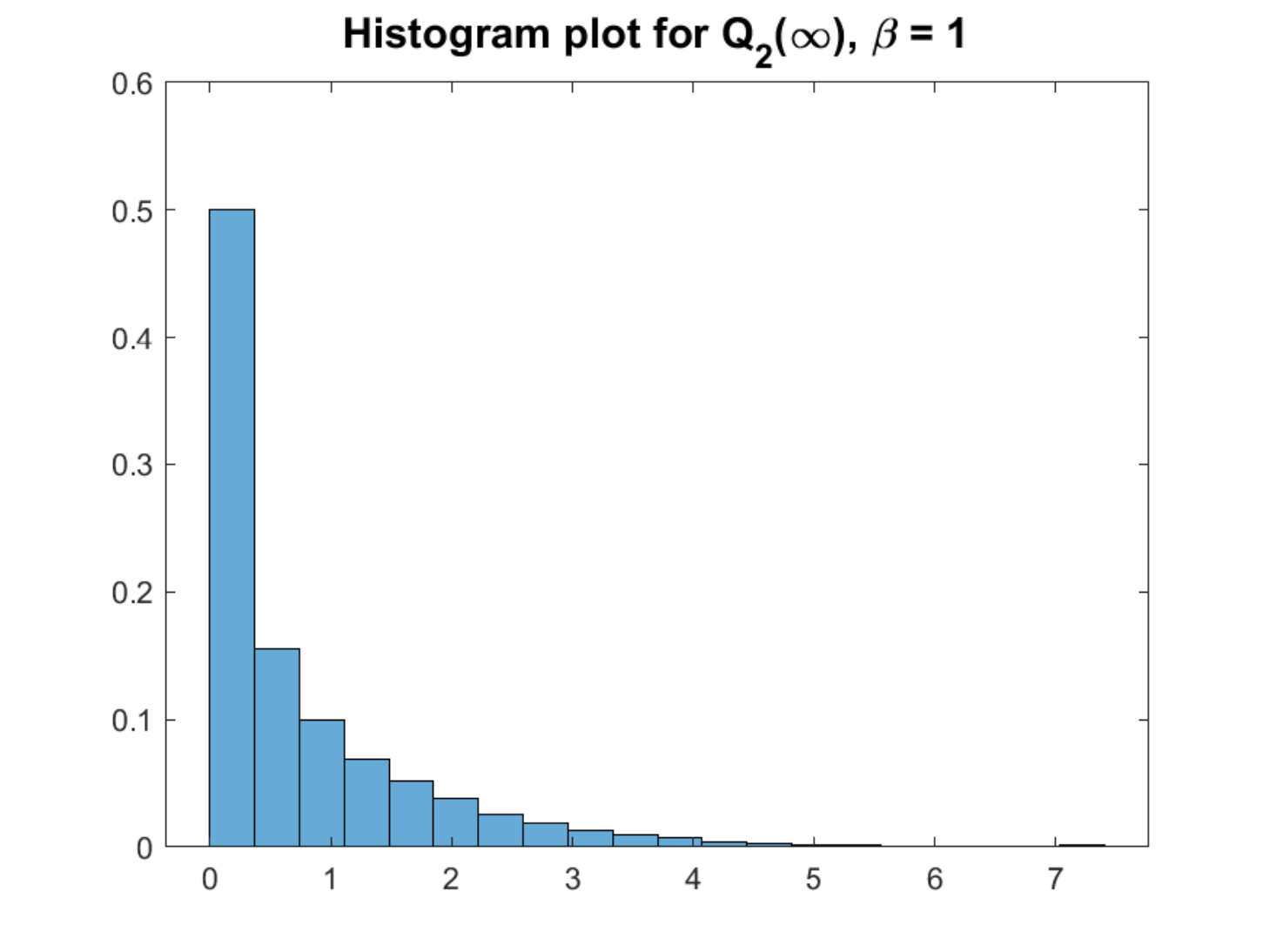}\\
\includegraphics[width=80mm]{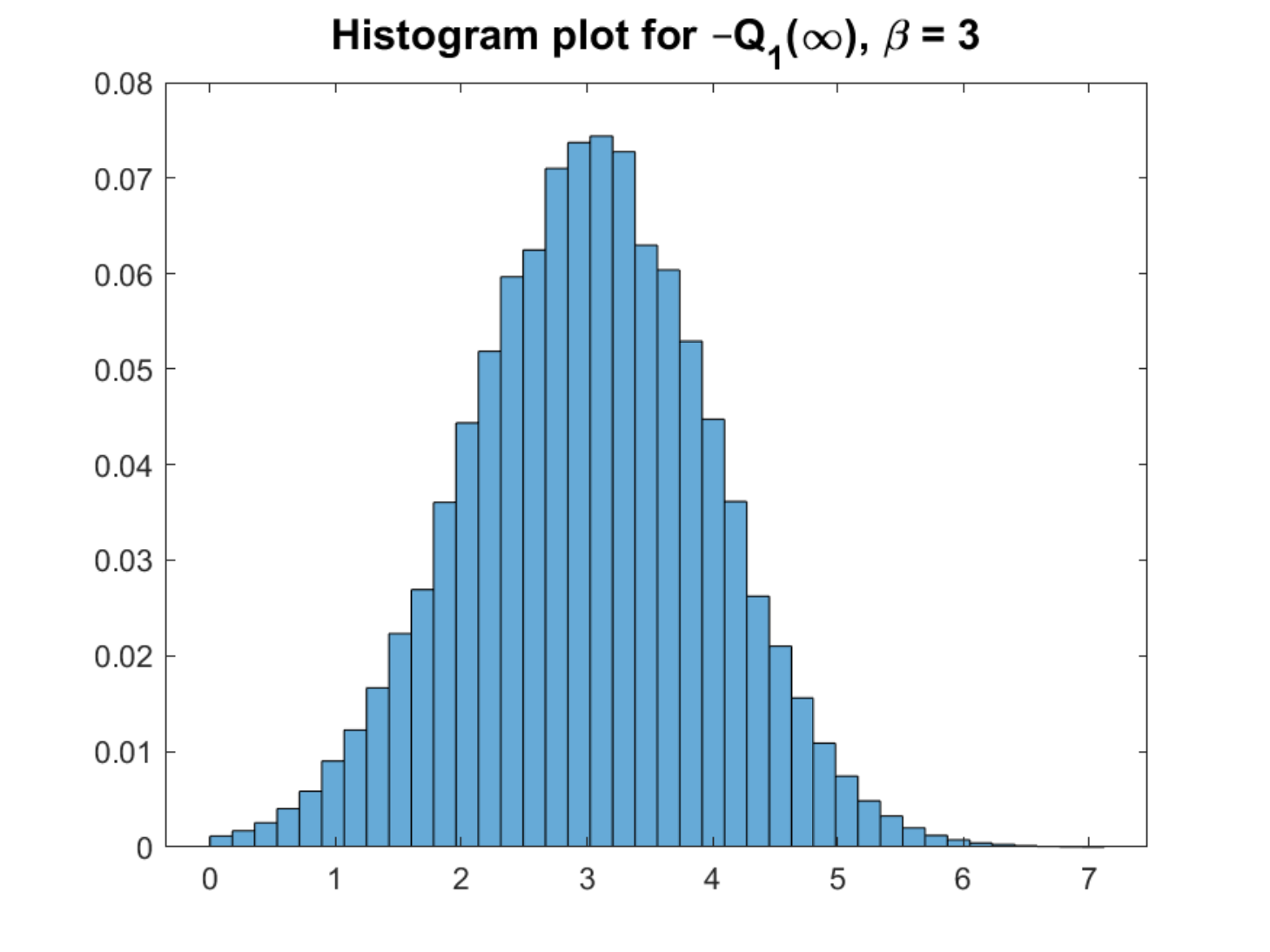}&
\includegraphics[width=80mm]{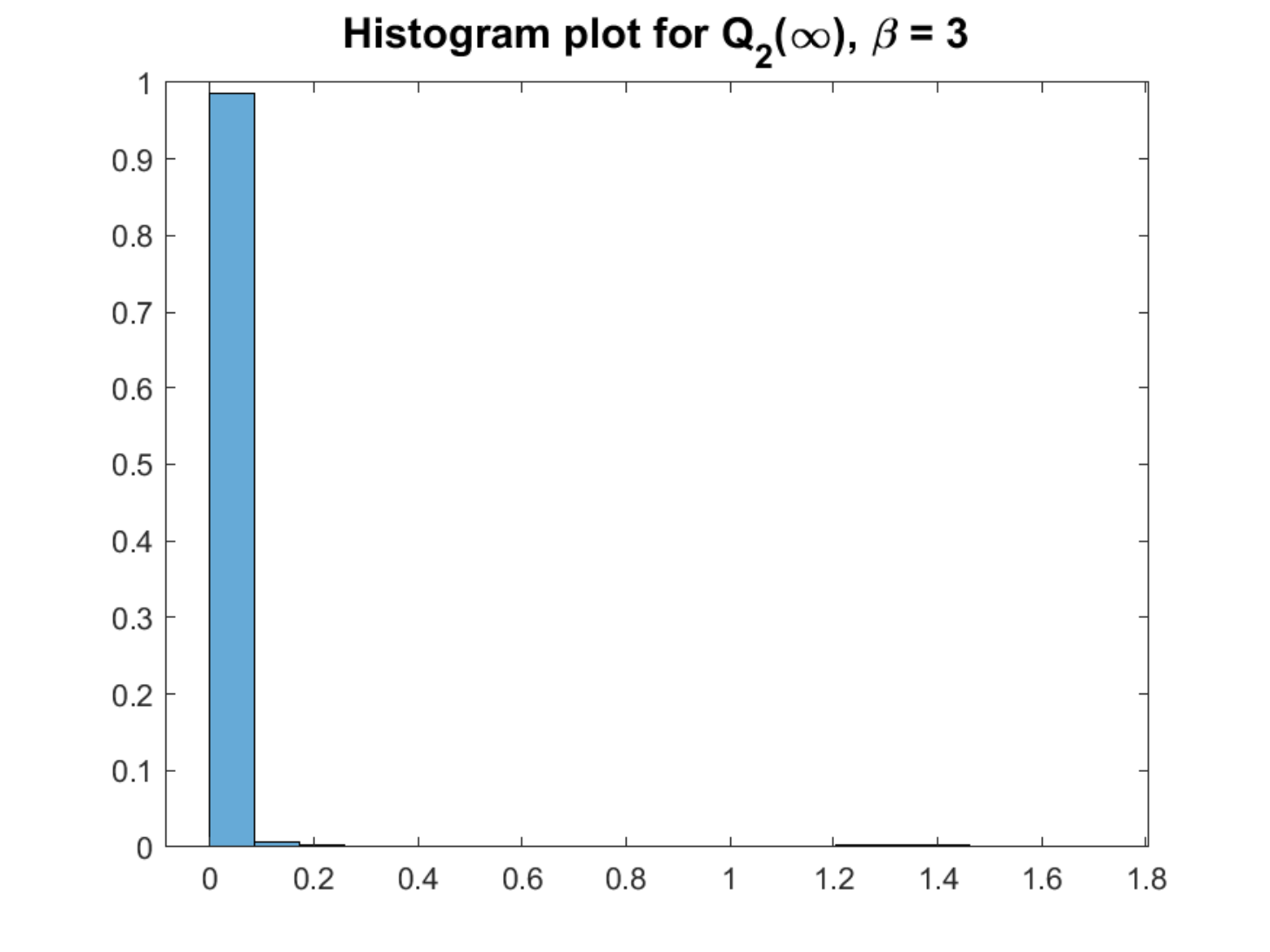}
\end{array}$
\end{center}
\caption{Histogram plots for $-Q_1(\infty)$ (left) and $Q_2(\infty)$ (right) when the value of $\beta$ is small (top), intermediate (middle), and large (bottom).
Observe the condensation of mass phenomenon for the distribution of $Q_2(\infty)$ when $\beta=3$.}
\label{fig:num}
\end{figure}

\section{Brief overview of the regenerative approach}
\label{sec:reg}

In this section we recall the regenerative approach introduced in~\cite{BM18}, which provides a tractable 
expression for the stationary distribution.
A stochastic process is called \emph{classical regenerative} if it starts anew at random times (called \emph{regeneration times}), independently of the past. See~\cite[Chapter 10]{Thorisson} for a rigorous treatment of regenerative processes.
The regeneration times split the process into renewal cycles that are independent and identically distributed, except possibly the first cycle.
Consequently, the behavior inside a specific renewal cycle characterizes the steady-state behavior.
In order to define the regeneration times, we introduce a few notations.
\begin{align*}
\tau_i(z)&:=\inf\{t \ge 0: Q_i(t)=z\}, \ \ i=1,2.\qquad\text{and}\qquad
\sigma(t):= \inf\{s \ge t: Q_1(s)=0\}.
\end{align*}
Now fix any $B>0$. For $k\geq 0$, define the stopping times
\begin{align}\label{rendef}
\alpha_{2k+1} &:= \inf \Big\{t\geq \alpha_{2k} : Q_2(t) = B\Big\},\quad
\alpha_{2k+2} := \inf \left\{t>\alpha_{2k+1}:Q_2(t) = 2B\right\},\quad
\Xi_k := \alpha_{2k+2},
\end{align}
with the convention that $\alpha_0=0$ and $\Xi_{-1}=0$.
The dependence of $B$ in the above stopping times is suppressed for convenience in notation, since the value of $B$ will be clear from the context. 
For any fixed $B>0$, \cite[Lemma 3.1]{BM18} states that the process $\{Q_1(t), Q_2(t)\}_{t\geq 0}$ is a classical regenerative process with regeneration times given by $\{\Xi_k\}_{k\geq 0}$.
Thus, invoking the theory of regenerative processes, it can be concluded that 
the process described by \eqref{eq:diffusionjsq} has a unique stationary distribution $\pi$, which can be represented as
\begin{equation}\label{eq:pi}
\pi(A) = \frac{\mathbb{E}_{(0, 2B)}\left(\int_{0}^{\Xi_0}\mathbf{1}_{[(Q_1(s),Q_2(s)) \in A]}ds\right)}{\mathbb{E}_{(0, 2B)}\left(\Xi_0\right)}
\end{equation}
for any measurable set $A \subseteq (-\infty, 0] \times (0, \infty)$. 
For convenience, rigorous statement of the above, along with some other useful results, are included in Appendix~\ref{app:recall}.

\section{Analysis in the large-$\beta$ regime}\label{sec:analysis-large}

In this section, we will investigate the behavior of the stationary distribution in the regime $\beta \ge \beta_0$ for sufficiently large $\beta_0$, and take $B=\beta^{-1}$ in \eqref{rendef}.
First, in Subsection~\ref{ssec:hitting-large}, we obtain estimates on the expectations of several carefully chosen hitting times. 
In Subsection~\ref{ssec:inter-large} we will provide upper and lower bounds on the expected inter-regeneration time $\mathbb{E}_{(0, 2\beta^{-1})}\left(\Xi_0\right)$.
Further, the hitting-time results of Subsection~\ref{ssec:hitting-large} will be used to obtain sharp bounds on the numerator on the right-side of~\eqref{eq:pi}, i.e., the amount of time the process spends on various regions within one renewal cycle.
Combining the results of Subsections~\ref{ssec:hitting-large} and \ref{ssec:inter-large} we prove in Subsection~\ref{ssec:proof-large}  the main results for the large-$\beta$ regime. 

To avoid cumbersome notation, we will use $\beta_0$ to denote the lower bound on $\beta$ for the assertion of each of the following lemmas to hold (the lower bounds change between lemmas and a common lower bound is obtained by taking the maximum of these bounds). Also, in the proofs, we will denote by $C, C'$ generic positive constants which do not depend on $\beta$ and whose values might change from line to line and between steps of calculations.

\subsection{Hitting time estimates}\label{ssec:hitting-large}
We start with a hitting-time estimate for $Q_2$ to hit some $\Theta(\beta)$ level starting from a larger level $y$.
When $Q_2$ is large, there is a deep interplay between the rate of decrease of $Q_2$ and the local time accumulated by $Q_1$.
Recall that, the rate of decay for $Q_2$ is proportional to itself.
However, observe that when $Q_2 \gg \beta$, $Q_1$ has a drift towards zero, and thus, spends most of the time around 0.
This increases the local time process $L$, which adds to $Q_2$.
Due to these two effects, it can be shown that $Q_2$ roughly behaves as a Brownian motion with drift $-\beta$, and so the expected time taken to hit some $\Theta(\beta)$ level starting from a larger level $y$ is approximately $\frac{y}{\beta}$. 
The next lemma formalizes the above heuristic.
\begin{lemma}\label{linfall}
There exists $\beta_0 \ge 1, \mathcal{C}>0$, such that for all $\beta \ge \beta_0$ and all $y \ge \beta/4$,
$$
\mathbb{E}_{(0,y)}\left(\tau_2(\beta/4)\right) \le \mathcal{C}\frac{y}{\beta}.
$$
\end{lemma}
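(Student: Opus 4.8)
The plan is to bound the process $Q_2$ from below, on the relevant range of values, by a reflected Brownian motion with drift $-\beta$ (or a simple constant multiple thereof), so that the hitting time $\tau_2(\beta/4)$ is dominated by the corresponding hitting time of that simpler process, which is explicitly computable. Concretely, start the diffusion from $(0,y)$ with $y \ge \beta/4$ and run it until $\tau_2(\beta/4)$. The first step is to control the local time $L$ that $Q_1$ accumulates at $0$: using the $Q_1$-equation in \eqref{eq:diffusionjsq}, write $L(t) = Q_1(0) - Q_1(t) + \sqrt 2 W(t) - \beta t + \int_0^t(-Q_1(s)+Q_2(s))\,ds$, and observe that while $Q_2$ stays above $\beta/4$ the integrand $-Q_1 + Q_2$ has a substantial positive part; heuristically, on this time window $Q_1$ is pinned near $0$ (it has strong positive drift $\approx Q_2 \gg \beta$), so $L(t) \approx \beta t + \int_0^t (Q_2(s) - Q_1(s))\,ds$ to leading order, which is what makes $Q_2$ behave like a $-\beta$-drift Brownian motion.

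The second step is to make this rigorous by a comparison/coupling argument rather than a heuristic. I would introduce the auxiliary process $\widetilde Q_2$ defined as the solution of $d\widetilde Q_2 = -dt - \sqrt 2\, dW + dK$, i.e.\ reflected Brownian motion (with the \emph{same} driving $W$, up to sign) with unit negative drift and a lower reflecting barrier, started at $y$, and show by a pathwise argument that $Q_2(t) \ge c\,\widetilde Q_2(t/c')$ (for suitable constants) up to the first time $Q_2$ drops to $\beta/4$; the reflection term for $Q_2$ comes from $dQ_2 = dL - Q_2\,dt$, and on the set $\{Q_2 \ge \beta/4\}$ the drift $-Q_2\,dt$ is more negative than in the bounding process, which is why one gets a \emph{lower} bound only after absorbing the right multiplicative constant into $\mathcal C$. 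More cleanly: since $dQ_2 = dL(t) - Q_2\,dt$ and $L$ grows at rate comparable to $\beta$ whenever $Q_2 \gg \beta$, a Lyapunov / drift estimate of the form $\mathbb{E}_{(0,y)}(Q_2(\tau_2(\beta/4))) \le y$ together with a lower bound on the drift of $Q_2$ of order $\beta$ gives, via optional stopping applied to $Q_2(t) + (\text{const})\cdot\beta t$ or to a linear function of $Q_2$, the bound $\mathbb{E}_{(0,y)}(\tau_2(\beta/4)) \le \mathcal C y/\beta$.

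In more detail, the cleanest route is the following. Let $\sigma = \tau_2(\beta/4)$. From the $Q_2$-equation, $Q_2(\sigma) - y = L(\sigma) - \int_0^\sigma Q_2(s)\,ds$, so $\int_0^\sigma Q_2(s)\,ds = y - \beta/4 + L(\sigma) \le y + L(\sigma)$. Since $Q_2(s) \ge \beta/4$ on $[0,\sigma)$, this yields $(\beta/4)\,\sigma \le y + L(\sigma)$, so it suffices to show $\mathbb{E}_{(0,y)}(L(\sigma)) \le \mathcal C' y$. For the latter, take expectations in the $Q_1$-equation at time $\sigma$: $\mathbb{E}(L(\sigma)) = Q_1(0) - \mathbb{E}(Q_1(\sigma)) - \beta\,\mathbb{E}(\sigma) + \mathbb{E}\int_0^\sigma(-Q_1(s)+Q_2(s))\,ds$. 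The term $\mathbb{E}\int_0^\sigma(-Q_1(s))\,ds \le \mathbb{E}\int_0^\sigma |Q_1(s)|\,ds$ must be controlled, and $\mathbb{E}\int_0^\sigma Q_2(s)\,ds \le y + \mathbb{E}(L(\sigma))$, which unfortunately reintroduces $\mathbb{E}(L(\sigma))$ — so this naive accounting is circular and one genuinely needs the pathwise comparison of the previous paragraph (or a careful two-sided estimate) to break the loop. I would therefore run the comparison argument to get an a priori bound $\mathbb{E}(\sigma) \le \mathcal C y/\beta$ with the reflected-BM-with-drift dominating process, for which $\mathbb{E}_{y}(\text{hitting time of }\beta/4) = (y - \beta/4)/(\text{drift}) = \Theta(y/\beta)$ by the standard formula, and then verify the domination holds until $\sigma$.

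The main obstacle is the pathwise comparison step: $Q_1$ and $Q_2$ are coupled, $Q_2$'s reflection term $L$ is driven by $Q_1$'s behavior at $0$, and one must show $L$ grows at least at rate $\sim\beta$ (not merely nonnegatively) on the window where $Q_2$ is large — this requires a lower bound on the local time of a Brownian motion with large positive drift reflected at $0$, i.e.\ controlling how much time $Q_1$ spends at the barrier, and must be uniform in the (large, fluctuating) value of $Q_2$. Handling the region where $Q_2$ is only moderately above $\beta/4$ (where the "$Q_1$ pinned at $0$" picture degrades) is the delicate part and is presumably where the constant $\mathcal C$ and the requirement $\beta \ge \beta_0$ enter; I expect one splits $[\beta/4, y]$ into the dyadic-type pieces $[\beta/4, \beta/2]$ and $[\beta/2, y]$ and argues separately, or uses an excursion-theoretic estimate for $Q_1$ conditioned on $Q_2$.
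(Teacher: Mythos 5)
There is a genuine gap: the heart of Lemma \ref{linfall} is never actually proved in your proposal. The descent estimate from a large level $y$ down to a $\Theta(\beta)$ level in expected time $O(y/\beta)$, and the handling of the band where $Q_2$ is only moderately above $\beta/4$, are exactly the steps you defer as ``the main obstacle'' and ``the delicate part,'' so what remains is a plan, not a proof. The paper obtains the first piece essentially by citation: the tail bound of \cite[Lemma 4.3]{BM18} (restated as Lemma \ref{lem:q2regeneration}) immediately gives $\mathbb{E}_{(0,y)}(\tau_2(c_1'\beta)) \le C y/\beta$ (this is \eqref{hightomed}), and the remaining work is to show that from the intermediate band $[\beta/4,\,c_1'\beta+1]$ the expected time to reach $\beta/4$ is bounded by a constant, uniformly in $\beta$ (this is \eqref{expectbd}). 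Note that the mechanism there is the opposite of the one you describe: in that band $Q_1$ is \emph{not} pinned at $0$; for large $\beta$, with probability at least $1-\e^{-c\beta^2}$ per trial, $Q_1$ drops to $-\beta/2$ and stays negative for a fixed amount of time, during which $Q_2$ contracts by a fixed factor (since $dQ_2=-Q_2\,dt$ off the boundary), while the sum $S=Q_1+Q_2$ is controlled so that $Q_2$ cannot climb above $c_1'\beta+2$; a geometric-trials argument with constant expected cycle length then yields \eqref{expectbd}. Your proposal contains no substitute for either ingredient.

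Beyond incompleteness, two concrete points would derail the route you sketch. First, the comparison direction is reversed: a pathwise lower bound $Q_2(t)\ge c\,\widetilde Q_2(t/c')$ implies $\tau_2(\beta/4)\ge\widetilde\tau(\beta/4)$, i.e.\ it \emph{lower} bounds the downward hitting time; to upper bound $\tau_2(\beta/4)$ you must dominate $Q_2$ from \emph{above} by a quickly descending process, which is precisely where the uncontrolled local-time input $dL$ must be handled. Second, your ``cleanest route'' reduces the lemma to $\mathbb{E}_{(0,y)}(L(\sigma))\le \mathcal{C}'y$, but that target is false in the main regime $y\gg\beta$: while $Q_2\approx z\gg\beta$, $Q_1$ is held near $0$ and $L$ accrues at rate roughly $z-\beta$, so over the descent (of duration $\approx y/\beta$) one has $L(\sigma)$ of order $y^2/\beta\gg y$, and the inequality $(\beta/4)\sigma\le y+L(\sigma)$ then only gives $\sigma=O(y^2/\beta^2)$. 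So even granting the comparison you have in mind, this bookkeeping cannot deliver the claimed $\mathcal{C}y/\beta$ bound; one needs either the imported estimate of Lemma \ref{lem:q2regeneration} (whose proof in \cite{BM18} tracks $S=Q_1+Q_2$, with drift $-\beta+(-Q_1)$, rather than $L$ itself) or an argument of comparable strength, plus a separate treatment of the $\Theta(\beta)$ band as in the paper.
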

Now we provide a useful estimate on the hitting time of zero by $Q_1$ when $Q_1(0)<0$ and $Q_2(0)$ is small.
From a high level, observe that if $\beta$ is large and $Q_2$ is considerably small (less than $\beta/2$, say), then $Q_1$ experiences a drift towards $-\beta$ whenever it is in the region $[-\beta+Q_2, 0]$.
Also, the drift is given by $-(\beta-Q_2) + (-Q_1) = \Theta(\beta)$.
Therefore, hitting time of zero by $Q_1$, in this case, can be thought of as the hitting time of a Brownian motion with a negative $\Theta(\beta)$ drift to hit level $\beta$.
As a result, for large enough $\beta$, the expected hitting time to 0 can be shown to increase exponentially with $\beta^2$.
The next lemma formalizes the above intuition.
\begin{lemma}\label{OUhit}
There exists $\beta_0 \ge 1$ and positive constants $\mathcal{C}_1^+,\mathcal{C}_2^+, \mathcal{C}_1^-,\mathcal{C}_2^-$ that do not depend on $\beta$ such that for all $\beta \ge \beta_0$,
\begin{align}\label{expexc}
\sup_{y \in (0,\beta]}\mathbb{E}_{(-\beta,y)}\left(\tau_1(0)\right) \le \mathcal{C}_1^+ \e^{\mathcal{C}_2^+\beta^2}, \ \ \inf_{y \in (0,\beta/2]}\mathbb{E}_{(-\beta/4,y)}\left(\tau_1(0)\right) \ge \mathcal{C}_1^-\e^{\mathcal{C}_2^-\beta^2}.
\end{align}
Moreover, $\mathcal{C}_1^-, \mathcal{C}_2^-$ above can be chosen so that for all $\beta \ge \beta_0$,
\begin{equation}\label{problb}
\sup_{y \in (0,\beta/2]}\prob_{(-\beta/4,y)}\left(\tau_1(0) \le \mathcal{C}_1^- \e^{\mathcal{C}_2^-\beta^2}\right) \le \mathcal{D}_1 \e^{-\mathcal{D}_2\beta^2},
\end{equation}
where $\mathcal{D}_1, \mathcal{D}_2$ are positive constants that do not depend on $\beta$.
\end{lemma}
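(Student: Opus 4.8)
The plan is to build three tractable bounding processes that stochastically dominate (or are dominated by) $Q_1$ on the relevant part of the state space, and then to invoke classical hitting-time/hitting-probability formulas for reflected Brownian motion with drift. Throughout I work under the assumption that $Q_2$ stays small (less than $\beta/2$) for as long as we need; this has to be justified separately, and I discuss it below as the main obstacle.

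\emph{Upper bound.} Starting from $(-\beta,y)$ with $y\in(0,\beta]$, I want to bound $\expt_{(-\beta,y)}(\tau_1(0))$ from above. The idea is to run the dynamics in stages: first wait for $Q_2$ to drop below $\beta/2$ (this happens quickly — in $O(\log\beta)$ expected time by the linear mean-reversion $\dif Q_2 = -Q_2\,\dif t$ away from the boundary, or one can use Lemma~\ref{linfall}-type reasoning), and then, on the event that $Q_2\le\beta/2$, the drift of $Q_1$ in the strip $[-\beta,0]$ is $-Q_1+Q_2-\beta$, which is bounded below by $-Q_1 - \beta \ge -\beta$ (since $-Q_1\le \beta$ there) and also the diffusion coefficient is the constant $\sqrt2$. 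Hence $Q_1$ is stochastically bounded below (coupling/comparison of one-dimensional Itô diffusions, e.g.\ via Ikeda--Watanabe) by a Brownian motion $\sqrt2\,W(t) - \beta t$ started at $-\beta$, reflected at $0$ from below. Actually a cleaner route is: as long as $Q_1\in[-\beta,0]$, its drift is at least $-2\beta$ (crudely), so $\tau_1(0)$ is dominated by the hitting time of $0$ by $\sqrt2 W(t) - 2\beta t$ started at $-\beta$; the standard formula gives $\expt[\text{hit}] = \beta/(2\beta) = 1/2$ — that is $O(1)$, which would be \emph{too strong} and signals the coupling must be done more carefully. The correct comparison keeps the mean-reverting term: in $[-\beta,0]$ the drift is $-Q_1 + Q_2 - \beta \le -Q_1$, wait — we need an \emph{upper} bound on the hitting time, so we need a \emph{lower} bound on the drift (pushing $Q_1$ up toward $0$): drift $= -Q_1 + Q_2 - \beta \ge -Q_1 - \beta$. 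When $-Q_1$ is close to $\beta$ this is near $0$; when $-Q_1$ is near $0$ this is near $-\beta$. So $Q_1 + \beta$ is dominated below by a reflected OU-type process $\dif U = (-U)\,\dif t + \sqrt2\,\dif W$ with $U$ started at $0$ and reflected at $\beta$ — no, let me restate: set $U = Q_1+\beta\in[0,\beta]$, then $\dif U = (-U + Q_2)\,\dif t + \sqrt2\,\dif W \le$ hmm the drift of $U$ is $-Q_1+Q_2-\beta = -(U-\beta)+Q_2-\beta = -U + Q_2$, so $\dif U = (-U+Q_2)\,\dif t + \sqrt2\,\dif W - \dif L$. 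On $\{Q_2\le \beta/2\}$ the drift of $U$ is at most $-U+\beta/2$ and at least $-U$. Comparing $U$ with a \emph{reflected (at $\beta$) OU process} $\tilde U$ solving $\dif\tilde U = -\tilde U\,\dif t + \sqrt2\,\dif W$, $\tilde U(0)=\beta$, gives $U \le \tilde U$ pathwise (since $U$ has smaller drift and smaller starting point, and the reflection only decreases $U$), so $\tau_1(0) = \inf\{t: U(t)=0\} \ge \inf\{t: \tilde U(t) = 0\}$ — that's the wrong direction for the upper bound, so instead I compare with the OU process with the \emph{extra} upward push, $\dif \hat U = (-\hat U + \beta/2)\,\dif t + \sqrt2\,\dif W$ reflected at $\beta$, with $\hat U(0)=\beta$; then $U\ge$ no. I will sort the direction by using $U\le\tilde U$ to get a \emph{lower} bound on $\tau_1(0)$ and $U\ge\hat U$ (suitably) to get an \emph{upper} bound. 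The upshot: $\expt_{(-\beta,y)}(\tau_1(0))$ is comparable to the expected hitting time of $0$ by an OU process started near its mean and forced to travel a distance $\Theta(\beta)$ against the mean-reverting drift; by the standard OU exit-time asymptotics (e.g.\ via the scale function $s(x) = \int_0^x \e^{u^2/2}\,\dif u$ and speed measure, or Laplace's method on the exact quadrature formula), this is $\Theta(\e^{\mathcal{C}\beta^2})$ for an explicit $\mathcal{C}$. This yields $\mathcal{C}_1^+\e^{\mathcal{C}_2^+\beta^2}$.

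\emph{Lower bound and the probability bound \eqref{problb}.} For the lower bound on $\inf_{y\le\beta/2}\expt_{(-\beta/4,y)}(\tau_1(0))$, I start $Q_1$ at $-\beta/4$ and note that to reach $0$ the process must, on the way, have $Q_2$ small enough often enough that the analysis applies — more precisely I argue that \emph{until} $\tau_1(0)$, with high probability $Q_2$ stays below $\beta/2$ (this is where I need a one-sided control on $Q_2$: when $Q_1<0$ there is no local-time input $\dif L$, so $Q_2$ only decreases, $\dif Q_2 = -Q_2\,\dif t\le 0$; thus $Q_2(t)\le Q_2(0)\le \beta/2$ automatically for all $t\le \tau_1(0)$! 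This is the clean observation that makes the lemma work). Given that, on $[0,\tau_1(0)]$ the drift of $U = Q_1+\beta/4+\text{const}$... better: on $[0,\tau_1(0)]$, $Q_1\in[-\beta,0]$ eventually but starts at $-\beta/4$, and its drift is $-Q_1 + Q_2 - \beta$, which is \emph{at most} $-Q_1 - \beta/2 \le \beta - \beta/2 = \beta/2$ (crude) but more usefully, writing $V = -Q_1\ge 0$, $\dif V = -(-V + Q_2 - \beta)\,\dif t - \sqrt2\,\dif W + \dif L = (V - Q_2 + \beta)\,\dif t - \sqrt 2\,\dif W$; since $0\le Q_2\le\beta/2$, the drift of $V$ is at least $V+\beta/2>0$: so $V$ is pushed \emph{away} from $0$, i.e.\ $Q_1$ is pushed away from $0$, and hitting $0$ is hard. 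Comparing $V$ below with a reflected (at $0$) OU process $\tilde V$ with $\dif\tilde V = (\tilde V + \beta/2)\,\dif t - \sqrt2\,\dif W$, $\tilde V(0)=\beta/4$ — wait, that has a \emph{positive} mean-reversion coefficient, i.e.\ it's an unstable OU / "anti-OU", which is exactly right: $Q_1$ near $0$ sees a restoring force toward $-\beta$. Then $\tau_1(0) = \inf\{t: V(t)=0\} \ge \inf\{t: \tilde V(t)=0\}$, and the latter hitting time for the anti-OU process started at $\beta/4$ with equilibrium drift pushing it toward $-\beta/2$-ish has expectation $\Theta(\e^{\mathcal{C}'\beta^2})$ and, crucially, satisfies a concentration bound $\prob(\tilde V \text{ hits } 0 \text{ before } c\,\e^{\mathcal{C}'\beta^2}) \le D_1 e^{-D_2\beta^2}$, obtained from a Markov-inequality/exponential-martingale argument (or from the explicit Laplace transform of the OU hitting time, or from the fact that the hitting time dominates a geometric number of independent $\Omega(1)$-probability "failed attempts", each attempt being an excursion away from a barrier). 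This simultaneously gives the $\inf$ lower bound on the expectation in \eqref{expexc} and the high-probability bound \eqref{problb}, with matching constants $\mathcal{C}_1^-,\mathcal{C}_2^-$.

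\emph{Main obstacle.} The delicate point is \emph{not} the one-dimensional OU hitting-time asymptotics (those are classical Laplace-method computations on $\int \e^{u^2/2}\,\dif u$) but rather making the coupling between the genuinely two-dimensional process $(Q_1,Q_2)$ and the scalar bounding diffusions fully rigorous — in particular handling the local-time term $\dif L$ correctly in the comparison (it only helps push $Q_1$ up, so for the upper bound we may need to retain it, while for the lower bound we used that $Q_1<0$ kills it), and ensuring the comparison/coupling of reflected Itô diffusions (Skorokhod map monotonicity plus the comparison theorem for SDEs with a reflecting barrier) applies despite the time-varying, state-dependent drift coming from $Q_2(t)$. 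I expect to quote a monotonicity lemma for the one-dimensional Skorokhod reflection map together with a pathwise comparison theorem, and to absorb the $Q_2$-dependence into the inequalities $0\le Q_2\le \beta/2$ as above; the bookkeeping of which barrier (at $0$ or at $\beta$) and which sign of mean-reversion coefficient goes with the upper versus the lower bound is where care is needed, but it is routine once set up. The probability bound \eqref{problb} is then the statement that a sum of i.i.d.\ geometrically-distributed-many excursions, each of which fails to reach the far barrier with probability $1 - \Theta(\e^{-\mathcal{C}_2^-\beta^2})$, is unlikely to be small — a one-line Markov/union bound once the excursion decomposition is in place.
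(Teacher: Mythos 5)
Your overall skeleton (reduce to one-dimensional bounding diffusions, use that $Q_2$ only decreases before $\tau_1(0)$, decompose into excursions so that $\tau_1(0)$ dominates a geometric number of $\Omega(1)$-length failed attempts, then Chernoff for \eqref{problb}) is the same family of argument as the paper's. However, the execution contains a genuine error at the heart of the lower bound. Writing $V=-Q_1$, the drift of $Q_1$ is $-\beta+(-Q_1+Q_2)=-\beta+V+Q_2$, so the drift of $V$ is $\beta-V-Q_2$; you instead computed it as $V-Q_2+\beta$ (you substituted $-Q_1=-V$ rather than $-Q_1=+V$). Consequently your claims that ``the drift of $V$ is at least $V+\beta/2$'' and that $V$ dominates an \emph{unstable} (``anti-'') OU with drift $\tilde V+\beta/2$ are false: the true drift of $V$ is mean-reverting toward $\beta-Q_2\in[\beta/2,\beta]$, and for $V$ above that level the drift inequality needed for your pathwise domination reverses, so the comparison $\tilde V\le V$ does not hold (nor are the hitting-time facts you want available for the unstable OU, which escapes to $+\infty$ with positive probability). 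The correct lower-bounding objects are a \emph{stable} OU with mean at least $\beta/2$ (using $Q_2(t)\le Q_2(0)\le\beta/2$ up to $\tau_1(0)$, which you did identify correctly), or, as the paper does, a Brownian motion with drift $-\beta/8$ acting on $Q_1$ on the strip $[-3\beta/8,0]$ (there $-Q_1+Q_2\le 7\beta/8$), which gives per-excursion success probability $\le \e^{-\beta^2/32}$ via the scale function; the expectation and \eqref{problb} then follow from a union bound on the number of attempts, a uniform-in-$\beta$ lower bound on each excursion duration with exponential tails, and Chernoff. As written, your quantitative estimates for both the lower bound in \eqref{expexc} and for \eqref{problb} rest on the invalid domination, so they are not established.

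The upper bound is also not pinned down: you at one point identify $\tau_1(0)$ with the hitting time of $0$ by $U=Q_1+\beta$ (it is the hitting time of the level $\beta$, equivalently of the reflecting barrier), and after several reversals of direction you never state a comparison that actually yields $\sup_{y\in(0,\beta]}\mathbb{E}_{(-\beta,y)}(\tau_1(0))\le \mathcal{C}_1^+\e^{\mathcal{C}_2^+\beta^2}$; note also that your standing hypothesis $Q_2\le\beta/2$ is not available here since $y$ may be as large as $\beta$. A valid route does exist and is close in spirit to what the paper uses (via the comparison theorem, Proposition 2.18 of \cite{Karatzas}): since $Q_2\ge 0$ and $L$ does not act before $\tau_1(0)$, $Q_1$ is bounded below up to $\tau_1(0)$ by the OU process $\dif X=(-X-\beta)\dif t+\sqrt2\,\dif W$, $X(0)=-\beta$, whose hitting time of $0$ has expectation $O(\e^{C\beta^2})$; the paper instead runs an excursion decomposition between levels $-\beta-1$, $-\beta$, $0$, bounding the per-excursion success probability below by $\tfrac12\e^{-\beta^2}$ and the excursion lengths by $O(\log(1/\beta))$ using the exponential decay of $Q_2$. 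Either way, you need to fix the direction of the comparison explicitly; as submitted, the upper bound is only asserted ``in spirit.''
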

From the heuristics given before Lemma~\ref{OUhit}, note that when $Q_2 < \beta$, $Q_1$ mostly stays away from zero, and hence, the local time process $L$ does not increase appreciably, resulting in a roughly exponential decay of $Q_2$. 
Consequently, starting from a suitable $\Theta(\beta)$ level $Q_2(0)$, the expected time take by $Q_2$ to hit a level $y \le Q_2(0)$ is $O(\log(\beta/y))$. 
The next lemma formalizes this.
\begin{lemma}\label{middletosmall}
There exist positive constants $C$ and $\beta_0$, such that for all fixed $\beta \ge \beta_0$ the following holds: For all $y \in [\beta \e^{-\mathcal{C}_1^- \e^{\mathcal{C}_2^-\beta^2}}, \beta/4]$ (where $\mathcal{C}_1^-, \mathcal{C}_2^-$ are the constants appearing in Lemma \ref{OUhit}),
$$
\sup_{z \in [y,\beta/4]}\mathbb{E}_{(0, z)}\left(\tau_2(y)\right) \le C\log\left(\frac{\beta}{y}\right).
$$
\end{lemma}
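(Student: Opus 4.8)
The whole argument would hinge on one structural feature of \eqref{eq:diffusionjsq}: while $Q_1(t)<0$ the local time $L$ is frozen, so there $Q_2$ satisfies $\dot Q_2=-Q_2$, i.e.\ $Q_2(s+u)=Q_2(s)\e^{-u}$ --- $Q_2$ decays \emph{exactly} exponentially. I would combine this with Lemma~\ref{OUhit}: once $Q_1$ has been driven down to $-\beta/4$, with probability at least $1-\mathcal D_1\e^{-\mathcal D_2\beta^2}$ it does not return to $0$ for a time of length at least $\mathcal C_1^-\e^{\mathcal C_2^-\beta^2}$; and the admissible range $y\ge\beta\e^{-\mathcal C_1^-\e^{\mathcal C_2^-\beta^2}}$ is exactly the set of $y$ for which $\log(\beta/y)\le\mathcal C_1^-\e^{\mathcal C_2^-\beta^2}$. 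Thus a \emph{single} sojourn of $Q_1$ strictly below $0$ already lasts long enough for $Q_2$ to fall, deterministically, from any level $\le\beta$ all the way down to $y$. The proof is then a matter of making this precise.

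\noindent\emph{Burn-in.} Starting from $(0,z)$ with $z\le\beta/4$, set $\theta:=\tau_1(-\beta/4)\wedge\inf\{t:L(t)\ge1\}$. On $[0,\theta]$ one has $Q_1(t)\in[-\beta/4,0]$ and $Q_2(t)\le z+L(t)\le\beta/2$, so the drift of $Q_1$, namely $-\beta-Q_1+Q_2$, is $\le-\beta/8$. Let $g$ and $h$ be the smooth nonnegative nondecreasing solutions on $[-\beta/4,0]$ of $-\tfrac\beta8 g'+g''=-1$ with $g(-\beta/4)=0$, $g'(0)=0$, resp.\ of $-\tfrac\beta8 h'+h''=0$ with $h'(0)=1$, $h\ge0$. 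Applying It\^o's formula to $g(Q_1(\cdot))$ and $h(Q_1(\cdot))$ and using the drift bound together with $g',h'\ge0$ (which makes the $dL$-terms have the favourable sign), $g(Q_1(t))+t$ and $h(Q_1(t))+L(t)$ are supermartingales up to $\theta$; hence $\mathbb E_{(0,z)}[\theta]\le g(0)\le2$ and $\mathbb E_{(0,z)}[L(\theta)]\le h(0)\le 8/\beta$. In particular, with probability at least $1-8/\beta$ the burn-in succeeds: $Q_1(\theta)=-\beta/4$ and $z':=Q_2(\theta)\le z+1\le\beta/2$.

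\noindent\emph{Descent and recursion.} Conditionally on a successful burn-in, let $\sigma^*:=\inf\{t\ge\theta:Q_1(t)=0\}$. On $(\theta,\sigma^*)$ one has $Q_1<0$, so $Q_2(\theta+u)=z'\e^{-u}$, which reaches $y$ at $u=\log(z'/y)\le\log(\beta/y)$. Thus either $\sigma^*-\theta>\log(z'/y)$, and then $\tau_2(y)=\theta+\log(z'/y)\le\theta+\log(\beta/y)$; or $\sigma^*-\theta\le\log(z'/y)$, in which case $Q_1$ returned to $0$ before $Q_2$ reached $y$, and I would restart from $(0,Q_2(\sigma^*))$ with $Q_2(\sigma^*)=z'\e^{-(\sigma^*-\theta)}\in[y,\beta/2]$. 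Since $\log(z'/y)\le\log(\beta/y)\le\mathcal C_1^-\e^{\mathcal C_2^-\beta^2}$, Lemma~\ref{OUhit} (the bound~\eqref{problb}), applied at the stopping time $\theta$ via the strong Markov property with $Q_1(\theta)=-\beta/4$, $Q_2(\theta)=z'\le\beta/2$, bounds the probability of the second alternative by $\mathcal D_1\e^{-\mathcal D_2\beta^2}$. I would then set $F:=\sup\{\mathbb E_{(x,v)}[\tau_2(y)]:-\beta/4\le x\le0,\ y\le v\le\beta/2\}$, finite by ergodicity of \eqref{eq:diffusionjsq} (see~\cite{Braverman18}); the two displays above show that from any state in this set, within expected time at most $2+\log(\beta/y)$ either $Q_2$ hits $y$ or the process re-enters the set, the latter with probability at most $p:=8/\beta+\mathcal D_1\e^{-\mathcal D_2\beta^2}$. (States produced by a failed burn-in, where $Q_1\in(-\beta/4,0)$ or $Q_2$ lies slightly above $\beta/2$, are driven back into the set in $O(1)$ expected time by the same supermartingale argument with two-sided boundary conditions, changing only constants; initial conditions with $v$ between $\beta/2$ and $\beta$ are handled by first letting $Q_2$ descend to $\beta/2$.) Choosing $\beta_0$ so that $p\le1/2$ and using the strong Markov property gives $F\le(2+\log(\beta/y))+pF$, hence $F\le 2(2+\log(\beta/y))\le C\log(\beta/y)$ since $\log(\beta/y)\ge\log4$. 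As $\{0\}\times[y,\beta/4]$ lies in the set defining $F$, the lemma follows.

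\noindent\emph{Main obstacle.} The delicate point is the feedback between $L$ and $Q_2$: $L$ grows only while $Q_1=0$, but then it pumps mass into $Q_2$, and a large $Q_2$ drives $Q_1$ back toward $0$; a crude bound of the form $\int_0^{\tau_2(y)}\!dL/Q_2\le L(\tau_2(y))/y$ is hopelessly lossy because $y$ can be doubly-exponentially small. The burn-in/descent steps are designed precisely to show this loop never ignites in the large-$\beta$ regime: while $Q_2\le\beta/2$ the restoring drift on $Q_1$ is of order $-\beta$, so $Q_1$ is flung to $-\beta/4$ accruing only $O(1/\beta)$ local time, and then --- this is the crucial quantitative input, Lemma~\ref{OUhit} --- it stays away from $0$ for a time $\gg\log(\beta/y)$ with overwhelming probability, during which $Q_2$ merely decays exponentially. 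The remaining work (the Dynkin/supermartingale estimates, the renewal-type bookkeeping for the recursion, and the treatment of initial data with $Q_2$ a bit above $\beta/2$) is routine; the substance is isolating these quantities and noticing that the admissible range of $y$ is exactly calibrated to Lemma~\ref{OUhit}.
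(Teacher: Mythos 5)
Your core mechanism is the same one the paper uses (burn-in so that $Q_1$ reaches $-\beta/4$ while $Q_2$ stays at a $\Theta(\beta)$ level, then the exact exponential decay of $Q_2$ during the excursion of $Q_1$ below $0$, with \eqref{problb} calibrated precisely to the admissible range of $y$, and a geometric number of attempts), but the bookkeeping of the failure branch has a genuine gap. Your working region $\{-\beta/4\le Q_1\le 0,\ y\le Q_2\le\beta/2\}$ is not invariant under your recursion: each round allows the local time to grow by up to $1$, so every failed round can raise $Q_2$ by up to $1$ (indeed already one round started from $Q_2$ near $\beta/2$ produces $z'\le\beta/2+1$, outside the hypothesis $Q_2\in(0,\beta/2]$ of \eqref{problb}), and after repeated failures $Q_2$ can drift above the region where your two key estimates hold. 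Once $Q_2$ exceeds roughly $\beta/2$ the burn-in drift bound degrades, and for $Q_2$ close to $\beta$ the drift $-\beta-Q_1+Q_2$ acting on $Q_1$ can be nonnegative, so the claim that such states are "driven back into the set in $O(1)$ expected time by the same supermartingale argument, changing only constants" is not correct as stated; controlling how $Q_2$ comes back down from an $O(\beta)$ level is exactly the part of the argument the paper supplies via its five-stage $\Gamma$-cycle, using Lemma~\ref{lem:q2regeneration} (descent from the $c_1'\beta$ level in expected time $O(1/\beta)$) together with Lemma~\ref{linfall} and the exponential decay; without some such input (or an a priori moment bound on the overflow event, whose probability is small but whose contribution to the expectation must still be controlled) your per-round estimate does not close.

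A second, related issue is the fixed-point step $F\le\bigl(2+\log(\beta/y)\bigr)+pF$: this is vacuous unless $F<\infty$ a priori, and "finite by ergodicity of \eqref{eq:diffusionjsq}" is not a justification — exponential ergodicity does not directly yield expected hitting times of the level $y$ (which may be doubly exponentially small in $\beta$) that are finite uniformly over a compact set of initial conditions; that finiteness is essentially a statement of the same difficulty as the lemma. This part is repairable in a routine way (work with $\tau_2(y)\wedge n$ and let $n\to\infty$, or, as the paper does, avoid the fixed point altogether by summing the geometric series of per-cycle bounds), but only after the overflow/failure-branch control from the previous paragraph is in place, since the geometric-series route needs the per-cycle expected duration bound to hold uniformly over all states reachable at the start of a cycle. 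Your burn-in estimates themselves (the Dynkin-type bounds $\mathbb{E}[\theta]\le 2$, $\mathbb{E}[L(\theta)]\le 8/\beta$) are a nice and correct alternative to the paper's scale-function bound on $S(t)$, but they do not substitute for the missing control of $Q_2$'s excursions above the working region.
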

It should be noted that in Lemma~\ref{middletosmall}, although $z$, the initial value of $Q_2$, can be anything in the region $[y, \beta/4]$, the upper bound of the expected hitting time to $y$  does not depend on $z$. 
By imposing some restriction on the value of $z$, this can be further improved.
This is achieved in the next lemma.
\begin{lemma}\label{middle}
There exist positive constants $C$ and $\beta_0$, such that for all fixed $\beta\geq \beta_0$ the following holds:
For any $z \in [\beta^{-1}, \beta/4]$ and any $y \in [\beta \e^{-\mathcal{C}_1^- \e^{\mathcal{C}_2^-\beta^2}}, \beta/8]$ with $z \ge 2y$,
$$
\mathbb{E}_{(0,z)}\left(\tau_2(y)\right) \le C\log\left(\frac{z}{y}\right).
$$
\end{lemma}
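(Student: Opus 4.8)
The plan is to run essentially the same argument as in Lemma~\ref{middletosmall}, but to exploit the extra hypothesis $z\le\beta/4$ together with the constraint $z\ge 2y$ to replace the crude bound $\log(\beta/y)$ by the sharper $\log(z/y)$. The heuristic is that when $Q_2<\beta$ the local time $L$ accrued by $Q_1$ is negligible on the relevant time scale, so $Q_2$ decays almost deterministically at rate proportional to itself; hence $\log Q_2$ behaves like a Brownian motion with a strictly negative drift, and the time for $\log Q_2$ to drop from $\log z$ to $\log y$ is of order $\log z-\log y=\log(z/y)$.

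First I would introduce the comparison process. On the event that $Q_2$ stays in, say, $(0,\beta/2]$, the drift of $Q_2$ is $-Q_2 + \dot L$, and the contribution of $\dif L$ over the excursion can be controlled using Lemma~\ref{OUhit}: each visit of $Q_1$ to $0$ is rare (probability bound~\eqref{problb}) and between such visits $L$ does not grow, so with overwhelming probability $L$ contributes at most a lower-order additive term to $Q_2$ before $Q_2$ reaches $y$. Concretely, I would fix a level such as $\beta/4$ and decompose the path of $Q_2$ at the successive times it up-crosses $\beta/4$ (if it does at all); by Lemma~\ref{middletosmall} each such excursion back down to $y$ costs expected time $O(\log(\beta/y))$, but one shows the number of such up-crossings before hitting $y$ has a geometric tail with small parameter (again via~\eqref{problb} and the strong Markov property), so their total expected contribution is $O(1)$, which is absorbed into the $C\log(z/y)$ bound since $\log(z/y)\ge\log 2$. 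On the complementary event where $Q_2$ never up-crosses $\beta/4$ before hitting $y$, I would apply Itô's formula to $\log Q_2(t\wedge\tau_2(y))$, obtaining
\[
\log Q_2(t\wedge\tau_2(y)) = \log z - (t\wedge\tau_2(y)) - \int_0^{t\wedge\tau_2(y)}\frac{\dif s}{Q_2(s)^2}\cdot(\text{diffusion coefficient term}) + (\text{martingale}) + \int_0^{t\wedge\tau_2(y)}\frac{\dif L(s)}{Q_2(s)},
\]
so that after taking expectations the drift $-1$ dominates, the $\dif L$ term is controlled as above, and a standard optional-stopping / uniform-integrability argument gives $\mathbb{E}_{(0,z)}(\tau_2(y)) \le \log z - \log y + O(1) \le C\log(z/y)$.

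The main obstacle is handling the local-time term $\int_0^{\cdot}\dif L(s)/Q_2(s)$ in the regime where $Q_2$ is small (close to $y=\beta\e^{-\mathcal{C}_1^-\e^{\mathcal{C}_2^-\beta^2}}$): a single unit of local time there could in principle cause a large relative jump in $Q_2$. The resolution is that accumulating an appreciable amount of local time requires $Q_1$ to stay near $0$, which by Lemma~\ref{OUhit} and the estimate~\eqref{problb} is exponentially unlikely in $\beta^2$ on any time scale of order $\e^{o(\e^{\beta^2})}$; so one partitions on whether $\tau_1(0)$ occurs before $\tau_2(y)$, bounds the rare contribution crudely using Lemma~\ref{linfall}/Lemma~\ref{middletosmall}, and on the dominant event uses that $L$ is constant, reducing to the clean logarithmic Itô computation. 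Assembling these pieces with the strong Markov property at the up-crossing times of $\beta/4$ and summing the resulting geometric series yields the claimed bound for all $\beta\ge\beta_0$.
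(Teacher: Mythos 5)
Your plan has the right ingredients (Lemmas~\ref{OUhit} and \ref{middletosmall}, the exponential decay of $Q_2$ while $Q_1<0$, rare returns of $Q_1$ to the origin), but it misses the one hypothesis the lemma actually hinges on: $z\ge\beta^{-1}$ is never used, and without it the conclusion is false --- the paper notes right after the statement that for $z\ll\beta^{-1}$ the correct cost is $\log(1/(\beta y))$, not $\log(z/y)$ (that regime is Lemma~\ref{smallval}). Concretely, since $Q_2$ has no diffusion part ($dQ_2=dL-Q_2\,dt$), your It\^o step reads exactly $\log Q_2(t)=\log z-t+\int_0^t dL(s)/Q_2(s)$ with no martingale or quadratic-variation term, so the entire content of the lemma is bounding $\mathbb{E}\int_0^{\tau_2(y)}dL(s)/Q_2(s)$ by $C\log(z/y)$. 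The dangerous contribution is not the one you address: \eqref{problb} controls returns of $Q_1$ to $0$ from $-\beta/4$, but $Q_1$ starts \emph{at} $0$ (so ``whether $\tau_1(0)$ occurs before $\tau_2(y)$'' is vacuous), and before $Q_1$ first reaches $-\beta/4$ the local time pushes $Q_2$ up by an amount stochastically dominated by $\sup_t(\sqrt2 W(t)-\tfrac{3\beta}{4}t)$, i.e.\ an exponential of mean $\Theta(\beta^{-1})$. The subsequent decay to $y$ then costs about $\log\bigl((z+\Theta(\beta^{-1}))/y\bigr)$, which equals $\log(z/y)+O(1)$ only because $\beta z\ge1$. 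The paper does exactly this accounting: it bounds $\mathbb{E}\log(Q_2^*/y)$ with $Q_2^*=\sup_{t\le\tau_1(-\beta/4)}Q_2(t)$ using \eqref{hites}, and the tail integral $\int\e^{-\frac{3\beta}{4}(y\e^{u}-z)}du$ is $O(1)$ precisely when $\beta z\ge1$. Your sketch never confronts this overshoot.

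The up-crossing bookkeeping is also off. The probability of reaching $Q_2=\beta/4$ before $Q_1$ hits $-\beta/4$ is of order $\e^{-\frac{3\beta}{4}(\beta/4-z)}$ (cf.\ \eqref{hites}), which is \emph{not} small when $z$ is near $\beta/4$; \eqref{problb} only yields a small parameter for up-crossings after $Q_1$ has descended. Since one excursion from $\beta/4$ down to $y$ costs up to $C\log(\beta/y)$, which can be of order $\e^{\mathcal{C}_2^-\beta^2}$, your assertion that the total up-crossing contribution is $O(1)$ fails. The repair --- and what the paper does --- is to split $\log(\beta/y)=\log(\beta/z)+\log(z/y)$: the up-crossing probability always suppresses the $\log(\beta/z)$ piece because, using $z\ge\beta^{-1}$ again, $\e^{-\frac{3\beta}{4}(\beta/4-z)}\log(\beta/z)\le\e^{-\frac34(\frac{\beta}{4z}-1)}\log(\beta/z)=O(1)$, while the $\log(z/y)$ piece carries probability at most $1$ and is simply part of the target bound. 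Finally, your $O(1)$ estimate for the rare returns of $Q_1$ to zero silently needs the constant relation $\mathcal{D}_2\ge\mathcal{C}_2^-$ so that $\mathcal{D}_1\e^{-\mathcal{D}_2\beta^2}\log(\beta/y)$ stays bounded; this does hold for the constants of Lemma~\ref{OUhit} (it is what \eqref{smalluse} exploits), but it must be stated. As written, the two places where $z\ge\beta^{-1}$ must enter are absent, and without them the argument does not produce $C\log(z/y)$.
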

There is a subtlety in the choice of the value of $z$ in the statement of Lemma \ref{middle}.
Note that it is crucial to have $z \ge \beta^{-1}$. This is because if $z \ll\beta^{-1}$, $Q_2$ can jump up to a $\Theta(\beta^{-1})$ level first (as can be seen by lower bounding the sum $Q_1(t)+Q_2(t)$ by a Brownian motion with drift $-\beta$) and then decrease (roughly exponentially) to hit $y$.
This produces an estimate of approximately $\log\left(1/(\beta y)\right)$. 
An estimate in such a scenario is obtained in the following lemma, where we obtain an upper bound on the expected amount of time spent by $Q_2$ above a suitable level $y\leq 2/\beta$ before hitting the level $2/\beta$. 
\begin{lemma}\label{smallval}
There exist positive constants $C$ and $\beta_0$,  such that for all fixed $\beta \ge \beta_0$ the following holds:
For all $y \in [2\beta \e^{-\mathcal{C}_1^-\e^{\mathcal{C}_2^-\beta^2}},2\beta^{-1}]$ (where $\mathcal{C}_1^-, \mathcal{C}_2^-$ are the constants appearing in Lemma \ref{OUhit}),
$$
\mathbb{E}_{(0,y/2)}\left(\int_0^{\tau_2(2\beta^{-1})}\mathbf{1}_{[Q_2(s) \ge y]}ds\right) \le C\log\left(\frac{4}{\beta y}\right).
$$
\end{lemma}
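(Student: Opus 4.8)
The plan is to decompose the excursion of $Q_2$ above level $y$ (before hitting $2/\beta$) according to whether $Q_1$ is near zero or bounded away from zero, and to bound the two contributions separately using the hitting-time estimates already established. Throughout, the starting point is $(Q_1(0),Q_2(0)) = (0, y/2)$ with $y \in [2\beta\e^{-\mathcal{C}_1^-\e^{\mathcal{C}_2^-\beta^2}}, 2\beta^{-1}]$, so $y/2 < \beta^{-1}$ and $Q_2$ is in the ``small value'' range where it can first drift up to a $\Theta(\beta^{-1})$ level before falling. First I would observe that, since we only need to control the time $Q_2$ spends above $y$, and $Q_2$ starts at $y/2 < y$, the process must first hit $y$; let $\rho_0 = \tau_2(y)$ from the start. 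For $t \le \tau_2(2\beta^{-1})$ the sum $Q_1(t) + Q_2(t)$ is bounded below by a Brownian motion with drift $-\beta$ started from $y/2$ (using that $\dif(Q_1 + Q_2) = \sqrt{2}\,\dif W - \beta\,\dif t - Q_1\,\dif t - Q_2\,\dif t$ and $-Q_1 \ge 0$, $-Q_2 \ge -2\beta^{-1}$ on this interval, the latter contributing only an $O(\beta^{-1})$ term over the relevant time scales), which shows $Q_2$ cannot reach $2\beta^{-1}$ without $Q_1$ having accumulated local time, and more importantly gives the dichotomy: either $Q_2$ reaches the target $2\beta^{-1}$ quickly, or it stays in a band of width $O(\beta^{-1})$.

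The core of the argument is a renewal-type decomposition. Define alternating stopping times: let $\nu_0$ be the first time $Q_2$ hits $y$; inductively, given $\nu_{2j}$ (a time when $Q_2 = y$), let $\mu_{2j+1} = \inf\{t > \nu_{2j} : Q_2(t) \in \{y/2,\ 2\beta^{-1}\}\}$ and if $Q_2(\mu_{2j+1}) = y/2$ let $\nu_{2j+2} = \inf\{t > \mu_{2j+1} : Q_2(t) = y\}$ (otherwise we have hit the target and stop). On each ``up'' interval $[\nu_{2j}, \mu_{2j+1}]$ the process $Q_2$ is above $y/2$; I would bound its length in expectation by coupling with a reflected/absorbed process. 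The key point is that while $Q_2 \in [y/2, 2\beta^{-1}]$ with $y/2 \ge \beta\e^{-\mathcal{C}_1^-\e^{\mathcal{C}_2^-\beta^2}}$, the decay rate $-Q_2$ is small but the local time of $Q_1$ is also not yet active unless $Q_1$ is near $0$; by Lemma~\ref{OUhit} (specifically \eqref{problb}), starting $Q_1$ from a $\Theta(\beta)$-scale negative value, $Q_1$ takes time exponential in $\beta^2$ to return to $0$ with overwhelming probability, so during a window of length $\e^{\mathcal{C}_2^-\beta^2}$ after $Q_1$ leaves a neighborhood of $0$, $L$ does not increase and $Q_2$ decays purely exponentially; hence $Q_2$ falls from any level $\le 2\beta^{-1}$ down to $y/2$ in expected time $O(\log(4/(\beta y)))$, mirroring the mechanism in Lemma~\ref{middletosmall}. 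Thus each excursion above $y/2$ has expected length $O(\log(4/(\beta y)))$, while the probability that, having returned to $y/2$ from below-target dynamics, $Q_2$ climbs back up to $y$ before being absorbed at the boundary of the renewal cycle is bounded away from $1$ uniformly in $\beta$ — this gives geometrically many excursions in expectation, and multiplying the (bounded) expected number of excursions by the $O(\log(4/(\beta y)))$ per-excursion cost yields the claimed bound $C\log(4/(\beta y))$.

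The main obstacle is the last step: establishing that the number of alternations between level $y/2$ and level $y$ (before the cycle-ending hit of $2\beta^{-1}$) has uniformly bounded expectation, equivalently that from $y/2$ the process has probability bounded away from zero of hitting $2\beta^{-1}$ before returning to $y$. The difficulty is that $y$ and $y/2$ can both be exponentially small in $\beta$, so naive Brownian comparisons give success probabilities of order $\log$-ratios that are $\Theta(1)$ only because $y/2$ and $y$ differ by a constant factor — I would make this precise by noting that on the relevant region $Q_1 + Q_2$ dominates a Brownian motion with drift $-\beta$, and the gambler's-ruin probability for this process to go from $y/2$ (effectively, modulo the $O(\beta^{-1})$ shift from $Q_1$) up to $2\beta^{-1}$ before down to $y$ is governed by the ratio of the gaps, which is $\Theta(1)$ since $2\beta^{-1} - y/2 \asymp \beta^{-1}$ and $y/2 - y/2$... here care is needed, and the cleaner route is: $Q_2$ itself, when below $2\beta^{-1}$ and with $Q_1$ near $0$ (which happens at the start since $Q_1(0)=0$ and recurs each time $L$ grows), receives upward local-time pushes that let it reach $2\beta^{-1}$ with $\Theta(1)$ probability per ``attempt.'' Making the coupling and the attempt-counting rigorous, while keeping all constants independent of $\beta$, is where the real work lies; the rest is bookkeeping with the strong Markov property and the already-proven Lemmas~\ref{OUhit} and~\ref{middletosmall}.
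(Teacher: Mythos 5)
Your decomposition is the same as the paper's: alternate between the first hit of $y$ (from below, so $Q_1=0$ there), the subsequent hit of $\{y/2,\,2\beta^{-1}\}$, and so on, bound each down-excursion started at $(0,y)$ in expectation by $C\log(4/(\beta y))$ via the mechanism of Lemma~\ref{OUhit}~\eqref{problb} and Lemma~\ref{middletosmall} (this part of your sketch matches the paper's argument, which first drives $Q_1$ to $-\beta/4$ in $O(1)$ expected time and then uses the exponential decay of $Q_2$ until $Q_1$ returns to $0$), and then sum a geometric series. So the architecture is right, and the per-excursion estimate is essentially the paper's.

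The genuine gap is exactly the step you flag as ``the main obstacle'': the uniform lower bound, independent of $\beta$ and of $y$ (which may be as small as $2\beta\e^{-\mathcal{C}_1^-\e^{\mathcal{C}_2^-\beta^2}}$), on the probability that a down-excursion started at $(0,y)$ reaches $2\beta^{-1}$ before $y/2$. Your formulation of it is not even well posed: starting from $y/2$, hitting $2\beta^{-1}$ ``before returning to $y$'' has probability zero by path continuity, and the gambler's-ruin computation for the dominating drifted Brownian motion between $y/2$ and $2\beta^{-1}$ gives a success probability of order $\beta y$, which is astronomically small here, not $\Theta(1)$; the ``local-time pushes per attempt'' heuristic you fall back on is not made into an argument. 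The paper closes this with a short but different observation: since $Q_2' \ge -Q_2$, one has $Q_2(t)\ge \e^{-t}y> y/2$ for all $t\le \beta^{-2}$ once $\beta$ is large, so on the window $[0,\beta^{-2}]$ the lower barrier simply cannot be reached; on that same window $Q_2(t)\ge S(t)\ge y+\sqrt{2}W(t)-\beta t$, and by Brownian scaling $\prob\big(\sup_{t\le \beta^{-2}}\sqrt{2}W(t)\ge 3\beta^{-1}\big)$ is a constant $p_{\mathbf{f}}>0$ free of $\beta$ and $y$. This yields the geometric tail for the number of excursions and hence the claimed bound. (Also note a small slip in your first paragraph: $d(Q_1+Q_2)=\sqrt{2}\,dW-\beta\,dt-Q_1\,dt$; there is no $-Q_2\,dt$ term, and none is needed since $-Q_1\ge 0$ already gives the Brownian lower bound for the sum.)
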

The next lemma essentially provides an upper bound for the steady-state tail probabilities for $Q_2$ in the region $(\beta^{-1}, \infty)$.
It will also be used in bounding the expected inter-regeneration times $\mathbb{E}_{(0, 2\beta^{-1})}\left(\Xi_0\right)$, as stated in Lemma~\ref{renexp}.
\begin{lemma}\label{largebeta}
There exist positive constants $C_L, C'_L$ and $\beta_0 \ge 1$ such that for any fixed $\beta \ge \beta_0$ the following holds:
For all $y \in [4\beta^{-1}, \infty)$
\begin{equation*}
\prob_{(0,2\beta^{-1})}\left(\tau_2(y) < \tau_2(\beta^{-1})\right) \le C_L\e^{-C'_L \beta y}.
\end{equation*}
\end{lemma}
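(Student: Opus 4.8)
The plan is to produce an explicit exponential supermartingale for the process killed at $\hat\tau := \tau_2(y)\wedge\tau_2(\beta^{-1})$ and then read off the bound from the optional stopping theorem. Since $Q_2(0)=2\beta^{-1}\in(\beta^{-1},y)$, on $[0,\hat\tau]$ the diffusion stays in $\{q_1\le 0,\ q_2\in[\beta^{-1},y]\}$, and $\hat\tau<\infty$ almost surely (by ergodicity of the diffusion, or since $\tau_2(\beta^{-1})\le\Xi_0$ with $\mathbb{E}_{(0,2\beta^{-1})}(\Xi_0)<\infty$). Fix $\theta:=\beta/4$ and set $\phi(q_1):=\exp(\e^{\theta q_1}-1)$ and $f(q_1,q_2):=\e^{\theta q_2}\phi(q_1)$. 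Then $\phi$ is increasing on $(-\infty,0]$ with $1/\e\le\phi\le 1$, and $\phi'(0)=\theta\phi(0)$. Writing $\mathcal{L}g=(-q_1+q_2-\beta)\partial_{q_1}g+\partial_{q_1}^2 g-q_2\partial_{q_2}g$ for the interior generator, Itô's formula together with the fact that $L$ increases only on $\{Q_1=0\}$ and enters $Q_1,Q_2$ with signs $-1,+1$ gives $\dif f(Q_1,Q_2)=\mathcal{L}f\,\dif t+\sqrt{2}\,\partial_{q_1}f\,\dif W+(-\partial_{q_1}f+\partial_{q_2}f)\,\dif L$. Because $\phi'(0)=\theta\phi(0)$, the boundary coefficient $(-\partial_{q_1}f+\partial_{q_2}f)|_{q_1=0}$ is identically $0$, so the local-time term drops out, and it remains only to verify $\mathcal{L}f\le 0$ on $\{q_1\le 0,\ q_2\ge\beta^{-1}\}$.

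Using $\phi'=\theta\e^{\theta q_1}\phi$ and $\phi''=\theta^2\e^{\theta q_1}(1+\e^{\theta q_1})\phi$ one finds $\mathcal{L}f=\e^{\theta q_2}\big(q_2(\phi'-\theta\phi)+(-q_1-\beta)\phi'+\phi''\big)$; since $\phi'-\theta\phi=\theta(\e^{\theta q_1}-1)\phi\le 0$ for $q_1\le 0$, the bracket is nonincreasing in $q_2$, so it suffices to check it is $\le 0$ at $q_2=\beta^{-1}$. Substituting $t:=\e^{\theta q_1}\in(0,1]$, i.e. $q_1=\theta^{-1}\log t$, and dividing by $\theta\phi\,\e^{\theta/\beta}>0$, this reduces to the scalar inequality
\[
 \beta\,t\Big(\tfrac14 t-\tfrac34\Big)+\frac1\beta\big(t-1-4t\log t\big)\ \le\ 0,\qquad t\in(0,1].
\]
The first term is $<0$ for all $t\in(0,1]$; the second is $\le 0$ for $t$ below the unique positive root $t_\star$ of $t-1-4t\log t$, and on $[t_\star,1]$ it is bounded above by an absolute constant $M$ while the first term is $\le -\delta\beta$ for an absolute $\delta>0$. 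Hence the inequality holds for all $\beta\ge\beta_0$ with an absolute $\beta_0\ge 1$, so $f(Q_1(\cdot\wedge\hat\tau),Q_2(\cdot\wedge\hat\tau))$ is a nonnegative supermartingale, bounded by $\e^{\theta y}$. Optional stopping gives $\mathbb{E}_{(0,2\beta^{-1})}\big[f(Q_1(\hat\tau),Q_2(\hat\tau))\big]\le f(0,2\beta^{-1})=\e^{2\theta/\beta}$; on $\{\tau_2(y)<\tau_2(\beta^{-1})\}$ we have $Q_2(\hat\tau)=y$ and $\phi(Q_1(\hat\tau))\ge 1/\e$, hence $f(Q_1(\hat\tau),Q_2(\hat\tau))\ge\e^{\theta y-1}$, so
\[
 \prob_{(0,2\beta^{-1})}\big(\tau_2(y)<\tau_2(\beta^{-1})\big)\ \le\ \e^{1+2\theta/\beta-\theta y}\ =\ \e^{3/2}\,\e^{-\beta y/4},
\]
which is the claim with $C_L=\e^{3/2}$ and $C_L'=1/4$ (taking $\theta=(\tfrac12-\epsilon)\beta$ instead would push $C_L'$ arbitrarily close to $1/2$).

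The substantive difficulty is the design of $\phi$. The reflection at $\{Q_1=0\}$ forces $\phi'(0)/\phi(0)=\theta$ with $\theta$ of order $\beta$ — anything smaller fails to put the product $\beta y$ into the exponent — yet for $C_L$ to stay bounded in $\beta$ one needs $\phi$ squeezed between two absolute positive constants on all of $(-\infty,0]$, while simultaneously $\mathcal{L}f\le 0$ is a Riccati-type differential inequality for $\phi$. The choice $\phi=\exp(\e^{\theta q_1}-1)$ is engineered to satisfy all three requirements at once, after which verifying $\mathcal{L}f\le 0$ collapses to the displayed one-variable inequality. I expect this balancing act — keeping the exponential rate $\Theta(\beta)$ while the potential remains $\Theta(1)$ — to be the only real obstacle; a more computational alternative is to dominate $Q_2$ above level $\beta^{-1}$ by a Brownian motion with drift $-\beta$ and use its scale function $x\mapsto\e^{\beta x}$, but that route requires controlling the feedback loop between the local time of $Q_1$ at $0$ and the upward excursions of $Q_2$, which the supermartingale argument sidesteps.
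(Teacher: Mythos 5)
Your argument is correct, and it takes a genuinely different route from the paper's. The paper proves this lemma in Appendix~C by splitting the dynamics into the regimes $\{-Q_1+Q_2>\beta\}$ and $\{-Q_1+Q_2<\beta\}$: it imports the tail estimate of Lemma~\ref{Q2gebeta2} from \cite{BM18} for the first regime, proves Lemma~\ref{Q2lebeta} for the second via stopping-time decompositions and Ornstein--Uhlenbeck hitting estimates, shows via Lemmas~\ref{down1}--\ref{down2} that an upcrossing from $(1+\epsilon)\beta$ to $2(1+\epsilon)\beta$ before reaching $\beta^{-1}$ has probability at most $3/4$, and then sums over a geometric number of attempts. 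You collapse all of this into a single exponential supermartingale $f(q_1,q_2)=\e^{\theta q_2}\exp(\e^{\theta q_1}-1)$ with $\theta=\beta/4$: I checked that $\phi'(0)=\theta\phi(0)$ does annihilate the local-time term (and $dL$ charges only $\{Q_1=0\}$), that $\e^{-1}\le\phi\le 1$ keeps the prefactor $\beta$-independent, that your expression for $\mathcal{L}f$ and the monotonicity in $q_2$ are correct, and that the resulting scalar inequality holds on $(0,1]$ for all $\beta$ above an absolute constant (the positive part of $t-1-4t\log t$ is at most $4\e^{-3/4}-1$, while $\beta t(t/4-3/4)\le -t_\star\beta/2$ on $[t_\star,1]$); optional stopping, using $\hat\tau<\infty$ a.s.\ (Theorem~\ref{stationary}) and $f\le \e^{\theta y}$ up to $\hat\tau$, then yields $C_L=\e^{3/2}$, $C'_L=1/4$, in fact uniformly over all $y>2\beta^{-1}$. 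Two small touch-ups, neither a gap: $t=1$ is also a zero of $t-1-4t\log t$, so ``unique positive root'' should be read as the sign-change point in $(0,\e^{-3/4})$ (your argument only needs $g\le 0$ to its left and $g$ bounded above to its right), and you should record why the stochastic integral is a true martingale up to $\hat\tau$ (its integrand is bounded by $\sqrt{2}\,\theta\e^{\theta y}$ there, or simply invoke that a nonnegative local supermartingale is a supermartingale). As for what each route buys: yours is far shorter, self-contained apart from the finiteness of $\hat\tau$, and gives explicit constants; the paper's longer patching argument makes explicit the two-regime mechanism that produces the exponential decay (cf.\ Remark~\ref{rem:lem4.6}), which is the structural picture the authors lean on throughout the large-$\beta$ analysis.
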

The proof of Lemma~\ref{largebeta} relies on some very intricate understanding of the qualitative behavior of the diffusion process, and follows using several intermediate steps, as further explained below in Remark~\ref{rem:lem4.6}.

\begin{remark}\normalfont
\label{rem:lem4.6}
Observe that from~\cite{BM18} we already know that for any fixed $\beta>0$, the steady-state tail probability $\pi(Q_2>y)$ is upper bounded by $C_1^*\e^{-C_2^*\beta y}$ for all $y\geq \beta + R^+/\beta$, where $C_1, C_2$, and $R^+$ are positive constants independent of $\beta$.
However, it requires a significant effort to get the tail estimate when $y$ is in the region $[\beta^{-1},\infty)$.
This is a crucial step, since for large values of $\beta$, this produces huge improvement in understanding the bulk behavior of $Q_2$ (viz., to obtain sharper bounds on the steady-state  expectation).

The key challenge stems from the fact that
the diffusion process exhibits a different qualitative behavior in the region $\{-Q_1 + Q_2 < \beta\}$ than in the region $\{-Q_1 + Q_2 > \beta\}$. 
This is because in the latter region, when $-Q_1 + Q_2$ is large enough, the local time and the drift acting on $Q_2$ result in a net negative linear drift of approximately $-\beta$. 
Lemma \ref{Q2gebeta2} exploits this linear drift to produce the exponential tail estimate on $Q_2$ in the latter region.
However, in the former region, $Q_1$ has a negative drift, and consequently, it does not hit the origin as often as in the latter region. 
Thus, with very little increment in the local time, $Q_2$ decays almost exponentially. 
Hence, a careful analysis is needed to combine the different behaviors in different regions to obtain a unified tail estimate.
Details of the above approach are given in Appendix~\ref{app:lemma4.8}.
\end{remark}
Lemmas \ref{linfall} -- \ref{smallval} are proved in Appendix~\ref{app:large-aux}, and Lemma~\ref{largebeta} is proved in Appendix~\ref{app:lemma4.8}.

\subsection{Bounds on the inter-regeneration times}\label{ssec:inter-large}
In this section we state and prove upper and lower bounds on the expected inter-regeneration times $\mathbb{E}_{(0, 2\beta^{-1})}\left(\Xi_0\right)$, which will be used in Subsection~\ref{ssec:proof-large} to prove the main results.
\begin{lemma}\label{renexp}
There exist $\beta_0>0$ and positive constants $C_1, C_2, C'_1, C'_2$ (not depending on $\beta$) such that for all $\beta \ge \beta_0$,
$$
C_1\e^{C_2\beta^2} \le \mathbb{E}_{(0, 2\beta^{-1})}\left(\Xi_0\right) \le C'_1\e^{C'_2\beta^2}.
$$
\end{lemma}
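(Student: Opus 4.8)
In this regime $B=\beta^{-1}$, so with $\alpha_0=0$ the regeneration time is $\Xi_0=\alpha_2$, where $\alpha_1=\inf\{t\ge 0: Q_2(t)=\beta^{-1}\}$ and $\alpha_2=\inf\{t>\alpha_1: Q_2(t)=2\beta^{-1}\}$, started from $(Q_1(0),Q_2(0))=(0,2\beta^{-1})$. I will write $\Xi_0=\alpha_1+(\alpha_2-\alpha_1)$: the first term is the time for $Q_2$ to descend from $2\beta^{-1}$ to $\beta^{-1}$, the second the time for $Q_2$ to climb back to $2\beta^{-1}$. Both bounds are governed by a single mechanism. Since $Q_2$ can increase only through the reflection term $L$, and $L$ increases only while $Q_1=0$, $Q_2$ grows only when $Q_1$ is at the boundary; but whenever $Q_2\le 2\beta^{-1}\ll\beta$ the drift of $Q_1$ pushes it towards $\approx-\beta$, so $Q_1$ reaches $0$ only after an exponentially (in $\beta^2$) long time. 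Lemma~\ref{OUhit} is exactly the quantitative form of this, and it supplies both directions.

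\textbf{Lower bound.} It suffices to bound $\mathbb{E}(\alpha_2-\alpha_1)$ from below. First I would show $\mathbb{P}_{(0,2\beta^{-1})}(Q_1(\alpha_1)\le-\beta/4)\ge\tfrac{1}{2}$ for $\beta\ge\beta_0$: starting from $Q_1(0)=0$ the boundary drift $Q_2-\beta<0$ pushes $Q_1$ away from $0$, and by \eqref{problb} of Lemma~\ref{OUhit} the probability that $Q_1$ returns to $0$ within a fixed time $T_0$ tends to $0$ as $\beta\to\infty$; on the complement $L$ stays essentially frozen, $Q_2$ decays like $2\beta^{-1}\e^{-t}$ so that $\alpha_1\approx\log 2\le T_0$, while over this $\Theta(1)$ window $Q_1$ runs as a time-inhomogeneous OU process reverting to $\approx-\beta$ from $\approx 0$, ending near $-\beta/2\le-\beta/4$. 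Then, on $\{Q_1(\alpha_1)\le-\beta/4\}$ we have $Q_2(\alpha_1)=\beta^{-1}\le\beta/2$, and since $L$ remains frozen (so $Q_2$ is non-increasing) until $Q_1$ next hits $0$, we get $\alpha_2-\alpha_1\ge\tau_1(0)$ (shifted to start at $\alpha_1$). A pathwise comparison in the initial value of $Q_1$ — same driving Brownian motion, with $Q_2$ evolving deterministically as $\beta^{-1}\e^{-t}$ until $Q_1$ hits $0$ — gives $\mathbb{E}_{(q,\beta^{-1})}(\tau_1(0))\ge\mathbb{E}_{(-\beta/4,\beta^{-1})}(\tau_1(0))\ge\mathcal{C}_1^-\e^{\mathcal{C}_2^-\beta^2}$ for $q\le-\beta/4$ by Lemma~\ref{OUhit}. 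Taking expectations, $\mathbb{E}_{(0,2\beta^{-1})}(\Xi_0)\ge\tfrac{1}{2}\mathcal{C}_1^-\e^{\mathcal{C}_2^-\beta^2}$.

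\textbf{Upper bound.} I would bound $\mathbb{E}(\alpha_1)$ and $\mathbb{E}(\alpha_2-\alpha_1)$ separately. For $\mathbb{E}(\alpha_1)$: on the overwhelming event that $Q_1$ does not return to $0$ within a fixed window, $Q_2$ decays exponentially and $\alpha_1=O(1)$; on the complement $Q_2$ may first climb to some level $y\ge 4\beta^{-1}$, which by Lemma~\ref{largebeta} has probability $\le C_L\e^{-C_L'\beta y}$, and from level $y$ it falls in expected time $O(y/\beta)$ (Lemma~\ref{linfall}) and then returns to $\beta^{-1}$ in time $O(\log\beta)$ (Lemmas~\ref{middletosmall}--\ref{middle}); integrating against the tail of the running maximum of $Q_2$ gives $\mathbb{E}(\alpha_1)=O(1)$ uniformly in $\beta\ge\beta_0$ (any bound $O(\e^{C\beta^2})$ would also suffice). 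For $\mathbb{E}(\alpha_2-\alpha_1)$: organize $[\alpha_1,\alpha_2]$ into ``rounds'', where a round consists of (a) $Q_1$ descending to $-\beta$ (expected duration $O(1)$), (b) $Q_1$ ascending from $-\beta$ to $0$ (expected duration $\le\mathcal{C}_1^+\e^{\mathcal{C}_2^+\beta^2}$ by the upper bound in Lemma~\ref{OUhit}, applicable since $Q_2<2\beta^{-1}\le\beta$ and $Q_2$ is non-increasing while $Q_1<0$), and (c) a ``burst'' at the boundary during which $L$, hence $Q_2$, is pushed up until either $Q_2$ hits $2\beta^{-1}$ (cycle ends) or $Q_1$ falls back to $-\beta$ (round ends), of expected duration $O(1)$. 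The crucial claim is that, uniformly in $\beta$, a single burst drives $Q_2$ past $2\beta^{-1}$ with probability at least some $p_0>0$; granting this, the number of rounds is stochastically dominated by a Geometric$(p_0)$ variable, and a conditional Wald estimate yields $\mathbb{E}(\alpha_2-\alpha_1)\le p_0^{-1}\big(O(1)+\mathcal{C}_1^+\e^{\mathcal{C}_2^+\beta^2}\big)\le C_1'\e^{C_2'\beta^2}$, whence $\mathbb{E}_{(0,2\beta^{-1})}(\Xi_0)\le C_1'\e^{C_2'\beta^2}$.

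\textbf{Main obstacle.} The delicate point is the burst claim: showing, with probability bounded away from $0$ uniformly in $\beta$, that in one visit of $Q_1$ to the reflecting boundary the accumulated local time $L$ (equivalently, the upward jump of $Q_2$) exceeds $\Theta(\beta^{-1})$, even though $Q_2$ is simultaneously decaying at rate proportional to itself. This requires sharp control of boundary local time — comparing $Q_1$ near $0$ with a reflected Brownian motion with drift $-\Theta(\beta)$ and estimating the law of its local time at $0$ before a macroscopic excursion — and is precisely where the coupling between the reflection dynamics of $Q_1$ and the self-damping of $Q_2$ must be made quantitative. A secondary, more routine, nuisance is assembling the uniform-in-$\beta$ bound on $\mathbb{E}(\alpha_1)$ from Lemmas~\ref{linfall}, \ref{middletosmall}--\ref{middle} and \ref{largebeta}.
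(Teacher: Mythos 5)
Your overall architecture (split at $\alpha_1$, exponential-in-$\beta^2$ cost per ascent of $Q_1$ from $-\beta$ to $0$ via Lemma~\ref{OUhit}, geometric number of rounds, and the lower bound via forcing $Q_1$ deep and waiting for its return to $0$) matches the paper's, but the proposal is incomplete precisely at the step you flag as the "main obstacle" and then assume: that a single boundary burst pushes $Q_2$ past $2\beta^{-1}$ with probability $p_0>0$ uniformly in $\beta$. This claim does not need any sharp local-time analysis. Since $Q_2$ increases only when $Q_1=0$, i.e.\ when $Q_2(t)=S(t)$ with $S=Q_1+Q_2$, the level $2\beta^{-1}$ is reached by $Q_2$ if and only if it is reached by $S$; moreover $S(t)\ge S(0)+\sqrt{2}W(t)-\beta t$ always, and $Q_1(t)\ge S(t)-2\beta^{-1}$ up to $\tau_2(2\beta^{-1})$. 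Hence the probability that $Q_2$ hits $2\beta^{-1}$ before $Q_1$ hits $-\beta$ is at least the probability that $\sqrt{2}W(t)-\beta t$ hits $2\beta^{-1}$ before $-\beta/2$, which by the scale function $\e^{\beta x}$ equals $(1-\e^{-\beta^2/2})/(\e^{2}-\e^{-\beta^2/2})\ge(1-\e^{-2})\e^{-2}$: the target level shrinks like $\beta^{-1}$ exactly so that the exponent $\beta\cdot 2\beta^{-1}$ stays $O(1)$. This is the paper's constant $p_{\mathbf e}$ in \eqref{e3}; combined with the per-round cost $C\e^{C'\beta^2}$ from Lemma~\ref{OUhit} and \eqref{ren22}, it closes your Wald-type bound.

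Two further gaps. First, your rounds begin at $\alpha_1$ and tacitly assume $Q_1(\alpha_1)\ge-\beta$ ("descending to $-\beta$ in $O(1)$ time"), but $Q_1(\alpha_1)$ can lie far below $-\beta$, and the expected time to climb back to $-\beta$ grows like $\log(2+|Q_1(\alpha_1)+\beta|)$ (OU comparison, Claim~\ref{cl:threeuse}); to control its expectation one needs a moment bound on $-Q_1(\alpha_1)$, which the paper establishes as $\mathbb{E}_{(0,2\beta^{-1})}(-Q_1(\alpha_1))\le C\beta^4$ (Claim~\ref{cl:qalpha}) using Lemma~\ref{largebeta} and Doob-representation estimates — a genuine missing ingredient in your sketch, not bookkeeping. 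Second, in the lower bound, the claim $\prob_{(0,2\beta^{-1})}(Q_1(\alpha_1)\le-\beta/4)\ge\tfrac12$ is not supported by \eqref{problb}: that estimate concerns the process started at $(-\beta/4,y)$ and its return time to $0$, whereas you start on the boundary; also "$Q_1$ returns to $0$ within a fixed time" is the wrong event, since the reflected process started at $0$ touches $0$ at arbitrarily small times a.s., so what matters is the accumulated local time, not returns. Your route could be repaired, but the paper's is simpler and avoids controlling $Q_1(\alpha_1)$ altogether: restart at the first visit of $Q_1$ to $0$ after $\alpha_1$ (where $Q_2\le\beta^{-1}$), show $Q_1$ hits $-\beta/4$ before $Q_2$ hits $2\beta^{-1}$ with probability at least $1-\e^{-3/4}$ via the comparison $Q_1\le S\le y+\sqrt{2}W(t)-\tfrac{3\beta}{4}t$, and then invoke the expectation lower bound of Lemma~\ref{OUhit}. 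As written, both the central upper-bound step and the key lower-bound probability estimate remain unproven.
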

Rest of this section is devoted in the proof of Lemma~\ref{renexp}.
\subsubsection{Proof of the upper bound}
Recall that $\Xi_0= \alpha_2$ where $\alpha_1$ and $\alpha_2$ are as defined in \eqref{rendef} with $B=\beta^{-1}$.
Write $\alpha_{1,-\beta} = \inf\{t \ge \alpha_1: Q_1(t) = -\beta\}$. Then, using the strong Markov property,
\begin{eq}\label{ren21}
&\mathbb{E}_{(0,2\beta^{-1})}\left(\alpha_2\right) = \mathbb{E}_{(0,2\beta^{-1})}\left(\alpha_2\mathbf{1}_{[\alpha_{1,-\beta} \le \alpha_2]}\right) + \mathbb{E}_{(0,2\beta^{-1})}\left(\alpha_2\mathbf{1}_{[\alpha_{1,-\beta} > \alpha_2]}\right)\\
&\quad\le \sup_{y \in (0, 2\beta^{-1})}\mathbb{E}_{(-\beta,y)}\left(\tau_2(2/\beta)\right) + \mathbb{E}_{(0,2\beta^{-1})}\left(\alpha_{1,-\beta}\mathbf{1}_{[\alpha_{1,-\beta} \le \alpha_2]}\right) + \mathbb{E}_{(0,2\beta^{-1})}\left(\alpha_2\mathbf{1}_{[\alpha_{1,-\beta} > \alpha_2]}\right)\\
&\quad= \sup_{y \in (0, 2\beta^{-1})}\mathbb{E}_{(-\beta,y)}\left(\tau_2(2/\beta)\right) + \mathbb{E}_{(0,2\beta^{-1})}\left(\alpha_{1,-\beta} \wedge \alpha_2\right)\\
&\quad\le \sup_{y \in (0, 2\beta^{-1})}\mathbb{E}_{(-\beta,y)}\left(\tau_2(2/\beta)\right) + \mathbb{E}_{(0,2\beta^{-1})}\left(\alpha_1\right) + \mathbb{E}_{(0,2\beta^{-1})}\left(\mathbb{E}_{(Q_1(\alpha_1), \beta^{-1})}\left(\tau_1(-\beta) \wedge \tau_2(2/\beta)\right)\right).
\end{eq}
In the rest of the proof of the upper bound, we will prove bounds on the three terms on the right side of~\eqref{ren21}.
From Lemma \ref{middle}, note that for $\beta \ge \beta_0$ sufficiently large,
\begin{equation}\label{ren1exp}
\mathbb{E}_{(0,2\beta^{-1})}\left(\alpha_1\right) \le C.
\end{equation}
Equations~\eqref{e1} and \eqref{e4}  provide upper bounds for $\mathbb{E}_{(0,2\beta^{-1})}\left(\mathbb{E}_{(Q_1(\alpha_1), \beta^{-1})}\left(\tau_1(-\beta)\wedge \tau_2(2/\beta)\right)\right)$ and $\sup_{y \in (0, 2\beta^{-1})}\mathbb{E}_{(-\beta,y)}\left(\tau_2(2/\beta)\right)$, respectively.
Combining \eqref{ren1exp}, \eqref{e1}, and \eqref{e4} will complete the proof of the upper bound.

First we claim the following.
\begin{claim}\label{cl:ren22}
For all $\beta>0$ the following holds:
\begin{equation}\label{ren22}
\sup_{x \in [-\beta,0], y \in (0,2\beta^{-1}]}\mathbb{E}_{(x,y)}\left(\tau_1(-\beta)\wedge \tau_2(2/\beta)\right) \le C\beta^2.
\end{equation}
\end{claim}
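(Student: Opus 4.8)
The plan is to bound $\tau_1(-\beta) \wedge \tau_2(2/\beta)$ by the exit time of a suitable two-dimensional comparison and, ultimately, by a one-dimensional hitting-time problem that is analytically explicit. Write $\tau := \tau_1(-\beta) \wedge \tau_2(2/\beta)$ and start the process from $(x,y)$ with $x \in [-\beta,0]$, $y \in (0,2\beta^{-1}]$. Up to time $\tau$ we have $Q_1(s) \in [-\beta,0]$ and $Q_2(s) \in (0, 2/\beta]$, so $-Q_1(s) + Q_2(s) \le \beta + 2/\beta$. The first step is to control $Q_2$: since $L$ increases only when $Q_1 = 0$, and on the event $\{s < \tau\}$ the value $Q_2(s)$ is bounded by $2/\beta$, the key observation is that the quantity $-Q_1 + Q_2$ (or equivalently a linear combination tailored to kill the interaction term) has a drift bounded below by a constant multiple of $\beta$ in absolute value, while its diffusion coefficient is order one. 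Concretely, I would look at $Z(s) := Q_2(s) - Q_1(s)$; from~\eqref{eq:diffusionjsq}, $\dif Z = \sqrt{2}\,\dif W - \beta\,\dif s + (Q_1 - Q_2)\,\dif s + Q_1\,\dif s$, wait — rather, it is cleaner to observe $\dif(Q_1 + Q_2) = \sqrt 2\, \dif W - \beta \dif s - Q_1 \dif s$ (the $L$ terms and the $\pm(Q_1-Q_2)$ terms, let me just track: $\dif Q_1 = \sqrt 2 \dif W - \beta \dif t + (-Q_1+Q_2)\dif t - \dif L$, $\dif Q_2 = \dif L - Q_2 \dif t$, so $\dif(Q_1+Q_2) = \sqrt 2 \dif W - \beta \dif t - Q_1 \dif t$). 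On $\{s<\tau\}$, $-Q_1(s) \in [0,\beta]$, so $Q_1 + Q_2$ is a semimartingale with diffusion coefficient $\sqrt 2$ and drift in $[-\beta, 0]$.

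The main step is then a hitting-time estimate: starting from $x + y \in [-\beta, 2\beta^{-1}]$, the process $S(s) := Q_1(s) + Q_2(s)$ has bounded drift $|b(s)| \le \beta$ and constant diffusion coefficient $2$, and I claim $\tau$ is dominated by the exit time of $S$ from an interval of length $O(\beta)$. Indeed, on $[0,\tau)$, $Q_2 \in (0, 2/\beta]$ and $Q_1 \in [-\beta, 0]$, so $S \in (-\beta, 2\beta^{-1}]$, an interval of length $\beta + 2\beta^{-1} = O(\beta)$. The subtlety is that $\tau$ is not literally an exit time of $S$ from this interval, because $S$ can stay inside the interval while $Q_1$ hits $-\beta$ or $Q_2$ hits $2/\beta$ individually. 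To handle this, I would instead show that $\tau$ is bounded by the first time a reflected (at $0$) Brownian motion with drift $-\beta$ — a lower bound for $Q_1 + Q_2$ or for $Q_2$ depending on the region — travels a distance of order $\beta$; more carefully, one bounds $\tau$ above by the time for $Q_2$ itself, which satisfies $\dif Q_2 = \dif L - Q_2\,\dif t$ with $L$ the local time at $\{Q_1 = 0\}$, but this has no independent noise. The cleanest route: bound $\tau \le \tau_1(-\beta)$ on the event $\{\tau_1(-\beta) \le \tau_2(2/\beta)\}$ and use that $Q_1$, while in $[-\beta + Q_2, 0] \supseteq$ (region where drift is negative), behaves like a Brownian motion with drift bounded in $[-\beta, \beta]$ reflected at $0$; the expected time to descend a distance $\beta$ against an adverse drift of magnitude at most $\beta$ with unit-order diffusion is $O(\beta^2)$ by the standard formula $\mathbb{E}(\text{exit time}) \le \ell^2 / \sigma^2$ type bounds (for a Brownian motion with bounded drift exiting an interval of length $\ell$, the expected exit time is at most a constant times $\ell^2$ when $\ell \cdot |\text{drift}| \lesssim 1$, but here $\ell |\text{drift}| \sim \beta^2$, so the correct bound comes from comparison with the drift-dominated regime and gives $O(\ell / |\text{drift}|) = O(1)$ — so in fact one must be careful which regime dominates). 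Since the claimed bound is $O(\beta^2)$, the crude estimate suffices: dominate $\tau$ by the exit time of $\sqrt 2 W(s) + (\text{drift in }[-\beta,\beta])$ from an interval of length $O(\beta)$, use optional stopping on the submartingale $(\sqrt 2 W(s))^2 - 2s$ together with Girsanov or a direct supermartingale argument, yielding $\mathbb{E}(\tau) \le C \beta^2$ uniformly in the starting point.

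The technical heart — and the step I expect to be the main obstacle — is making rigorous the reduction from the genuinely two-dimensional, interaction-coupled, reflected dynamics to a clean one-dimensional hitting-time problem on which optional stopping applies. The reflection term $L$ and the position-dependent drift $-Q_1 + Q_2$ must be controlled simultaneously, and one has to choose the right linear functional of $(Q_1, Q_2)$ (or the right stopping-time decomposition) so that the reflection at $\{Q_1 = 0\}$ either drops out or only helps. I would handle this by introducing the auxiliary process $\widetilde{S}(s) = Q_1(s) + Q_2(s)$, noting $\dif\widetilde S = \sqrt 2\,\dif W + \widetilde b(s)\,\dif s$ with $\widetilde b(s) = -\beta - Q_1(s) \in [-\beta, -\beta + \beta] = [-\beta, 0]$ on $\{s < \tau\}$ (no reflection term at all, since the $-\dif L$ from $Q_1$ and $+\dif L$ from $Q_2$ cancel exactly — this is the crucial cancellation), so $\widetilde S$ is simply a Brownian motion with bounded drift with no reflection on $[0,\tau)$. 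Then $\{\tau_1(-\beta) \le \tau_2(2\beta^{-1})\}$ forces $\widetilde S$ to have left the strip... hmm, not quite, since $Q_1 = -\beta$ with $Q_2 > 0$ means $\widetilde S > -\beta$. So instead: on $[0,\tau)$ both $Q_1 \ge -\beta$ and $Q_2 \le 2\beta^{-1}$ hold, and at time $\tau$ either $Q_1 = -\beta$ (so $\widetilde S = -\beta + Q_2(\tau) \ge -\beta$, not directly an exit) or $Q_2 = 2\beta^{-1}$. Since this doesn't cleanly close, the honest fix is to bound $\tau \le \tau_{Q_2}(2\beta^{-1}) \wedge \tau_{Q_1}(-\beta)$ and separately estimate $\mathbb{E}(\tau_{Q_1}(-\beta) \wedge \tau_{Q_2}(2\beta^{-1}))$ by comparing with the one-dimensional diffusion $Y$ solving $\dif Y = \sqrt 2 \dif W - 0 \cdot \dif t$... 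In any case, the expected answer $C\beta^2$ is generous enough that a comparison with an unreflected $\sqrt 2 W$ exiting an interval of length $2\beta + 2\beta^{-1}$, via the submartingale $\tfrac14(\sqrt2 W_s)^2 - \tfrac{s}{2}$ and optional stopping, delivers the bound; the work is in justifying that $\tau$ is indeed $\le$ such an exit time, which follows once one checks that on $[0,\tau)$ the relevant coordinate stays in a bounded interval and its increments dominate those of $\sqrt 2 W$ plus an adverse drift of magnitude $\le \beta$, and then absorbing the drift into the interval length by enlarging it to $O(\beta)$. I would present this as: construct on a common probability space a Brownian motion $W$ and note $Q_2(s) + c \ge$ (reflected BM with drift $-\beta$) up to $\tau$ for a suitable constant, then invoke the explicit expected-exit-time formula for reflected Brownian motion with drift from $[0, O(\beta)]$, which is $O(\beta^2)$ — uniform in $(x,y)$ as required.
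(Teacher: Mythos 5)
There is a genuine gap, and it is quantitative: your comparison is run on the wrong coordinate, at the wrong length scale. You reduce everything to a one-dimensional process ($S=Q_1+Q_2$, or an unreflected $\sqrt2\,W$ plus bounded drift) that must traverse an interval of length $\beta+2\beta^{-1}$, and you assert this is $O(\beta)$. That is only true for $\beta$ bounded away from $0$; for small $\beta$ the interval has length of order $\beta^{-1}$, and any block/optional-stopping estimate based on it yields $\mathbb{E}(\tau_1(-\beta)\wedge\tau_2(2/\beta))\le C(\beta+\beta^{-1})^2$, not the claimed $C\beta^2$ -- and the claim is stated for \emph{all} $\beta>0$. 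The paper's proof avoids this by never asking $S$ to travel the full interval: it compares $Q_1$ itself. For $s<t\le\tau_1(-\beta)\wedge\tau_2(2/\beta)$ the drift of $Q_1$ is $-\beta+(-Q_1+Q_2)\le 2\beta^{-1}$ (the $-\beta$ cancels $-Q_1\le\beta$), so $Q_1(t)\le Q_1(s)+\sqrt2\,(W(t)-W(s))+2\beta^{-1}(t-s)$. Hence on the event $\{\sqrt2\,W(\beta^2)<-3\beta\}$, whose probability is a constant $p>0$ independent of $\beta$ by Brownian scaling (the adverse drift contributes only $2\beta^{-1}\cdot\beta^2=2\beta$, and the distance from $0$ to $-\beta$ is only $\beta$), the stopping time occurs within time $\beta^2$; iterating with the Markov property at times $n\beta^2$ gives $\prob(\tau_1(-\beta)\wedge\tau_2(2/\beta)\ge n\beta^2)\le(1-p)^n$ and hence the uniform bound $C\beta^2$ for every $\beta>0$. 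Your route cannot recover this for small $\beta$, because it forces the comparison process to cover the $Q_2$-scale $2\beta^{-1}$ rather than the $Q_1$-scale $\beta$.

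Separately, even in the regime $\beta\ge 1$ where your scales are right, the write-up does not close. The "subtlety" you flag about whether $\tau$ is dominated by the exit time of $S$ from $[-\beta,2\beta^{-1}]$ is resolvable (downward exit of $S$ forces $Q_1=S-Q_2<-\beta$ since $Q_2>0$, and upward exit forces $Q_2\ge S=2\beta^{-1}$ since $Q_1\le 0$, so by continuity $\tau$ has already occurred), but you leave it unresolved and instead fall back on a circular restatement ($\tau\le\tau_1(-\beta)\wedge\tau_2(2\beta^{-1})$). Moreover, bounding $\mathbb{E}$ of the exit time of $S$ by optional stopping requires the drift bound $-\beta-Q_1\in[-\beta,0]$ up to that exit time, which is only guaranteed up to $\tau$; after $\tau$ the drift of $S$ can be large and positive, so the submartingale argument as sketched does not apply. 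The final appeal to an "explicit expected-exit-time formula for reflected Brownian motion with drift from $[0,O(\beta)]$" refers to a process you have not constructed (there is no reflection in $S$, and the interval is not of length $O(\beta)$ uniformly). A correct fix is exactly the paper's: run a constant-probability-per-block argument directly on $Q_1$ over blocks of length $\beta^2$.
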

\begin{claimproof}
Note that for $s< t \le \tau_1(-\beta)\wedge \tau_2(2/\beta)$,
$$
Q_1(t) \le Q_1(s) + \sqrt{2}(W(t) - W(s)) + 2\beta^{-1}(t-s).
$$
This gives us
\begin{multline*}
\inf_{x \in [-\beta,0], y \in (0,2\beta^{-1}]}\prob_{(x,y)}\left(\tau_1(-\beta)\wedge \tau_2(2/\beta) < \beta^2\right) \ge \prob\left(\sqrt{2}W(\beta^2) < -3\beta\right) \ge p>0,
\end{multline*}
where $p$ does not depend on $\beta, x,y$. Thus, for $n \ge 1$, by the Markov property applied at time $(n-1)\beta^2$,
\begin{align*}
&\sup_{x \in [-\beta,0], y \in (0,2\beta^{-1}]}\prob_{(x,y)}\left(\tau_1(-\beta)\wedge \tau_2(2/\beta) \ge n\beta^2\right)\\
&\le\sup_{x \in [-\beta,0], y \in (0,2\beta^{-1}]}\prob_{(x,y)}\left(\tau_1(-\beta)\wedge \tau_2(2/\beta) \ge (n-1)\beta^2\right)\\
&\hspace{5cm}\times \sup_{x \in [-\beta,0], y \in (0,2\beta^{-1}]}\prob_{(x,y)}\left(\tau_1(-\beta)\wedge \tau_2(2/\beta) \ge \beta^2\right)
\le (1-p)^n,
\end{align*}
which implies \eqref{ren22}.
\end{claimproof}
\noindent
Next, we will bound $\mathbb{E}_{(0,2\beta^{-1})}\left(-Q_1(\alpha_1)\right)$. 

\begin{claim}\label{cl:qalpha}
There exists $\beta_0 \ge 1$ such that for all $\beta \ge \beta_0$
\begin{equation}\label{qalpha}
\mathbb{E}_{(0,2\beta^{-1})}\left(-Q_1(\alpha_1)\right) \le C\beta^4.
\end{equation}
\end{claim}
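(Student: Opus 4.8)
The plan is to add the two coordinates of~\eqref{eq:diffusionjsq} so that the reflection term disappears, and then exploit the sign constraint $Q_1\le 0$. Adding the $Q_1$- and $Q_2$-equations, the local time contributions $-L(t)$ and $+L(t)$ cancel, and the term $\int_0^t Q_2(s)\,\dif s$ inside the $Q_1$-drift cancels against $-\int_0^t Q_2(s)\,\dif s$ from the $Q_2$-equation, leaving
\begin{equation*}
Q_1(t)+Q_2(t)=Q_1(0)+Q_2(0)+\sqrt{2}\,W(t)-\beta t-\int_0^t Q_1(s)\,\dif s.
\end{equation*}
Writing $-Q_1(t)=Q_2(t)-\bigl(Q_1(t)+Q_2(t)\bigr)$ and using that $Q_1(s)\le 0$ forces $\int_0^t Q_1(s)\,\dif s\le 0$, we obtain the one-sided bound
\begin{equation*}
-Q_1(t)\;\le\;Q_2(t)-Q_1(0)-Q_2(0)-\sqrt{2}\,W(t)+\beta t\qquad\text{for all }t\ge 0.
\end{equation*}

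Next I would specialize to $t=\alpha_1$. Under $\mathbb{P}_{(0,2\beta^{-1})}$ we have $Q_1(0)=0$, $Q_2(0)=2\beta^{-1}$, and $Q_2(\alpha_1)=\beta^{-1}$ by the definition of $\alpha_1$ (with $B=\beta^{-1}$) together with the continuity of $Q_2$; hence $-Q_1(\alpha_1)\le \beta\alpha_1-\sqrt{2}\,W(\alpha_1)-\beta^{-1}\le \beta\alpha_1-\sqrt{2}\,W(\alpha_1)$. Taking expectations, it only remains to show $\mathbb{E}_{(0,2\beta^{-1})}\bigl(W(\alpha_1)\bigr)=0$. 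Since $\alpha_1$ is a stopping time for a filtration with respect to which $W$ is a Brownian motion, and $\mathbb{E}_{(0,2\beta^{-1})}(\alpha_1)\le C$ by~\eqref{ren1exp}, the process $\bigl(W(t\wedge\alpha_1)\bigr)_{t\ge 0}$ is an $L^2$-bounded martingale (indeed $\mathbb{E}(W(t\wedge\alpha_1)^2)=\mathbb{E}(t\wedge\alpha_1)\le C$), hence uniformly integrable, and optional stopping gives $\mathbb{E}_{(0,2\beta^{-1})}(W(\alpha_1))=0$. Therefore
\begin{equation*}
\mathbb{E}_{(0,2\beta^{-1})}\bigl(-Q_1(\alpha_1)\bigr)\;\le\;\beta\,\mathbb{E}_{(0,2\beta^{-1})}(\alpha_1)\;\le\;C\beta\;\le\;C\beta^4
\end{equation*}
for all $\beta\ge\beta_0\ge 1$, which is~\eqref{qalpha}.

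The argument has no genuinely hard step once~\eqref{ren1exp} is in hand; the two points to be careful about are (i) that the ``sum process'' $Q_1+Q_2$ is a semimartingale with no reflection term --- this is precisely what lets us trade the unbounded upward push $L$ for the harmless, sign-definite integral $-\int_0^t Q_1(s)\,\dif s$ --- and (ii) the optional-stopping justification for discarding the Brownian term, which needs only the first-moment bound on $\alpha_1$. I remark that this in fact yields the stronger estimate $\mathbb{E}_{(0,2\beta^{-1})}(-Q_1(\alpha_1))=O(\beta)$; the looser $O(\beta^4)$ recorded in~\eqref{qalpha} is all that is needed downstream and leaves ample slack for the crude bounds used there.
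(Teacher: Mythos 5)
Your proof is correct, and it takes a genuinely different route from the paper. The paper establishes the pointwise tail bound $\prob_{(0,2\beta^{-1})}(Q_1(\alpha_1)\le -x)\le C\e^{-C'x^{1/4}}$ for $x\ge(2\beta)^4$ — via the decomposition \eqref{threeest}, the excursion control of Lemma~\ref{largebeta}, and Ornstein--Uhlenbeck/Brownian comparisons \eqref{qalpha1}--\eqref{qalpha3} — and then integrates the tail, which is exactly where the exponent $4$ in $C\beta^4$ comes from. You instead use the drift-and-reflection cancellation in $S=Q_1+Q_2$ (a decomposition the paper itself uses repeatedly, e.g.\ in the form $S(t)\ge S(0)+\sqrt{2}W(t)-\beta t$), the sign constraint $Q_1\le 0$, continuity of $Q_2$ at $\alpha_1$ (which gives $Q_2(\alpha_1)=\beta^{-1}$, legitimate since \eqref{ren1exp} gives $\alpha_1<\infty$ a.s.), and optional stopping for the $L^2$-bounded stopped martingale $W(\cdot\wedge\alpha_1)$; all of this rests only on \eqref{ren1exp}, which comes from Lemma~\ref{middle} and involves no circularity with the claim. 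The payoff of your route is a shorter argument and the sharper estimate $\mathbb{E}_{(0,2\beta^{-1})}(-Q_1(\alpha_1))\le C\beta$, which is more than enough for \eqref{eq:aftercl1}, where only a logarithm of this quantity is used. What the paper's heavier route buys is distributional (stretched-exponential tail) information on $Q_1(\alpha_1)$, a template it reuses in the small-$\beta$ regime (see \eqref{three4}--\eqref{three5} and the second-moment estimate \eqref{54}); your first-moment argument would not by itself deliver those, but for Claim~\ref{cl:qalpha} as stated it is complete.
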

\begin{claimproof}
Take $(Q_1(0), Q_2(0))= (0,2\beta^{-1})$. For $x \ge (2\beta)^4$,
\begin{eq}\label{threeest}
\prob_{(0,2\beta^{-1})}&\left(Q_1(\alpha_1) \le -x\right) = \mathbb{E}_{(0,2\beta^{-1})}\left(\mathbf{1}_{[\tau_1(-x/2) < \alpha_1]} Q_1(\alpha_1) \le -x\right)\\
&\le \prob_{(0,2\beta^{-1})}\left(\tau_2(x^{1/4}) < \alpha_1\right) + \sup_{y \le x^{1/4}}\prob_{(-x/2, y)}\left(\tau_1(-x) \wedge \tau_1(0) \le \log(\beta x^{1/4})\right)\\
&\le \prob_{(0,2\beta^{-1})}\left(\tau_2(x^{1/4}) < \alpha_1\right) + \sup_{y \le x^{1/4}}\prob_{(-x/2, y)}\left(\tau_1(0) \le \log(\beta x^{1/4})\right)\\
&\hspace{3.5cm}+ \sup_{y \le x^{1/4}}\prob_{(-x/2, y)}\left(\tau_1(-x) \le \log(\beta x^{1/4}), \tau_1(-x) \le \tau_1(0)\right)
\end{eq}
where we used the fact that $Q_2(t)$ decreases exponentially for $t \le \tau_1(0)$. By Lemma~\ref{largebeta}, there is $\beta_0 \ge 1$ such that for all $\beta \ge \beta_0$ and all $x \ge (2\beta)^4$,
\begin{equation}\label{qalpha1}
\prob_{(0,2\beta^{-1})}\left(\tau_2(x^{1/4}) < \alpha_1\right) \le C\e^{-C'\beta x^{1/4}} \le C\e^{-C'x^{1/4}}.
\end{equation}
Now, we estimate $\sup_{y \le x^{1/4}}\prob_{(-x/2, y)}\left(\tau_1(0) \le \log(\beta x^{1/4})\right)$. Take $(Q_1(0), Q_2(0)) = (-x/2, y)$ for $y \le x^{1/4}$. For $t \le \tau_1(0)$,
$$
Q_1(t) = -\frac{x}{2} + \sqrt{2}W(t) - \beta t + \int_0^t \left(-Q_1(s) + y\e^{-s}\right)ds.
$$
By Proposition 2.18 of \cite{Karatzas}, for $t \le \tau_1(0)$, $Q_1(t)$ is stochastically bounded above by the Ornstein-Uhlenbeck process
$$
X(t) = -\frac{x}{2} + \sqrt{2}W(t) + \int_0^t \left(x^{1/4} -\beta-X(s)\right)ds.
$$
By the Doob representation of Ornstein-Uhlenbeck processes, we can write
$$
X(t) = -\frac{x}{2}\e^{-t} + (x^{1/4} -\beta)(1-\e^{-t}) + \e^{-t} \widetilde{W}\left(\e^{2t}-1\right)
$$
for some Brownian motion $\widetilde{W}$. Therefore, for $x \ge (2\beta)^4$ where $\beta \ge \beta_0$ for sufficiently large $\beta_0$,
\begin{equation}\label{qalpha2}
\sup_{y \le x^{1/4}}\prob_{(-x/2, y)}\left(\tau_1(0) \le \log(\beta x^{1/4})\right) \le \prob\left(\sup_{t \le x} \widetilde{W}(t) \ge \frac{x}{4}\right) \le C\e^{-C'x}.
\end{equation}
Recall that for $t \le \tau_1(0)$, $Q_1(t) \ge Q_1(0) + \sqrt{2}W(t) - \beta t$. Thus, for $x \ge (2\beta)^4$,
\begin{eq}\label{qalpha3}
&\sup_{y \le x^{1/4}}\prob_{(-x/2, y)}\Big(\tau_1(-x) \le \log(\beta x^{1/4}), \tau_1(-x) \le \tau_1(0)\Big) \\
&\le \prob\Big(\inf_{t \le C \log x} \sqrt{2}W(t) \le -\frac{x}{2} + C\beta \log x\Big)\le \prob\Big(\inf_{t \le C \log x} \sqrt{2}W(t) \le -\frac{x}{2} + Cx^{1/4} \log x\Big) \le C\e^{-C'x}.
\end{eq}
Thus, combining \eqref{qalpha1}, \eqref{qalpha2} and \eqref{qalpha3}, we get for $\beta_0 \ge 1$ such that for all $\beta \ge \beta_0$ and all $x \ge (2\beta)^4$,
\begin{equation*}
\prob_{(0,2\beta^{-1})}\left(Q_1(\alpha_1) \le -x\right) \le C\e^{-C'x^{1/4}}.
\end{equation*}
Consequently, Claim~\ref{cl:qalpha} follows.
\end{claimproof}
\noindent
Note that
\begin{align*}
&\mathbb{E}_{(0,2\beta^{-1})}\left(\mathbb{E}_{(Q_1(\alpha_1), \beta^{-1})}\left(\tau_1(-\beta)\wedge \tau_2(2/\beta)\right)\right)\\
&= \mathbb{E}_{(0,2\beta^{-1})}\left(\mathbb{E}_{(Q_1(\alpha_1), \beta^{-1})}\left(\tau_1(-\beta)\wedge \tau_2(2/\beta)\right) \mathbf{1}_{[Q_1(\alpha_1) \ge -\beta]}\right)\\
&\hspace{4cm}+ \mathbb{E}_{(0,2\beta^{-1})}\left(\mathbb{E}_{(Q_1(\alpha_1), \beta^{-1})}\left(\tau_1(-\beta)\wedge \tau_2(2/\beta)\right) \mathbf{1}_{[Q_1(\alpha_1) < -\beta]}\right)\\
&\le \sup_{x \in [-\beta,0], y \in (0,2\beta^{-1}]}\mathbb{E}_{(x,y)}\left(\tau_1(-\beta)\wedge \tau_2(2/\beta)\right)\\
&\hspace{4cm} + \mathbb{E}_{(0,2\beta^{-1})}\left(\mathbb{E}_{(Q_1(\alpha_1), \beta^{-1})}\left(\tau_1(-\beta)\wedge \tau_2(2/\beta)\right) \mathbf{1}_{[Q_1(\alpha_1) < -\beta]}\right)\\
&\le \sup_{x \in [-\beta,0], y \in (0,2\beta^{-1}]}\mathbb{E}_{(x,y)}\left(\tau_1(-\beta)\wedge \tau_2(2/\beta)\right)\\
 &\hspace{4cm}+ \mathbb{E}_{(0,2\beta^{-1})}\left(\mathbf{1}_{[Q_1(\alpha_1) < -\beta]}\mathbb{E}_{(Q_1(\alpha_1), \beta^{-1})}\left(\tau_1(-\beta)\right)\right).
\end{align*}
By \eqref{ren22}, $\sup_{x \in [-\beta,0], y \in (0,2\beta^{-1}]}\mathbb{E}_{(x,y)}\left(\tau_1(-\beta)\wedge \tau_2(2/\beta)\right) \le C\beta^2$. 
Further, to estimate the second term in the right hand side above, 
we will make use of the following simple claim.
\begin{claim}\label{cl:threeuse}
Fix any $\beta>0$.
For any $x<-\beta, y>0$,
\begin{equation}\label{threeuse}
\mathbb{E}_{(x,y)}\left(\tau_1(-\beta)\right) \le C \log\left(2 + |x + \beta|\right).
\end{equation}
\end{claim}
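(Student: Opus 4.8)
\textit{Proof proposal.} The plan is to reduce \eqref{threeuse} to a hitting-time estimate for a standard Ornstein--Uhlenbeck process. Fix $x<-\beta$ and $y>0$ and start the diffusion from $(x,y)$. Since $Q_1(0)=x<-\beta<0$ and $Q_1$ is continuous, $Q_1(t)<-\beta<0$ for every $t<\tau_1(-\beta)$; as $L$ increases only when $Q_1=0$, the process $L$ is constant on $[0,\tau_1(-\beta)]$, and hence $Q_2$ solves $\dif Q_2=-Q_2\,\dif t$ there, i.e.\ $Q_2(t)=y\e^{-t}$ on $[0,\tau_1(-\beta)]$. Setting $Z(t):=Q_1(t)+\beta$, the $-\beta t$ drift in \eqref{eq:diffusionjsq} cancels, and for $t\le\tau_1(-\beta)$,
\begin{equation*}
\dif Z(t)=\bigl(-Z(t)+y\e^{-t}\bigr)\dif t+\sqrt2\,\dif W(t),\qquad Z(0)=x+\beta<0,
\end{equation*}
with $\tau_1(-\beta)=\inf\{t\ge0:Z(t)=0\}$. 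Concretely, I would let $\hat Z$ be the global solution of this (linear, unreflected) SDE on $[0,\infty)$; then $\hat Z$ agrees with $Q_1+\beta$ on $[0,\tau_1(-\beta)]$ and $\inf\{t:\hat Z(t)=0\}=\tau_1(-\beta)$.

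Next I would remove the $y$-dependence by a \emph{comparison} argument. Since $y\e^{-t}\ge0$, the drift of $\hat Z$ dominates pointwise that of the standard OU process $\tilde Z$ solving $\dif\tilde Z(t)=-\tilde Z(t)\,\dif t+\sqrt2\,\dif W(t)$, $\tilde Z(0)=x+\beta$, driven by the same Brownian motion, and the diffusion coefficient is constant. By the one-dimensional comparison theorem (see \cite{Karatzas}, as already used around \eqref{qalpha2}), $\hat Z(t)\ge\tilde Z(t)$ for all $t\ge0$ a.s.; since $\hat Z(0)=\tilde Z(0)<0$, continuity and the intermediate value theorem give $\tau_1(-\beta)=\inf\{t:\hat Z(t)=0\}\le\inf\{t:\tilde Z(t)=0\}$ a.s. Thus it suffices to show $\mathbb{E}_w[\tau_0]\le C\log(2+|w|)$ for the standard OU process started at $w:=x+\beta<0$, where $\tau_0$ is its hitting time of $0$.

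For the last step I would use the scale and speed of $\tilde Z$, namely $s'(\xi)=\e^{\xi^2/2}$ and $m(\xi)=\e^{-\xi^2/2}$; since $-\infty$ is a natural boundary, $u(w):=\mathbb{E}_w[\tau_0]$ is the minimal nonnegative solution of $u''-\xi u'=-1$, $u(0)=0$, and solving the ODE yields
\begin{equation*}
u(w)=\int_{w}^{0}\e^{\xi^2/2}\Bigl(\int_{-\infty}^{\xi}\e^{-\eta^2/2}\,\dif\eta\Bigr)\dif\xi .
\end{equation*}
Splitting the outer integral at $-1$, the contribution of $[-1,0]$ is a universal constant; on $[w,-1]$ the Gaussian tail bound $\int_{-\infty}^{\xi}\e^{-\eta^2/2}\dif\eta\le|\xi|^{-1}\e^{-\xi^2/2}$ makes the integrand at most $1/|\xi|$, whose integral over $[w,-1]$ is $\log|w|$. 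Hence $u(w)\le C+\log|w|\le C'\log(2+|w|)$, which is \eqref{threeuse}.

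The only real obstacle is the bookkeeping in the first paragraph: checking that $L$ genuinely stays constant and that $Q_1+\beta$ coincides with the globally defined linear SDE solution $\hat Z$ up to $\tau_1(-\beta)$, so that the comparison theorem applies without running into the reflection at $0$. Once this is set up, the comparison and the OU hitting-time estimate are routine. If one prefers to avoid the scale-function formula, the same $O(\log(2+|w|))$ bound follows from a dyadic argument: the time for $\tilde Z$ to climb from $-2^{k+1}$ to $-2^k$ has $O(1)$ expectation (the upward drift there is $\gtrsim 2^k$, dwarfing the $O(1)$ noise over unit time), and summing over $k=0,\dots,\lceil\log_2|w|\rceil$ plus an $O(1)$ bound for the stretch $[-1,0]$ gives the claim.
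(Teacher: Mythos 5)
Your proof is correct, and its central step --- bounding $Q_1+\beta$ from below by an Ornstein--Uhlenbeck process via the one-dimensional comparison theorem, so that $\tau_1(-\beta)$ is pathwise dominated by the OU hitting time of $0$ started from $x+\beta<0$ --- is exactly the reduction in the paper's proof (your $\tilde Z$ is the paper's $X^*$; your preliminary observation that $L$ is constant and $Q_2(t)=y\e^{-t}$ before $\tau_1(-\beta)$ is sound, since $Q_1<-\beta<0$ on that interval and $L$ grows only on $\{Q_1=0\}$, and the paper handles the same point implicitly via $Q_2\ge 0$ in the drift comparison). Where you genuinely differ is the final estimate for the OU hitting time: the paper uses the Doob representation $X^*(t)=(x+\beta)\e^{-t}+\e^{-t}W^*(\e^{2t}-1)$ and the Brownian first-passage density to obtain the tail bound $\prob_{(x,y)}(\tau_1(-\beta)>t)\le C|x+\beta|\e^{-t}$ for $t\ge\log(2+|x+\beta|)$ and then integrates the tail, whereas you invoke the scale-function (Green's function) formula $u(w)=\int_w^0\e^{\xi^2/2}\bigl(\int_{-\infty}^{\xi}\e^{-\eta^2/2}\,\dif\eta\bigr)\dif\xi$ together with the Mills-ratio bound (or the dyadic drift argument). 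Both routes are standard and yield the same logarithmic bound; the paper's version additionally gives exponential decay of the tail in $t$ (not needed for the Claim), while your ODE/Green's-function computation delivers the expectation in one step without a tail integration.
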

\begin{claimproof}
Note that if $(Q_1(0),Q_2(0)) = (x,y)$ where $x < -\beta$, then for $t \le \tau_1(-\beta)$, $Q_1^*(t) = Q_1(t) + \beta$ is stochastically bounded below by an Ornstein-Uhlenbeck process
$$
X^*(t) = x + \beta + \sqrt{2}W(t)  - \int_0^t X^*(s)ds.
$$
From the Doob representation $X^*(t)  =\left(x + \beta\right)\e^{-t} + \e^{-t}W^*\left(\e^{2t}-1\right)$ (where $W^*$ is a standard Brownian motion), for $t \ge \log \left(2 + |x + \beta|\right)$,
\begin{align*}
\prob_{(x,y)}\left(\tau_1(-\beta) >t\right) &\le \prob\left(\left(x + \beta\right) + W^*\left(\cdot\right) \text{ hits zero after time } \e^{2t}-1\right)\\
&= \int_{\e^{2t}-1}^{\infty}\frac{|x+\beta|}{\sqrt{2\pi s^3}}\e^{-(x +\beta)^2/(2s)}ds \le \frac{|x+\beta|}{(\e^{2t}-1)^{1/2}} \le C|x+\beta| \e^{-t}.
\end{align*}
This completes the proof of the claim.
\end{claimproof}
\noindent
Note that the statement of Claim~\ref{cl:threeuse} is for {\em all} $\beta>0$ and it will be used subsequently in the small $\beta$ regime in Section \ref{sec:small-beta}.
Now, using Jensen's inequality, Claim~\ref{cl:threeuse}, and \eqref{qalpha},
\begin{eq}\label{eq:aftercl1}
\mathbb{E}_{(0,2\beta^{-1})}\left(\mathbf{1}_{[Q_1(\alpha_1) < -\beta]}\mathbb{E}_{(Q_1(\alpha_1), \beta^{-1})}\left(\tau_1(-\beta)\right)\right) \le \mathbb{E}_{(0,2\beta^{-1})}\left(\log(2+ |Q_1(\alpha_1) + \beta|)\right)\\
\le \log\left(2+ \mathbb{E}_{(0,2\beta^{-1})}\left(-Q_1(\alpha_1)\right) + \beta\right) \le C \log(2 + C'\beta^4) \le C\log \beta.
\end{eq}
Thus, we get from~\eqref{ren22} and \eqref{eq:aftercl1}
\begin{equation}\label{e1}
\mathbb{E}_{(0,2\beta^{-1})}\left(\mathbb{E}_{(Q_1(\alpha_1), \beta^{-1})}\left(\tau_1(-\beta)\wedge \tau_2(2/\beta)\right)\right) \le C\beta^2 + C\log\beta \le C'\beta^2.
\end{equation}
\ 

Next, we will estimate $\sup_{y \in (0, 2\beta^{-1})}\mathbb{E}_{(-\beta,y)}\left(\tau_2(2/\beta)\right)$. 
Take $(Q_1(0), Q_2(0)) = (-\beta, y)$, where $y < 2\beta^{-1}$. Define $\mathbf{e}_0 = 0$ and for $k \ge 0$,
\begin{align*}
\mathbf{e}_{2k+1} &= \inf\{t \ge \mathbf{e}_{2k}: Q_1(t)=0 \text{ or } Q_2(t) = 2\beta^{-1}\},\\
\mathbf{e}_{2k+2} &= \inf\{t \ge \mathbf{e}_{2k+1}: Q_1(t)=-\beta \text{ or } Q_2(t) = 2\beta^{-1}\}.
\end{align*}
Let $\mathcal{N}_{\mathbf{e}} = \inf\{k \ge 1: Q_2(\mathbf{e}_{2k})=2\beta^{-1}\}$. Note that by the expectation upper bound given in Lemma \ref{OUhit} and \eqref{ren22},
\begin{equation}\label{e2}
\sup_{y \in (0, 2\beta^{-1})}\mathbb{E}_{(-\beta,y)}\left(\mathbf{e}_{2}\right) \le C\e^{C'\beta^2}.
\end{equation}
Define $S(t): = Q_1(t) + Q_2(t)$.
If $(Q_1(0), Q_2(0))=(0,y)$ for any $y \le 2\beta^{-1}$, note that $S(t) \le Q_1(t) + 2\beta^{-1}$ for $t \le \tau_2(2\beta^{-1})$ and $Q_2(t) \ge S(t) \ge \sqrt{2}W(t) - \beta t$ for all $t$, and hence we get for $\beta \ge 2$,
\begin{multline*}
\inf_{y \le 2\beta^{-1}}\prob_{(0,y)}\left(\tau_2(2\beta^{-1}) \le \tau_1(-\beta)\right) \ge \prob\left(\sqrt{2}W(t) - \beta t \text{ hits } 2\beta^{-1} \text{ before } -\beta/2\right) = \frac{1-\e^{-\beta^2/2}}{\e^2 - \e^{-\beta^2/2}}\\
\ge (1-\e^{-2})\e^{-2} = p_{\mathbf{e}}>0,
\end{multline*}
where $p_{\mathbf{e}}$ does not depend on $\beta$. This immediately gives us for $k \ge 1$,
\begin{equation}\label{e3}
\sup_{y \in (0, 2\beta^{-1})}\prob_{(-\beta,y)}\left(\mathcal{N}_{\mathbf{e}} \ge k\right) \le (1-p_{\mathbf{e}})^k.
\end{equation}
Thus, by \eqref{e2} and \eqref{e3},
\begin{eq}\label{e4}
\sup_{y \le 2\beta^{-1}}\mathbb{E}_{(-\beta,y)}(\tau_2(2/\beta)) &= \sup_{y \le 2\beta^{-1}}\mathbb{E}_{(-\beta,y)}\Big(\sum_{k=1}^{\infty}\mathbf{1}_{[\mathcal{N}_{\mathbf{e}} = k]}\mathbf{e}_{2k}\Big)\\
&= \sup_{y \le 2\beta^{-1}}\mathbb{E}_{(-\beta,y)}\Big(\sum_{k=1}^{\infty}\left(\mathbf{e}_{2k}-\mathbf{e}_{2k-2}\right)\mathbf{1}_{[\mathcal{N}_{\mathbf{e}} > k-1]}\Big)\\
&\le \sum_{k=1}^{\infty}\sup_{y \in (0, 2\beta^{-1})}\mathbb{E}_{(-\beta,y)}\left(\mathbf{e}_{2}\right)\sup_{y \in (0, 2\beta^{-1})}\prob_{(-\beta,y)}\left(\mathcal{N}_{\mathbf{e}} \ge k\right)\\
&\le C\e^{C'\beta^2}\sum_{k=1}^{\infty}(1-p_{\mathbf{e}})^k = C\e^{C'\beta^2}.
\end{eq}
Finally, using \eqref{ren1exp}, \eqref{e1}, and \eqref{e4} in \eqref{ren21}, we obtain
\begin{equation*}
\mathbb{E}_{(0,2\beta^{-1})}\left(\alpha_2\right) \le C'_1\e^{C'_2\beta^2},
\end{equation*}
which yields the upper bound claimed in the lemma.

\subsubsection{Proof of the lower bound} 
By the strong Markov property applied at time $\inf\{t \ge \alpha_1: Q_1(t) =0\}$,
\begin{align*}
\mathbb{E}_{(0,2\beta^{-1})}\left(\alpha_2\right) &\ge \inf_{y \in (0,\beta^{-1}]}\mathbb{E}_{(0,y)}\left(\tau_2(2\beta^{-1})\right)\\
&\ge \inf_{y \in (0,\beta^{-1}]}\prob_{(0,y)}\left(\tau_1(-\beta/4) <\tau_2(2\beta^{-1})\right)\inf_{y \in (0,2\beta^{-1}]}\mathbb{E}_{(-\beta/4,y)}\left(\tau_1(0)\right).
\end{align*}
Recall that if $(Q_1(0),Q_2(0))=(0,y)$ for any $y \in (0,\beta^{-1}]$, $Q_2(t)=2\beta^{-1}$ for some $t$ if and only if $S(t) = 2\beta^{-1}$. Moreover, $y + \sqrt{2}W(t) -3\beta t/4 \ge S(t) \ge Q_1(t)$ for all $t \le \tau_1(-\beta/4)$. Thus, for $\beta \ge 2$,
\begin{align*}
\sup_{y \in (0,\beta^{-1}]}&\prob_{(0,y)}\left(\tau_1(-\beta/4) > \tau_2(2\beta^{-1})\right)\\
&\le \sup_{y \in (0,\beta^{-1}]}\prob_{(0,y)}\left(y + \sqrt{2}W(t) -3\beta t/4 \text{ hits } 2\beta^{-1} \text{ before } -\beta/4\right)\\
&\le \prob\left(\sqrt{2}W(t) -3\beta t/4 \text{ hits } \beta^{-1} \text{ before } -\beta/2\right) = \frac{1 - \e^{-3\beta^2/8}}{\e^{3/4} - \e^{-3\beta^2/8}} \le \e^{-3/4}.
\end{align*}
Combining this with the expectation lower bound in Lemma \ref{OUhit}, we obtain for all $\beta \ge \beta_0$ for sufficiently large $\beta_0$,
$$
\mathbb{E}_{(0,2\beta^{-1})}\left(\alpha_2\right) \ge (1-\e^{-3/4})\inf_{y \in (0,2\beta^{-1}]}\mathbb{E}_{(-\beta/4,y)}\left(\tau_1(0)\right) \ge C_1\e^{C_2\beta^2},
$$
which yields the lower bound claimed in the lemma. $\hfill\qed$

\subsection{Proofs of the main results}\label{ssec:proof-large}
Now, we can prove Theorem~\ref{largestat} about the detailed behavior of the stationary distribution of $Q_2$ in the large-$\beta$ regime.
\begin{proof}[Proof of Theorem~\ref{largestat}]
Fix $\beta\geq \beta_0$ large enough.
Recall \eqref{eq:pi} with $B=\beta^{-1}$.
Both the proof of the upper bound and the lower bound consist of two cases: (a) when $y \ge 4\beta^{-1}$ and (b) when  $y \in [4\beta \e^{-\mathcal{C}_1^-\e^{\mathcal{C}_2^-\beta^2}}, 4\beta^{-1})$.\\

\noindent
\textit{Proof of the upper bound.}
(a)
Let $y \ge 4\beta^{-1}$. 
Then for $\alpha_1, \alpha_2$ as defined in \eqref{rendef} with $B=\beta^{-1}$,
\begin{eq}\label{s1}
&\mathbb{E}_{(0, 2\beta^{-1})}\left(\int_{0}^{\Xi_0}\mathbf{1}_{[Q_2(s) \ge y]}ds\right) = \mathbb{E}_{(0, 2\beta^{-1})}\left(\int_{0}^{\alpha_1}\mathbf{1}_{[Q_2(s) \ge y]}ds\right)\\
&\le \mathbb{E}_{(0, 2\beta^{-1})}\left(\mathbf{1}_{[\tau_2(y) < \tau_2(\beta^{-1})]}\left(\tau_2(\beta^{-1})-\tau_2(y)\right) \right)
 = \prob_{(0,2\beta^{-1})}\left(\tau_2(y) < \tau_2(\beta^{-1})\right) \mathbb{E}_{(0, y)}\left(\tau_2(\beta^{-1})\right),
\end{eq}
where the last step follows from the strong Markov property. 
For the first term on the right side of~\eqref{s1}, note that by Lemma~\ref{largebeta},
\begin{equation}\label{s2}
\prob_{(0,2\beta^{-1})}\left(\tau_2(y) < \tau_2(\beta^{-1})\right) \le C\e^{-C'\beta y}.
\end{equation}
Now, for the second term on the right side of~\eqref{s1}, we will consider two cases depending on whether $y\in[4\beta^{-1}, \beta/4]$ or $y\geq \beta/4$.
When $y \in [4\beta^{-1}, \beta/4]$, by Lemma \ref{middle},
\begin{equation}\label{detail2}
\mathbb{E}_{(0, y)}\left(\tau_2(\beta^{-1})\right) \le C\log(\beta y).
\end{equation}
For $y\geq \beta/4$, note that
\begin{multline}\label{eq:s2-1}
\mathbb{E}_{(0, y)}\left(\tau_2(\beta^{-1})\right) = \mathbb{E}_{(0, y)}\left(\tau_2(\beta/4)\right) + \mathbb{E}_{(0, y)}\left[(\sigma\left(\tau_2(\beta/4)\right) \wedge \tau_2(\beta^{-1})) - \tau_2(\beta/4)\right]\\
 + \mathbb{E}_{(0, y)}\left[\tau_2(\beta^{-1}) - (\sigma\left(\tau_2(\beta/4)\right) \wedge \tau_2(\beta^{-1}))\right],
\end{multline}
where recall that  $\sigma(t) = \inf\{s \ge t: Q_1(s) = 0\}$.
Now, by Lemma \ref{linfall}, $\mathbb{E}_{(0, y)}\left(\tau_2(\beta/4)\right) \le Cy/\beta$. 
Also, since $Q_2$ decreases exponentially when $Q_1$ is negative, 
$$\mathbb{E}_{(0, y)}\left[(\sigma\left(\tau_2(\beta/4)\right) \wedge \tau_2(\beta^{-1})) - \tau_2(\beta/4)\right] \le \log(\beta^2/4).$$ 
Furthermore, by the strong Markov property and Lemma \ref{middle},
\begin{align*}
\mathbb{E}_{(0, y)}\left[\tau_2(\beta^{-1}) - (\sigma\left(\tau_2(\beta/4)\right) \wedge \tau_2(\beta^{-1}))\right] &\le \sup_{z \in [\beta^{-1}, \beta/4]}\mathbb{E}_{(0,z)}(\tau_2((2\beta)^{-1}))\\
& \le \sup_{z \in [\beta^{-1}, \beta/4]}C \log(2\beta z) \le C\log(\beta^2/2).
\end{align*}
Thus, using the above bounds in~\eqref{eq:s2-1} we obtain for $y\geq \beta/4$,
\begin{equation}\label{detail1}
\mathbb{E}_{(0, y)}\left(\tau_2(\beta^{-1})\right) \le C\left(\frac{y}{\beta} + \log \beta \right).
\end{equation}
Using \eqref{s2}, \eqref{detail2}, and \eqref{detail1} in~\eqref{s1}, and the lower bound on $\mathbb{E}_{(0, 2\beta^{-1})}\left(\Xi_0\right)$ obtained in Lemma~\ref{renexp}, we get for $y \ge 4\beta^{-1}$,
\begin{equation}\label{upart1}
\pi(Q_2(\infty) \ge y) \le C_1^+\e^{-C_2^+\beta^2}\e^{-C_2^+ \beta y}
\end{equation}
for appropriate choice of $C_1^+, C_2^+$. 

\noindent
(b) Now, consider $y \in [4\beta \e^{-\mathcal{C}_1^-\e^{\mathcal{C}_2^-\beta^2}}, 4\beta^{-1})$. Then
\begin{eq}\label{s5}
\mathbb{E}_{(0, 2\beta^{-1})}\Big(\int_{0}^{\Xi_0}\mathbf{1}_{[Q_2(s) \ge y]}ds\Big) \le \mathbb{E}_{(0, 2\beta^{-1})}\Big(\tau_2(y/4)\Big) &+ \mathbb{E}_{(0,y/4)}\Big(\int_0^{\tau_2(2\beta^{-1})}\mathbf{1}_{[Q_2(s) \ge y/2]}ds\Big)\\
&\le C \log\Big(\frac{8}{\beta y}\Big),
\end{eq}
where the last step is a consequence of Lemma \ref{middle} and Lemma \ref{smallval}. This, along with the lower bound on $\mathbb{E}_{(0, 2\beta^{-1})}\left(\Xi_0\right)$ obtained in Lemma \ref{renexp}, gives for $y \in [4\beta \e^{-\mathcal{C}_1^-\e^{\mathcal{C}_2^-\beta^2}}, 4\beta^{-1})$,
\begin{equation}\label{upart2}
\pi(Q_2(\infty) \ge y) \le C_1^+\e^{-C_2^+\beta^2}\log\left(\frac{8}{\beta y}\right).
\end{equation}
It is straightforward to check that the upper bound claimed in the theorem follows from \eqref{upart1} and \eqref{upart2}. \\

\noindent
\textit{Proof of the lower bound.}
(a) As before, we will first consider $y \ge \beta^{-1}$. 
Writing $\tau_2' = \inf\{ t \ge \tau_2(2y): Q_2(t) = y\}$,
\begin{eq}\label{s6}
\mathbb{E}_{(0, 2\beta^{-1})}\left(\int_{0}^{\Xi_0}\mathbf{1}_{[Q_2(s) \ge y]}ds\right)
&\ge \mathbb{E}_{(0, 2\beta^{-1})}\left(\mathbf{1}_{[\tau_2(2y) < \tau_2(\beta^{-1})]}\left(\tau_2'-\tau_2(2y)\right) \right)\\
 &= \prob_{(0,2\beta^{-1})}\left(\tau_2(2y) < \tau_2(\beta^{-1})\right) \mathbb{E}_{(0, 2y)}\left(\tau_2(y)\right).
\end{eq}
As $2y \ge 2\beta^{-1}$, therefore, by Lemma~\ref{Q2lb},
\begin{equation}\label{s7}
\prob_{(0,2\beta^{-1}}\left(\tau_2(2y) < \tau_2(\beta^{-1})\right) \ge (1-\e^{-1})\e^{-\beta(2y-2\beta^{-1})}.
\end{equation}
Furthermore, as $Q_2(t) \ge S(t) \ge S(0) + \sqrt{2}W(t) - \beta t$ for all $t$, the hitting time of level $y$ for $Q_2$ when $(Q_1(0),Q_2(0))=(0,2y)$ is stochastically bounded below by the hitting time of level $y$ by $2y + \sqrt{2}W(t) - \beta t$. Therefore,
\begin{equation}\label{s8}
\mathbb{E}_{(0, 2y)}\left(\tau_2(y)\right) \ge C \frac{y}{\beta} \ge C\frac{1}{\beta^2},
\end{equation}
where $C$ does not depend on $\beta, y$. Using \eqref{s6}, \eqref{s7} and the upper bound on $\mathbb{E}_{(0, 2\beta^{-1})}\left(\Xi_0\right)$ obtained in Lemma \ref{renexp}, we obtain for $y \ge \beta^{-1}$,
\begin{equation}\label{lpart1}
\pi(Q_2(\infty) \ge y) \ge C_1^-\e^{-C_2^-\beta^2}\e^{-C_2^- \beta y}
\end{equation}
for appropriate choice of $C_1^-, C_2^-$. 

\noindent
(b) Now we consider $y \in  [4\beta \e^{-\mathcal{C}_1^-\e^{\mathcal{C}_2^-\beta^2}}, \beta^{-1})$.
As $Q_2(t) \ge Q_2(s)\e^{-(t-s)}$ for any $0 \le s<t$, $\tau_2(y) \ge \log(2/(\beta y))$ (when $(Q_1(0), Q_2(0)) = (0,2\beta^{-1})$). Therefore,
\begin{eq}\label{s9}
&\mathbb{E}_{(0, 2\beta^{-1})}\Big(\int_{0}^{\Xi_0}\mathbf{1}_{[Q_2(s) \ge y]}ds\Big) \ge \mathbb{E}_{(0, 2\beta^{-1})}\left(\alpha_2 \wedge \tau_2(y)\right) \\
&\ge \mathbb{E}_{(0, 2\beta^{-1})}\Big(\alpha_2 \wedge \log\Big(\frac{2}{\beta y}\Big)\Big)
\ge \log\Big(\frac{2}{\beta y}\Big)\prob_{(0,2\beta^{-1})}\Big(\alpha_2 \ge \log\Big(\frac{2}{\beta y}\Big)\Big).
\end{eq}
Define the stopping times:
$$
\mathbf{G}_1 = \inf\{t \ge \alpha_1: Q_1(t)=0\}, \ \ \mathbf{G}_2 = \inf\{t \ge \mathbf{G}_1: Q_1(t)=-\beta/4\}.
$$
$Q_2$ is decreasing on $[\alpha_1, \mathbf{G}_1]$ and $Q_1(t) \le S(t) \le S(\mathbf{G}_1) + \sqrt{2}(W(t) - W(\mathbf{G}_1)) -3\beta (t-\mathbf{G}_1)/4$ for $t \in [\mathbf{G}_1, \mathbf{G}_2]$. As $Q_2(t)=2\beta^{-1}$ for some $t \in [\mathbf{G}_1, \mathbf{G}_2]$ if and only if $S(t)=2\beta^{-1}$, therefore by applying the strong Markov property at $\mathbf{G}_1$, for any $\beta \ge 2$,
\begin{align*}
&\prob_{(0,2\beta^{-1})}\Big(\sup_{t \in [0, \mathbf{G}_2]}Q_2(t) < 2\beta^{-1}\Big) \ge \inf_{z \in (0, \beta^{-1})}\prob_{(0,z)}\left(\tau_1(-\beta/4) < \tau_2(2\beta^{-1})\right)\\
&\ge \inf_{z \in (0, \beta^{-1})}\prob_{(0,z)}\left(S(t) \text{ hits } -\beta/4 \text{ before } 2\beta^{-1}\right) \ge \prob\left(\sqrt{2}W(t) -3\beta t/4 \text{ hits } -\beta/2 \text{ before } \beta^{-1}\right)\\
&=\frac{\e^{3/4}-1}{\e^{3/4} - \e^{-3\beta^2/8}} \ge 1- \e^{-3/4}.
\end{align*}
By applying the strong Markov property at $\mathbf{G}_2$, for $\beta \ge \beta_0$ for sufficiently large $\beta_0 \ge 1$,
\begin{eq}\label{s10}
&\prob_{(0,2\beta^{-1})}\Big(\alpha_2 \ge \log\left(\frac{2}{\beta y}\right)\Big) \ge \prob_{(0,2\beta^{-1})}\Big(\sup_{t \in [0, \mathbf{G}_2]}Q_2(t) < 2\beta^{-1}, \alpha_2 - \mathbf{G}_2\ge \log\left(\frac{2}{\beta y}\right)\Big)\\
&\ge \prob_{(0,2\beta^{-1})}\Big(\sup_{t \in [0, \mathbf{G}_2]}Q_2(t) < 2\beta^{-1}\Big)\inf_{z \in (0, 2\beta^{-1})}\prob_{(-\beta/4,z)}\Big(\tau_1(0) \ge \log\left(\frac{2}{\beta y}\right)\Big)\\
&\ge (1- \e^{-3/4})\inf_{z \in (0, 2\beta^{-1})}\prob_{(-\beta/4,z)}\left(\tau_1(0) \ge \mathcal{C}_1^-\e^{\mathcal{C}_2^-\beta^2}\right) \ \text{(as $y \ge 4\beta \e^{-\mathcal{C}_1^-\e^{\mathcal{C}_2^-\beta^2}}$)}\\
&\ge (1- \e^{-3/4})\left(1-\mathcal{D}_1 \e^{-\mathcal{D}_2\beta^2}\right) \quad \text{(by \eqref{problb})}\qquad \ge \frac{1}{2}(1- \e^{-3/4}).
\end{eq}
From \eqref{s9} and \eqref{s10}, we obtain
\begin{equation*}
\mathbb{E}_{(0, 2\beta^{-1})}\left(\int_{0}^{\Xi_0}\mathbf{1}_{[Q_2(s) \ge y]}ds\right) \ge C \log\left(\frac{2}{\beta y}\right),
\end{equation*}
which, along with the upper bound on the expectation of the renewal time obtained in Lemma \ref{renexp}, gives us for $y \in  [4\beta \e^{-\mathcal{C}_1^-\e^{\mathcal{C}_2^-\beta^2}}, \beta^{-1})$,
\begin{equation}\label{lpart2}
\pi(Q_2(\infty) \ge y) \ge C_1^-\e^{-C_2^-\beta^2}\log\left(\frac{2}{\beta y}\right).
\end{equation}
It is straightforward to check that the lower bound claimed in the theorem follows from \eqref{lpart1} and \eqref{lpart2}. The $L^p$ convergence claimed in the theorem is immediate from the upper bound.
\end{proof}

\begin{proof}[Proof of Corollary~\ref{intermit}]
From the lower bound in Theorem \ref{largestat},
$$
\mathbb{E}_{\pi}\left(Q_2(\infty)\right) \ge \int_{\beta^{-1}}^{\infty}C_1^-\e^{-C_2^-\beta^2}\e^{-C_2^- \beta y}dy = \Big(\frac{C_1^-\e^{-C_2^-}}{C_2^-\beta}\Big)\e^{-C_2^-\beta^2},
$$
which proves the lower bound on the expectation of $Q_2(\infty)$. 
To get the upper bound, we will first prove the condensation result. Note that from the upper bound in Theorem \ref{largestat}, it is clear that if we pick a positive constant $\mathcal{C}^*$ (not depending on $\beta$) such that
$$
\e^{-\e^{\mathcal{C}^*\beta^2}} \ge \Big(2\beta^{-1}\e^{-\e^{C_2^+\beta^2/2}}\Big) \vee \Big(4\beta \e^{-\mathcal{C}_1^-\e^{\mathcal{C}_2^-\beta^2}}\Big),
$$
then for all $\beta \ge \beta_0$ sufficiently large,
$$
\pi\Big(Q_2(\infty) \ge \e^{-\e^{\mathcal{C}^*\beta^2}}\Big) \le 2C_1^+\e^{-C_2^+}\e^{-C_2^+\beta^2/2}.
$$
This, in turn, gives the upper bound on the expectation using the upper bound in Theorem \ref{largestat} as follows:
\begin{align*}
\mathbb{E}_{\pi}\left(Q_2(\infty)\right) &\le \int_0^{\e^{-\e^{\mathcal{C}^*\beta^2}}}\pi\left(Q_2(\infty) \ge y\right)dy + \int_{\e^{-\e^{\mathcal{C}^*\beta^2}}}^{\beta^{-1}}\pi\left(Q_2(\infty) \ge y\right)dy + \int_{\beta^{-1}}^{\infty}\pi\left(Q_2(\infty) \ge y\right)dy\\
&\le \e^{-\e^{\mathcal{C}^*\beta^2}} + \frac{2C_1^+\e^{-C_2^+}}{\beta}\e^{-C_2^+\beta^2/2}+\left(\frac{C_1^+\e^{-C_2^+}}{C_2^+\beta}\right)\e^{-C_2^+\beta^2},
\end{align*}
which proves the upper bound on the expectation.
\end{proof}

\begin{proof}[Proof of Proposition~\ref{Q1fin}]
Initiate the diffusion process at stationarity, i.e., $(Q_1(0), Q_2(0))$ is distributed as the steady state distribution $\pi$. To avoid more notation, we will use $\mathbb{E}_{\pi}$ to also denote the expectation operator corresponding to the law of the stationary diffusion process on the path space with initial distribution $\pi$.  
For any $n \ge 1$, applying Ito's formula to $(Q_1(t)+\beta)^{2n}$, we obtain
\begin{eq}\label{ito1}
 (Q_1(t) + \beta)^{2n} &= (Q_1(0) + \beta)^{2n} + 2n\sqrt{2}\int_0^t (Q_1(s) + \beta)^{2n-1}dW(s) -2n\int_0^t (Q_1(s) + \beta)^{2n}ds\\
 &\hspace{2.25cm}+ 2n\int_0^t (Q_1(s) + \beta)^{2n-1}Q_2(s)ds - 2n\int_0^t (Q_1(s) + \beta)^{2n-1}dL(s)\\ 
 &\hspace{6.75cm}+ 2n(2n-1)\int_0^t (Q_1(s) + \beta)^{2n-2}ds.
\end{eq}
By \cite[Theorem 2.1]{BM18}, for any $\beta>0$, $\pi$ has an exponential tail in $Q_2$ and a Gaussian tail in $Q_1$, and hence, for any $m, n \ge 1$, $\mathbb{E}_{\pi}\left(|Q_1(0)|^m|Q_2(0)|^n)\right) < \infty$. 
From this observation, we conclude that the local martingale $\int_0^tQ_1^m(s)Q_2^n(s)dW(s)$ has a finite expected quadratic variation for each $t$ and thus, by \cite[Pg. 73, Corollary 3]{Protter05}, it is indeed a true martingale having zero expectation for each $t$. 
Further, note that the times of increase of $L$ are precisely the times $s$ when $Q_1(s) = 0$ and therefore, we can replace the integral $2n\int_0^t (Q_1(s) + \beta)^{2n-1}dL(s)$ above by $2n\int_0^t \beta^{2n-1}dL(s)$. 
Moreover, as the initial distribution is the stationary distribution $\pi$, for any integers $k,l \ge 0$ and any $t \ge 0$, $\mathbb{E}_{\pi}\left((Q_1(t) + \beta)^kQ_2(t)^l\right) = \mathbb{E}_{\pi}\left((Q_1(0) + \beta)^kQ_2(0)^l\right)$. Thus, taking expectation with respect to $\mathbb{E}_{\pi}$ in \eqref{ito1} and applying Fubini's theorem, we obtain for any $\beta>0, t >0$,
\begin{align*}
-2n\mathbb{E}_{\pi}(Q_1(0) + \beta)^{2n} &+ 2n\mathbb{E}_{\pi}((Q_1(0) + \beta)^{2n-1}Q_2(0)) \\
&+ 2n(2n-1)\mathbb{E}_{\pi}(Q_1(0) + \beta)^{2n-2}
 - 2n\beta^{2n-1}\frac{\mathbb{E}_{\pi}(L(t))}{t} = 0.
\end{align*}
Note that $L(t) \ge 0$ for all $t \ge 0$. Moreover, as $Q_1(t) + \beta \le \beta$ and $2n-1$ is odd, $(Q_1(t) + \beta)^{2n-1} \le \beta^{2n-1}$. Using these observations and Corollary \ref{intermit} in the above equation,
\begin{align*}
2n\mathbb{E}_{\pi}(Q_1(0) + \beta)^{2n} &\le 2n\beta^{2n-1}\mathbb{E}_{\pi}(Q_2(0)) + 2n(2n-1)\mathbb{E}_{\pi}(Q_1(0) + \beta)^{2n-2}\\
&\le 2n\beta^{2n-1}\e^{-C_2\beta^2} + 2n(2n-1)\mathbb{E}_{\pi}(Q_1(0) + \beta)^{2n-2}.
\end{align*}
The lemma now follows by induction.
\end{proof}
\begin{proof}[Proof of Theorem~\ref{betainfty}]
For cleaner notation, we will suppress the dependence of the stationary distribution on $\beta$. 

We will use the method of moments. For any $n \ge 0$, the $2n$-th moment of the standard normal distribution is given by $m^*_{2n} = \frac{(2n)!}{2^n n!}$ and $(2n+1)$-moment is $m^*_{2n+1}=0$. 
Thus, this distribution has moderately growing moments (that is, $m^*_n \le AC^n n!$ for some $A,C>0$ and all integers $n \ge 1$) in the sense of Definition 2.51 of \cite{kirsch2015}. By \cite[Theorem 2.56]{kirsch2015}, it suffices to prove that for all $n \ge 1$, $\mathbb{E}_{\pi}\left((Q_1(\infty) + \beta)^n\right) \rightarrow m^*_n$ as $\beta \rightarrow \infty$. 

As before, let the diffusion process start at stationarity, i.e., $(Q_1(0), Q_2(0))$ is distributed as the stationary distribution $\pi$. 
For any $n \ge 2$, applying Ito's formula to $(Q_1(t)+\beta)^n$, taking expectation with respect to $\mathbb{E}_{\pi}$, and then applying Fubini's theorem as in the proof of Proposition~\ref{Q1fin} above, we obtain for any $\beta>0, t >0$,
\begin{eq}\label{cl1}
-n\mathbb{E}_{\pi}(Q_1(0) + \beta)^n &+ n\mathbb{E}_{\pi}((Q_1(0) + \beta)^{n-1}Q_2(0)) \\
&+ n(n-1)\mathbb{E}_{\pi}(Q_1(0) + \beta)^{n-2}
- n\beta^{n-1}\frac{\mathbb{E}_{\pi}(L(t))}{t} = 0.
\end{eq}
From the evolution equation of $Q_2(t)$, it readily follows that for any $t>0$,
$$
\frac{\mathbb{E}_{\pi}(L(t))}{t} = \mathbb{E}_{\pi}(Q_2(0)).
$$
From this observation and Corollary \ref{intermit},
\begin{equation}\label{cl2}
n\beta^{n-1}\frac{\mathbb{E}_{\pi}(L(t))}{t} \le n\beta^{n-1}\e^{-C_2\beta^2} \rightarrow 0 \ \text{ as } \beta \rightarrow \infty.
\end{equation}
By Proposition \ref{Q1fin}, there is $\beta_0>0$ such that for all $\beta \ge \beta_0$, the following holds: for each $n\ge 2$, there is $C'_n >0$ not depending on $\beta$ such that $\mathbb{E}_{\pi}\left((Q_1(0) + \beta)^{2n-2}\right) \le C'_n$. Moreover, from the upper tail estimate in Theorem \ref{largestat}, $\mathbb{E}_{\pi}(Q_2^2(0)) \rightarrow 0$ as $\beta \rightarrow \infty$. Using these observations along with the Cauchy-Schwarz inequality, we have
\begin{equation}\label{cl3}
\mathbb{E}_{\pi}(|Q_1(0) + \beta|^{n-1}Q_2(0)) \le \left(\mathbb{E}_{\pi}((Q_1(0) + \beta)^{2n-2})\right)^{1/2}\left(\mathbb{E}_{\pi}Q_2^2(0)\right)^{1/2} \rightarrow 0 \ \text{ as } \beta \rightarrow \infty.
\end{equation}
We proceed by induction. It follows readily from the evolution equation of $Q_1(t) + Q_2(t)$ that $\mathbb{E}_{\pi}(Q_1(0) + \beta) = 0$. Hence using induction along with \eqref{cl2} and \eqref{cl3} in \eqref{cl1}, we conclude that for each $n \ge 0$, $\lim_{\beta \rightarrow \infty} \mathbb{E}_{\pi}\left((Q_1(0) + \beta)^{2n+1}\right)$ exists and equals zero. Using $n=2$ in \eqref{cl1} along with \eqref{cl2} and \eqref{cl3}, it follows that $\lim_{\beta \rightarrow \infty} \mathbb{E}_{\pi}\left((Q_1(0) + \beta)^{2}\right)$ exists and equals $2$. Using this and induction along with \eqref{cl2} and \eqref{cl3} in \eqref{cl1}, we conclude that for each $n \ge 1$, $\lim_{\beta \rightarrow \infty} \mathbb{E}_{\pi}\left((Q_1(0) + \beta)^{2n}\right)$ exists and equals $m^*_{2n}$, completing the proof of the theorem.
\end{proof}

\section{Analysis in the small-$\beta$ regime}\label{sec:small-beta}
In this section, we will investigate the behavior of the stationary distribution in the regime when $\beta \le \beta_0$ for sufficiently small $\beta_0$.
For any such fixed $\beta$, take $B=2M_0\beta^{-1}$  in \eqref{rendef}, where $M_0 = c_1'$ is a fixed constant (independent of $\beta$) that appears in Lemma \ref{lem:q2regeneration}.
As in the large-$\beta$ regime in Section~\ref{sec:analysis-large}, our analysis in the small-$\beta$ regime 
relies on several key hitting time estimates. 
We state these results on hitting times in Subsection~\ref{ssec:hitting-small}.
In Subsection~\ref{ssec:inter-small} we will provide upper and lower bounds in the expected inter-regeneration time $\mathbb{E}_{(0, 4M_0\beta^{-1})}\left(\Xi_0\right)$.
The hitting-time results of Subsections~\ref{ssec:hitting-small} will be used to obtain sharp bounds on the numerator on the right-side of~\eqref{eq:pi}, i.e., the amount of time the process spends on various regions within one renewal cycle.
Combining the results of Subsections~\ref{ssec:hitting-small} and \ref{ssec:inter-small} we prove in Subsection~\ref{ssec:proof-small}  the main results for the small-$\beta$ regime. 

As before, we will use $\beta_0$ to denote the upper bound on $\beta$ for the assertion of each of the following lemmas to hold (specific upper bounds change between lemmas and a common upper bound is obtained by taking the minimum of these bounds). 
Also, in the proofs, $C, C', C", C_1, C_2$ will represent generic positive constants that do not depend on $\beta$ whose values will change between steps and from line to line.

\subsection{Hitting time estimates}\label{ssec:hitting-small}
As mentioned in Subsection \ref{ssec:small}, the main challenge in this regime is to patch up the different behaviors of $Q_1$ for small and large values of $Q_2$. We record the following estimates, which describe these different behaviors individually.
\begin{lemma}\label{qoneexc-i}
There exist $\beta_0 \in (0,1)$ and positive constant $C$, such that the following hold for all fixed $\beta \in (0, \beta_0]$.
For every $x \ge 2\beta^{1/4}$,
$$\sup_{0<y \le \beta^{-1/2}}\mathbb{E}_{(0,y)}\Big(\int_0^{\tau_2(2\beta^{-1/2})}\mathbf{1}_{[Q_1(s) \le -x]}ds\Big) \le C\beta^{-5/4}\e^{-(x-\beta)^2/8}.$$
\end{lemma}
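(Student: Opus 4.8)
The plan is to bound the expected occupation time of the set $\{Q_1 \le -x\}$, before $Q_2$ reaches $2\beta^{-1/2}$, by comparing $Q_1$ (while $Q_2$ stays below $2\beta^{-1/2}$) to a reflected Ornstein--Uhlenbeck process and then estimating the occupation time of the comparison process above a level. First I would observe that, on the event $t \le \tau_2(2\beta^{-1/2})$ and as long as $Q_1(t) < 0$ (so $L$ is not increasing), the drift of $Q_1$ is $-Q_1(s) + Q_2(s) - \beta \le -Q_1(s) + 2\beta^{-1/2} - \beta$. Hence, by the comparison principle (Proposition~2.18 of \cite{Karatzas}, used exactly as in the proofs of Lemmas~\ref{OUhit} and the claims in Section~\ref{sec:analysis-large}), $Q_1(t)$ is stochastically dominated, up to time $\tau_2(2\beta^{-1/2})$, by the stationary-in-mean OU process
\begin{equation*}
X(t) = Q_1(0) + \sqrt{2}W(t) + \int_0^t\big(2\beta^{-1/2} - \beta - X(s)\big)\,\dif s,
\end{equation*}
which is reflected at $0$ from below only to the extent $Q_1$ is; since we care about $\{Q_1 \le -x\}$ with $x>0$, reflection at $0$ only helps and we may drop it, working with the un-reflected OU process $X$ having mean-reversion level $m_\beta := 2\beta^{-1/2}-\beta$ and, crucially, $m_\beta \le 2\beta^{-1/2}$ which is much smaller than $x$ once $\beta$ is small (recall $x \ge 2\beta^{1/4}$, but the relevant regime for the bound is $x$ potentially large). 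Therefore
\begin{equation*}
\sup_{0<y\le\beta^{-1/2}}\mathbb{E}_{(0,y)}\Big(\int_0^{\tau_2(2\beta^{-1/2})}\mathbf{1}_{[Q_1(s)\le -x]}\,\dif s\Big) \le \sup_{x_0\le 0}\mathbb{E}_{x_0}\Big(\int_0^\infty \mathbf{1}_{[X(s)\le -x]}\,\dif s\Big),
\end{equation*}
and the right-hand side is maximized (in the stochastic-domination sense) by starting $X$ at $0$ — or, more carefully, I would bound the occupation time over all of $[0,\infty)$ for the OU process started at its worst admissible initial point.

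Next I would compute the expected total occupation time of $(-\infty,-x]$ by the OU process $X$. Write $X(t) = m_\beta + Z(t)$ where $Z$ is a stationary-rate OU process with generator $\frac{\dif^2}{\dif z^2} - z\frac{\dif}{\dif z}$ and stationary law $\mathcal{N}(0,1)$. By occupation-time / Green's function formulas for the OU process (equivalently, integrating the transition density against time, or using that $\int_0^\infty \mathbb{E}_{z_0}(f(Z(s)))\,\dif s$ solves $\mathcal{L}g = -f$ with the appropriate growth condition), the expected occupation time of the half-line $\{X \le -x\} = \{Z \le -x - m_\beta\}$ is bounded by a constant times the stationary measure of that half-line divided by the spectral gap, which is $O(1)$ here; more precisely one gets a clean bound of the form $C\,\Phi(-(x+m_\beta))/\phi(\cdot)$-type expressions, but the cruder and sufficient estimate is
\begin{equation*}
\mathbb{E}_{z_0}\Big(\int_0^\infty \mathbf{1}_{[Z(s)\le -(x+m_\beta)]}\,\dif s\Big) \le C\,\mathbb{P}\big(\mathcal{N}(0,1)\le -(x+m_\beta)\big)\big/ c^* \le C'\,\e^{-(x+m_\beta)^2/2},
\end{equation*}
using the Gaussian tail bound and that the mixing time/spectral gap of $Z$ is a universal constant. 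Since $m_\beta = 2\beta^{-1/2}-\beta$ and $x \le$ is not assumed small, I would then need $(x+m_\beta)^2/2 \ge (x-\beta)^2/8$ together with an extra polynomial-in-$\beta$ slack to absorb the crossover; expanding, $(x+m_\beta)^2/2 - (x-\beta)^2/8 = \tfrac{3}{8}x^2 + (\tfrac{2}{\sqrt\beta} - \tfrac{3\beta}{4})x + \tfrac12 m_\beta^2 - \tfrac{\beta^2}{8}$, which for $\beta$ small and $x \ge 2\beta^{1/4}$ is bounded below by a constant, and in fact the term $\tfrac12 m_\beta^2 \sim 2\beta^{-1}$ dominates, giving room to pull out a factor $C\beta^{-5/4}$ (indeed much more). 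This is where the stated prefactor $\beta^{-5/4}$ comes from: it is a crude polynomial bound on $\e^{-(x+m_\beta)^2/2 + (x-\beta)^2/8}$, whose leading exponent $-2\beta^{-1}$ makes any polynomial factor harmless for small $\beta$; I would simply verify the inequality $C\,\e^{-(x+m_\beta)^2/2} \le C\beta^{-5/4}\e^{-(x-\beta)^2/8}$ holds for all $\beta \le \beta_0$ and all $x \ge 2\beta^{1/4}$ by the elementary bound just sketched.

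The main obstacle I anticipate is making the comparison-to-OU step fully rigorous in the presence of the reflection term $L$ and the genuine (rather than bounded) process $Q_2$. One must ensure that up to time $\tau_2(2\beta^{-1/2})$ we really do have $Q_2(s) \le 2\beta^{-1/2}$ — which is immediate from the definition of $\tau_2$ — and that the reflection of $Q_1$ at $0$ does not interfere with the lower comparison: intuitively it only pushes $Q_1$ up, away from $\{Q_1 \le -x\}$, so dropping it is conservative, but this needs a short monotonicity argument (pathwise, $Q_1$ with reflection is $\ge$ the same SDE without the $-L$ term would be false — rather, one compares $Q_1$ to a dominating process that has no downward reflection and a drift ceiling $-X + 2\beta^{-1/2}-\beta$; the standard way, as used elsewhere in the paper, is to note $Q_1(t) = -L(t) + (\text{something with drift } \le -Q_1 + 2\beta^{-1/2}-\beta)$ and that $-L \le 0$, so $Q_1 \le \tilde X$ where $\tilde X$ solves the SDE without reflection). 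The second, more routine, obstacle is producing the occupation-time bound for the OU process with a fully explicit constant; I would either cite a standard Green's-function computation for the OU semigroup or derive the bound $\int_0^\infty p_s^{\mathrm{OU}}(z_0, A)\,\dif s \le C\,\pi_{\mathrm{OU}}(A)$ from exponential ergodicity of the OU process (uniform in the starting point over compact sets, and here the starting point is $\le 0$, hence in a half-line we can control via monotone coupling). Everything else is elementary Gaussian tail estimation and the exponent-comparison inequality above.
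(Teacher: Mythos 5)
Your argument has a directional error at its core, and the resulting bound is in fact false, so the gap is not repairable within your strategy. You correctly note that dropping the local-time term $-L$ and replacing $Q_2$ by its ceiling $2\beta^{-1/2}$ yields a comparison process $X$ with $Q_1(t)\le X(t)$ up to $\tau_2(2\beta^{-1/2})$. But a pathwise \emph{upper} bound on $Q_1$ gives $\mathbf{1}_{[Q_1(s)\le -x]}\ge \mathbf{1}_{[X(s)\le -x]}$, i.e.\ it bounds the occupation time of $\{Q_1\le -x\}$ from \emph{below}, not from above; your displayed inequality
$\mathbb{E}\int_0^{\tau_2(2\beta^{-1/2})}\mathbf{1}_{[Q_1\le -x]}\,ds \le \sup_{x_0\le 0}\mathbb{E}_{x_0}\int_0^\infty \mathbf{1}_{[X\le -x]}\,ds$
does not follow from $Q_1\le X$. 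To bound the time spent below $-x$ from above you need a process lying \emph{below} $Q_1$, and the obstruction to constructing one globally is precisely the reflection term $-L$, which pushes $Q_1$ down whenever $Q_1=0$ and can accumulate substantially over the horizon $\tau_2(2\beta^{-1/2})$; it cannot simply be discarded. A quick sanity check also shows your final estimate cannot be right: your bound is of order $\e^{-(x+2\beta^{-1/2}-\beta)^2/2}\approx \e^{-2/\beta}$ at $x=2\beta^{1/4}$, whereas the true expected occupation time there is at least of order one (with positive probability $Q_2$ stays below $2\beta^{-1/2}$ for a unit time, during which $Q_1$, a reflected OU-type process centered near $-\beta$, spends a nonvanishing expected fraction of time below the vanishing level $2\beta^{1/4}$); the lemma's bound is correspondingly $\approx\beta^{-5/4}$, not exponentially small in $\beta^{-1}$.

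The paper's proof circumvents the local time by an excursion decomposition: it introduces the alternating hitting times of the levels $-\beta^{1/4}$ and $-\beta$ for $Q_1$, notes that during a downward excursion (before $Q_1$ returns to $0$) there is no local time and $Q_2\ge 0$, so $Q_1+\beta$ is bounded \emph{below} by a standard OU process started at $-\beta^{1/4}+\beta$; a scale-function computation then shows the per-excursion expected time below $-x$ is of order $\beta^{1/4}\e^{-(x-\beta)^2/2}$ (up to polynomial factors), the number of excursions up to time $t$ is $O(\beta^{-1/2}t)$ by a sub-exponential/Chernoff argument, and a tail estimate $\prob(\tau_2(2\beta^{-1/2})\ge k\beta^{-1})\le C\e^{-C'\sqrt{k}}$ lets one truncate the horizon and optimize, producing the prefactor $\beta^{-5/4}$ and the relaxed exponent $(x-\beta)^2/8$. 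If you want to salvage your write-up, you would need to replace your upper comparison by this kind of lower comparison valid on excursions (or some other device controlling $L$), at which point you are essentially reconstructing the paper's argument.
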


Lemma \ref{qoneexc-i} captures the behavior of $Q_1$ when $Q_2 \le 2\beta^{-1/2}$. In this region, $Q_1$ behaves like a reflected Ornstein-Uhlenbeck process resulting in the Gaussian exponent in the bound.

\begin{lemma}\label{qoneexc-ii}
There exist $\beta_0 \in (0,1)$ and positive constants $C, C'$, such that the following hold for all fixed $\beta \in (0, \beta_0]$.
For every $x \ge 2\beta$,
$$\mathbb{E}_{(0,4M_0\beta^{-1})}\Big(\int_0^{\tau_2(2M_0\beta^{-1})}\mathbf{1}_{[Q_1(s) \le -x]}ds\Big) \le Cx\beta^{-3}\e^{-C'\frac{x}{\beta}}.$$
\end{lemma}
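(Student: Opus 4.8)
The plan is to exploit that in the range $Q_2 \approx M_0\beta^{-1}$ the component $Q_1$ behaves like a Brownian motion with a large \emph{positive} drift reflected downward at $0$, whose invariant law is exponential with rate of order $\beta^{-1}$; this is the source of the factor $\e^{-C'x/\beta}$, while the polynomial prefactor $x\beta^{-3}$ will come from the (expected) length of the excursion of $Q_2$ from $4M_0\beta^{-1}$ down to $2M_0\beta^{-1}$ together with the expected time $Q_1$ needs to climb back up after a deep excursion. Concretely, on $[0,\tau_2(2M_0\beta^{-1}))$ one has $Q_2(s) > 2M_0\beta^{-1}$, and since $Q_1(s)\le 0$ the drift of $Q_1$ at a point $q\le 0$ equals $-\beta - q + Q_2(s) \ge 2M_0\beta^{-1} - \beta =: \mu$, which is at least $M_0\beta^{-1}$ once $\beta_0$ is small enough. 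Hence, by the comparison principle for one-dimensional reflected diffusions (as used in the large-$\beta$ analysis via \cite[Proposition 2.18]{Karatzas}), on this interval $Q_1(t) \ge Z(t)$, where $Z$ is the process $\sqrt{2}\,W(t) + \mu t$ reflected downward at $0$ and started at $0$.

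Using this comparison I would first bound the probability that $Q_1$ ever reaches $-x$ within the $Q_2$-excursion. Since $Q_1 \ge Z$ on the interval, the event $\{\tau_1(-x) < \tau_2(2M_0\beta^{-1})\}$ is contained in $\{Z \text{ reaches } -x \text{ before } \tau_2(2M_0\beta^{-1})\}$, whose probability is, for any deterministic horizon $T$, at most $\mathbb{P}(Z \text{ reaches } -x \text{ before time } T) + \mathbb{P}(\tau_2(2M_0\beta^{-1}) > T)$. The first term is controlled by a union bound over the excursions of $Z$ away from $0$ in $[0,T]$ — each reaches depth $x$ with probability at most $\e^{-\mu x}$ (the all-time maximum of the reflected Brownian motion $-Z$ restricted to one excursion has an exponential$(\mu)$-type bound) and there are $O(\mu T)$ of them in expectation — giving $\le C\mu T\e^{-\mu x}$; the second term needs an upper bound on $\mathbb{E}_{(0,4M_0\beta^{-1})}(\tau_2(2M_0\beta^{-1}))$ of polynomial order in $\beta^{-1}$, which itself requires a separate argument (it is not immediate, since while $Q_2\ge 2M_0\beta^{-1}$ the sum $S=Q_1+Q_2$ has drift $-\beta-Q_1\ge -\beta$ and so decreases only slowly; one argues via the slow diffusive motion of $Q_2$ on the manifold where $Q_1$ stays near zero, or via a Lyapunov function controlling the local time accumulated by $Q_1$). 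Optimizing over $T$ then yields $\mathbb{P}(\tau_1(-x) < \tau_2(2M_0\beta^{-1})) \le C\beta^{-a}\e^{-\mu x/2} \le C\beta^{-a}\e^{-M_0 x/(2\beta)}$.

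For the occupation time itself I would apply the strong Markov property at $\tau_1(-x)$ and write the integral as $\mathbf{1}_{[\tau_1(-x)<\tau_2(2M_0\beta^{-1})]}\int_{\tau_1(-x)}^{\tau_2(2M_0\beta^{-1})}\mathbf{1}_{[Q_1(s)\le -x]}\,ds$; after $\tau_1(-x)$ the comparison process climbs back from $-x$ to $0$ with drift $\ge\mu$, so the expected time spent below $-x$ per such excursion is $O(1/\mu)=O(\beta)$, while the expected number of excursions of $Q_1$ down to $-x$ before $\tau_2(2M_0\beta^{-1})$ is again polynomially bounded in $\beta^{-1}$ (the linear-in-$x$ factor tracking the $\sim x/\mu \sim x\beta$ expected time to return to $0$ after reaching depth $x$). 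Multiplying the reach probability by the post-$\tau_1(-x)$ occupation bound and collecting the powers of $\beta$ produces the asserted $Cx\beta^{-3}\e^{-C'x/\beta}$; note the contrast with Lemma~\ref{qoneexc-i}, where for $Q_2$ only of order $\beta^{-1/2}$ the drift on $Q_1$ is still essentially a restoring $-q$ term and one gets a Gaussian rather than exponential exponent. The main obstacle is exactly the coupling of the \emph{time-homogeneous} reflected-Brownian comparison with the random, $Q_2$-dependent horizon $\tau_2(2M_0\beta^{-1})$: one must show, uniformly over small $\beta$, that a single $Q_2$-excursion is only polynomially long and contains only polynomially many attempts of $Q_1$ to descend to $-x$, without the feedback between a very negative $Q_1$ (which slows the decrease of $Q_2$, and hence lengthens the excursion) and a long excursion destroying the estimate — which is precisely why the exponential reach probability $\e^{-M_0x/(2\beta)}$ must be established first and then fed into the excursion-length bookkeeping.
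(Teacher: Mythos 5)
Your proposal is correct in substance and follows essentially the same route as the paper: dominate $Q_1$ from below by a Brownian motion with drift of order $\beta^{-1}$ while $Q_2 \ge 2M_0\beta^{-1}$, obtain an $\e^{-C'x/\beta}$ probability of descending to $-x$ together with an $O(x\beta)$ climb-back time, and multiply by a polynomially bounded number of descent attempts. The paper implements this bookkeeping via the stopping times $\Delta^*_k$ (excursions of $Q_1$ between $-\beta$ and $0$), scale-function estimates for the unreflected drifted Brownian motion on each such excursion, and an expected excursion count of order $\beta^{-4}$ obtained from precisely the ingredient you flag as needing a separate argument, namely the hitting-time bound for $\tau_2(2M_0\beta^{-1})$, which is the imported Lemma~\ref{lem:q2regeneration}(ii) from \cite{BM18}.
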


Lemma \ref{qoneexc-ii} considers the region when $Q_2$ is large, namely $\Theta\left(\beta^{-1}\right)$. In this region, $Q_1$ behaves roughly like a reflected Brownian motion with drift $-\beta^{-1}$, resulting in the exponential decay of the tail, i.e., $\e^{-C'\frac{x}{\beta}}$ term in the bound.

\begin{lemma}\label{qoneexc-iii}
There exist $\beta_0 \in (0,1)$ and positive constants $C, C'$, such that the following hold for all fixed $\beta \in (0, \beta_0]$.
For every $x \ge 2\beta^{1/2}$,
$$\sup_{\beta^{-1/2} \le y \le 2M_0\beta^{-1}}\mathbb{E}_{(0,y)}\Big(\int_0^{\tau_2(\beta^{-1/2}) \wedge \tau_2(4M_0\beta^{-1})}\mathbf{1}_{[Q_1(s) \le -x]}ds\Big) \le Cx\beta^{-5/2}\e^{-C'\frac{x}{\sqrt{\beta}}}.$$
\end{lemma}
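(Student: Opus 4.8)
The plan is to follow the template of Lemmas~\ref{qoneexc-i} and~\ref{qoneexc-ii}, adapted to the intermediate band of $Q_2$-values. Set $T:=\tau_2(\beta^{-1/2})\wedge\tau_2(4M_0\beta^{-1})$. By path-continuity $Q_2(s)\in[\beta^{-1/2},4M_0\beta^{-1}]$ for all $s\in[0,T]$; in particular $Q_2(s)\geq\beta^{-1/2}$ and $-Q_1(s)\geq 0$ there, so on $[0,T]$ the drift of $Q_1$ satisfies $Q_2(s)-\beta-Q_1(s)\geq\beta^{-1/2}-\beta=:m$. First I would use the comparison theorem for one-dimensional reflected diffusions driven by the common Brownian motion $W$ to build a Brownian motion $\widehat{X}$ with constant drift $m$ and diffusion coefficient $\sqrt 2$, reflected downward at $0$, started from $\widehat{X}(0)=0=Q_1(0)$, with $\widehat{X}(t)\leq Q_1(t)$ for all $t\leq T$. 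Then $\mathbf{1}_{[Q_1(s)\leq -x]}\leq\mathbf{1}_{[\widehat{X}(s)\leq -x]}$ on $[0,T]$, so it suffices to bound $\mathbb{E}_{(0,y)}\!\left(\int_0^{T}\mathbf{1}_{[\widehat{X}(s)\leq -x]}\,ds\right)$. For $\beta$ small one has $m\geq\tfrac{1}{2}\beta^{-1/2}>0$, and the stationary law of $\widehat{X}$ on $(-\infty,0]$ has density $m\,\e^{mu}$, so $\prob(\widehat{X}(\infty)\leq -x)=\e^{-mx}\leq\e^{-x/(2\sqrt\beta)}$.

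Next I would control the occupation time of $\{\widehat{X}\leq -x\}$ on $[0,T]$ by an excursion decomposition. Let $0=u_0<u_1<\cdots$ be the successive return times of $\widehat X$ to $0$, and $v_j:=\inf\{t\geq u_{j-1}:\widehat X(t)=-x\}$; by the strong Markov property the blocks $[u_{j-1},u_j]$ are i.i.d., with $\prob_0(v_1<u_1)\leq\e^{-mx}$ and, on the event $\{v_1<u_1\}$, expected time spent below $-x$ before $u_1$ of order $\mathrm{poly}(x,\beta^{-1})$, because the drift pushes $\widehat X$ back up at rate $m$. Hence the number $N(T)$ of blocks begun before $T$ obeys $\mathbb{E}(N(T))\leq C\,\mathbb{E}(T)\,\e^{-mx}$ up to polynomial corrections (renewal/Wald bookkeeping, using $\mathbb{E}_0(u_1)\geq c$ and that blocks reaching $-x$ are exponentially rare), and combining with the per-block occupation bound yields $\mathbb{E}_{(0,y)}\!\left(\int_0^{T}\mathbf{1}_{[\widehat{X}(s)\leq -x]}\,ds\right)\leq C\,\mathbb{E}(T)\,\mathrm{poly}(x,\beta^{-1})\,\e^{-x/(2\sqrt\beta)}$, which gives the claimed estimate once $\mathbb{E}(T)$ is controlled.

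The remaining input is $\mathbb{E}_{(0,y)}(T)=O(\beta^{-\kappa})$ for a fixed $\kappa$ (together with the polynomial factors above this produces the $\beta^{-5/2}$ prefactor). For this I would use that on $[0,T]$ the process $S:=Q_1+Q_2$ satisfies $dS=\sqrt 2\,dW-(\beta+Q_1)\,dt$ with $0\geq Q_1\geq\widehat X$, so $-Q_1\leq-\widehat X$ is typically $O(1)$ and $Q_2=S-Q_1$ stays within $O(1)$ of $S$; since $Q_2$ must leave the band $[\beta^{-1/2},4M_0\beta^{-1}]$ of width $\Theta(\beta^{-1})$ by time $T$, $T$ is dominated by the exit time of a Brownian motion with $O(1)$ drift from an interval of length $\Theta(\beta^{-1})$, whose expectation is $O(\beta^{-2})$; the downward excursions of $Q_1$ only shorten $T$ (there $dL=0$ and $Q_2$ decays exponentially). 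Alternatively this bound can be quoted from the small-$\beta$ inter-regeneration estimates, exactly as Lemma~\ref{renexp} is used in the large-$\beta$ regime.

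I expect the main obstacle to be the occupation-time step on the \emph{random} horizon $[0,T]$: one cannot simply write $\mathbb{E}\big(\int_0^T\mathbf{1}_{[\widehat X(s)\leq -x]}\,ds\big)\leq\mathbb{E}(T)\sup_{s}\prob(\widehat X(s)\leq -x)$, because $T$ and $\widehat X$ are both functions of the same driving noise and $\widehat X$ restarted inside $[0,T]$ need not be close to its stationary law. The excursion/renewal bookkeeping circumvents this, and propagating the polynomial-in-$x$ and polynomial-in-$\beta^{-1}$ factors through it so as to land precisely on the stated $Cx\beta^{-5/2}\e^{-C'x/\sqrt\beta}$ is the delicate part; the proofs of Lemmas~\ref{qoneexc-i} and~\ref{qoneexc-ii} should supply the needed machinery.
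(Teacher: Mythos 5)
Your overall architecture is the same as the paper's: on $[0,T]$ with $T=\tau_2(\beta^{-1/2})\wedge\tau_2(4M_0\beta^{-1})$ one has $Q_2\ge\beta^{-1/2}$, so $Q_1$ dominates a process with upward drift of order $\beta^{-1/2}$; one then decomposes $[0,T]$ into downward excursions, bounds the time spent below $-x$ per excursion by (probability of reaching $-x$)$\times$(expected return time) $\lesssim x\sqrt\beta\,\e^{-C'x/\sqrt\beta}$, and multiplies by an expected excursion count controlled through the tail of $\tau_2(4M_0\beta^{-1})$ (which is $O(\beta^{-2})$ in expectation, as you say). The paper does exactly this with the stopping times $\Delta^{**}_k$ (down-crossings of $-\sqrt\beta$ followed by returns of $Q_1$ to $0$), a per-excursion bound as in \eqref{31}, the tail bound \eqref{4use}, and the excursion count \eqref{32}, yielding $\beta^{-3}\cdot x\sqrt\beta=x\beta^{-5/2}$.

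However, your renewal step has a genuine defect as written. For the reflected comparison process $\widehat X$, the ``successive return times to $0$'' do not form a discrete sequence: $0$ is instantaneously reflecting and regular for itself, so there is no i.i.d.\ block structure with $u_1>u_0$, and the assertion $\mathbb{E}_0(u_1)\ge c$ is false (indeed meaningless). The blocks must instead be defined by descents below a threshold such as $-\sqrt\beta$ followed by return to $0$ --- precisely the paper's $\Delta^{**}$ construction --- and then the typical block length is of order $\beta$, not order $1$, so the expected number of blocks is of order $\mathbb{E}(T)\beta^{-1}\approx\beta^{-3}$ rather than $\mathbb{E}(T)\approx\beta^{-2}$; this is what produces the stated $\beta^{-5/2}$ after multiplying by the per-block factor $x\sqrt\beta\,\e^{-C'x/\sqrt\beta}$ (your line $\mathbb{E}(N(T))\le C\,\mathbb{E}(T)\e^{-mx}$ conflates the number of blocks with the number of blocks reaching $-x$). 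Relatedly, the dependence between $T$ and the blocks cannot be settled by $\mathbb{E}(T)$ alone: the paper handles it by a Chernoff lower bound on block lengths combined with the stretched-exponential tail $\prob(\tau_2(4M_0\beta^{-1})\ge k\beta^{-2})\le C\e^{-C'\sqrt k}$, which is the quantitative version of the ``renewal/Wald bookkeeping'' you gesture at. With the threshold-level blocks and this tail input your plan goes through and coincides with the paper's proof; without them the step fails as stated.
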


Lemma \ref{qoneexc-iii} patches up the behavior in the region where $Q_2$ is $O(\beta^{-1/2})$ with the behavior where $Q_2$ is $\Theta\left(\beta^{-1}\right)$. The resulting bound we obtain decays exponentially in $x/\sqrt{\beta}$.\\

Recall that  $\sigma(t) = \inf\{s \ge t: Q_1(s) = 0\}$. The next two lemmas give estimates for the time spent by $Q_1$ below some negative threshold before $Q_1$ hits zero, for the regions where $Q_2$ is $O(\beta^{-1/2})$ and $Q_2$ is $\Theta\left(\beta^{-1}\right)$, respectively.
\begin{lemma}\label{qoneexc-iv}
There exist $\beta_0 \in (0,1)$ and positive constants $C, C'$, such that the following hold for all fixed $\beta \in (0, \beta_0]$.
For every $x \ge 4\beta^{1/2}$,
\begin{multline*}
\sup_{\beta^{-1/2} \le y \le 4M_0\beta^{-1}}\mathbb{E}_{(0,y)}\Big(\Big(\int_{\tau_2(\beta^{-1/2})}^{\sigma\left(\tau_2(\beta^{-1/2})\right)}\mathbf{1}_{[Q_1(s) \le -x]}ds\Big)\mathbf{1}_{[\tau_2(\beta^{-1/2}) < \tau_2(4M_0\beta^{-1})]}\Big)\\
\le C\left(\beta^{-7/2}\e^{-C'\frac{x}{\sqrt{\beta}}} + \e^{-(x-2\beta)^2/4}\right).
\end{multline*}
\end{lemma}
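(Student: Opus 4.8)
The plan is to condition at $\tau:=\tau_2(\beta^{-1/2})$ and analyze the single downward excursion of $Q_1$ to $0$ that follows it. Fix $\beta\le\beta_0$ small and $y\in[\beta^{-1/2},4M_0\beta^{-1}]$. On $\{\tau<\tau_2(4M_0\beta^{-1})\}$ one has $Q_2(\tau)=\beta^{-1/2}$ and $Q_1(\tau)\le 0$; write $A:=-Q_1(\tau)$. Since $\{\tau<\tau_2(4M_0\beta^{-1})\}$ and $A$ are measurable with respect to the path up to $\tau$, the strong Markov property at $\tau$ reduces the quantity in the lemma to
$$
\mathbb{E}_{(0,y)}\big(\mathbf{1}_{[\tau<\tau_2(4M_0\beta^{-1})]}\,\varphi(A)\big),\qquad \varphi(a):=\mathbb{E}_{(-a,\beta^{-1/2})}\Big(\int_0^{\tau_1(0)}\mathbf{1}_{[Q_1(s)\le -x]}\,\dif s\Big),
$$
so I would estimate $\varphi$ and the conditional law of $A$ separately and then combine. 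The two terms in the asserted bound will correspond, respectively, to an Ornstein--Uhlenbeck regime (small $Q_2$, giving the Gaussian factor $\e^{-(x-2\beta)^2/4}$) and a reflected-Brownian-with-drift regime (moderate $Q_2$, giving $\beta^{-7/2}\e^{-C'x/\sqrt\beta}$).

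\emph{Estimating $\varphi$.} On $[0,\tau_1(0)]$ the coordinate $Q_1$ is negative, so $L$ is frozen and $Q_2(s)=\beta^{-1/2}\e^{-s}\in[0,\beta^{-1/2}]$, hence the drift of $Q_1$ lies in $[-\beta-Q_1,\,-\beta+\beta^{-1/2}-Q_1]$. By the comparison principle (used just as Proposition~2.18 of \cite{Karatzas} is used elsewhere in the paper), $Q_1(s)\ge X(s)$ on $[0,\tau_1(0)]$, where $X$ is the Ornstein--Uhlenbeck process $\dif X=\sqrt2\,\dif W-(X+\beta)\,\dif s$ started at $-a$; and since $Q_1\le 0$, the state-space constraint forces $\tau_1^{Q_1}(0)\le\tau^X_0$, whence $\varphi(a)\le \mathbb{E}_{-a}\big(\int_0^{\tau^X_0}\mathbf{1}_{[X(s)\le -x]}\,\dif s\big)$. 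For $a\le x$ this factors, by the strong Markov property at $\tau^X_{-x}$, as $\mathbb{P}_{-a}(\tau^X_{-x}<\tau^X_0)\cdot\mathbb{E}_{-x}(\int_0^{\tau^X_0}\mathbf{1}_{[X\le -x]}\dif s)$; the scale function $s_X(u)=\int_0^u\e^{v^2/2+\beta v}\,\dif v$ and speed density $\tfrac12\e^{-v^2/2-\beta v}$ make the first factor a ratio of incomplete Gaussian-type integrals that is super-exponentially small in $x$ when $a\ll x$, while a Green's-function computation bounds the second by $\tfrac{x}{2(x-\beta)}\le 1$. For $a>x$ one gets instead $\varphi(a)\le \mathbb{E}_{-a}(\tau^X_{-x})+\mathbb{E}_{-x}(\int_0^{\tau^X_0}\mathbf{1}_{[X\le -x]}\dif s)$, with $\mathbb{E}_{-a}(\tau^X_{-x})=O(\log a)$ (an Ornstein--Uhlenbeck analogue of Claim~\ref{cl:threeuse}). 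The net outcome I would record is: $\varphi(a)\le C\e^{-(x-2\beta)^2/4}$ for $a\le 2\beta$, $\varphi(a)\le C\e^{-cx^2+\beta x}$ for $a\le x/2$, $\varphi(a)\le C$ for $a\le x$, and $\varphi(a)\le C(1+\log a)$ for all $a$.

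\emph{Tail of $A$ on the event.} Since $\{A\ge a\}\cap\{\tau<\tau_2(4M_0\beta^{-1})\}\subseteq\{\tau_1(-a)<\rho\}$ with $\rho:=\tau_2(\beta^{-1/2})\wedge\tau_2(4M_0\beta^{-1})$, it suffices to bound $\mathbb{P}_{(0,y)}(\tau_1(-a)<\rho)$. On $[0,\rho]$ one has $Q_2\ge\beta^{-1/2}$, so whenever $Q_1<0$ its drift $-\beta-Q_1+Q_2$ is at least $-Q_1+\mu$ with $\mu:=\beta^{-1/2}-\beta>0$; by monotonicity of Skorokhod reflection, $Q_1(s)\ge\hat X(s)$ on $[0,\rho]$, where $\hat X$ reverts to $\mu$, is reflected at $0$ from below, and starts at $0$. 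Hence $\mathbb{P}_{(0,y)}(\tau_1(-a)<\rho)\le\mathbb{P}(\tau^{\hat X}_{-a}\le T)+\mathbb{P}(\rho>T)$ for any $T>0$. An excursion estimate for $\hat X$ — the It\^o measure of downward excursions reaching depth $a$ is of order $(a+\mu)\e^{-a^2/2-\mu a}$ and the local time at $0$ over $[0,T]$ is of order $\mu T$ — gives $\mathbb{P}(\tau^{\hat X}_{-a}\le T)\le C\mu T(a+\mu)\e^{-a^2/2-\mu a}$; and from $Q_2(s)\ge S(s)=Q_1(s)+Q_2(s)\ge y+\sqrt2\,W(s)-\beta s$, the time $\rho$ is dominated by the hitting time of $\beta^{-1/2}$ by a Brownian motion with drift $-\beta$ started at $y\le 4M_0\beta^{-1}$, so $\mathbb{P}(\rho>T)\le\e^{-c\beta^2 T}$ for $T\ge C\beta^{-2}$. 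Choosing $T$ of order $a\beta^{-5/2}$ balances the two and yields $\mathbb{P}_{(0,y)}(A\ge a,\ \tau<\tau_2(4M_0\beta^{-1}))\le C\beta^{-4}\e^{-C'a/\sqrt\beta}$ for $a\ge 2\beta$.

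\emph{Combining, and the main obstacle.} Split $\mathbb{E}_{(0,y)}(\mathbf{1}_{[\tau<\tau_2(4M_0\beta^{-1})]}\varphi(A))$ over $\{A\le 2\beta\}$ and $\{A>2\beta\}$: on the first set monotonicity of $\varphi$ gives $\varphi(A)\le\varphi(2\beta)\le C\e^{-(x-2\beta)^2/4}$; on the second, a layer-cake integration of the tail bound against $\dif\varphi$, using $\varphi(a)\le C\e^{-cx^2+\beta x}$ for $a\le x/2$, $\varphi(a)\le C$ for $x/2<a\le x$, and $\varphi(a)\le C(1+\log a)$ for $a>x$, produces a bound $\le C\beta^{-7/2}\e^{-C'x/\sqrt\beta}$ after absorbing the polynomial-in-$\beta^{-1}$ and logarithmic factors into the exponent $\beta^{-7/2}$ (legitimate since $\beta\le\beta_0$). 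Adding the two pieces and taking $\sup_{y}$ gives the lemma. The delicate point is the tail estimate on $A$: $\tau$ is a stopping time and $[0,\rho]$ can be as long as order $\beta^{-2}$, so no stationary estimate for $Q_1$ is available, and obtaining the rate $\e^{-C'a/\sqrt\beta}$ with only a polynomial prefactor forces one to combine the reflected-Ornstein--Uhlenbeck comparison (which supplies the $1/\sqrt\beta$ exponent) with an $a$-dependent, sufficiently sharp control of $\rho$ together with a drawdown/excursion estimate that is uniform over the time horizon.
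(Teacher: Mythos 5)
Your overall architecture (strong Markov at $\tau=\tau_2(\beta^{-1/2})$, Ornstein--Uhlenbeck comparison for the shallow part, which reproduces the paper's bound \eqref{42} and the Gaussian term) is sound, but the tail estimate for $A=-Q_1(\tau)$ — which you yourself flag as the delicate point — has a genuine gap. You justify $\prob(\rho>T)\le \e^{-c\beta^2T}$ from the pathwise inequality $Q_2(s)\ge y+\sqrt2\,W(s)-\beta s$, but that inequality bounds $Q_2$ from \emph{below}, hence bounds $\tau_2(\beta^{-1/2})$ from below (the comparison goes the wrong way), and it gives no control at all of $\tau_2(4M_0\beta^{-1})$, whose drift comparison points away from the upper level. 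An upper tail for $\rho$ does hold, but it is the stretched-exponential bound of \eqref{4use}, $\prob_{(0,y)}(\tau_2(4M_0\beta^{-1})\ge k\beta^{-2})\le C\e^{-C'\sqrt k}$, obtained by an up-crossing argument for $S(t)$ combined with OU return-time estimates — not by the Brownian domination you invoke. Moreover, even with a corrected $\rho$-tail, your time-horizon trade-off ($T\sim a\beta^{-5/2}$, local time $\sim\mu T$, excursion depth measure) yields a prefactor of order $\beta^{-4}$ (or worse, with extra powers of $a$), and the claim that this can be ``absorbed into $\beta^{-7/2}$'' is false: $\beta^{-4}\ge\beta^{-7/2}$ for small $\beta$, and for $a$ near $2\beta$ the factor $\e^{-C'a/\sqrt\beta}$ offers no compensation. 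A related soft spot: if you integrate the uncapped tail bound $C\beta^{-4}\e^{-C'a/\sqrt\beta}$ against $d\varphi$ on $\{2\beta<a\le x/2\}$, the result is not dominated by $C\e^{-(x-2\beta)^2/4}+C\beta^{-7/2}\e^{-C'x/\sqrt\beta}$ in the window $1\lesssim x\lesssim\sqrt{\log(1/\beta)}$; you need to cap the probability at $1$ there and exploit that $\varphi(a)\le C\e^{-3(x-2\beta)^2/8}$ for $a\le x/2$ beats the exponent $1/4$.

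For contrast, the paper gets the stated $\beta^{-7/2}$ by a different decomposition that avoids any $T$-splitting: it splits on $\{Q_1(\tau)\ge -x/2\}$ versus $\{Q_1(\tau)<-x/2\}$; on the deep event it bounds the excursion integral crudely by $\tau_2(4M_0\beta^{-1})$ (legitimate since $\sigma(\tau)<\tau_2(4M_0\beta^{-1})$ on $\{\tau<\tau_2(4M_0\beta^{-1})\}$) and applies Cauchy--Schwarz, using $\big(\mathbb{E}_{(0,y)}\tau_2(4M_0\beta^{-1})^2\big)^{1/2}\le C\beta^{-2}$ from \eqref{4use} and
$\prob_{(0,y)}\big(Q_1(\tau)<-x/2,\ \tau<\tau_2(4M_0\beta^{-1})\big)\le C\beta^{-3}\e^{-C'x/\sqrt\beta}$,
the latter obtained by counting the excursions of $Q_1$ below $-\sqrt\beta$ before $\rho$ (expected number $\le C\beta^{-3}$, from the stopping times $\Delta^{**}_k$ and \eqref{32} in the proof of Lemma~\ref{qoneexc-iii}) times a per-excursion probability from the Brownian-with-drift-$(\beta^{-1/2}-\beta)$ comparison. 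This produces exactly $\beta^{-2}\cdot\beta^{-3/2}=\beta^{-7/2}$. To repair your proposal you would need to replace your $\rho$-tail step by such an excursion count (or by \eqref{4use}) and accept that your route, as written, proves at best the weaker prefactor $\beta^{-4}$.
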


\begin{lemma}\label{qoneexc-v}
There exist $\beta_0 \in (0,1)$ and positive constants $C, C'$, such that the following hold for all fixed $\beta \in (0, \beta_0]$.
For every $x \ge 4\beta$,
$$
\mathbb{E}_{(0,4M_0\beta^{-1})}\Big(\int_{\tau_2(2M_0\beta^{-1})}^{\sigma\left(\tau_2(2M_0\beta^{-1})\right)}\mathbf{1}_{[Q_1(s) \le -x]}ds\Big)
\le C\left(\beta^{-4}\e^{-C'\frac{x}{\beta}} + \e^{-(x-2\beta)^2/4}\right).
$$
\end{lemma}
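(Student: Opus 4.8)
The plan is to condition on $\mathcal{F}_{\theta_0}$, where $\theta_0:=\tau_2(2M_0\beta^{-1})$, and analyse the recovery window $[\theta_0,\sigma_0]$ with $\sigma_0:=\sigma(\theta_0)$. The key structural fact is that on $[\theta_0,\sigma_0]$ one has $Q_1\le 0$, so the local time $L$ is frozen and $Q_2$ decays \emph{deterministically}, $Q_2(\theta_0+v)=2M_0\beta^{-1}\e^{-v}$; hence on this window $Q_1$ is, until it first returns to $0$, the explicit time-inhomogeneous Ornstein--Uhlenbeck process
\[
Q_1(\theta_0+v)=-a\e^{-v}-\beta(1-\e^{-v})+2M_0\beta^{-1}v\e^{-v}+\sqrt{2}\int_0^v\e^{-(v-w)}dW(\theta_0+w),\qquad a:=-Q_1(\theta_0)\ge 0,
\]
whose mean path leaves $-a$, is pushed up by a term of order $\beta^{-1}$ while $v=O(1)$, and relaxes to $-\beta$ thereafter.

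First I would record the tail bound $\prob_{(0,4M_0\beta^{-1})}(-Q_1(\theta_0)\ge u)\le C\beta^{-4}\e^{-C'u/\beta}$: on $[0,\theta_0]$ we have $Q_2\ge 2M_0\beta^{-1}$, so $Q_1$ has drift at least $M_0\beta^{-1}$ there and $-Q_1$ is dominated by a reflected Brownian motion with drift $-M_0\beta^{-1}$ started at $0$; coupled with a crude bound $\mathbb{E}_{(0,4M_0\beta^{-1})}(\theta_0)=O(\beta^{-2})$ (while $Q_1$ hugs $0$ the process $Q_2$ is a Brownian motion with drift $-\beta$; while $Q_1<0$ it decays exponentially), this yields the claim. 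Next I would split the window at the deterministic time $\theta_0+v_*$, $v_*:=\log(2M_0)$, at which $Q_2$ reaches $\beta^{-1}$. On the first phase $[\theta_0,(\theta_0+v_*)\wedge\sigma_0]$ one has $Q_2\ge\beta^{-1}$, hence the drift of $Q_1$ exceeds $\tfrac12\beta^{-1}$; bounding $Q_1$ below by $-a+\sqrt2\,W(\cdot)+\tfrac12\beta^{-1}(\cdot)$ and using $\prob(\inf_{t\ge0}(\sqrt2\,W(t)+\mu t)\le -z)=\e^{-\mu z}$, the time $Q_1$ spends below $-x$ on this phase is at most $v_*(\e^{-(x-a)/(2\beta)}\wedge1)$, and integrating over the law of $a$ produces the term $C\beta^{-4}\e^{-C'x/\beta}$. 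On the second phase $[(\theta_0+v_*)\wedge\sigma_0,\sigma_0]$ one has $Q_2>0$, so the drift of $Q_1$ is at least $-(Q_1+\beta)$; thus $Q_1$ dominates the Ornstein--Uhlenbeck process $X^*$ solving $dX^*=\sqrt2\,dW-(X^*+\beta)\,dt$ from $Q_1((\theta_0+v_*)\wedge\sigma_0)$, and $Q_1\ge X^*$ forces $\sigma_0\le\tau^{X^*}_0$, so the time below $-x$ on this phase is at most $\int_0^{\tau^{X^*}_0}\mathbf{1}_{[X^*(s)\le-x]}ds$. A Green's-function estimate for $X^*$ absorbed at $0$ bounds the expectation of this by $C\e^{-(x-2\beta)^2/4}$ when the starting value is $\ge-x$ (here $x\ge4\beta$ is used), while the rare event that $Q_1((\theta_0+v_*)\wedge\sigma_0)<-x$ is absorbed using the tail of $a$. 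Adding the two phases and taking expectations over $\mathcal{F}_{\theta_0}$ gives the lemma.

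The hard part will be the patching in the second step: the recovery window has random length and the two phases live at very different drift scales ($\Theta(\beta^{-1})$ versus $\Theta(1)$), so one must control simultaneously how negative $Q_1$ can be at $\theta_0$ \emph{and} at the phase boundary, and must avoid a spurious $\log(1/\beta)$ factor coming from the possibly long stretch during which $Q_2$ lies in the intermediate range $(\beta,\beta^{-1})$. This is exactly why the second phase is handled via an Ornstein--Uhlenbeck process reverting to $-\beta$ --- whose hitting time of $0$ is $O(1)$, so the time integral converges without a window-length prefactor --- rather than by a crude drift estimate on a window of length $\Theta(\log(1/\beta))$.
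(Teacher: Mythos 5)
Your overall plan (use the deterministic decay $Q_2(\theta_0+v)=2M_0\beta^{-1}\e^{-v}$ on the recovery window, a high-drift phase of length $v_*=\log(2M_0)$ while $Q_2\ge\beta^{-1}$, then an OU phase) is a legitimate alternative to the paper's decomposition, which instead splits on $\{Q_1(\tau_2(2M_0\beta^{-1}))\ge -x/2\}$ versus its complement; your phase-1 estimate is fine. The genuine gap is in phase 2. The claim that a Green's-function estimate for $X^*$ (OU reverting to $-\beta$) absorbed at $0$ gives $\mathbb{E}\int_0^{\tau_0^{X^*}}\mathbf{1}_{[X^*(s)\le -x]}ds\le C\e^{-(x-2\beta)^2/4}$ ``when the starting value is $\ge -x$'' is false near the bottom of that range: with scale function $s(v)=\int_0^v\e^{(w+\beta)^2/2}dw$, a start at $z\in[-x,-x/2]$ reaches $-x$ before $0$ with probability comparable (up to polynomial factors) to $\e^{\left((|z|-\beta)^2-(x-\beta)^2\right)/2}$, which is of order one when $|z|$ is close to $x$, and the subsequent expected occupation below $-x$ is then not Gaussian-small. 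The Gaussian bound needs the starting point separated from $-x$, e.g.\ $\ge -x/2$ as in the paper's estimate \eqref{42}. Consequently your ``rare event'' must be $\{Q_1(\text{phase boundary})<-x/2\}$ rather than $\{<-x\}$, and, more importantly, on that event you cannot conclude by bounding only its probability: the expected occupation below $-x$ before $\sigma_0$ grows with how negative the boundary value is (like a logarithm of it, through the OU return time), so the tail of the boundary value must be paired with a quantitative bound on the conditional return/occupation time. This is exactly why the paper's proof runs Cauchy--Schwarz with the second-moment bound $\mathbb{E}_{(Q_1(\alpha_1),2M_0\beta^{-1})}(\tau_1(0))^{2}\le C\beta^{-4}$ (built from the moment estimate \eqref{three5} on $-Q_1(\alpha_1)$) against $\mathbb{P}(Q_1(\alpha_1)<-x/2)\le C\beta^{-4}\e^{-C'x/\beta}$. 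Your sketch omits this step; it can be repaired using your tail of $a$ together with a logarithmic bound on the OU return time (as in Claim~\ref{cl:threeuse}), but as written phase 2 does not close.

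A secondary gap: the tail bound $\mathbb{P}(-Q_1(\theta_0)\ge u)\le C\beta^{-4}\e^{-C'u/\beta}$ does not follow from the reflected-Brownian domination ``coupled with'' $\mathbb{E}(\theta_0)=O(\beta^{-2})$ alone, since $\theta_0$ is a random time correlated with the path. You need a time-uniform statement such as $\mathbb{P}(\sup_{t\le T}Z(t)\ge u)\le C(1+T)\e^{-cu/\beta}$ for the dominating reflected Brownian motion, combined with a bound on $\mathbb{P}(\theta_0>T)$ for a $u$-dependent choice of $T$ (Markov's inequality with $T=\beta^{2}\e^{cu/(2\beta)}$ does work). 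The paper obtains this tail instead from the excursion-counting machinery of Lemma~\ref{qoneexc-ii} (expected number of excursions $\le C\beta^{-4}$ and a per-excursion scale-function bound), which is where its $\beta^{-4}$ prefactor originates. This second point is routine to fix; the phase-2 issue above is the substantive one.
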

Lemmas~\ref{qoneexc-i}--\ref{qoneexc-v} will be combined to compute upper and lower bounds on the expected regeneration times and to estimate the expected time spent by $Q_1$ below $-x$ between two successive regeneration times, for any $x \ge 2\beta^{1/4}$. 
These, in turn, will be used to estimate $\pi(Q_1(\infty) \le -x)$ using the expression given in~\eqref{eq:pi}. 

The next lemma supplies a key technical estimate by giving an upper bound on the probability of $Q_2$ hitting a positive level $y \ge 8M_0\beta^{-1}$ between two successive renewal times. 

\begin{lemma}\label{smallbetafluc}
There exist positive constants $D_S, D_S', M_0, \beta_0$ such that for all $\beta \le \beta_0$,
$$
\prob_{(0,4M_0\beta^{-1})}\left(\tau_2(y) < \tau_2(2M_0\beta^{-1})\right) \le D_S\e^{-D_S'\beta y}, \ \ y \ge 8M_0\beta^{-1}.
$$
\end{lemma}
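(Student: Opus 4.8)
The plan is to pass to the sum process $S(t):=Q_1(t)+Q_2(t)$, for which \eqref{eq:diffusionjsq} gives the clean equation $dS(t)=\sqrt{2}\,dW(t)-\beta\,dt+(-Q_1(t))\,dt$, so that $S$ is a Brownian motion with constant negative drift $-\beta$ plus the nonnegative correction with nondecreasing integral $A(t):=\int_0^t(-Q_1(s))\,ds$. The first step is a reduction. Put $\tau:=\tau_2(y)\wedge\tau_2(2M_0\beta^{-1})$ and start from $(0,4M_0\beta^{-1})$. On the event $\{\tau_2(y)<\tau_2(2M_0\beta^{-1})\}$ one has $\tau=\tau_2(y)$, and since $Q_2$ is continuous with $Q_2(0)=4M_0\beta^{-1}<y$, it approaches the level $y$ from strictly below at time $\tau$; hence $Q_2$ cannot be decreasing there, which (using $\int\mathbf{1}_{[Q_1<0]}dL=0$) forces $L$ to grow at $\tau$ and therefore $Q_1(\tau_2(y))=0$. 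Consequently $S(\tau)=Q_1(\tau)+Q_2(\tau)=y$ on this event. Since $S(0)=4M_0\beta^{-1}$ and $S(\tau)=4M_0\beta^{-1}+\sqrt{2}\,W(\tau)-\beta\tau+A(\tau)$, and $y\ge 8M_0\beta^{-1}$, we get
$$\{\tau_2(y)<\tau_2(2M_0\beta^{-1})\}\subseteq\{S(\tau)=y\}\subseteq\big\{\sqrt{2}\,W(\tau)-\beta\tau+A(\tau)\ge \tfrac{y}{2}\big\}.$$

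From here I would split according to which of the two nonnegative contributions is at least $y/4$, and use $\tau\le\tau_2(2M_0\beta^{-1})$ together with $-Q_1\ge 0$ in the second term, to obtain
$$\prob_{(0,4M_0\beta^{-1})}\big(\tau_2(y)<\tau_2(2M_0\beta^{-1})\big)\le \prob\Big(\sup_{t\ge 0}\big(\sqrt{2}\,W(t)-\beta t\big)\ge \tfrac{y}{4}\Big)+\prob_{(0,4M_0\beta^{-1})}\Big(\int_0^{\tau_2(2M_0\beta^{-1})}(-Q_1(s))\,ds\ge \tfrac{y}{4}\Big).$$
The first term is immediate: the running maximum of the Brownian motion with drift $\sqrt{2}\,W(t)-\beta t$ is exponentially distributed with rate $\beta$, so that probability equals $\e^{-\beta y/4}$, which already has the form claimed in the lemma.

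The work is entirely in the second term, and this is where I expect the main obstacle. Lemma~\ref{qoneexc-ii} gives $\mathbb{E}_{(0,4M_0\beta^{-1})}\big(\int_0^{\tau_2(2M_0\beta^{-1})}\mathbf{1}_{[Q_1(s)\le -x]}\,ds\big)\le Cx\beta^{-3}\e^{-C'x/\beta}$; integrating over $x\ge 0$ yields $\mathbb{E}_{(0,4M_0\beta^{-1})}\big(\int_0^{\tau_2(2M_0\beta^{-1})}(-Q_1(s))\,ds\big)=O(\beta^{-1})$, but Markov's inequality alone then only gives an $O\big(1/(\beta y)\big)$ bound for the second term, which is not exponentially small. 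To upgrade this I would decompose the excursion of $Q_2$ above $2M_0\beta^{-1}$ into sub-excursions delimited by the successive returns of $Q_1$ to $0$ and by its rare deep excursions below a fixed negative level: throughout such an excursion $Q_2\ge 2M_0\beta^{-1}$, so for $\beta$ small the drift of $Q_1$ is at least $-\beta-Q_1+Q_2\ge M_0\beta^{-1}+(-Q_1)$, making $-Q_1$ behave like a strongly mean-reverting reflected diffusion whose deep excursions are short and whose contribution to $\int(-Q_1)\,ds$ has exponentially small tails. Combining this per-excursion control via the strong Markov property with a bound on the number of excursions -- equivalently, an exponential tail for $\tau_2(2M_0\beta^{-1})$ itself, which holds because $Q_2$ is drawn downward toward a level strictly below $2M_0\beta^{-1}$ -- should yield $\prob_{(0,4M_0\beta^{-1})}\big(\int_0^{\tau_2(2M_0\beta^{-1})}(-Q_1(s))\,ds\ge a\big)\le D\,\e^{-D'\beta a}$ for positive constants $D,D'$. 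Taking $a=y/4$ and adding the two bounds gives the lemma with $D_S'=\tfrac14\min\{1,D'\}$. The delicate point is precisely this last chain: obtaining genuinely exponential (not merely polynomial) control on the accumulated upward push $A(\cdot)$ over the full, random-length $Q_2$-excursion, for which Lemmas~\ref{qoneexc-i}--\ref{qoneexc-v} and the strong Markov property must be orchestrated with care; this is the small-$\beta$ analogue of the intricate argument behind Lemma~\ref{largebeta}.
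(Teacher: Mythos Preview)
Your reduction to the sum process $S$ and the split into the Brownian-maximum term and the accumulated-drift term $A(\tau)=\int_0^\tau(-Q_1(s))\,ds$ is correct and natural, and the first term is indeed $\e^{-\beta y/4}$. The gap is that the exponential tail $\prob\big(A(\tau_2(2M_0\beta^{-1}))\ge y/4\big)\le D\e^{-D'\beta y}$ is essentially the whole content of the lemma, and your sketch for it---decompose into excursions of $Q_1$, control each, sum via strong Markov---is not carried out. Lemmas~\ref{qoneexc-i}--\ref{qoneexc-v} give only expectation bounds on occupation times, which do not by themselves yield exponential tails for $A$; you would need uniform exponential moments for the per-excursion contributions together with a matching tail for the number of excursions up to $\tau_2(2M_0\beta^{-1})$, and neither is routine (note $\tau_2(2M_0\beta^{-1})$ only has a stretched-exponential tail in Lemma~\ref{lem:q2regeneration}).

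The paper takes a different route that avoids this step. Rather than bounding the total accumulation $A(\tau)$, it controls the \emph{rate} of growth of $A$ via Lemma~\ref{lem:q1integral}, which gives $\prob\big(\int_0^t(-Q_1)\,ds>\beta t/2,\ \tau_2(M_0\beta^{-1}+\beta)>t\big)\le\e^{-c\beta^{-2/5}t^{1/5}}$ uniformly for $t\ge c'\beta^2$. Summing over a lattice of times yields $\prob\big(A(t)>3\beta t/4\text{ for some }t\ge k_1(\beta y)^5\beta^2,\ \tau_2(2M_0\beta^{-1})>t\big)\le\e^{-c''\beta y}$. On the complement one has $S(t)\le S(0)+\sqrt{2}W(t)-\beta t/4$ for all relevant $t$, so your Brownian-maximum argument applies directly; the remaining short-time window $t\le k_1(\beta y)^5\beta^2$ is handled by a crude fluctuation bound since $Q_1$ cannot move far in so little time. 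This gives the lemma for $y\in[8M_0\beta^{-1},4C_S\beta^{-1}\log\beta^{-1}]$ (Lemma~\ref{smallsmall}); larger $y$ are covered by the existing Lemma~\ref{Q2gebeta2} (via Lemma~\ref{smallint}), and a short stopping-time argument patches the two ranges. In effect the paper trades your total-accumulation bound for a rate bound, for which the key input is already available from prior work.
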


\begin{remark}\label{rem:extn}\normalfont
Observe that Lemma~\ref{smallbetafluc} is a major improvement over \cite[Lemma 5.3]{BM18} (see the statement in Lemma \ref{Q2gebeta2}).
In Lemma \ref{Q2gebeta2} a similar tail-bound is given for $y \ge \beta^{-1} \log \beta^{-1}$. 
Lemma~\ref{smallbetafluc} extends this tail bound for all $y$ in the region $[\Theta(\beta^{-1}), \infty)$.
This extension is crucial in capturing the behavior of the stationary distribution near the steady-state mean of $Q_2$, which, as we will prove, is of order $\Theta\left(\beta^{-1}\right)$.
\end{remark}

Appendix \ref{app:small-aux} is devoted to the proofs of Lemmas \ref{qoneexc-i} -- \ref{qoneexc-v}, and Lemma \ref{smallbetafluc} is proved in Appendix \ref{app:lem5.6}.

\subsection{Bounds on the inter-regeneration times}\label{ssec:inter-small}
In this section we state and prove upper and lower bounds on the expected inter-regeneration times $\mathbb{E}_{(0,4M_0\beta^{-1})}\left(\Xi_0\right)$, which will be used in Subsection~\ref{ssec:proof-small} to prove the main results.

\begin{lemma}\label{regsmall}
There exist positive constants $E_1, E_2, \beta_0$ such that for all fixed $\beta \le \beta_0$,
$$
\frac{E_1}{\beta^2} \le \mathbb{E}_{(0,4M_0\beta^{-1})}\left(\Xi_0\right) \le \frac{E_2}{\beta^2}.
$$
\end{lemma}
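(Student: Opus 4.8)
The plan is to follow the blueprint of the proof of Lemma~\ref{renexp}. Recall that in this regime $B=2M_0\beta^{-1}$, so $\Xi_0=\alpha_2$, where $\alpha_1=\tau_2(2M_0\beta^{-1})$ under $\mathbb{P}_{(0,4M_0\beta^{-1})}$, and by the strong Markov property at $\alpha_1$,
\[
\mathbb{E}_{(0,4M_0\beta^{-1})}(\Xi_0)=\mathbb{E}_{(0,4M_0\beta^{-1})}(\alpha_1)+\mathbb{E}_{(0,4M_0\beta^{-1})}\Big(\mathbb{E}_{(Q_1(\alpha_1),\,2M_0\beta^{-1})}\big(\tau_2(4M_0\beta^{-1})\big)\Big).
\]
It therefore suffices to show each of the two summands is of order $\beta^{-2}$.

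\emph{Lower bound (the easy direction).} Set $S=Q_1+Q_2$. The reflection pushes $(Q_1,Q_2)$ in the direction $(-1,1)$, which is orthogonal to $\nabla S$, so $S$ carries no local-time term: $S(t)=S(0)+\sqrt2\,W(t)+\int_0^t(-\beta-Q_1(s))\,ds\ge S(0)+\sqrt2\,W(t)-\beta t$ because $Q_1\le 0$. Since $Q_2\ge S$ and $S(0)=4M_0\beta^{-1}$, we get $Q_2(t)\ge 4M_0\beta^{-1}+\sqrt2\,W(t)-\beta t$ for all $t$; hence $\alpha_1$ dominates the first-passage time $T$ of $\sqrt2\,W(\cdot)-\beta(\cdot)$ to the level $-2M_0\beta^{-1}$. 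A standard optional-stopping computation (applied to the martingale $\sqrt2\,W$ at $T\wedge n$ and letting $n\to\infty$) gives $\mathbb{E}(T)=2M_0\beta^{-1}/\beta=2M_0\beta^{-2}$. Thus $\mathbb{E}_{(0,4M_0\beta^{-1})}(\Xi_0)\ge\mathbb{E}_{(0,4M_0\beta^{-1})}(\alpha_1)\ge 2M_0\beta^{-2}$, which yields the lower bound with $E_1=2M_0$.

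\emph{Upper bound.} Here one runs an excursion decomposition analogous to the $\mathbf{e}_k$-construction in the proof of Lemma~\ref{renexp}. The governing heuristic is that while $Q_2$ stays in the band $[2M_0\beta^{-1},\infty)$, the drift of $-Q_1$ at level $z$ equals $\beta-z-Q_2\le\beta-z-2M_0\beta^{-1}$, so (up to controllable excursions of $Q_1$ away from $0$) $-Q_1$ is pathwise dominated by a reflected Ornstein–Uhlenbeck process mean-reverting to a negative level of order $\beta^{-1}$; consequently $Q_2\approx S$ evolves like a Brownian motion with drift $\approx-\beta$ and diffusion coefficient $2$. Crossing a band of width $2M_0\beta^{-1}$ with such dynamics takes time $\Theta(\beta^{-2})$, while \emph{climbing} across such a band succeeds with probability bounded away from $0$ uniformly in $\beta$, since $\beta\cdot 2M_0\beta^{-1}=2M_0$ is a constant; hence only $O(1)$ rounds — each of expected length $O(\beta^{-2})$ — are needed. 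To turn this into a proof, first bound $\mathbb{E}_{(0,4M_0\beta^{-1})}(\alpha_1)$ via the identity $\int_0^{\alpha_1}Q_2(s)\,ds=2M_0\beta^{-1}+L(\alpha_1)$ together with $Q_2\ge2M_0\beta^{-1}$ on $[0,\alpha_1)$, which reduces the task to showing $\mathbb{E}(L(\alpha_1))=O(\beta^{-3})$, and control the local time via the reflected-OU comparison above and the time-below-$(-x)$ estimates of Lemmas~\ref{qoneexc-ii}--\ref{qoneexc-v}. Next use Lemma~\ref{smallbetafluc} to ensure $Q_2$ does not spike above $8M_0\beta^{-1}$ during a round except with small probability, so a round length stays $O(\beta^{-2})$ in expectation. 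Finally, prove a tail bound $\mathbb{P}_{(0,4M_0\beta^{-1})}(Q_1(\alpha_1)\le-x)\le Ce^{-C'(\cdot)}$ in the spirit of Claim~\ref{cl:qalpha} (using Lemmas~\ref{smallbetafluc} and~\ref{qoneexc-ii}) and combine it with Claim~\ref{cl:threeuse} — which the text has already flagged as valid for all $\beta>0$ — to see that recovering from a deep value of $Q_1(\alpha_1)$ costs only an additive logarithmic term, so the second summand is also $O(\beta^{-2})$.

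\emph{Main obstacle.} The genuinely delicate point is the regime in which $Q_1$ hovers near $0$: there the local time $L$ grows and drives $Q_2$ upward, but the time-below-$(-x)$ lemmas only bite for $x\gtrsim\beta$, whereas the identity route to $\mathbb{E}(\alpha_1)$ requires showing the \emph{time-average} of $-Q_1$ over $[0,\alpha_1]$ stays below $(1-\varepsilon)\beta$ for some $\varepsilon>0$ independent of $\beta$ (equivalently, that $L$ does not grow too fast). Establishing this via the reflected-OU comparison — whose stationary mean is of order $\beta/M_0$ — and propagating the resulting constant uniformly through the excursion decomposition, together with the analogous analysis of the uphill round, is the technical heart of the argument; it is also the place where the specific choice $M_0=c_1'$ matters, since it must keep the comparison constant strictly below $1$.
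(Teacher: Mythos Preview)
Your lower bound is correct and matches the paper exactly: both use $Q_2(t)\ge S(t)\ge S(0)+\sqrt2\,W(t)-\beta t$ to dominate $\alpha_1$ from below by the first-passage time of a drifted Brownian motion across a gap of width $2M_0\beta^{-1}$.

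Your upper-bound route, however, is substantially more circuitous than the paper's and creates the very ``main obstacle'' you flag. For $\mathbb{E}_{(0,4M_0\beta^{-1})}(\alpha_1)$ the paper simply invokes part~(ii) of Lemma~\ref{lem:q2regeneration}, which directly gives a tail bound on $\{\inf_{s\le t}Q_2(s)>M_0\beta^{-1}\}$ of the form $c_3'(\exp(-c_2'\beta^{-2/5}t^{1/5})+\exp(-c_2'\beta^2 t)+\beta^{-2}\exp(-c_2't))$ for $t\ge C\beta^{-2}$; integrating yields $\mathbb{E}(\alpha_1)\le C'\beta^{-2}$ in one line. Your local-time identity $\int_0^{\alpha_1}Q_2\,ds=2M_0\beta^{-1}+L(\alpha_1)$ is correct, but reducing to $\mathbb{E}(L(\alpha_1))=O(\beta^{-3})$ and then to ``time-average of $-Q_1$ below $(1-\varepsilon)\beta$'' essentially reproves Lemma~\ref{lem:q1integral} (which is what underlies Lemma~\ref{lem:q2regeneration}). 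You are reinventing machinery the paper already has available.

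For $\mathbb{E}(\alpha_2-\alpha_1)$ the paper follows the same two-term split as in~\eqref{ren21}, but the key estimate $\sup_{y<4M_0\beta^{-1}}\mathbb{E}_{(-\beta,y)}(\tau_2(4M_0\beta^{-1}))\le C\beta^{-2}$ is obtained via stopping times $\mathcal{S}_k$ based on $S(t)$ hitting $4M_0\beta^{-1}$ or dipping to $-\beta^{-1}$: the number of such excursions is $O(1)$ (since $\mathbb{P}(\sqrt2 W-\beta t$ crosses $+\,(4M_0{+}1)\beta^{-1}$ before $-(2\beta)^{-1})\ge p>0$ uniformly in $\beta$), and each excursion has expected length $O(\beta^{-2})$ by a renewal argument on $\{\sup_{t\le\beta^{-2}}(\sqrt2 W(t)-\beta t)\ge (4M_0{+}1)\beta^{-1}\}$. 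This is cleaner than the reflected-OU comparison you sketch, and it does not require Lemmas~\ref{qoneexc-ii}--\ref{qoneexc-v} or Lemma~\ref{smallbetafluc} (the latter is used only for the tail of $Q_1(\alpha_1)$, where the paper does follow the template of Claim~\ref{cl:qalpha} and Claim~\ref{cl:threeuse} as you suggest). In short: your plan would likely work, but the paper's shortcut via Lemma~\ref{lem:q2regeneration} eliminates the delicate part you identify as the technical heart.
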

Lemma~\ref{regsmall} should be contrasted with Lemma~\ref{renexp}, where the expected inter-regeneration time grows exponentially with $\beta^2$, instead of inverse-quadratically  when $\beta$ is small.
\begin{proof}[Proof of Lemma~\ref{regsmall}]
Let $(Q_1(0), Q_2(0)) = (0, 4M_0 \beta^{-1})$. Recall that $\Xi_0= \alpha_2$ 
where $\alpha_1$ and $\alpha_2$ are as defined in \eqref{rendef} with $B= 2M_0\beta^{-1}$.
The proof consists of two main parts: (i) First we establish upper and lower bounds on the expected value of $\alpha_1$, and (ii) Next, we obtain an upper bound on the expected value of $\alpha_2-\alpha_1 $.
As we will see, since the lower bound for $\alpha_1 $ matches with that of 
$ \Xi_0$ as stated in the lemma, this will complete the proof.\\

\noindent
(i) {\em Upper and lower bounds for $\alpha_1$.}
As $Q_2(t) \ge S(t) \ge S(0) + \sqrt{2}W(t) - \beta t$ for all $t \ge 0$, $\alpha_1$ stochastically dominates the hitting time of level $2M_0\beta^{-1}$ by a Brownian motion with drift $-\beta$ starting from $4M_0\beta^{-1}$. Therefore,
\begin{equation}\label{smallr1}
\mathbb{E}_{(0,4M_0\beta^{-1})}\left(\alpha_1\right) \ge \frac{C}{\beta^2}.
\end{equation}
From part (ii) of Lemma \ref{lem:q2regeneration}, for $\beta$ small enough to ensure $3M_0\beta^{-1} \ge 1$ and for $t \ge C\beta^{-2}$,
\begin{multline*}
\prob_{(0,4M_0\beta^{-1})}\left(\alpha_1 > t\right) \le \prob_{(0,4M_0\beta^{-1})}\big(\inf_{s\leq t}Q_2(s) > M_0\beta^{-1}\big)\\
\le c'_3\left(\exp(-c'_2\beta^{-2/5}t^{1/5}) + \exp(-c'_2\beta^2 t)
+ \beta^{-2}\exp(-c'_2 t)\right)
\end{multline*}
for positive constants $C, c_2',c_3'$ not depending on $\beta$. Using this bound, we obtain
\begin{equation}\label{smallr2}
\mathbb{E}_{(0,4M_0\beta^{-1})}\left(\alpha_1\right) = \int_0^{\infty} \prob_{(0,4M_0\beta^{-1})}\left(\alpha_1 > t\right)dt \le \frac{C'}{\beta^2}.
\end{equation}

\noindent
(ii) \emph{ Upper bound for $\alpha_2-\alpha_1$.}
The proof follows the similar notation and arguments as in the proof of Lemma \ref{renexp}. 
Recall $\alpha_{1,-\beta} = \inf\{t \ge \alpha_1: Q_1(t) = -\beta\}$. Then by repeating the computation exactly along the lines of \eqref{ren21}, we have
\begin{multline}\label{smallr3}
\mathbb{E}_{(0,4M_0\beta^{-1})}\left(\alpha_2 - \alpha_1\right) 
\le \sup_{y \in (0, 4M_0\beta^{-1})}\mathbb{E}_{(-\beta,y)}\left(\tau_2(4M_0\beta^{-1})\right)\\
 + \mathbb{E}_{(0,4M_0\beta^{-1})}\left(\mathbb{E}_{(Q_1(\alpha_1), 2M_0\beta^{-1})}\left(\tau_1(-\beta) \wedge \tau_2(4M_0\beta^{-1})\right)\right).
\end{multline}
In the rest of the proof we will estimate the two expectation on the right side of \eqref{smallr3}.
We start with $\sup_{y \in (0, 4M_0\beta^{-1})}\mathbb{E}_{(-\beta,y)}\left(\tau_2(4M_0\beta^{-1})\right)$. Take any $y \in (0, 4M_0\beta^{-1})$ and set the starting configuration as $(Q_1(0), Q_2(0))=(-\beta, y)$. Recall $S(t) = Q_1(t) + Q_2(t)$. Define the following stopping times: $\mathcal{S}_0 = 0$ and for $k \ge 0$,
\begin{align*}
\mathcal{S}_{2k+1} &= \inf\{ t \ge \mathcal{S}_{2k}: S(t) = 4M_0\beta^{-1} \text{ or } S(t) \le -\beta^{-1}\},\\
\mathcal{S}_{2k+2} &= \inf\{ t \ge \mathcal{S}_{2k+1}: S(t) = 4M_0\beta^{-1} \text{ or } S(t) = -\beta\}.
\end{align*}
Let $N_{\mathcal{S}} = \inf\{k \ge 0: S(\mathcal{S}_{2k+1}) = 4M_0 \beta^{-1}\}$. As $S(t) \ge S(0) + \sqrt{2}W(t) - \beta t$ for all $t \ge 0$, for any $k \ge 0$ and $\beta \le 2^{-1/2}$,
\begin{multline*}
\prob_{(-\beta, y)}\left(\sup_{t \in [\mathcal{S}_{2k}, \mathcal{S}_{2k+1}]}S(t) \ge 4M_0 \beta^{-1}\right) \ge \prob\left(\sqrt{2}W(t) - \beta t \text{ hits } (4M_0 + 1)\beta^{-1} \text{ before } -(2 \beta)^{-1}\right)\\
= \frac{1-\e^{-1/2}}{\e^{4M_0 + 1} - \e^{-1/2}} = p > 0
\end{multline*} 
using the fact that the scale function for $\sqrt{2}W(t) - \beta t$ is $s(x) = \e^{\beta x}$, where $p$ does not depend on $\beta, y$. This immediately gives us
$$
\prob_{(-\beta, y)}\left(N_{\mathcal{S}} \ge n\right) \le (1-p)^n, \ \ n \ge 1,
$$
which implies
\begin{equation}\label{smallr4}
\sup_{y \in (0, 4M_0\beta^{-1})}\mathbb{E}_{(-\beta, y)}\left(N_{\mathcal{S}}\right) \le C_1.
\end{equation}
To estimate $\mathbb{E}_{(-\beta, y)}\left(\mathcal{S}_1\right)$, observe that for $n \ge 1$,
\begin{eq}\label{three0}
&\prob_{(-\beta, y)}\left(\mathcal{S}_1 \ge n \beta^{-2}\right)\\
&\le \mathbb{E}_{(-\beta, y)}\Big(\mathbf{1}_{[\mathcal{S}_1 \ge (n-1)\beta^{-2}]}\sup_{\substack{x \in [-\beta^{-1}, 0], \\ y \in (0, 4M_0\beta^{-1}]}}\prob_{(x,y)}\left(S(t) < 4M_0\beta^{-1} \text{ for all } t \in [0, \beta^{-2}]\right) \Big)\\
&\le \mathbb{E}_{(-\beta, y)}\Big(\mathbf{1}_{[\mathcal{S}_1 \ge (n-1)\beta^{-2}]}\prob\Big(\sup_{t < \infty}(\sqrt{2}W(t) - \beta t) < (4M_0+1)\beta^{-1} \Big) \Big)\\
&=\prob_{(-\beta, y)}\left(\mathcal{S}_1 \ge (n-1) \beta^{-2}\right)(1- \e^{-(4M_0+1)}) \le (1- \e^{-(4M_0+1)})^n
\end{eq}
where the last step follows by induction. This gives us
\begin{multline}\label{smallr6}
\mathbb{E}_{(-\beta, y)}\left(\mathcal{S}_1\right) = \int_0^{\infty}\prob_{(-\beta, y)}\left(\mathcal{S}_1 \ge t\right)dt
\le 1 + \beta^{-2}\sum_{n=1}^{\infty}\prob_{(-\beta, y)}\left(\mathcal{S}_1 \ge n \beta^{-2}\right) \le \frac{C'}{\beta^2}
\end{multline}
where $C'$ does not depend on $\beta, y$. 
For $t \le \tau_1(0)$, $Q_1^*(t) = Q_1(t) + \beta$ is stochastically bounded below by an Ornstein-Uhlenbeck process
$$
X^*(t) = Q_1(0) + \beta + \sqrt{2}W(t)  - \int_0^t X^*(s)ds,
$$
which has the Doob representation $X^*(t)  =\left(Q_1(0) + \beta\right)\e^{-t} + \e^{-t}W^*\left(\e^{2t}-1\right)$ (where $W^*$ is a standard Brownian motion). Furthermore, note that if $Q_2(0) \le 4M_0 \beta^{-1}$, then for all $t \le \tau_2(4M_0 \beta^{-1})$, $Q_1(t) \ge S(t) - 4M_0\beta^{-1}$. From these facts, it is straightforward to check that $\mathcal{S}_{2} - \mathcal{S}_{1}$ is stochastically dominated by the hitting time of level $-\beta$ by the Ornstein-Uhlenbeck process $X^*$ defined above taking $Q_1(0) = -(4M_0 + 1)\beta^{-1}$ and hence,
\begin{equation}\label{smallr5}
\mathbb{E}_{(-\beta, y)}\left(\mathcal{S}_{2} - \mathcal{S}_{1}\right) \le \frac{C''}{\beta}
\end{equation}
where $C''$ does not depend on $\beta, y$. 
Combining \eqref{smallr5} and \eqref{smallr6}, we obtain,
\begin{equation}\label{smallr7}
\sup_{y \in (0, 4M_0\beta^{-1})}\mathbb{E}_{(-\beta, y)}\left(\mathcal{S}_2\right) \le \frac{C_2}{\beta^2}.
\end{equation}
Therefore, using \eqref{smallr4}, \eqref{smallr7} and the strong Markov property, 
\begin{eq}\label{smallmain1}
&\sup_{y \in (0, 4M_0\beta^{-1})}\mathbb{E}_{(-\beta,y)}\left(\tau_2(4M_0\beta^{-1})\right) = \sup_{y \in (0, 4M_0\beta^{-1})}\mathbb{E}_{(-\beta,y)}\left(\mathcal{S}_{2N_{\mathcal{S}} + 1}\right)\\
&=  \sup_{y \in (0, 4M_0\beta^{-1})}\mathbb{E}_{(-\beta,y)}\left(\mathcal{S}_{2N_{\mathcal{S}} + 2}\right) = \sup_{y \in (0, 4M_0\beta^{-1})}\mathbb{E}_{(-\beta,y)}\Big(\sum_{j=0}^{\infty}(\mathcal{S}_{2j + 2} - \mathcal{S}_{2j})\mathbf{1}_{[N_{\mathcal{S}}\ge j]}\Big)\\
&  \le \sup_{y \in (0, 4M_0\beta^{-1})}\mathbb{E}_{(-\beta,y)}\Big(\sum_{j=0}^{\infty}\mathbf{1}_{[N_{\mathcal{S}} \ge j]}\sup_{z \in (0, 4M_0\beta^{-1})}\mathbb{E}_{(-\beta,z)}(\mathcal{S}_{2})\Big)\\
&  = \sup_{y \in (0, 4M_0\beta^{-1})}\mathbb{E}_{(-\beta, y)}\left(\mathcal{S}_2\right)\sup_{y \in (0, 4M_0\beta^{-1})}\mathbb{E}_{(-\beta, y)}\left(N_{\mathcal{S}}\right) \le \frac{C_1 C_2}{\beta^2}.
\end{eq}
Now we estimate $\mathbb{E}_{(0,4M_0\beta^{-1})}\left(\mathbb{E}_{(Q_1(\alpha_1), 2M_0\beta^{-1})}\left(\tau_1(-\beta) \wedge \tau_2(4M_0\beta^{-1})\right)\right)$. Observe that
\begin{eq}\label{three1}
&\mathbb{E}_{(0,4M_0\beta^{-1})}\left(\mathbb{E}_{(Q_1(\alpha_1), 2M_0\beta^{-1})}\left(\tau_1(-\beta) \wedge \tau_2(4M_0\beta^{-1})\right)\right)\\
&\hspace{2cm} \le \sup_{x \in [-\beta,0], y \in (0, 4M_0\beta^{-1})}\mathbb{E}_{(x,y)}\left(\tau_1(-\beta) \wedge \tau_2(4M_0\beta^{-1})\right)\\
 &\hspace{4cm}+ \mathbb{E}_{(0,4M_0\beta^{-1})}\left(\mathbf{1}_{[Q_1(\alpha_1) < -\beta]}\mathbb{E}_{(Q_1(\alpha_1), 2M_0\beta^{-1})}\left(\tau_1(-\beta)\right)\right).
\end{eq}
Writing for $n \ge 1$
\begin{multline*}
 \sup_{x \in [-\beta,0], y \in (0, 4M_0\beta^{-1})}\prob_{(x,y)}\left(\tau_1(-\beta) \wedge \tau_2(4M_0\beta^{-1}) \ge n\beta^{-2}\right)\\
\le \sup_{x \in [-\beta,0], y \in (0, 4M_0\beta^{-1})}\mathbb{E}_{(-\beta, y)}\left(\mathbf{1}_{[\tau_1(-\beta) \wedge \tau_2(4M_0\beta^{-1}) \ge (n-1)\beta^{-2}]}\right.\\
\left.\sup_{z \in [-\beta, 0], y \in (0, 4M_0\beta^{-1}]}\prob_{(z,y)}\left(S(t) < 4M_0\beta^{-1} \text{ for all } t \in [0, \beta^{-2}]\right) \right)
\end{multline*}
and following the computations in \eqref{three0}and \eqref{smallr6}, we obtain
\begin{equation}\label{three2}
\sup_{x \in [-\beta,0], y \in (0, 4M_0\beta^{-1})}\mathbb{E}_{(x,y)}\left(\tau_1(-\beta) \wedge \tau_2(4M_0\beta^{-1})\right) \le \frac{C_3}{\beta^2}.
\end{equation}
Therefore, to complete the proof of the upper bound of $\mathbb{E}_{(0,4M_0\beta^{-1})}\left(\alpha_2 - \alpha_1\right)$, we need to estimate the second term on the right hand side of \eqref{three1}. 
From Claim~\eqref{cl:threeuse} note that for any $x<-\beta, y>0$,
\begin{equation*}
\mathbb{E}_{(x,y)}\left(\tau_1(-\beta)\right) \le C \log\left(2 + |x + \beta|\right).
\end{equation*}
Thus, by Jensen's inequality,
\begin{eq}\label{three3}
&\mathbb{E}_{(0,4M_0\beta^{-1})}\left(\mathbf{1}_{[Q_1(\alpha_1) < -\beta]}\mathbb{E}_{(Q_1(\alpha_1), \beta^{-1})}\left(\tau_1(-\beta)\right)\right)\\
 &\le \mathbb{E}_{(0,4M_0\beta^{-1})}\left(\log(2+ |Q_1(\alpha_1) + \beta|)\right)
\le \log\left(2+ \mathbb{E}_{(0,4M_0\beta^{-1})}\left(-Q_1(\alpha_1)\right) + \beta\right).
\end{eq}
Thus, we need an estimate on $\mathbb{E}_{(0, 4M_0\beta^{-1})}\left(-Q_1(\alpha_1)\right)$. Following the calculations in \eqref{threeest}, we obtain for $x \ge (8M_0\beta^{-1})^4$,
\begin{eq}\label{three4}
&\prob_{(0,4M_0\beta^{-1})}\left(Q_1(\alpha_1) \le -x\right)\\
&\le \prob_{(0,4M_0\beta^{-1})}\left(\tau_2(x^{1/4}) < \alpha_1\right) + \sup_{y \le x^{1/4}}\prob_{(-x/2, y)}\Big(\tau_1(0) \le \log\Big(\frac{\beta x^{1/4}}{2M_0}\Big)\Big)\\
&\hspace{3cm}+ \sup_{y \le x^{1/4}}\prob_{(-x/2, y)}\Big(\tau_1(-x) \le \log\Big(\frac{\beta x^{1/4}}{2M_0}\Big), \tau_1(-x) \le \tau_1(0)\Big).
\end{eq}
From Lemma~\ref{smallbetafluc}, we obtain $C, C', \beta_0>0$ such that for all $\beta \le \beta_0$ and all $x \ge (8M_0\beta^{-1})^4$,
\begin{equation*}
\prob_{(0,4M_0\beta^{-1})}\left(\tau_2(x^{1/4}) < \alpha_1\right) \le C\e^{-C'\beta x^{1/4}}.
\end{equation*}
By exactly the same argument used in deriving \eqref{qalpha2} and \eqref{qalpha3}, we obtain constants $C, C', \beta''$ such that for all $\beta \le \beta''$ and all $x \ge (8M_0\beta^{-1})^4$,
\begin{equation*}
\sup_{y \le x^{1/4}}\prob_{(-x/2, y)}\left(\tau_1(0) \le \log\left(\frac{\beta x^{1/4}}{2M_0}\right)\right) \le C\e^{-C'x}
\end{equation*}
and
\begin{equation*}
\sup_{y \le x^{1/4}}\prob_{(-x/2, y)}\left(\tau_1(-x) \le \log\left(\frac{\beta x^{1/4}}{2M_0}\right), \tau_1(-x) \le \tau_1(0)\right) \le C\e^{-C'x}.
\end{equation*}
Using the above three bounds in \eqref{three4}, we obtain $\beta'''>0$ such that for all $\beta \le \beta'''$ and all $x \ge (8M_0\beta^{-1})^4$,
$$
\prob_{(0,4M_0\beta^{-1})}\left(Q_1(\alpha_1) \le -x\right) \le C\e^{-C'\beta x^{1/4}}.
$$
which implies
\begin{equation}\label{three5}
\mathbb{E}_{(0, 4M_0\beta^{-1})}\left(-Q_1(\alpha_1)\right) = \int_0^{\infty}\prob_{(0,4M_0\beta^{-1})}\left(Q_1(\alpha_1) \le -x\right)dx \le \frac{C_4}{\beta^4}.
\end{equation}
Using \eqref{three5} in \eqref{three3},
\begin{equation}\label{three6}
\mathbb{E}_{(0,4M_0\beta^{-1})}\left(\mathbf{1}_{[Q_1(\alpha_1) < -\beta]}\mathbb{E}_{(Q_1(\alpha_1), \beta^{-1})}\left(\tau_1(-\beta)\right)\right)
\le \log\left(2+ \frac{C_4}{\beta^4} + \beta\right).
\end{equation}
Using \eqref{three2} and \eqref{three6} in \eqref{three1}, we obtain $\beta_s'>0$ such that for all $\beta \le \beta_s'$,
\begin{equation}\label{three7}
\mathbb{E}_{(0,4M_0\beta^{-1})}\left(\mathbb{E}_{(Q_1(\alpha_1), 2M_0\beta^{-1})}\left(\tau_1(-\beta) \wedge \tau_2(4M_0\beta^{-1})\right)\right) \le \frac{C_3}{\beta^2} + \log\left(2+ \frac{C_4}{\beta^4} + \beta\right) \le \frac{C_5}{\beta^2}.
\end{equation}
Using \eqref{smallmain1} and \eqref{three7} in \eqref{smallr3}, we obtain $\beta_0>0$ such that for all $\beta \le \beta_0$,
\begin{equation}\label{smallmain}
\mathbb{E}_{(0,4M_0\beta^{-1})}\left(\alpha_2 - \alpha_1\right) \le \frac{C_6}{\beta^2}.
\end{equation}
\eqref{smallr1}, \eqref{smallr2} and \eqref{smallmain} together prove the lemma.
\end{proof}

\subsection{Proofs of the main results}\label{ssec:proof-small}

\begin{proof}[Proof of Theorem~\ref{smallbetathm}]
As before, we will use Theorem \ref{stationary} with $B= 2M_0\beta^{-1}$.
To prove the upper bound on the stationary probability, observe that for $y \ge 8M_0\beta^{-1}$,
\begin{eq}\label{smalls1}
&\mathbb{E}_{(0, 4M_0\beta^{-1})}\Big(\int_{0}^{\Xi_0}\mathbf{1}_{[Q_2(s) \ge y]}ds\Big) = \mathbb{E}_{(0, 4M_0\beta^{-1})}\Big(\int_{0}^{\alpha_1}\mathbf{1}_{[Q_2(s) \ge y]}ds\Big)\\
&\le \mathbb{E}_{(0, 4M_0\beta^{-1})}\left(\mathbf{1}_{[\tau_2(y) < \tau_2(2M_0\beta^{-1})]}\left(\tau_2(2M_0\beta^{-1})-\tau_2(y)\right) \right)\\
&= \prob_{(0,4M_0\beta^{-1})}\left(\tau_2(y) < \tau_2(2M_0\beta^{-1})\right) \mathbb{E}_{(0, y)}\left(\tau_2(2M_0\beta^{-1})\right).
\end{eq}
From Lemma~\ref{smallbetafluc}, there exist positive constants $D_S, D_S', M_0, \beta_0$ such that for all $\beta \le \beta_0$, $y \ge 8M_0\beta^{-1}$,
\begin{equation}\label{smalls2}
\prob_{(0,4M_0\beta^{-1})}\left(\tau_2(y) < \tau_2(2M_0\beta^{-1})\right) \le D_S\e^{-D_S'\beta y}.
\end{equation}
From part (ii) of Lemma \ref{lem:q2regeneration} and recalling that $M_0 = c_1'$ and choosing $\beta$ small enough such that $6M_0\beta^{-1} \ge 1$, we obtain for $t\geq c'_4 \left(\frac{(y-M_0\beta^{-1})}{\beta} \vee \beta^{-2}\right)$,
\begin{align*}
\prob_{(0, y)}\left(\tau_2(2M_0\beta^{-1}) > t\right) &\le \prob_{(0,y)}\big(\inf_{s\leq t}Q_2(s) > M_0\beta^{-1}\big)\\
&\le c'_3\left(\exp(-c'_2\beta^{-2/5}t^{1/5}) + \exp(-c'_2\beta^2 t)
+ \beta^{-2}\exp(-c'_2 t)\right)
\end{align*}
for positive constants $c_1', c_2', c_3', c_4'$ not depending on $\beta$. From this, we obtain
\begin{equation}\label{smalls3}
\mathbb{E}_{(0, y)}\left(\tau_2(2M_0\beta^{-1})\right) = \int_0^{\infty}\prob_{(0, y)}\left(\tau_2(2M_0\beta^{-1}) > t\right)dt \le \frac{Cy}{\beta}
\end{equation}
where $C$ does not depend on $\beta, y$. Using \eqref{smalls2} and \eqref{smalls3} in \eqref{smalls1},
\begin{equation}\label{smalls4}
\mathbb{E}_{(0, 4M_0\beta^{-1})}\Big(\int_{0}^{\Xi_0}\mathbf{1}_{[Q_2(s) \ge y]}ds\Big) \le D_S\frac{Cy}{\beta}\e^{-D_S'\beta y}.
\end{equation}
From Lemma \ref{regsmall},
\begin{equation}\label{smalls5}
\mathbb{E}_{(0,4M_0\beta^{-1})}\left(\Xi_0\right) \ge \frac{E_1}{\beta^2}.
\end{equation}
Using the estimates \eqref{smalls4} and \eqref{smalls5} in the representation \eqref{eq:pi} of the stationary distribution, we obtain
$$
\pi(Q_2(\infty) \ge y) \le \frac{D_S C}{E_1} \ y\beta \e^{-D_S'\beta y}, \ \ y \ge 8M_0\beta^{-1}
$$
which proves the upper bound on the stationary probability claimed in the theorem. To prove the lower bound, note that for $y \ge 8M_0\beta^{-1}$, writing $\tau_2' = \inf\{ t \ge \tau_2(2y): Q_2(t) = y\}$,
\begin{eq}\label{lows1}
&\mathbb{E}_{(0, 4M_0\beta^{-1})}\Big(\int_{0}^{\Xi_0}\mathbf{1}_{[Q_2(s) \ge y]}ds\Big) \ge \mathbb{E}_{(0, 4M_0\beta^{-1})}\Big(\int_{0}^{\alpha_1}\mathbf{1}_{[Q_2(s) \ge 2y]}ds\Big)\\
&\ge \mathbb{E}_{(0, 4M_0\beta^{-1})}\left(\mathbf{1}_{[\tau_2(2y) < \tau_2(2M_0\beta^{-1})]}\left(\tau_2'-\tau_2(2y)\right) \right)\\
&= \prob_{(0,4M_0\beta^{-1})}\left(\tau_2(2y) < \tau_2(2M_0\beta^{-1})\right) \mathbb{E}_{(0, 2y)}\left(\tau_2(y)\right).
\end{eq}
From Lemma~\ref{Q2lb}, taking $B = 2M_0 \beta^{-1}$,
\begin{equation}\label{lows2}
\prob_{(0,4M_0 \beta^{-1})}\left(\tau_2(y) < \tau_2(2M_0 \beta^{-1})\right) \ge (1-\e^{-2M_0})\e^{-\beta(y-4M_0 \beta^{-1})}, \ \ y \ge 4M_0 \beta^{-1}.
\end{equation}
Recall that $Q_2(t) \ge S(t) \ge S(0) + \sqrt{2}W(t) - \beta t$ for all $t \ge 0$. Therefore, if the starting configuration is $(Q_1(0), Q_2(0)) = (0, 2y)$, $\tau_2(y)$ is stochastically lower bounded by the hitting time of level $-y$ by the process $(\sqrt{2}W(t) - \beta t)_{t \ge 0}$. This implies
\begin{equation}\label{lows3}
\mathbb{E}_{(0, 2y)}\left(\tau_2(y)\right) \ge \frac{C' y}{\beta}
\end{equation}
for some constant $C'$ not depending on $y, \beta$. Using \eqref{lows2} and \eqref{lows3} in \eqref{lows1},
\begin{equation}\label{lows4}
\mathbb{E}_{(0, 4M_0\beta^{-1})}\Big(\int_{0}^{\Xi_0}\mathbf{1}_{[Q_2(s) \ge y]}ds\Big) \ge (\e^{4M_0} - \e^{2M_0})\frac{C' y}{\beta} \e^{-\beta y}.
\end{equation}
From Lemma \ref{regsmall},
\begin{equation}\label{lows5}
\mathbb{E}_{(0,4M_0\beta^{-1})}\left(\Xi_0\right) \le \frac{E_2}{\beta^2}.
\end{equation}
Using \eqref{lows4} and \eqref{lows5} in the representation \eqref{eq:pi} of the stationary distribution, we obtain
$$
\pi(Q_2(\infty) \ge y) \ge \frac{(\e^{4M_0} - \e^{2M_0})C'}{E_2} \ y\beta \e^{-\beta y}, \ \ y \ge 8M_0\beta^{-1}
$$
which proves the lower bound on the stationary probability claimed in the theorem. The bounds on the expectation  follow from the probability bounds upon noting the following:
\begin{align*}
\mathbb{E}_{\pi}(Q_2(\infty)) &\ge \int_{8M_0\beta^{-1}}^{\infty}\pi(Q_2(\infty) \ge y)dy \ge \frac{C_s^{1-}\e^{-8C_s^{2-}M_0}}{C_s^{2-}\beta},\\
\mathbb{E}_{\pi}(Q_2(\infty)) &\le 8M_0\beta^{-1} + \int_{8M_0\beta^{-1}}^{\infty}\pi(Q_2(\infty) \ge y)dy \le \left(8M_0 + \frac{C_s^{1+}}{C_s^{2+}}\right)\frac{1}{\beta}.
\end{align*}
\end{proof}

\begin{proof}[Proof of Theorem~\ref{Q1statsmall}]
From \cite[Theorem 2.1]{BM18} and its proof, we know that there exist constants $R,C,C">0$ and $\beta_0>0$, such that for all $\beta \le \beta_0$ and all $x \ge \frac{R}{\beta}\log \left(\frac{1}{\beta}\right)$, $\pi(Q_1(\infty) \le -x) \le C \e^{-C"x^2}$. Thus, it suffices to prove that $\pi(Q_1(\infty) \le -x) \le C\left(\beta^{1/4}\e^{-(x-2\beta)^2/8} + \beta^2\e^{-C'\frac{x}{\sqrt{\beta}}}\right)$ for all $x \ge 2\beta^{1/4}$.

As mentioned earlier, in the proof, $C, C'$ will denote generic positive constants whose values do not depend on $\beta, x$ and might change from line to line. 
Take any $\beta \le \beta_0$, where $\beta_0$ is minimum over all $\beta_0$'s given in Lemmas \ref{qoneexc-i}-\ref{qoneexc-v}. 
Define the stopping time $\Lambda_{-1} = \inf\{t \ge \tau_2(2M_0\beta^{-1}) : \text{ either } \{Q_2(t)\le \beta^{-1/2}, Q_1(t)=0\} \text{ or } Q_2(t) = 4M_0\beta^{-1}\}$. We can write
\begin{eq}\label{lp1}
&\mathbb{E}_{(0, 4M_0\beta^{-1})}\Big(\int_{0}^{\Xi_0}\mathbf{1}_{[(Q_1(s) \le -x]}ds\Big)\\
& = \mathbb{E}_{(0, 4M_0\beta^{-1})}\Big(\int_{0}^{\Lambda_{-1}}\mathbf{1}_{[(Q_1(s) \le -x]}ds + \mathbf{1}_{[Q_2(\Lambda_{-1}) \le \beta^{-1/2}]}\int_{\Lambda_{-1}}^{\Xi_0}\mathbf{1}_{[(Q_1(s) \le -x]}ds\Big)\\
& \le  \mathbb{E}_{(0, 4M_0\beta^{-1})}\Big(\int_{0}^{\Lambda_{-1}}\mathbf{1}_{[(Q_1(s) \le -x]}ds\Big) + \sup_{0<y \le \beta^{-1/2}}\mathbb{E}_{(0,y)}\Big(\int_0^{\tau_2(4M_0\beta^{-1})}\mathbf{1}_{[(Q_1(s) \le -x]}ds\Big).
\end{eq}
Note that
\begin{eq}\label{lp2}
&\mathbb{E}_{(0, 4M_0\beta^{-1})}\Big(\int_{0}^{\Lambda_{-1}}\mathbf{1}_{[(Q_1(s) \le -x]}ds\Big)\\
& = \mathbb{E}_{(0, 4M_0\beta^{-1})}\Big(\int_{0}^{\tau_2(2M_0\beta^{-1})}\mathbf{1}_{[(Q_1(s) \le -x]}ds\Big)
 + \mathbb{E}_{(0, 4M_0\beta^{-1})}\Big(\int_{\tau_2(2M_0\beta^{-1})}^{\sigma\left(\tau_2(2M_0\beta^{-1})\right)}\mathbf{1}_{[(Q_1(s) \le -x]}ds\Big)\\
&\qquad + \mathbb{E}_{(0, 4M_0\beta^{-1})}\Big(\mathbf{1}_{[Q_2(\sigma\left(\tau_2(2M_0\beta^{-1})\right))>\beta^{-1/2}]}\int_{\sigma\left(\tau_2(2M_0\beta^{-1})\right)}^{\sigma\left(\tau_2(\beta^{-1/2})\right) \wedge \tau_2(4M_0\beta^{-1})}\mathbf{1}_{[(Q_1(s) \le -x]}ds\Big).
\end{eq}
By Lemmas \ref{qoneexc-ii} and  \ref{qoneexc-v}, for any $x \ge 4\beta$,
\begin{multline}\label{lp21}
\mathbb{E}_{(0, 4M_0\beta^{-1})}\Big(\int_{0}^{\tau_2(2M_0\beta^{-1})}\mathbf{1}_{[(Q_1(s) \le -x]}ds\Big)
 + \mathbb{E}_{(0, 4M_0\beta^{-1})}\Big(\int_{\tau_2(2M_0\beta^{-1})}^{\sigma\left(\tau_2(2M_0\beta^{-1})\right)}\mathbf{1}_{[(Q_1(s) \le -x]}ds\Big)\\
 \le C\left(\beta^{-4}\e^{-C'\frac{x}{\beta}} + \e^{-(x-2\beta)^2/4}\right).
\end{multline}
By Lemmas \ref{qoneexc-iii} and~\ref{qoneexc-iv}, for any $x \ge 4\beta^{1/2}$,
\begin{eq}\label{lp22}
&\mathbb{E}_{(0, 4M_0\beta^{-1})}\Big(\mathbf{1}_{[Q_2(\sigma\left(\tau_2(2M_0\beta^{-1})\right)) > \beta^{-1/2}]}\int_{\sigma\left(\tau_2(2M_0\beta^{-1})\right)}^{\sigma\left(\tau_2(\beta^{-1/2})\right) \wedge \tau_2(4M_0\beta^{-1})}\mathbf{1}_{[(Q_1(s) \le -x]}ds\Big)\\
 &\le \sup_{\beta^{-1/2} \le y \le 2M_0\beta^{-1}}\mathbb{E}_{(0, y)}\Big(\int_{0}^{\tau_2(\beta^{-1/2}) \wedge \tau_2(4M_0\beta^{-1})}\mathbf{1}_{[(Q_1(s) \le -x]}ds\Big)\\
 & \qquad+ \sup_{\beta^{-1/2} \le y \le 2M_0\beta^{-1}}\mathbb{E}_{(0, y)}\Big(\Big(\int_{\tau_2(\beta^{-1/2})}^{\sigma\left(\tau_2(\beta^{-1/2})\right)}\mathbf{1}_{[(Q_1(s) \le -x]}ds\Big)\mathbf{1}_{[\tau_2(\beta^{-1/2}) < \tau_2(4M_0\beta^{-1})]}\Big)\\
& \le C\left(\beta^{-7/2}\e^{-C'\frac{x}{\sqrt{\beta}}} + \e^{-(x-2\beta)^2/4}\right).
\end{eq}
Using \eqref{lp21} and \eqref{lp22} in \eqref{lp2}, we obtain for any $x \ge 4\beta^{1/2}$,
\begin{equation}\label{lp3}
\mathbb{E}_{(0, 4M_0\beta^{-1})}\left(\int_{0}^{\Lambda_{-1}}\mathbf{1}_{[(Q_1(s) \le -x]}ds\right) \le C\left(\beta^{-4}\e^{-C'\frac{x}{\sqrt{\beta}}} + \e^{-(x-2\beta)^2/4}\right).
\end{equation}
Now we estimate the second term appearing on the right hand side of \eqref{lp1}. With any starting configuration $(Q_1(0), Q_2(0)) = (0,y)$ where $y \in (0,\beta^{-1/2}]$, define the stopping times $\Lambda_0=0$ and for $k \ge 0$,
\begin{align*}
\Lambda_{2k+1} &= \inf\{t \ge \Lambda_{2k}: Q_2(t) = 2\beta^{-1/2}\},\\
\Lambda_{2k+2} &= \inf\{t \ge \Lambda_{2k+1}: \text{ either } \{Q_2(t)\le \beta^{-1/2}, Q_1(t)=0\} \text{ or } Q_2(t) = 4M_0\beta^{-1}\}.
\end{align*}
Let $\mathcal{N}_{\Lambda} = \inf\{ k \ge 1: Q_2(\Lambda_{2k}) = 4M_0\beta^{-1}\}.$ From Lemma \ref{qoneexc-i}, for any $x \ge 2\beta^{1/4}$,
\begin{multline}\label{lp4}
\sup_{0<y \le \beta^{-1/2}}\mathbb{E}_{(0,y)}\Big(\int_{0}^{\Lambda_1} \mathbf{1}_{[Q_1(s) \le -x]}ds\Big)\\
= \sup_{0 < y \le \beta^{-1/2}}\mathbb{E}_{(0,y)}\Big(\int_{0}^{\tau_2(2\beta^{-1/2})} \mathbf{1}_{[Q_1(s) \le -x]}ds\Big) \le C\beta^{-5/4}\e^{-(x-\beta)^2/8}.
\end{multline}
From Lemmas \ref{qoneexc-iii} and \ref{qoneexc-iv}, for any $x \ge 2\beta^{1/2}$,
\begin{eq}\label{lp5}
&\sup_{0<y \le \beta^{-1/2}}\mathbb{E}_{(0,y)}\Big(\int_{\Lambda_1}^{\Lambda_2} \mathbf{1}_{[Q_1(s) \le -x]}ds\Big)\\
&\le \mathbb{E}_{(0,2\beta^{-1/2})}\Big(\int_{0}^{\tau_2(\beta^{-1/2})\wedge \tau_2(4M_0\beta^{-1})} \mathbf{1}_{[Q_1(s) \le -x]}ds\Big)\\
 &\hspace{3cm}+ \mathbb{E}_{(0, 2\beta^{-1/2})}\Big(\Big(\int_{\tau_2(\beta^{-1/2})}^{\sigma\left(\tau_2(\beta^{-1/2})\right)}\mathbf{1}_{[(Q_1(s) \le -x]}ds\Big)\mathbf{1}_{[\tau_2(\beta^{-1/2}) < \tau_2(4M_0\beta^{-1})]}\Big)\\
& \le C\left(\beta^{-7/2}\e^{-C'\frac{x}{\sqrt{\beta}}} + \e^{-(x-2\beta)^2/4}\right).
\end{eq}
As $Q_2(t) \ge S(t) \ge S(0) + \sqrt{2}W(t) - \beta t$,
\begin{align*}
\prob_{(0,2\beta^{-1/2})}\left(\tau_2(4M_0\beta^{-1}) < \tau_2(\beta^{-1/2})\right)& \ge \prob\left(2\beta^{-1/2} + \sqrt{2}W(t) - \beta t \text{ hits } 4M_0\beta^{-1} \text{ before } \beta^{-1/2}\right)\\
&=\frac{\e^{2\sqrt{\beta}} - \e^{\sqrt{\beta}}}{\e^{4M_0} - \e^{\sqrt{\beta}}} \ge C\sqrt{\beta}.
\end{align*}
This gives us
\begin{equation}\label{lp6}
\sup_{0<y \le \beta^{-1/2}}\mathbb{E}_{(0,y)}\left(\mathcal{N}_{\Lambda}\right) \le C'\beta^{-1/2}.
\end{equation}
From \eqref{lp4}, \eqref{lp5} and \eqref{lp6}, we obtain for any $x \ge 2\beta^{1/4}$,
\begin{eq}\label{lp7}
&\sup_{0<y \le \beta^{-1/2}}\mathbb{E}_{(0,y)}\Big(\int_0^{\tau_2(4M_0\beta^{-1})}\mathbf{1}_{[(Q_1(s) \le -x]}ds\Big)\\
& = \sup_{0<y \le \beta^{-1/2}}\sum_{k=0}^{\infty}\Big(\mathbf{1}_{[\mathcal{N}_{\Lambda} \ge 2k+2]}\int_{\Lambda_{2k}}^{\Lambda_{2k+2}}\mathbf{1}_{[(Q_1(s) \le -x]}ds\Big)\\
&\le \sup_{0<y \le \beta^{-1/2}}\mathbb{E}_{(0,y)}\Big(\int_{0}^{\Lambda_2} \mathbf{1}_{[Q_1(s) \le -x]}ds\Big)\sup_{0<y \le \beta^{-1/2}}\mathbb{E}_{(0,y)}\left(\mathcal{N}_{\Lambda}\right)\\
&\le C\left(\beta^{-7/4}\e^{-(x-\beta)^2/8} + \beta^{-4}\e^{-C'\frac{x}{\sqrt{\beta}}} + \beta^{-1/2}\e^{-(x-2\beta)^2/4}\right).
\end{eq}
Using \eqref{lp3} and \eqref{lp7} in \eqref{lp1}, we obtain for any $x \ge \max\{4\beta^{1/2}, 2\beta^{1/4}\}$,
\begin{equation*}
\mathbb{E}_{(0, 4M_0\beta^{-1})}\Big(\int_{0}^{\Xi_0}\mathbf{1}_{[(Q_1(s) \le -x]}ds\Big) \le C\left(\beta^{-7/4}\e^{-(x-2\beta)^2/8} + \beta^{-4}\e^{-C'\frac{x}{\sqrt{\beta}}}\right).
\end{equation*}
Take $\beta'_0 \in (0, \beta_0]$ small enough such that $\beta^{1/4} \ge 2\beta^{1/2}$ and $\beta^{-4}\e^{-C'\beta^{-1/4}} \le 1$, where $C'$ is the constant appearing in the above equation. Then for every $\beta \le \beta'_0$ and every $x \ge 2\beta^{1/4}$,
\begin{equation*}
\mathbb{E}_{(0, 4M_0\beta^{-1})}\Big(\int_{0}^{\Xi_0}\mathbf{1}_{[(Q_1(s) \le -x]}ds\Big) \le C\left(\beta^{-7/4}\e^{-(x-2\beta)^2/8} + \e^{-C'\frac{x}{2\sqrt{\beta}}}\right).
\end{equation*}
Using the above bound and the lower bound on $\mathbb{E}_{(0, 4M_0\beta^{-1})}\left(\Xi_0\right)$ obtained in Lemma \ref{regsmall}, the theorem is proved.
\end{proof}

\begin{proof}[Proof of Theorem~\ref{betazero}]
For cleaner notation, we will suppress the dependence of the stationary distribution on $\beta$. As before, we will use $\mathbb{E}_{\pi}$ to also denote the expectation operator corresponding to the law of the stationary diffusion process on the path space with initial distribution $\pi$.

For any $n \ge 0$, as in the proof of Proposition~\ref{Q1fin}, applying Ito's formula to $Q_1(t)Q_2^n(t)$ and then taking expectation with respect to $\mathbb{E}_{\pi}$ and applying Fubini's theorem, we obtain for any $\beta>0, t > 0$,
\begin{equation*}
\int_0^t\mathbb{E}_{\pi}\left(-(n+1)Q_1(s)Q_2^n(s) + Q_2^{n+1}(s) -\beta Q_2^n(s)\right)ds
 - \mathbb{E}_{\pi}\int_0^tQ_2^n(s)dL(s) = 0
\end{equation*}
which, using the fact that $\pi$ is stationary, gives for each $t>0$,
\begin{equation}\label{cs1}
-(n+1)\mathbb{E}_{\pi}(Q_1(0)Q_2^n(0)) + \mathbb{E}_{\pi}(Q_2^{n+1}(0)) -\beta \mathbb{E}_{\pi}(Q_2^n(0)) = \frac{\mathbb{E}_{\pi}\int_0^tQ_2^n(s)dL(s)}{t}.
\end{equation}
Using Ito's formula and Fubini's theorem for $Q_2^{n+1}(t)$, we get for each $t>0$,
\begin{equation}\label{cs2}
\frac{\mathbb{E}_{\pi}\int_0^tQ_2^n(s)dL(s)}{t} = \mathbb{E}_{\pi}(Q_2^{n+1}(0)).
\end{equation}
Using \eqref{cs2} in \eqref{cs1},
\begin{equation}\label{cs3}
\mathbb{E}_{\pi}(Q_1(0)Q_2^n(0)) = -\frac{\beta}{n+1}\mathbb{E}_{\pi}(Q_2^n(0)).
\end{equation}
Applying the same procedure to $Q_1^2(t)Q_2^n(t)$, we obtain
\begin{equation}\label{cs4}
-(n+2)\mathbb{E}_{\pi}(Q_1^2(0)Q_2^n(0)) + 2\mathbb{E}_{\pi}(Q_1(0)Q_2^{n+1}(0))) + 2\mathbb{E}_{\pi}(Q_2^n(0)) - 2\beta\mathbb{E}_{\pi}(Q_1(0)Q_2^n(0)) = 0.
\end{equation}
Using \eqref{cs3} to replace $\mathbb{E}_{\pi}(Q_1(0)Q_2^n(0))$ and $\mathbb{E}_{\pi}(Q_1(0)Q_2^{n+1}(0))$ appearing in \eqref{cs4} with the terms $-\frac{\beta}{n+1}\mathbb{E}_{\pi}(Q_2^n(0))$ and $-\frac{\beta}{n+2}\mathbb{E}_{\pi}(Q_2^{n+1}(0))$ respectively,
\begin{equation}\label{cs5}
\mathbb{E}_{\pi}(Q_1^2(0)Q_2^n(0)) = -\frac{2\beta}{(n+2)^2}\mathbb{E}_{\pi}(Q_2^{n+1}(0)) + \left(\frac{2}{n+2} + \frac{2\beta^2}{(n+1)(n+2)}\right)\mathbb{E}_{\pi}(Q_2^n(0)).
\end{equation}
By Theorem \ref{smallbetathm}, there is $\beta^*_s>0$ such that for all $\beta \le \beta^*_s$, the following holds: for each $n \ge 0$, there is $C_n>0$ not depending on $\beta$ satisfying $\mathbb{E}_{\pi}((\beta Q_2(0))^{2n}) \le C_n$. Moreover, by Theorem \ref{Q1statsmall}, $\mathbb{E}_{\pi}(|Q_1(0)|^4) \rightarrow 0$ as $\beta \rightarrow 0$. Using these observations and Cauchy-Schwarz inequality,
\begin{equation}\label{cs6}
\mathbb{E}_{\pi}(Q_1^2(0)(\beta Q_2(0))^n) \le \left(\mathbb{E}_{\pi}(Q_1^4(0))\right)^{1/2}\left(\mathbb{E}_{\pi}((\beta Q_2(0)))^{2n}\right)^{1/2} \rightarrow 0 \ \text{ as } \beta \rightarrow 0.
\end{equation}
We proceed via induction. For $n=0$, using \eqref{cs6} in \eqref{cs5}, we conclude that $\lim_{\beta \rightarrow 0}\mathbb{E}_{\pi}\left(\beta Q_2(0)\right)$ exists and
$$
\lim_{\beta \rightarrow 0}\mathbb{E}_{\pi}\left(\beta Q_2(0)\right) = 2.
$$
Suppose we have proved for some integer $n \ge 0$ that $\lim_{\beta \rightarrow 0}\mathbb{E}_{\pi}\left((\beta Q_2(0))^{n}\right) = (n+1)!$. Using this and \eqref{cs6} in \eqref{cs5}, we conclude that $\lim_{\beta \rightarrow 0}\mathbb{E}_{\pi}\left((\beta Q_2(0))^{n+1}\right)$ exists and
$$\lim_{\beta \rightarrow 0}\mathbb{E}_{\pi}\left((\beta Q_2(0))^{n+1}\right) = (n+2)!
$$
completing the proof of the theorem.
\end{proof}

\appendix
\section{Summary of required known auxiliary results}\label{app:recall}
In this appendix we recall some useful probability estimates from~\cite{BM18}.
\begin{lemma}[{\cite[Lemma 4.3]{BM18}}]\label{lem:q2regeneration}
There exist positive constants $c'_1, c'_2, c'_3, c'_4$ not depending on $\beta$ such that the following hold: 
\begin{itemize}
\item[(i)] For $\beta \ge 1$ and any $y \ge 1$,
\begin{align*}
\prob_{(0,\ y + c'_1\beta)}\big(\inf_{s\leq t}Q_2(s) > c'_1\beta \big)\leq c'_3\exp(-c'_2\beta^{2/5}t^{1/5})
\qquad \mbox{for all } t\geq c'_4 y/\beta.
\end{align*}
\item[(ii)] For $\beta \in (0,1)$ and any $y \ge 1$, for all $t\geq c'_4 \left(\frac{y}{\beta} \vee \beta^{-2}\right)$
\begin{align*}
\prob_{(0,\ y + c'_1\beta^{-1})}\big(\inf_{s\leq t}Q_2(s) > c'_1\beta^{-1}\big)&\leq c'_3\left(\exp(-c'_2\beta^{-2/5}t^{1/5}) + \exp(-c'_2\beta^2 t)\right.\\
&\hspace{2cm}\left. + \beta^{-2}\exp(-c'_2 t)\right).
\end{align*}
\end{itemize}
\end{lemma}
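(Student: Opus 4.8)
The plan is to control the excursion of $Q_2$ above the level $h_\beta$, where $h_\beta=c_1'\beta$ in part (i) and $h_\beta=c_1'\beta^{-1}$ in part (ii), by exploiting the feedback between the local time $L$ and the level of $Q_2$. Set $T=\inf\{s\ge 0:Q_2(s)\le h_\beta\}$; we must bound $\prob_{(0,\,y+h_\beta)}(T>t)$ for $t$ beyond the stated threshold. The guiding heuristic is that on $\{T>s\}$ the drift of $Q_1$ equals $-Q_1(s)+Q_2(s)-\beta\ge -Q_1(s)+(h_\beta-\beta)$, which (for $c_1'$ large) is strongly positive, so $Q_1$ is pinned near the origin, the local time accumulates at rate $\approx Q_2-\beta$, and hence $dQ_2=dL-Q_2\,ds\approx-\beta\,ds$; thus $Q_2$ should descend from $y+h_\beta$ to $h_\beta$ in time $\Theta(y/\beta)$, which is exactly the threshold appearing in the statement. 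Remaining above $h_\beta$ for $t\gg y/\beta$ is therefore a large-deviation event, and the goal is to make its probability quantitative.

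\textbf{Two core steps.} First I would make rigorous a local-time accumulation estimate: on $\{T>s\}$, stochastically dominate the reflected coordinate $Q_1$ from below (using the comparison theorem, e.g. Proposition~2.18 of \cite{Karatzas}) by a Brownian motion with constant drift $\asymp h_\beta$ reflected downward at $0$, and deduce (a) that $Q_1$ returns to a neighbourhood of $0$ within time $\lesssim\log(1+|Q_1(0)|)/h_\beta$, and (b) that over a window $[u,u+\delta]$ with $\delta$ suitably chosen, $L(u+\delta)-L(u)\ge \tfrac12(h_\beta-\beta)\delta$ outside an event of small probability, the bad event being a long excursion of $Q_1$ away from $0$, which is controlled by hitting-time estimates for the comparison process (and by the Doob representation when an Ornstein--Uhlenbeck comparison is used, as in Claim~\ref{cl:threeuse}). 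The complementary step turns this into a decay of $Q_2$: since $Q_2(u+\delta)=Q_2(u)+\big(L(u+\delta)-L(u)\big)-\int_u^{u+\delta}Q_2(s)\,ds$ and $Q_2(s)>h_\beta$ on the window, one gets $Q_2(u+\delta)\le Q_2(u)-\tfrac12\beta\delta+(\text{Brownian-type fluctuation})$; equivalently one may run the argument on $S=Q_1+Q_2$ via $dS=\sqrt{2}\,dW-\beta\,ds-Q_1\,ds$ together with the bound on $\int(-Q_1)\,ds$ coming from (a)--(b). Iterating over consecutive windows forces $Q_2$ below $h_\beta$ unless a controlled-probability excursion event occurs in one of the windows.

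\textbf{Bookkeeping and the main obstacle.} The last step is to partition $[0,t]$ into blocks and choose the block lengths together with the required per-block descent so that each block's failure probability is governed by a Gaussian tail at scale $\asymp h_\beta\times(\text{block length})^{1/2}$ against a displacement $\asymp\beta\times(\text{block length})$; optimizing the block length against the number of blocks produces the stretched-exponential rate $\exp(-c_2'\beta^{2/5}t^{1/5})$ in case (i), while in case (ii) the extra terms $\exp(-c_2'\beta^2 t)$ and $\beta^{-2}\exp(-c_2't)$ arise from the time scale $\asymp\beta^{-2}$ on which the Brownian fluctuations and the $\Theta(\beta)$ descent rate compete near the high level $c_1'\beta^{-1}$, and from the cost of the initial equilibration of $Q_1$, respectively. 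I expect the \emph{main obstacle} to be exactly this feedback loop: because the drift of $Q_2$ is not an explicit function of $(Q_1,Q_2)$ but is mediated by the pathwise local time $L$ of the reflected coordinate, one cannot dominate $Q_2$ by an autonomous diffusion; the delicate point is handling the rare large negative excursions of $Q_1$, during which local-time accumulation essentially shuts off and $Q_2$ can hang at a high level, and showing that at the relevant $\beta$-dependent time scales these excursions are only stretched-exponentially rare — which is precisely what degrades the naive linear-in-$t$ rate to the $t^{1/5}$ rate in the lemma.
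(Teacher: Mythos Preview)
The paper does not prove this lemma: it is quoted verbatim in Appendix~\ref{app:recall} as a result from \cite{BM18} (their Lemma~4.3), with no argument given. So there is no ``paper's own proof'' to compare against.

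That said, your sketch is headed in the right direction and is consistent with the machinery the paper imports. The crucial intermediate estimate you describe --- controlling $\int_0^t(-Q_1(s))\,ds$ on the event that $Q_2$ stays above the high level --- is exactly the content of Lemma~\ref{lem:q1integral} (also cited from \cite{BM18}), which bounds $\prob\big(\int_0^t(-Q_1)\,ds>\tfrac12(\beta\wedge\beta^{-1})t,\ \inf_{s\le t}Q_2(s)\ge c_1'(\beta\vee\beta^{-1})+\beta\big)$ by $\exp(-c_2't^{1/5}(\beta\vee\beta^{-1})^{2/5})$. Once that is in hand, your second step via $S=Q_1+Q_2$ and $dS=\sqrt{2}\,dW-\beta\,dt+(-Q_1)\,dt$ does the job: on the good event one has $S(t)\le S(0)+\sqrt{2}W(t)-\tfrac{\beta}{2}t$ (or the analogous bound for small $\beta$), and then the event $\{\inf_{s\le t}Q_2(s)>h_\beta\}$ forces a Brownian large deviation. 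This is precisely how the paper uses these two lemmas together in, e.g., the proof of Lemma~\ref{smallsmall}. Your identification of the $t^{1/5}$ rate as coming from a block decomposition balancing Gaussian fluctuations against linear descent is the right intuition for where the stretched exponential originates, though the actual optimization in \cite{BM18} would need to be consulted for the details.
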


Recall the inter-regeneration times from Section~\ref{sec:reg}.
The next theorem guarantees that for any $B>0$, the process $\{Q_1(t), Q_2(t)\}_{t\geq 0}$ is a classical regenerative process with regeneration times given by $\{\Xi_k\}_{k\geq 0}$.
It also provides a tractable form for the steady state measure.

\begin{theorem}[{\cite[Proposition 3.2 \& Theorem 3.3]{BM18}}]
\label{stationary}
Fix any $B>0$.
Set $(Q_1(0), Q_2(0))= (x,y)$ ($x \le 0, y >0$). There exist constants $ c^{(1)}_\Xi, c^{(2)}_\Xi>0$ (depending on $x,y,B,\beta$), such that 
$$\prob_{(x,y)}(\Xi_0>t)\leq c^{(1)}_\Xi\exp(-c^{(2)}_\Xi t^{1/6}).$$
In particular, $\expt_{(x,y)}\Xi_0<\infty.$
The process described by Equation~\eqref{eq:diffusionjsq} has a unique stationary distribution $\pi$ which can be represented as
$$
\pi(A) = \frac{\mathbb{E}_{(0, 2B)}\left(\int_{0}^{\Xi_0}\mathbf{1}_{[(Q_1(s),Q_2(s)) \in A]}ds\right)}{\mathbb{E}_{(0, 2B)}\left(\Xi_0\right)}
$$
for any measurable set $A \in (-\infty, 0] \times (0, \infty)$. Moreover, the process is ergodic in the sense that for any measurable function $f$ satisfying $\mathbb{E}_{(0, 2B)}\left(\int_{0}^{\Xi_0}f((Q_1(s),Q_2(s)))ds\right) < \infty$,
\begin{equation}\label{ergodicity}
\frac{1}{t}\int_0^t f((Q_1(s),Q_2(s)))ds \rightarrow \frac{\mathbb{E}_{(0, 2B)}\left(\int_{0}^{\Xi_0}f((Q_1(s),Q_2(s))ds\right)}{\mathbb{E}_{(0, 2B)}\left(\Xi_0\right)}
\end{equation}
almost surely as $t \rightarrow \infty$.
\end{theorem}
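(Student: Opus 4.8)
The plan is to show that $\{(Q_1(t),Q_2(t))\}_{t\ge0}$ is a \emph{classical} regenerative process with regeneration times $\{\Xi_k\}$, to bound the tail of the cycle length $\Xi_0$, and then to read off the representation of $\pi$, its uniqueness, and the ergodic statement from the standard theory of regenerative processes. The foundational point is that the regeneration points are deterministic. The crucial observation is that the second coordinate carries no martingale part: $Q_2(t)=Q_2(0)+L(t)-\int_0^t Q_2(s)\,\dif s$, with $L$ continuous, nondecreasing and flat off $\{s:Q_1(s)=0\}$. Fix $k\ge0$ and put $\tau=\alpha_{2k+2}=\inf\{t>\alpha_{2k+1}:Q_2(t)=2B\}$. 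Since $Q_2(\alpha_{2k+1})=B<2B$ and $Q_2$ is continuous, $Q_2(t)<2B$ on $(\alpha_{2k+1},\tau)$. If $Q_1(\tau)<0$, then by continuity of $Q_1$ the process $L$ is flat on some interval $(\tau-\varepsilon,\tau]$, so there $Q_2'=-Q_2<0$; hence $Q_2$ is strictly decreasing on $(\tau-\varepsilon,\tau]$ and $Q_2(\tau-\varepsilon/2)>Q_2(\tau)=2B$, a contradiction. Thus $Q_1(\tau)=0$, i.e.\ $(Q_1(\Xi_k),Q_2(\Xi_k))=(0,2B)$ for every $k\ge0$. By the strong Markov property for this reflected diffusion, the path after $\Xi_k$ is independent of $\mathcal F_{\Xi_k}$ and has the law of the process started at $(0,2B)$; so the cycles $\{(Q_1(t),Q_2(t)):\Xi_{k-1}\le t<\Xi_k\}$, $k\ge1$, are i.i.d., with the same law as the $0$-th cycle when one starts at $(0,2B)$.

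Next I would bound the tail of $\Xi_0=\alpha_1+(\alpha_2-\alpha_1)$. For the descent $\alpha_1$ with $y>B$ one has $\{\alpha_1>t\}=\{\inf_{s\le t}Q_2(s)>B\}$: while $Q_2$ stays high, the drift $-\beta-Q_1+Q_2$ of $Q_1$ is large and positive, so $Q_1$ is pinned near $0$, and once the accumulated local time is subtracted $Q_2$ behaves like a Brownian motion with drift $\approx-\beta$, giving $\prob_{(x,y)}(\alpha_1>t)\le c\,\e^{-c't^{1/5}}$ by the argument underlying Lemma~\ref{lem:q2regeneration}; for $y\le B$, $Q_2$ reaches $B$ within comparable time by the same excursion analysis used for the ascent. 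For the ascent $\alpha_2-\alpha_1$, starting from $(Q_1(\alpha_1),B)$, I would bound the number and the durations of the excursions of $Q_1$ to $0$ needed to lift $Q_2$ by $B$ through the term $L$, each excursion contributing an order-one increment against the unit-rate exponential decay of $Q_2$; combining the two phases over a geometric number of constituent excursions (and, for a general start $(x,y)$, a preliminary bounded burn-in into a compact region) yields $\prob_{(x,y)}(\Xi_0>t)\le c^{(1)}_\Xi\,\e^{-c^{(2)}_\Xi t^{1/6}}$, with the exponent degrading from the $1/5$ of the individual estimates. In particular $\expt_{(x,y)}\Xi_0=\int_0^\infty\prob_{(x,y)}(\Xi_0>t)\,\dif t<\infty$.

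With the classical regenerative structure and i.i.d.\ cycles of finite mean length $\expt_{(0,2B)}\Xi_0$ in hand, the renewal--reward theorem (\cite[Chapter~10]{Thorisson}) gives, for every measurable $f$ with $\expt_{(0,2B)}\big(\int_0^{\Xi_0}|f(Q_1(s),Q_2(s))|\,\dif s\big)<\infty$,
\[
\frac1t\int_0^t f(Q_1(s),Q_2(s))\,\dif s\ \longrightarrow\ \pi(f):=\frac{\expt_{(0,2B)}\big(\int_0^{\Xi_0}f(Q_1(s),Q_2(s))\,\dif s\big)}{\expt_{(0,2B)}(\Xi_0)}\qquad\text{a.s. as }t\to\infty,
\]
valid from any initial state since $\Xi_0<\infty$ a.s.\ by the previous step. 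Taking $f\equiv1$ shows $\pi$ is a probability measure, and the general regenerative construction shows $\pi$ is stationary for the Markov process. For uniqueness, if $\nu$ is any stationary distribution, then starting from $\nu$, applying the display to a bounded continuous $f$, and using $\expt_\nu f(Q_1(s),Q_2(s))=\nu(f)$ together with bounded convergence gives $\nu(f)=\pi(f)$; since bounded continuous functions are measure-determining, $\nu=\pi$.

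The short parts are the $C^1$-type argument for the deterministic regeneration points and the textbook regenerative-process input for the representation and ergodicity. The main obstacle is the quantitative tail bound, and within it the ascent estimate: it rests entirely on the singular local-time term, so one must quantify how often and for how long $Q_1$ visits the origin precisely in the regime where $Q_2$ is only of order $B$ --- exactly where the convenient comparisons (reflected Ornstein--Uhlenbeck process, Brownian motion with drift $-\beta$) used elsewhere break down --- and then iterate these excursion estimates while controlling the deteriorating stretched-exponential exponents down to $t^{1/6}$.
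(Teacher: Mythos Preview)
Your proposal is correct and matches the approach of the cited source: the paper itself does not prove this theorem but recalls it from \cite{BM18} (see Appendix~\ref{app:recall} and the sketch in Section~\ref{sec:reg}), and your outline---the deterministic regeneration state via the $C^1$ argument on $Q_2$ (which the paper also uses, e.g.\ after \eqref{lin1}), the stretched-exponential tail via excursion decomposition building on Lemma~\ref{lem:q2regeneration}, and the textbook regenerative input---is precisely the route taken there. Your identification of the ascent phase as the bottleneck driving the exponent from $t^{1/5}$ down to $t^{1/6}$ is accurate.
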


\begin{lemma}[{\cite[Lemma 5.3]{BM18}}]\label{Q2gebeta2}
There exist constants $R^+, R^-, C^*_1, C^*_2 >0$ that do not depend on $\beta$ such that
\begin{equation*}
\prob_{(0, y+ \beta)}\left(\tau_2\left(2y + \beta\right) \le \tau_2\left(y_0(\beta) + \beta\right)\right) \le C^*_1 \e^{-C^*_2 \beta y}
\end{equation*}
for all $y \ge y_0(\beta)$, where $y_0(\beta) = \frac{R^+}{\beta}$ if $\beta \ge 1$ and $y_0(\beta) = R^-\max\left\lbrace\frac{1}{\beta}\log \frac{1}{\beta}, \frac{1}{\beta}\right\rbrace$ if $\beta <1$.
\end{lemma}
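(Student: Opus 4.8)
Here the plan is to exploit the fact that whenever $Q_2$ is large the coordinate $Q_1$ is held near the origin by a strong restoring drift, so that the local time it pumps into $Q_2$ cannot generate more than a modest upward drift; this reduces the claim to a one-dimensional exponential maximal estimate for a negatively drifted Brownian motion. Set $S(t):=Q_1(t)+Q_2(t)$ and $P(t):=-Q_1(t)\ge 0$. Subtracting the two lines of~\eqref{eq:diffusionjsq} the local time cancels, giving the clean identities $S(t)=S(0)+\sqrt 2 W(t)-\beta t+\int_0^t P(s)\,ds$ and $Q_2(t)=S(t)+P(t)$; thus the only obstruction to $S$ (hence $Q_2$) having drift $-\beta$ is the nonnegative additive functional $\int_0^t P(s)\,ds$. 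Writing $T_-:=\tau_2(y_0(\beta)+\beta)$ and $T_+:=\tau_2(2y+\beta)$, I would work on the event $E:=\{T_+\le T_-\}$, on which $Q_2(t)\ge y_0(\beta)+\beta$ for every $t\le T_+$. On that window the drift of $P$ away from $0$ equals $\beta-P(t)-Q_2(t)\le -y_0(\beta)$, so by the comparison theorem for one-dimensionally reflected SDEs driven by a common Brownian motion, $P(t)\le \bar P(t)$ for $t\le T_+$, where $\bar P$ is a variance-$2$ Brownian motion with constant drift $-y_0(\beta)$, reflected at $0$, started from $\bar P(0)=P(0)=0$ (and we may extend $\bar P$ as such a process for all time).

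Next I would control the feedback. Itô's formula for $\bar P^2$ (the cross term $\bar P\,d\bar L$ vanishing since $\bar L$ is carried on $\{\bar P=0\}$) gives $y_0(\beta)\int_0^t \bar P(s)\,ds=t-\tfrac12\bar P(t)^2+\sqrt 2\,\mathcal M(t)$ with $\mathcal M$ the driving martingale, so $\int_0^t \bar P(s)\,ds\le t/y_0(\beta)+(\sqrt2/y_0(\beta))\,\mathcal M(t)$. Choosing $R^\pm$ large enough that $y_0(\beta)\ge 4/\beta$ for all admissible $\beta$, the deterministic part is at most $\beta t/4$, and since $\bar P$ is driven by $-W$ the stochastic part recombines with $\sqrt2 W(t)$ into a single continuous martingale $\mathcal B(t):=\sqrt 2\int_0^t (1-\bar P(s)/y_0(\beta))\,dW(s)$ whose quadratic variation is $2t$ up to the exponentially negligible contribution of $\{\bar P>2y_0(\beta)\}$. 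On the window this yields $Q_2(t)\le (y+\beta)+\mathcal B(t)-\tfrac{\beta}{2}t+\bar P(t)$, so on $E$, evaluating at $T_+$, $y\le \mathcal B(T_+)-\tfrac{\beta}{2}T_+ + \bar P(T_+)\le \sup_{t\ge 0}[\mathcal B(t)-\tfrac{\beta}{2}t+\bar P(t)]$. It then suffices to bound $\prob(\sup_{t\ge 0}[\mathcal B(t)-\tfrac{\beta}{2}t+\bar P(t)]\ge y)$ by $C_1^*\e^{-C_2^*\beta y}$: peeling the time axis into $t\le 4y/\beta$ and the dyadic blocks $t\in[2^k\cdot 4y/\beta,\ 2^{k+1}\cdot 4y/\beta]$, on which the drift $-\tfrac\beta2 t$ already exceeds $2^{k+1}y$, and applying the exponential maximal inequality for the martingale $\mathcal B$ (bracket $\le 2t$) together with the rate-$\gtrsim y_0(\beta)$ tail of the reflected process $\bar P$ over each block, every block contributes at most $\e^{-c2^k\beta y}$, whose geometric sum is $O(\e^{-c\beta y})$; the base range is handled by the same martingale estimate and the exponential tail of $\sup_{s\le 4y/\beta}\bar P(s)$. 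Since $\beta y_0(\beta)$ is bounded below, the prefactors collapse to constants $C_1^*, C_2^*$ independent of $\beta$.

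The main obstacle is exactly the uniform-in-time control of $\int_0^t P(s)\,ds$: the window $[0,T_+]$ is random and can be long on $E$, so one cannot merely use an in-expectation ergodic estimate for $\bar P$ but needs genuine concentration of its additive functionals together with tail control on the length of the excursion of $Q_2$ away from $y_0(\beta)+\beta$ — here the regeneration-type bounds of Lemma~\ref{lem:q2regeneration} enter. This is also what dictates the threshold: for $\beta<1$ the errors picked up in these steps are polynomial in $\beta^{-1}$ (e.g.\ the $\beta^{-2}\e^{-c t}$ term in Lemma~\ref{lem:q2regeneration}), and absorbing a $\beta^{-k}$ factor into $\e^{-C_2^*\beta y}$ forces $\beta y\gtrsim \log\beta^{-1}$, hence $y_0(\beta)\asymp \beta^{-1}\log\beta^{-1}$, whereas for $\beta\ge 1$ the corresponding errors are $O(1)$ and $y_0(\beta)\asymp\beta^{-1}$ suffices. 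The remaining ingredients — the reflected-SDE comparison, the Itô identity for $\bar P^2$, and the exponential maximal inequality for negatively drifted Brownian motion — are standard.
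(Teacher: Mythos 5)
First, note that the paper does not actually prove Lemma~\ref{Q2gebeta2}: it is imported verbatim from \cite[Lemma 5.3]{BM18}. Your high-level architecture is nevertheless the intended one and matches what \cite{BM18} and the in-paper analogues do (work on the window where $Q_2$ stays above $y_0(\beta)+\beta$, show the feedback $\int_0^t(-Q_1(s))\,ds$ stays below $\beta t/2$ so that $S=Q_1+Q_2$ keeps drift $\le-\beta/2$, then finish with a drifted-Brownian maximal estimate over time blocks; compare Lemma~\ref{lem:q1integral} and the proofs of Lemmas~\ref{smallint} and~\ref{smallsmall}). A small simplification you could also use: $Q_2$ increases only at times when $Q_1=0$, so $Q_2$ reaches $2y+\beta$ only when $Q_2=S$; bounding $\sup_t S(t)$ suffices and the awkward $\bar P(T_+)$ term can be dropped.

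The genuine gap is in how you control the feedback. You dominate $P=-Q_1$ by a reflected Brownian motion $\bar P$ with \emph{constant} drift $-y_0(\beta)$, discarding the restoring part of the true drift $\beta-P-Q_2\le -P-y_0(\beta)$. For $\beta\ge 1$ one has $y_0(\beta)=R^+/\beta$, so $\bar P$ has stationary law exponential with rate $y_0$ and typical size $1/y_0=\beta/R^+\gg y_0$; the event $\{\bar P>2y_0\}$ is therefore not rare but typical, the bracket of your recombined martingale $\mathcal{B}$ is of order $t/y_0^4\asymp \beta^4 t$ rather than $2t$, and the correction $(\sqrt2/y_0)\int_0^t\bar P\,dW$ swamps the drift gain $\beta t/4$. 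The exponential rate this route can yield for $\sup_t\bigl(\mathcal{B}(t)-\tfrac{\beta}{2}t\bigr)$ is of order $\beta/\beta^4=\beta^{-3}$, i.e.\ a bound like $\e^{-cy/\beta^{3}}$, which is far weaker than the claimed $\e^{-C_2^*\beta y}$ and useless for the large-$\beta$ analysis of Section~\ref{sec:analysis-large}. The fix is to keep the linear term: on the window $P$ is dominated by a \emph{reflected OU-type} process, whose additive functional is $O(t)\le \beta t/2$ with stretched-exponentially small exceptional probability — this is exactly the concentration estimate Lemma~\ref{lem:q1integral} of \cite{BM18} that the actual proof invokes. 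In the small-$\beta$ regime your scheme is quantitatively consistent (there $1/y_0\le\beta/R^-$, the correction is negligible, and the $\log\beta^{-1}$ in $y_0$ absorbs the polynomial prefactors, as you say), but the pathwise claims you rely on — ``quadratic variation $\approx 2t$ up to negligible events'', per-block control of $\sup\bar P$ and of $\int\bar P^2$ over a random and possibly long horizon — are precisely the additive-functional concentration that still has to be proved (via Lemma~\ref{lem:q2regeneration}/Lemma~\ref{lem:q1integral}-type arguments); as written they are asserted rather than established.
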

\begin{lemma}[{\cite[Lemma 5.4]{BM18}}]\label{Q2lb}
For any $B>0$,
\begin{equation*}
\prob_{(0,2B)}\left(\tau_2(y) < \tau_2(B)\right) \ge (1-\e^{-\beta B})\e^{-\beta(y-2B)}
\end{equation*}
for all $y \ge 2B$.
\end{lemma}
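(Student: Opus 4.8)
The plan is to bound the event $\{\tau_2(y) < \tau_2(B)\}$ from below by a tractable event involving a Brownian motion with constant drift, and then to evaluate that probability exactly with a scale function. If $y = 2B$ the bound is immediate, since then $\tau_2(y) = 0 < \tau_2(B)$ surely while the right-hand side is at most $1$; so assume $y > 2B$ from now on.

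First I would set $S(t) := Q_1(t) + Q_2(t)$. Adding the two lines of \eqref{eq:diffusionjsq}, the process $L$ cancels and, using $Q_1(0)=0$, $Q_2(0)=2B$, one obtains $S(t) = 2B + \sqrt{2}\,W(t) - \beta t + \int_0^t(-Q_1(s))\,\mathrm{d}s$. Since the state space forces $Q_1(s)\le 0$ for all $s$, the last integral is nonnegative, so with $X(t) := 2B + \sqrt{2}\,W(t) - \beta t$ we have the pathwise bounds $Q_2(t) = S(t) - Q_1(t) \ge S(t) \ge X(t)$ for every $t \ge 0$, which is precisely the comparison used throughout the paper.

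Next, let $T_y := \inf\{t\ge 0: X(t) = y\}$ and $T_B := \inf\{t\ge 0: X(t) = B\}$. On $\{T_y < T_B\}$, the path $X$ starts at $2B > B$, is continuous, and has not reached $B$ by time $T_y$, so $X(t) > B$ for all $t \le T_y$, whence $Q_2(t) \ge X(t) > B$ on $[0,T_y]$; moreover $Q_2(T_y) \ge X(T_y) = y$ while $Q_2(0) = 2B < y$, so by continuity $Q_2$ attains $y$ at some time $\le T_y$. Hence on this event $\tau_2(y) \le T_y$ and $\tau_2(B) > T_y$, so $\{T_y < T_B\} \subseteq \{\tau_2(y) < \tau_2(B)\}$ and therefore $\prob_{(0,2B)}(\tau_2(y) < \tau_2(B)) \ge \prob(T_y < T_B)$. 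Since $X$ has generator $f \mapsto f'' - \beta f'$, its scale function is $s(x) = \e^{\beta x}$, and the classical one-dimensional exit identity for a diffusion started at $2B \in (B,y)$ gives $\prob(T_y < T_B) = \dfrac{s(2B)-s(B)}{s(y)-s(B)} = \dfrac{\e^{2\beta B} - \e^{\beta B}}{\e^{\beta y} - \e^{\beta B}}$. Dropping the positive term $\e^{\beta B}$ in the denominator only decreases the fraction, so this is at least $\dfrac{\e^{2\beta B}-\e^{\beta B}}{\e^{\beta y}} = (1-\e^{-\beta B})\,\e^{-\beta(y-2B)}$, which is exactly the asserted bound.

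There is no serious obstacle here: the entire content is the reduction $Q_2 \ge S \ge X$ together with the event inclusion, and the only delicate points are verifying that inclusion carefully via the intermediate value theorem and handling the boundary case $y = 2B$. The final step is an exact identity after discarding a single positive term, so no quantitative slack is lost.
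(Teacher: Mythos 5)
Your proof is correct, and it uses exactly the comparison technique that runs through the paper: the pathwise bound $Q_2(t) \ge S(t) \ge 2B + \sqrt{2}W(t) - \beta t$, the event inclusion for hitting $y$ before $B$, and the scale function $s(x)=\e^{\beta x}$ for the drifted Brownian motion, with the prefactor $(1-\e^{-\beta B})\e^{-\beta(y-2B)}$ emerging precisely from dropping $\e^{\beta B}$ in the denominator of the exit identity. The paper itself only recalls this lemma from \cite{BM18} without reproving it, but your argument matches the standard one (and the analogous computations the paper does carry out, e.g.\ in the proofs of Lemmas~\ref{renexp} and~\ref{regsmall}), and I see no gaps, including your handling of the boundary case $y=2B$.
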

\begin{lemma}[{\cite[Lemma A.4]{BM18}}]\label{lem:q1integral}
There exist $c'_1, c'_2, c'_3>0$, not depending on $\beta$ such that for any $y > c'_1\left(\beta \vee \beta^{-1}\right)+ \beta$,
\begin{align*}
&\prob_{(0, y)}\Bigg(\int_0^t(-Q_1(s))\dif s>\left(\beta \wedge \beta^{-1}\right)\frac{t}{2},\ \inf_{s\leq t}Q_2(s) \geq c'_1\left(\beta \vee \beta^{-1}\right)+ \beta\Bigg)\\
&\leq \exp\Big(-c'_2t^{1/5}\left(\beta \vee \beta^{-1}\right)^{2/5}\Big)\qquad \text{for}\quad t \ge c'_3 \left(\beta \wedge \beta^{-1}\right)^2.
\end{align*} 
\end{lemma}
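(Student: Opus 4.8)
\emph{Plan.} The plan is to prove Lemma~\ref{lem:q1integral} in three moves: first, on the event that $Q_2$ stays large, dominate $-Q_1$ pathwise by a reflected Brownian motion with a strong negative drift; second, use Brownian scaling to reduce to a single deviation estimate for the time‑integral of a canonical reflected Brownian motion; third, prove that estimate by splitting off the (unlikely) event that the reflected process ever gets large and applying a concentration inequality to the resulting bounded additive functional.

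\emph{Step 1 (domination and reduction).} Write $\theta:=c'_1(\beta\vee\beta^{-1})+\beta$, $\mu:=c'_1(\beta\vee\beta^{-1})$, and $A_t:=\{\inf_{s\le t}Q_2(s)\ge\theta\}$; note $\theta-\beta=\mu$ and $\beta\wedge\beta^{-1}=c'_1/\mu$. Starting from $(Q_1(0),Q_2(0))=(0,y)$ with $y>\theta$, the process $Z(s):=-Q_1(s)\ge 0$ satisfies the reflected SDE $\dif Z(s)=(\beta-Q_2(s)-Z(s))\,\dif s+\sqrt2\,\dif\widehat W(s)+\dif L(s)$ with $\widehat W:=-W$, $Z(0)=0$, and $L$ increasing only on $\{Z=0\}$; on $A_t\cap[0,t]$ its drift is $\le\beta-\theta=-\mu$, pointwise in the state. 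By the comparison theorem for one‑dimensional reflected diffusions (same driving Brownian motion, same barrier $0$, same start) one gets $Z(s)\le R(s)$ for $s\le t$ on $A_t$, where $R$ is the reflected Brownian motion with drift $-\mu$, diffusion coefficient $\sqrt2$, $R(0)=0$ (to make this unconditional, compare instead with the solution of the same SDE with drift truncated at $-\mu$). Since $(\beta\wedge\beta^{-1})/2=c'_1/(2\mu)$ and, by Brownian scaling, $\widetilde R(u):=\mu R(u/\mu^2)$ is a reflected Brownian motion with drift $-1$, diffusion coefficient $\sqrt2$, $\widetilde R(0)=0$, with $\int_0^t R(s)\,\dif s=\mu^{-3}\int_0^{T}\widetilde R(u)\,\dif u$ where $T:=\mu^2t$, the hypothesis $t\ge c'_3(\beta\wedge\beta^{-1})^2=c'_3(c'_1/\mu)^2$ becomes $T\ge c'_3(c'_1)^2$, and using $T^{1/5}=(c'_1)^{2/5}(\beta\vee\beta^{-1})^{2/5}t^{1/5}$ the lemma reduces to showing that, for $\widetilde R$ as above, all $T\ge c'_3(c'_1)^2$, and $c'_1$ chosen large enough,
\[
\prob\Big(\int_0^{T}\widetilde R(u)\,\dif u>\tfrac{c'_1}{2}\,T\Big)\le c\,\e^{-c'T^{1/5}}
\]
for absolute constants $c,c'$.

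\emph{Step 2 (the reduced deviation estimate).} Fix the truncation level $M:=T^{1/5}$. The process $\widetilde R$ is a fixed positive‑recurrent diffusion (unit drift, no parameters) with stationary law $\mathrm{Exp}(1)$; started at $0$ it lies below its stationary version, so $\expt[\widetilde R(u)]\le 1$ and $c'_1 T/2$ is a large multiple of the typical value $\int_0^T\widetilde R\approx T$. The overshoot of $\widetilde R$ above any level has an $\mathrm{Exp}$‑type tail and level $M$ is up‑crossed at most $O(T)$ times before time $T$, so $\prob(\sup_{u\le T}\widetilde R(u)>M)\le CT\e^{-cM}+\e^{-c'T}\le c\,\e^{-c'T^{1/5}}$ (the polynomial prefactor being absorbed for $T$ beyond a constant). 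On the complement, $\int_0^T\widetilde R=\int_0^T(\widetilde R\wedge M)$, and $\widetilde R\wedge M$ is a bounded observable of $\widetilde R$ with stationary mean $\le 1$; a Bernstein‑type deviation inequality for additive functionals of $\widetilde R$—obtained either by decomposing the path at successive returns of $\widetilde R$ to $0$ and applying a Bernstein bound to the i.i.d.\ cycle‑integrals (whose tails are exponential, scaled by $M$), or via an exponential‑martingale/Feynman–Kac argument with the truncated potential—yields, for $c'_1$ large, $\prob\big(\int_0^T(\widetilde R\wedge M)\,\dif u>\tfrac{c'_1}{2}T\big)\le\e^{-c\,c'_1 T/M}=\e^{-c\,c'_1 T^{4/5}}$, far smaller than $\e^{-c'T^{1/5}}$. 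Summing the two bounds and undoing the scaling and the domination of Step~1 gives the lemma with $c'_2=c'(c'_1)^{2/5}$ (and $c'_3$ enlarged so that $c'_1/2<T^{1/5}$ and the absorbed logarithmic factors are harmless).

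\emph{Obstacle and remark.} The crux is the concentration estimate in Step~2: a Bernstein‑type inequality for $\int_0^T(\widetilde R\wedge M)\,\dif u$ with the correct dependence on the truncation level $M$ and with all constants independent of the model parameters. Everything else—the reflected‑SDE comparison, the supremum tail bound, and the two scalings—is routine; it is the quantitative ergodicity of the reflected diffusion, whether packaged through regeneration at the origin or through a Lyapunov function, that must be handled with care, and reducing to the canonical $\widetilde R$ in Step~1 is precisely what makes those constants uniform in $\beta$. Finally, the exponent $t^{1/5}$ is not optimal: balancing the truncation level differently (taking $M\asymp (c'_1T)^{1/2}$) would already give a bound of the form $\e^{-c\sqrt{c'_1 T}}$; the weaker form stated here is all that is needed downstream, e.g.\ in the proof of Lemma~\ref{lem:q2regeneration}.
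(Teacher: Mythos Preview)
The paper does not prove this lemma; it is quoted verbatim from \cite[Lemma~A.4]{BM18} in the appendix of auxiliary results, so there is no in-paper argument to compare against. Your strategy---dominate $-Q_1$ on the event $\{Q_2\ge\theta\}$ by a reflected Brownian motion with drift $-\mu=-c'_1(\beta\vee\beta^{-1})$, rescale to a parameter-free process $\widetilde R$, then control $\int_0^T\widetilde R$ by truncation at level $M=T^{1/5}$ plus a Bernstein inequality over regeneration cycles---is correct and is the natural route; the $t^{1/5}$ exponent falls out of that truncation exactly as you say.

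Two points deserve an explicit line of justification. First, the comparison $Z\le R$ on $A_t$ is not a black-box application of a standard comparison theorem, because the drift of $Z$ feeds back through $Q_2$ via the local time $L$; the clean argument is by contradiction: if $Z(s_0)>R(s_0)$ on $A_t$, let $\tau$ be the last time before $s_0$ with $Z(\tau)=R(\tau)$, note that $L_Z$ is flat on $(\tau,s_0]$ since $Z>0$ there, and compute
\[
Z(s_0)-R(s_0)=\int_\tau^{s_0}\bigl(\beta-Q_2(u)-Z(u)+\mu\bigr)\,du-\bigl(L_R(s_0)-L_R(\tau)\bigr)\le 0
\]
because $\beta-Q_2+\mu\le 0$ on $A_t$ and $Z\ge 0$. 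Second, the cycle integrals of $\widetilde R\wedge M$ are sub-exponential with scale $O(M)$ (they are bounded by $M$ times the cycle length), so Bernstein actually gives $\exp\bigl(-c\min((c'_1)^2T/M^2,\,c'_1T/M)\bigr)$, which with $M=T^{1/5}$ may be $\e^{-cT^{3/5}}$ rather than your claimed $\e^{-cT^{4/5}}$; either bound is far stronger than the required $\e^{-c'T^{1/5}}$, so the conclusion is unaffected.
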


\section{Proofs of hitting time estimates in the large-$\beta$ regime}\label{app:large-aux}
In this Appendix, we will assume $\beta$ to be sufficiently large in many calculations, often without explicitly mentioning it.
\begin{proof}[Proof of Lemma~\ref{linfall}]
Set $(Q_1(0), Q_2(0)) = (0, y)$. 
From Lemma~\ref{lem:q2regeneration}, there exist positive constants $c_1',c_2',c_3',c_4'$ not depending on $\beta$ such that for any $\beta \ge 1$ and any $z \ge 1$,
\begin{align*}
\prob_{(0,\ z + c'_1\beta)}\big(\inf_{s\leq t}Q_2(s) > c'_1\beta \big)\leq c'_3\exp(-c'_2\beta^{2/5}t^{1/5})
\end{align*}
for all $t\geq c'_4 z/\beta$. Without loss of generality, we can assume $c_1'>1$. 
This implies that there exists $C>0$ (not depending on $\beta$), such that for $y \ge c_1'\beta+1$,
\begin{equation}\label{hightomed}
\mathbb{E}_{(0,y)}\left(\tau_2(c_1'\beta)\right) \le C\frac{y}{\beta}.
\end{equation}
Thus, the lemma will be proved if we can show $\sup_{x \le 0, y \in [\beta/4,c_1'\beta+1]}\mathbb{E}_{(x,y)}(\tau_2(\beta/4)) \le C$. Note that
\begin{align*}
\sup_{x \le 0, y \in [\beta/4,c_1'\beta+1]}\mathbb{E}_{(x,y)}(\tau_2(\beta/4)) &= \sup_{x \le 0, y \in [\beta/4,c_1'\beta+1]}\mathbb{E}_{(x,y)}(\tau_2(\beta/4), \mathbf{1}_{[\tau_1(0) \le \tau_2(\beta/4)]})\\
& \qquad + \sup_{x \le 0, y \in [\beta/4,c_1'\beta+1]}\mathbb{E}_{(x,y)}(\tau_2(\beta/4), \mathbf{1}_{[\tau_1(0) > \tau_2(\beta/4)]}).
\end{align*}
As $Q_2$ decays exponentially when $Q_1<0$,
$$
\sup_{x \le 0, y \in [\beta/4,c_1'\beta+1]}\mathbb{E}_{(x,y)}(\tau_2(\beta/4), \mathbf{1}_{[\tau_1(0) > \tau_2(\beta/4)]}) \le C.
$$
Further, by the strong Markov property,
$$
\sup_{x \le 0, y \in [\beta/4,c_1'\beta+1]}\mathbb{E}_{(x,y)}(\tau_2(\beta/4), \mathbf{1}_{[\tau_1(0) \le \tau_2(\beta/4)]}) \le \sup_{y \in [\beta/4,c_1'\beta+1]} \mathbb{E}_{(0,y)}(\tau_2(\beta/4)).
$$
Thus, to complete the proof, it suffices to show that
\begin{equation}\label{expectbd}
\sup_{y \in [\beta/4,c_1'\beta+1]} \mathbb{E}_{(0,y)}(\tau_2(\beta/4)) \le C.
\end{equation}
Thus, assume that in the starting configuration $\beta/4\leq y \le c_1'\beta + 1$. Define the following stopping times: $\Theta^*_0=0$ and for $k \ge 0$,
\begin{align*}
\Theta^*_{3k+1} &= \inf\{ t \ge \Theta^*_{3k}: Q_1(t) = -\beta/2 \text{ or } Q_2(t) = c_1'\beta + 2\},\\
\Theta^*_{3k+2} &= \inf\{ t \ge \Theta^*_{3k+1}: Q_2(t) \le  c_1'\beta + 1 \},\\
\Theta^*_{3k+3} &= \inf\{ t \ge \Theta^*_{3k+2}: Q_1(t) =0 \text{ or } Q_2(t) \le \beta/4\}.
\end{align*}
Let $\mathcal{N}_{\Theta} = \inf\{ k \ge 0: Q_2(\Theta^*_{3k}) \le \beta/4\}$. Define $\Theta_{n}=\Theta^*_{n \wedge \mathcal{N}_{\Theta}}$ for $n \ge 1$. Define $q=\e^{-\frac{1}{4(c_1'+2)}}$. Choose $\beta_0 \ge \max\left\lbrace1,\left(\frac{8q}{1-q}\right)^{1/2}\right\rbrace$ satisfying $q(c_1'\beta + 1 + \beta^{-1}) \le c_1'\beta + 1$ for all $\beta \ge \beta_0$. Take any $\beta \ge \beta_0$ and choose $n_0$ not depending on $\beta$ such that $q^{n_0}(c_1'\beta + 1) \le \beta/8$. Define $\Theta_{n}=\Theta_{\mathcal{N}_{\Theta}}$ for all $n \ge \mathcal{N}_{\Theta}$.
We will show that there exists $p > 0$ such that for $\beta \ge \beta_0$,
\begin{equation}\label{contraction}
\inf_{y \in [\beta/4, c_1'\beta + 1]}\prob_{(0,y)}\left(Q_2(\Theta_{3n_0}) \le \beta/4\right) \ge p^{n_0}>0.
\end{equation}
To see this, note that for any $y \in [\beta/4, c_1'\beta + 1]$,
\begin{eq}\label{lin1}
&\prob_{(0,y)}\left(Q_2(\Theta_{3}) \le \max\{q(y + \beta^{-1}), \beta/4\}\right)\\
&\ge \prob_{(0,y)}\left(Q_2(\Theta_{1}) < y + \beta^{-1}, \ \inf\{t \ge \Theta_1: Q_1(t)=0\} - \Theta_1 \ge \frac{1}{4(c_1'+2)}\right)\\
&\ge \prob_{(0,y)}\left(Q_2(\Theta_{1}) < y + \beta^{-1}\right) \inf_{z \le c_1'\beta+2}\prob_{(-\beta/2, z)}\Big(\tau_1(0) \ge \frac{1}{4(c_1'+2)}\Big)
\end{eq}
where for the first inequality, note that if the process starts from a state with $Q_2<y+\beta^{-1}$ and $Q_1$ stays away from 0 for more than $1/(4(c_1'+2))$ time, then $Q_2$ must hit $q(y + \beta^{-1})$ (due to exponential decay of $Q_2$),
and the last step follows from the strong Markov property of the process applied at time $\Theta_1$. 
Recall that $S(t) < y + \beta^{-1}$ for all $t \le \Theta_1$ implies $Q_2(t) < y + \beta^{-1}$ for all $t \le \Theta_1$ (this is because if $t^*$ denotes the first time $Q_2$ hits the level $y + \beta^{-1}$ from below, $Q_1(t^*)=0$ and consequently, $S(t^*)=Q_2(t^*)=y + \beta^{-1}$). Further, for $t \le \Theta_1$, 
$$
S(t) = S(0) + \sqrt{2}W(t) - \beta t + \int_0^t\left(-Q_1(s)\right)ds \le y + \sqrt{2}W(t) - \frac{\beta}{2} t. 
$$
Therefore,
\begin{align}\label{lin2}
 \prob_{(0,y)}\left(Q_2(\Theta_{1}) < y + \beta^{-1}\right) \ge \prob\left(\sup_{t < \infty}\left(\sqrt{2}W(t) - \frac{\beta}{2} t\right) < \beta^{-1}\right) = 1-\e^{-1/2}.
\end{align}
Now we estimate $\inf_{z \le c_1'\beta+2}\prob_{(-\beta/2, z)}\left(\tau_1(0) \ge \frac{1}{4(c_1'+2)}\right)$. Note that for $t \le \tau_1(0)$, $Q_2$ is decreasing and hence if $(Q_1(0), Q_2(0)) = (-\beta/2, z)$ with $z \le c_1'\beta +2$, then for $t \le \tau_1(0) \wedge \tau_1(-\beta)$,
$$
Q_1(t) \le -\frac{\beta}{2} + \sqrt{2}W(t) + \left(c_1'\beta + 2\right)t \le -\frac{\beta}{2} + \sqrt{2}W(t) + \left(c_1' + 2\right)\beta t, \ \ \text{ as } \beta \ge 1.
$$
Moreover, for $t \le \tau_1(0)$, 
$$
Q_1(t) \ge -\frac{\beta}{2} +  \sqrt{2}W(t) -\beta t.
$$
Thus,
\begin{eq}\label{lin3}
&\sup_{z \le c_1'\beta+2}\prob_{(-\beta/2, z)}\left(\tau_1(0) \le \frac{1}{4(c_1'+2)}\right) \le \sup_{z \le c_1'\beta+2}\prob_{(-\beta/2, z)}\left(\tau_1(0) \wedge \tau_1(-\beta) \le \frac{1}{4(c_1'+2)}\right)\\
&\le \prob\left(-\frac{\beta}{2} + \sqrt{2}W(t) + \left(c_1' + 2\right)\beta t \text{ hits zero before time } \frac{1}{4(c_1'+2)}\right)\\
 &\hspace{5cm}+ \prob\left(-\frac{\beta}{2} + \sqrt{2}W(t) - \beta t \text{ hits } -\beta \text{ before time } \frac{1}{4(c_1'+2)}\right)\\
& = \prob\Big(\sup_{t \le \frac{1}{4(c_1'+2)}}\left( \sqrt{2}W(t) + \left(c_1' + 2\right)\beta t \right) \ge \beta/2\Big) + \prob\Big(\inf_{t \le \frac{1}{4(c_1'+2)}}\left( \sqrt{2}W(t) -\beta t \right) \le -\beta/2\Big)\\
& \le \prob\Big(\sup_{t \le \frac{1}{4(c_1'+2)}}\left( \sqrt{2}W(t)\right) \ge \beta/4\Big) + \prob\Big(\inf_{t \le \frac{1}{4(c_1'+2)}}\left( \sqrt{2}W(t)\right) \le -\beta/4\Big) \le \e^{-C\beta^2},
\end{eq}
where $C$ does not depend on $\beta, y$. Using \eqref{lin2} and \eqref{lin3} in \eqref{lin1}, we obtain $p>0$ such that for all $\beta \ge \beta_0$,
\begin{align}\label{lin4}
\inf_{y \in [\beta/4, c_1'\beta + 1]}\prob_{(0,y)}\left(Q_2(\Theta_{3}) \le \max\{q(y + \beta^{-1}), \beta/4\}\right) \ge p>0.
\end{align}
If $q(y + \beta^{-1}) \le \beta/4$, this proves \eqref{contraction}. If $q(y + \beta^{-1}) > \beta/4$, we obtain
\begin{eq}\label{eq:l41-d1}
&\inf_{y \in [\beta/4, c_1'\beta + 1]}\prob_{(0,y)}\left(Q_2(\Theta_{6}) \le \max\{q^2(y + \beta^{-1}) + q\beta^{-1}, \beta/4\}\right)\\
& \ge \inf_{y \in [\beta/4, c_1'\beta + 1]}\left(\prob_{(0,y)}\left(Q_2(\Theta_{6}) \le \max\{q^2(y + \beta^{-1}) + q\beta^{-1}, \beta/4\}, \frac{\beta}{4} <Q_2(\Theta_{3}) \le q(y + \beta^{-1})\right)\right.\\
 &\qquad\qquad \left. + \prob_{(0,y)}\left(Q_2(\Theta_{6}) \le \max\{q^2(y + \beta^{-1}) + q\beta^{-1}, \beta/4\}, Q_2(\Theta_{3}) \le \frac{\beta}{4}\right)\right)
\end{eq}
Applying the strong Markov property at $\Theta_3$ to the first probability on the right side of \eqref{eq:l41-d1} and noting that if $Q_2(\Theta_{3})>\beta/4$, then $Q_1(\Theta_3)=0$, we obtain
\begin{multline*}
\prob_{(0,y)}\left(Q_2(\Theta_{6}) \le \max\{q^2(y + \beta^{-1}) + q\beta^{-1}, \beta/4\}, \frac{\beta}{4} <Q_2(\Theta_{3}) \le q(y + \beta^{-1})\right)\\
\ge \prob_{(0,y)}\left(\frac{\beta}{4} < Q_2(\Theta_{3}) \le q(y + \beta^{-1})\right)p,
\end{multline*}
where we have used \eqref{lin4} to the process started at $\Theta_3$ along with the fact that $q(y + \beta^{-1}) \le c_1'\beta + 1$ for $\beta \ge \beta_0$. Moreover, for the second probability on the right side of \eqref{eq:l41-d1},
\begin{align*}
\prob_{(0,y)}\left(Q_2(\Theta_{6}) \le \max\{q^2(y + \beta^{-1}) + q\beta^{-1}, \beta/4\}, Q_2(\Theta_{3}) \le \frac{\beta}{4}\right)\\
= \prob_{(0,y)}\left(Q_2(\Theta_{3}) \le \frac{\beta}{4}\right) \ge \prob_{(0,y)}\left(Q_2(\Theta_{3}) \le \frac{\beta}{4}\right)p.
\end{align*}
From the above and \eqref{lin4}, we get
\begin{multline*}
\inf_{y \in [\beta/4, c_1'\beta + 1]}\prob_{(0,y)}\left(Q_2(\Theta_{6}) \le \max\{q^2(y + \beta^{-1}) + q\beta^{-1}, \beta/4\}\right)\\
\ge \left(\inf_{y \in [\beta/4, c_1'\beta + 1]}\prob_{(0,y)}\left(Q_2(\Theta_{3}) \le q(y + \beta^{-1})\right)\right)p \ge p^2.
\end{multline*}
Writing $f(y) = q(y + \beta^{-1})$, note that for any $\beta \ge \max\left\lbrace1,\left(\frac{8q}{1-q}\right)^{1/2}\right\rbrace$ and any $y \le c_1'\beta + 1$, applying $f$ $n_0$ times to $y$ gives us a number less than or equal to $\beta/4$. Thus, iterating \eqref{lin4} $n_0$ times, we obtain \eqref{contraction}. This, in turn, implies for $k \ge 1$,
\begin{equation}\label{Ntheta}
\sup_{y \in [\beta/4,c_1'\beta + 1]}\prob_{(0,y)}\left(\mathcal{N}_{\Theta} \ge kn_0\right) \le (1-p^{n_0})^k.
\end{equation}
Next, we want to prove that there exists a positive constant $C>0$ that does not depend on $\beta \ge 1$ such that
\begin{equation}\label{finexp}
\sup_{y \in [\beta/4, c_1'\beta + 1]}\mathbb{E}_{(0,y)}(\Theta_3) \le C.
\end{equation}
To see this, observe that for $t \le \Theta_1$, $Q_1(t) \le S(t) \le c_1'\beta + 1 + \sqrt{2}W(t) - \frac{\beta}{2} t$ and hence, $\Theta_1$ is stochastically dominated by the hitting time of level $-\beta/2$ by $c_1'\beta + 1 + \sqrt{2}W(t) - \frac{\beta}{2} t$ and hence, $\sup_{y \in [\beta/4, c_1'\beta + 1]}\mathbb{E}_{(0,y)}(\Theta_1) \le C(2c_1' + 3)$. From Lemma \ref{lem:q2regeneration}, we get $\sup_{y \in [\beta/4, c_1'\beta + 1]}\mathbb{E}_{(0,y)}(\Theta_2 - \Theta_1) \le C/\beta$. Further, as $Q_2$ decreases exponentially in $[\Theta_2, \Theta_3]$, $\Theta_3 - \Theta_2 \le \log (4(c_1'+1))$. These observations lead to \eqref{finexp}.
Now, we can write
\begin{align*}
\sup_{y \in [\beta/4,c_1'\beta + 1]} \mathbb{E}_{(0,y)}(\tau_2(\beta/4)) &\le \sup_{y \in [\beta/4,c_1'\beta + 1]} \mathbb{E}_{(0,y)}\Big(\sum_{k=1}^{\infty}\mathbf{1}_{[\mathcal{N}_{\Theta} = k]}\Theta_{3k}\Big)\\
&= \sup_{y \in [\beta/4,c_1'\beta + 1]} \mathbb{E}_{(0,y)}\Big(\sum_{k=1}^{\infty}\left(\Theta_{3k}-\Theta_{3k-3}\right)\mathbf{1}_{[\mathcal{N}_{\Theta} > k-1]}\Big)\\
&\le \sum_{k=1}^{\infty}\sup_{y \in [\beta/4, c_1'\beta + 1]}\mathbb{E}_{(0,y)}(\Theta_3)\sup_{y \in [\beta/4,c_1'\beta + 1]}\prob_{(0,y)}\left(\mathcal{N}_{\Theta} > k-1\right),
\end{align*}
where the second equality follows by interchanging summation and the third inequality follows by applying the strong Markov property at $\Theta_{3k-3}$.
Finally, combining \eqref{Ntheta} and \eqref{finexp}, we obtain
\begin{equation*}
\sup_{y \in [\beta/4,c_1'\beta + 1]} \mathbb{E}_{(0,y)}(\tau_2(\beta/4)) \le \sum_{k=0}^{\infty}Cn_0(1-p^{n_0})^k \le C'
\end{equation*}
which, in particular, proves \eqref{expectbd} and hence completes the proof of the lemma.
\end{proof}

\begin{proof}[Proof of Lemma~\ref{OUhit}]
First we will prove the upper bound in \eqref{expexc}. Start from $(Q_1(0), Q_2(0))=(-\beta,y)$ for $0 < y \le \beta$. Define the stopping times: $\tau^+_0=0$ and for $k \ge 0$,
\begin{align*}
\tau^+_{2k+1} &= \inf\{t \ge \tau^+_{2k}: Q_1(t) = 0 \text{ or } Q_1(t)=-\beta - 1\},\\
\tau^+_{2k+2} &= \inf\{t \ge \tau^+_{2k+1}: Q_1(t) = 0 \text{ or } Q_1(t)=-\beta\}.
\end{align*}
Let $\mathcal{N}^+ = \inf\{ k \ge 0: Q_1(\tau^+_{2k+1}) = 0\}.$ Note that for $t \le \tau_1(0)$, $Q_1(t) \ge Q_1(0) + \sqrt{2}W(t) - \beta t$. Therefore, for any $y>0$,
\begin{multline*}
\prob_{(-\beta,y)}\left(\tau_1(0) < \tau_1(-\beta-1)\right) \ge \prob\left(-\beta + \sqrt{2}W(t) - \beta t \text{ hits } 0 \text{ before } -\beta -1\right)\\
 = \frac{1-\e^{-\beta}}{\e^{\beta^2} - \e^{-\beta}} \ge \frac{1}{2}\e^{-\beta^2}
\end{multline*}
for sufficiently large $\beta$.
Therefore, for any $k \ge 0$,
\begin{equation}\label{Nplus}
\sup_{y \in (0,\beta]}\prob_{(-\beta,y)}\left(\mathcal{N}^+ \ge k\right) \le \left(1-\frac{1}{2}\e^{-\beta^2}\right)^k.
\end{equation}
for all $\beta \ge \beta_0$ where $\beta_0$ is chosen sufficiently large. If the starting configuration is set to $(Q_1(0),Q_2(0)) = (-\beta-1, z)$ for any $z \in (0, \beta]$, then for $t \le \tau_1(0)$, $Q^*_1 = Q_1 + \beta$ is stochastically bounded from below by an Ornstein-Uhlenbeck process (that does not depend on $\beta$) started from $-1$. To see this, note that for $t \le \tau_1(0)$, $Q_2(t) = z\e^{-t}$. Therefore, $Q^*_1$ has the following representation for $t \le \tau_1(0)$:
$$
Q^*_1(t) = -1 + \sqrt{2}W(t) + \int_0^t \left(-Q^*_1(s) + z\e^{-s}\right)ds.
$$
By Proposition 2.18 of \cite{Karatzas}, $Q^*_1(t) \ge Z(t)$ for all $t \le \tau_1(0)$, where $Z$ is the Ornstein-Uhlenbeck process that is the solution to the following SDE:
$$
Z(t) = -1 + \sqrt{2}W(t) -\int_0^tZ(s)ds,
$$
where $W$ is the same Brownian motion that drives $Q^*_1$. Thus, $\tau_1(-\beta)$ is stochastically bounded above by the hitting time of $0$ by $Z$. Therefore,
\begin{equation}\label{ub1}
\sup_{z \in (0, \beta]}\mathbb{E}_{(-\beta-1,z)}\left(\tau_1(-\beta)\right) \le C.
\end{equation}
Now consider the starting configuration $(Q_1(0), Q_2(0)) = (-\beta, z)$ for $z \in (0,\beta]$. Note that on the event $\{\tau_1(0) \ge \log (1/\beta)\}$, $Q_2(t) \le 1$ for $t \in [\log (1/\beta), \tau_1(0)]$. Therefore, again using Proposition 2.18 of \cite{Karatzas}, for any $t \ge \log (1/\beta)$, $Q^*_1(t) \le \widetilde{Z}(t)$ where $\widetilde{Z}$ is the solution to the SDE:
$$
\widetilde{Z}(t) = \sqrt{2}W(t) + \int_0^t(1-\widetilde{Z}(s))ds,
$$
where $W$ is the same Brownian motion that drives $Q^*_1$. Thus, writing $\widetilde{\tau}$ for the hitting time of $\widetilde{Z}$ on level $-1$, the event $\{\tau_1(0) \wedge \tau_1(-\beta-1) > t\}$ implies $\{\widetilde{\tau} > t - \log (1/\beta)\}$ for any $t \ge \log (1/\beta)$. Hence,
\begin{multline}\label{ub2}
\sup_{z \in (0,\beta]}\mathbb{E}_{(-\beta,z)}\left(\tau_1(0) \wedge \tau_1(-\beta-1)\right) \le \log (1/\beta) + \sup_{z \in (0,\beta]}\int_{\log (1/\beta)}^{\infty}\prob_{(-\beta, z)}\left(\tau_1(0) \wedge \tau_1(-\beta-1) > t\right)\\
\le \log (1/\beta) + \int_{\log (1/\beta)}^{\infty}\prob\left(\widetilde{\tau} > t - \log (1/\beta)\right) = \log (1/\beta) + \mathbb{E}\left(\widetilde{\tau}\right) \le \log(1/\beta) + C.
\end{multline}
Define the stopping time $\tau^+ = \inf\{t \ge \tau_1(-\beta): Q_1(t) = -\beta-1 \text{ or } Q_1(t) = 0\}$. Combining \eqref{ub1} and \eqref{ub2}, we get
\begin{equation}\label{ub3}
\sup_{z \in (0, \beta]}\mathbb{E}_{(-\beta-1,z)}\left(\tau^+\right) \le \log(1/\beta) + C.
\end{equation}
Finally, combining \eqref{Nplus}, \eqref{ub2} and \eqref{ub3} and using strong Markov property, we have
\begin{align*}
&\sup_{y \in (0,\beta]} \mathbb{E}_{(-\beta,y)}(\tau_1(0)) = \sup_{y \in (0,\beta]} \mathbb{E}_{(-\beta,y)}\Big(\sum_{k=0}^{\infty}\mathbf{1}_{[\mathcal{N}^+ = k]}\tau^+_{2k+1}\Big)\\
&\le \sup_{y \in (0,\beta]}\mathbb{E}_{(-\beta,y)}\left(\tau_1(0) \wedge \tau_1(-\beta-1)\right) + \sup_{y \in (0,\beta]} \mathbb{E}_{(-\beta,y)}\Big(\sum_{k=1}^{\infty}\left(\tau^+_{2k+1}-\tau^+_{2k-1}\right)\mathbf{1}_{[\mathcal{N}^+ \ge k]}\Big)\\
&\le \sup_{y \in (0,\beta]}\mathbb{E}_{(-\beta,y)}\left(\tau_1(0) \wedge \tau_1(-\beta-1)\right) + \sum_{k=1}^{\infty}\Big(\sup_{y \in (0, \beta]}\mathbb{E}_{(-\beta-1,y)}\left(\tau^+\right)\Big)\sup_{y \in (0,\beta]}\prob_{(-\beta,y)}\left(\mathcal{N}^+ \ge k\right)\\
&\le \left(\log(1/\beta) + C\right)\sum_{k=0}^{\infty}\sup_{y \in (0,\beta]}\prob_{(-\beta,y)}\left(\mathcal{N}^+ \ge k\right)\\
&\le \left(\log(1/\beta) + C\right)\sum_{k=0}^{\infty}\Big(1-\frac{1}{2}\e^{-\beta^2}\Big)^{k} 
= 2\left(\log(1/\beta) + C\right)\e^{\beta^2},
\end{align*}
which gives the required upper bound for $\beta \ge \beta_0$ for sufficiently large $\beta_0$.\\

Now, we prove the lower bound in \eqref{expexc}. Start from $(Q_1(0), Q_2(0))=(-\beta/4, y)$ where $y \le \beta/2$.
Define the stopping times: $\tau^-_0=0$ and for $k \ge 0$,
\begin{align*}
\tau^-_{2k+1} &= \inf\{t \ge \tau^-_{2k}: Q_1(t) = 0 \text{ or } Q_1(t)=-3\beta/8\},\\
\tau^-_{2k+2} &= \inf\{t \ge \tau^-_{2k+1}: Q_1(t) = 0 \text{ or } Q_1(t)=-\beta/4\}.
\end{align*}
Let $\mathcal{N}^- = \inf\{ k \ge 0: Q_1(\tau^-_{2k+1}) = 0\}.$ Note that for $t \in [\tau^-_{2k}, \tau^-_{2k+1}]$ for any $k \ge 0$, 
$$
Q_1(t) =Q_1(0) + \sqrt{2}W(t) -\beta t + \int_0^t(-Q_1(s) + Q_2(s))ds \le Q_1(0) + \sqrt{2}W(t) - \beta t/8.
$$
Therefore, for any $y>0$,
\begin{multline*}
\prob_{(-\beta/4,y)}\left(\tau_1(0) < \tau_1(-3\beta/8)\right) \le \prob\left(-\beta/4 + \sqrt{2}W(t) - \beta t/8 \text{ hits } 0 \text{ before } -3\beta/8\right)\\
 = \frac{1-\e^{-\beta^2/64}}{\e^{\beta^2/32} - \e^{-\beta^2/64}} \le \e^{-\beta^2/32}.
\end{multline*}
Therefore, for any $k \ge 0$, by the union bound,
\begin{equation}\label{Nminus}
\sup_{y \in (0,\beta/2]}\prob_{(-\beta/4,y)}\left(\mathcal{N}^- \le k\right) \le (k+1)\e^{-\beta^2/32}.
\end{equation}
Next, we show that for $\beta \ge 1$,
\begin{equation}\label{explow}
\inf_{y \in (0,\beta/2]}\mathbb{E}_{(-\beta/4,y)}\left(\tau^-_{1}\right) \ge \mu,
\end{equation}
where $\mu$ does not depend on $\beta$. To see this, note that for $t \in [0, \tau^-_{1}]$,
$$
Q_1(0) + \sqrt{2}W(t) - \beta t \le Q_1(t) \le Q_1(0) + \sqrt{2}W(t) - \beta t/8.
$$
Thus, for any $\beta \ge 1$,
\begin{multline*}
\inf_{y \in (0,\beta/2]}\prob_{(-\beta/4,y)}\left(\tau^-_{1}\ge 1/16\right)  \ge \prob\left(\sup_{t \le 1/16}\sqrt{2}W(t) \le \beta/8, \inf_{t \le 1/16}\sqrt{2}W(t) > -\beta/16\right)\\
\ge \prob\left(\sup_{t \le 1/16}\sqrt{2}W(t) \le 1/8, \inf_{t \le 1/16}\sqrt{2}W(t) > -1/16\right) = p^-> 0,
\end{multline*}
where $p^-$ does not depend on $\beta$. Therefore, for every $\beta \ge 1$,
$$
\inf_{y \in (0,\beta/2]}\mathbb{E}_{(-\beta/4,y)}\left(\tau^-_{1}\right) \ge \int_0^{1/16}\inf_{y \in (0,\beta/2]}\prob_{(-\beta/4,y)}\left(\tau^-_{1}\ge t\right)dt \ge p^-/16>0.
$$
We then have the following:
\begin{eq}\label{onetwo}
&\inf_{y \in (0,\beta/2]}\mathbb{E}_{(-\beta/4,y)}\left(\tau_1(0)\right) = \inf_{y \in (0,\beta/2]}\mathbb{E}_{(-\beta/4,y)}\sum_{k=0}^{\mathcal{N}^-}\left(\tau^-_{2k+1} - \tau^-_{2k}\right)\\
&= \inf_{y \in (0,\beta/2]}\mathbb{E}_{(-\beta/4,y)}\sum_{k=0}^{\infty}\left(\tau^-_{2k+1} - \tau^-_{2k}\right)\mathbf{1}_{[\mathcal{N}^- \ge k]}\\
&\ge \sum_{k=0}^{\infty}\Big(\inf_{y \in (0,\beta/2]}\mathbb{E}_{(-\beta/4,y)}\left(\tau^-_{1}\right)\Big)\Big(\inf_{y \in (0,\beta/2]}\prob_{(-\beta/4,y)}\left(\mathcal{N}^- \ge k\right)\Big) \\
&\ge \mu\sum_{k=0}^{\lfloor \e^{\beta^2/64}\rfloor + 1}\inf_{y \in (0,\beta/2]}\prob_{(-\beta/4,y)}\left(\mathcal{N}^- \ge k\right),
\end{eq} 
where the first inequality follows using the strong Markov property.
Using \eqref{Nminus}, for all $k \le \lfloor \e^{\beta^2/64}\rfloor + 1$,
$$
\inf_{y \in (0,\beta/2]}\prob_{(-\beta/4,y)}\left(\mathcal{N}^- \ge k\right) \ge \frac{1}{2}
$$
for all $\beta \ge \beta_0$ for sufficiently large $\beta_0$. This fact, along with \eqref{onetwo}, implies that for all $\beta \ge \beta_0$,
$$
\inf_{y \in (0,\beta/2]}\mathbb{E}_{(-\beta/4,y)}\left(\tau_1(0)\right) \ge \frac{\mu}{2}\e^{\beta^2/64},
$$
which proves the lower bound in \eqref{expexc}.

To prove \eqref{problb}, observe that for each $k\ge 0$, $\tau^-_{2k+1} - \tau^-_{2k}$ is stochastically dominated by the hitting time of level $-\beta/8$ by $\sqrt{2}W(t) - \beta t/8$. We have for any $\beta \ge 1$ and any $t \ge 1$,
\begin{multline}\label{exptailconc2}
\sup_{k\ge 0}\sup_{y \in (0,\beta/2]}\prob_{(-\beta/4,y)}\left(\tau^-_{2k+1} - \tau^-_{2k} \ge t\right) \le \prob\left(\sqrt{2}W(t) - \beta t/8 \text{ hits } -\beta/8 \text{ after time } t\right)\\
\le \prob\left(\sqrt{2}W(t)> \frac{\beta}{8}(t-1)\right) \le  \prob\left(\sqrt{2}W(t)> \frac{1}{8}(t-1)\right) \le C\e^{-C't}.
\end{multline}
By \eqref{exptailconc2}, \eqref{explow} and using Chernoff's inequality (see \cite[Pg.~16, Equation (2.2)]{Massart07}),
\begin{equation*}
\sup_{y \in (0,\beta/2]}\prob_{(-\beta/4,y)}\Big(\sum_{k=0}^n\left(\tau^-_{2k+1} - \tau^-_{2k}\right) \le n\mu/2, \ \mathcal{N}^- > n\Big) \le C\e^{-C'n},
\end{equation*}
where $\mu$ is the constant (independent of $\beta$) that appears in \eqref{explow}. Therefore, recalling \eqref{Nminus}, we obtain
\begin{multline*}
\sup_{y \in (0,\beta/2]}\prob_{(-\beta/4,y)}\left(\tau_1(0) \le \frac{\mu}{2} \e^{\beta^2/64}\right) \le \sup_{y \in (0,\beta/2]}\prob_{(-\beta/4,y)}\left(\mathcal{N}^- \le \e^{\beta^2/64}\right)\\
+ \sup_{y \in (0,\beta/2]}\prob_{(-\beta/4,y)}\Big(\sum_{k=0}^{\lfloor \e^{\beta^2/64}\rfloor + 1}\left(\tau^-_{2k+1} - \tau^-_{2k}\right) \le \frac{\mu}{2} \e^{\beta^2/64}, \ \mathcal{N}^- > \e^{\beta^2/64}\Big) \le C\e^{-\beta^2/64} + C\e^{-C'\e^{\beta^2/64}},
\end{multline*}
which proves \eqref{problb}.
\end{proof}

\begin{proof}[Proof of Lemma~\ref{middletosmall}]
Fix $y \in [\beta \e^{-\mathcal{C}_1^- \e^{\mathcal{C}_2^-\beta^2}}, \beta/4]$. Set the starting configuration $(Q_1(0), Q_2(0)) = (0,z)$ where $z \in [y,\beta/4]$.

Recall from Lemma \ref{lem:q2regeneration} that there exist positive constants $c_1',c_2',c_3',c_4'$ not depending on $\beta$ such that for any $\beta \ge 1$ and any $w \ge 1$,
\begin{align*}
\prob_{(0,\ w + c'_1\beta)}\big(\inf_{s\leq t}Q_2(s) > c'_1\beta \big)\leq c'_3\exp(-c'_2\beta^{2/5}t^{1/5})
\end{align*}
for all $t\geq c'_4 w/\beta$. Without loss of generality, we can assume $c_1'>1$. Define the following stopping times. $\Gamma_0 = 0$ and for $k \ge 0$,
\begin{align*}
\Gamma_{5k+1} &= \inf\{ t \ge \Gamma_{5k}: Q_1(t) = -\beta/4 \text{ or } Q_2(t) = c_1'\beta + 2\},\\
\Gamma_{5k+2} &= \inf\{ t \ge \Gamma_{5k+1}: Q_2(t) \le  c_1'\beta + 1 \},\\
\Gamma_{5k+3} &= \inf\{ t \ge \Gamma_{5k+2}: Q_1(t) =0 \text{ or } Q_2(t) = y\},\\
\Gamma_{5k+4} &= \inf\{ t \ge \Gamma_{5k+3}: Q_2(t) \le \beta/4\text{ or } Q_2(t)=y\},\\
\Gamma_{5k+5} &= \inf\{ t \ge \Gamma_{5k+4}: Q_1(t) =0 \text{ or } Q_2(t) = y\}.
\end{align*}
Define $\mathcal{N}_{\Gamma} = \inf\{ k \ge 1: Q_2(\Gamma_{5k})=y\}$.

We will first show that there exists $p_0>0$ that do not depend on $\beta$ such that for all $\beta \ge \beta_0$ for large enough $\beta_0$,
\begin{equation}\label{downprob}
\inf_{w \in [y, \beta/4]}\prob_{(0,w)}\left(Q_2(\Gamma_3) = y\right) \ge p_0>0.
\end{equation}
To see this, first observe that if $Q_1<0$, then $Q_2$ decreases exponentially. Thus, applying the strong Markov property at $\Gamma_1$ and recalling that $y \ge \beta \e^{-\mathcal{C}_1^- \e^{\mathcal{C}_2^-\beta^2}}$, we get 
\begin{multline}\label{exp1}
\inf_{w \in [y, \beta/4]}\prob_{(0,w)}\left(Q_2(\Gamma_3) = y\right) \ge \inf_{w \in [y, \beta/4]}\prob_{(0,w)}\left(Q_2(\Gamma_1) \le \beta/2\right)\\
\times \inf_{w \in (0, \beta/2]}\prob_{(-\beta/4,w)}\left(\tau_1(0) \ge \mathcal{C}_1^- \e^{\mathcal{C}_2^-\beta^2}\right).
\end{multline}
Now, $S(t) \le \beta/2$ for $t \le \Gamma_1$ implies $Q_2(t) \le \beta/2$ for $t \le \Gamma_1$ (by the same argument appearing after \eqref{lin1}) and for  $t \le \Gamma_1$,
$$
S(t) = S(0) + \sqrt{2}W(t) - \beta t + \int_0^t\left(-Q_1(s)\right)ds \le \frac{\beta}{4} + \sqrt{2}W(t) - \frac{3\beta}{4} t. 
$$
Therefore, for any $w \in [\beta \e^{-\mathcal{C}_1^- \e^{\mathcal{C}_2^-\beta^2}}, \beta/4]$,
\begin{align}\label{exp2}
 \prob_{(0,w)}\left(Q_2(\Gamma_1) \le \beta/2\right) \ge \prob\left(\sup_{t < \infty}\left(\sqrt{2}W(t) - \frac{3\beta}{4} t\right) < \beta/4\right) = 1-\e^{-3\beta^2/16}.
\end{align}
By Lemma \ref{OUhit},
\begin{align}\label{exp3}
\inf_{w \in (0, \beta/2]}\prob_{(-\beta/4,w)}\left(\tau_1(0) \ge \mathcal{C}_1^- \e^{\mathcal{C}_2^-\beta^2}\right) \ge 1- \mathcal{D}_1 \e^{-\mathcal{D}_2\beta^2}.
\end{align}
Using \eqref{exp2} and \eqref{exp3} in \eqref{exp1} gives \eqref{downprob}. This, in turn, implies for $k \ge 1$,
\begin{equation}\label{NGammaest}
\sup_{w \in [y, \beta/4]}\prob_{(0,w)}\left(\mathcal{N}_{\Gamma} \ge k\right) \le (1-p_0)^k.
\end{equation}
Next, we will show that,
\begin{equation}\label{finexpdown}
\sup_{z \in [y, \beta/4]}\mathbb{E}_{(0,z)}\left(\Gamma_{5}\right) \le C\log\left(\frac{\beta}{y}\right).
\end{equation}
To verify this, observe that for $t \le \Gamma_{1}$, $Q_1(t) \le S(t) \le \frac{\beta}{4} + \sqrt{2}W(t) - \frac{3\beta}{4} t$ and hence, $\Gamma_{1}$ is stochastically dominated by the hitting time of level $-\beta/4$ by $\frac{\beta}{4} + \sqrt{2}W(t) - \frac{3\beta}{4} t$ and hence, $\sup_{z \in [y, \beta/4]}\mathbb{E}_{(0, z)}\left(\Gamma_{1}\right) \le C$. From Lemma \ref{lem:q2regeneration}, $\sup_{z \in [y, \beta/4]}\mathbb{E}_{(0, z)}\left(\Gamma_{2} - \Gamma_{1}\right) \le C/\beta$. Further, as $Q_2$ decreases exponentially in $[\Gamma_{2}, \Gamma_{3}]$, $\Gamma_{3} -\Gamma_{2} \le \log\left(\frac{c_1'\beta}{y}\right)$. Moreover, by the strong Markov property and Lemma \ref{linfall},
\begin{multline*}
\sup_{z \in [y, \beta/4]}\mathbb{E}_{(0,z)}\left(\Gamma_{4} - \Gamma_{3}\right) = \sup_{z \in [y, \beta/4]}\mathbb{E}_{(0,z)}\left((\Gamma_{4} - \Gamma_{3})\mathbf{1}_{[Q_1(\Gamma_{3})=0]}\right)\\
\le \sup_{\beta/4 \le w \le c_1'\beta} \mathbb{E}_{(0,w)}\left(\tau_2(\beta/4)\right) \le C\frac{c_1'\beta}{\beta} = Cc_1'.
\end{multline*}
Finally, as $Q_2$ decreases exponentially on $[\Gamma_4, \Gamma_5]$, $\Gamma_5 - \Gamma_4 \le \log\left(\frac{\beta}{4y}\right)$.
These observations yield \eqref{finexpdown}.

Finally, using \eqref{NGammaest} and \eqref{finexpdown}, we obtain,
\begin{align*}
&\sup_{z \in [y, \beta/4]}\mathbb{E}_{(0,z)}(\tau_2(y)) \le \sup_{z \in [y, \beta/4]}\mathbb{E}_{(0,z)}\Big(\sum_{k=1}^{\infty}\mathbf{1}_{[\mathcal{N}_{\Gamma} = k]}\Gamma_{5k}\Big)\\
&= \sup_{z \in [y, \beta/4]}\mathbb{E}_{(0,z)}\Big(\sum_{k=1}^{\infty}\left(\Gamma_{5k}-\Gamma_{5k-5}\right)\mathbf{1}_{[\mathcal{N}_{\Gamma} \ge k]}\Big)
\le \sum_{k=1}^{\infty}\sup_{z \in [y, \beta/4]}\mathbb{E}_{(0,z)}\left(\Gamma_{5}\right)\sup_{w \in [y, \beta/4]}\prob_{(0,w)}\left(\mathcal{N}_{\Gamma} \ge k\right)\\
&\le C\log\left(\frac{\beta}{y}\right)\sum_{k=1}^{\infty}(1-p_0)^k = C'\log\left(\frac{\beta}{y}\right),
\end{align*}
which completes the proof of the lemma.
\end{proof}

\begin{proof}[Proof of Lemma~\ref{middle}]
Take any $z \in [\beta^{-1}, \beta/4]$ and any $y \in [\beta \e^{-\mathcal{C}_1^- \e^{\mathcal{C}_2^-\beta^2}}, \beta/8]$ satisfying $z \ge 2y$. We can write
\begin{align}\label{inter1}
\mathbb{E}_{(0,z)}\left(\tau_2(y)\right) = \mathbb{E}_{(0,z)}\left(\tau_2(y)\mathbf{1}_{[\tau_2(\beta/4)\wedge \tau_2(y) < \tau_1(-\beta/4)]}\right) + \mathbb{E}_{(0,z)}\left(\tau_2(y)\mathbf{1}_{[\tau_2(\beta/4)\wedge \tau_2(y) \ge \tau_1(-\beta/4)]}\right).
\end{align}
Using the strong Markov property, we obtain
\begin{eq}\label{inter2}
&\mathbb{E}_{(0,z)}\left(\tau_2(y)\mathbf{1}_{[\tau_2(\beta/4) \wedge \tau_2(y) < \tau_1(-\beta/4)]}\right)\\
&\le \mathbb{E}_{(0,z)}\left(\tau_2(y)\mathbf{1}_{[\tau_2(y) \le \tau_1(-\beta/4)]}\right) + \mathbb{E}_{(0,z)}\left(\tau_2(y)\mathbf{1}_{[\tau_2(\beta/4) < \tau_1(-\beta/4)\wedge \tau_2(y)]}\right)\\
&\le \mathbb{E}_{(0,z)}\left(\tau_2(y)\mathbf{1}_{[\tau_2(y) \le \tau_1(-\beta/4)]}\right) + \mathbb{E}_{(0,z)}\left(\tau_2(\beta/4)\mathbf{1}_{[\tau_2(\beta/4) < \tau_1(-\beta/4)\wedge \tau_2(y)]}\right)\\
&\hspace{5cm}+ \prob_{(0,z)}\left(\tau_2(\beta/4) < \tau_1(-\beta/4)\right) \mathbb{E}_{(0,\beta/4)}\left(\tau_2(y)\right)\\
&\le 2\mathbb{E}_{(0,z)}\left(\tau_1(-\beta/4)\right) + \prob_{(0,z)}\left(\tau_2(\beta/4) < \tau_1(-\beta/4)\right) \mathbb{E}_{(0,\beta/4)}\left(\tau_2(y)\right).
\end{eq}
For $t \le \tau_1(-\beta/4)$, $Q_1(t) \le S(t) \le S(0) + \sqrt{2}W(t) -3\beta t/4 \le \beta/4 + \sqrt{2}W(t) -3\beta t/4$. Therefore, $\mathbb{E}_{(0,z)}\left(\tau_1(-\beta/4)\right) \le C$. Furthermore, for any $u \ge z$, $Q_2(t)$ hits level $u$ if and only if $S(t)$ hits level $u$ and $Q_1(t) \le S(t) \le S(0) + \sqrt{2}W(t) -3\beta t/4$ for $t \le \tau_1(-\beta/4)$. Thus,
\begin{multline}\label{hites}
\prob_{(0,z)}\left(\tau_2(u) < \tau_1(-\beta/4)\right) \le \prob\left(z + \sqrt{2}W(t) -3\beta t/4 \text{ hits } u \text{ before } -\beta/4\right)\\
= \frac{\e^{3\beta z/4} - \e^{-3\beta^2/16}}{\e^{3\beta u/4} - \e^{-3\beta^2/16}} \le \e^{-\frac{3\beta}{4}\left(u - z\right)}.
\end{multline}
Combining the above estimate with $u=\beta/4$ with Lemma \ref{middletosmall} and noting that $z \in [\beta^{-1}, \beta/4]$ and $z \ge 2y$, we obtain
\begin{multline*}
\prob_{(0,z)}\left(\tau_2(\beta/4) < \tau_1(-\beta/4)\right) \mathbb{E}_{(0,\beta/4)}\left(\tau_2(y)\right) \le C\e^{-\frac{3\beta}{4}\left(\frac{\beta}{4} - z\right)}\log\left(\frac{\beta}{y}\right)\\
\le \log\left(\frac{z}{y}\right) + C\e^{-\frac{3\beta}{4}\left(\frac{\beta}{4} - z\right)}\log\left(\frac{\beta}{z}\right) \le  \log\left(\frac{z}{y}\right) + C\e^{-\frac{3}{4}\left(\frac{\beta}{4z}-1\right)}\log\left(\frac{\beta}{z}\right) \le C\log\left(\frac{z}{y}\right).
\end{multline*}
Since $z \ge 2y$, using the above estimates in \eqref{inter2},
\begin{align}\label{inter3}
\mathbb{E}_{(0,z)}\left(\tau_2(y)\mathbf{1}_{[\tau_2(\beta/4) \wedge \tau_2(y) < \tau_1(-\beta/4)]}\right) \le C\log\left(\frac{z}{y}\right).
\end{align}
Now, we estimate the second term in \eqref{inter1}. 
Using the strong Markov property at $\tau_1(-\beta/4)$,
\begin{eq}\label{inter4}
&\mathbb{E}_{(0,z)}\left(\tau_2(y)\mathbf{1}_{[\tau_2(\beta/4)\wedge \tau_2(y) \ge \tau_1(-\beta/4)]}\right)\\
&\le \mathbb{E}_{(0,z)}\left(\tau_1(-\beta/4)\right) + \mathbb{E}_{(0,z)}\left(\mathbf{1}_{[\tau_2(\beta/4)\wedge \tau_2(y) \ge \tau_1(-\beta/4)]}\mathbb{E}_{(-\beta/4, Q_2(\tau_1(-\beta/4)))}\left(\tau_2(y)\right)\right)\\
&\le C + \mathbb{E}_{(0,z)}\left(\mathbf{1}_{[\tau_2(\beta/4)\wedge \tau_2(y) \ge \tau_1(-\beta/4)]}\mathbb{E}_{(-\beta/4, Q_2(\tau_1(-\beta/4)))}\left(\tau_2(y)\right)\right).
\end{eq}
Recall that when $Q_1<0$, $Q_2$ decays exponentially. Using this fact and the strong Markov property, we obtain for any $w \in [y, \beta/4]$,
\begin{eq}\label{smalluse}
\mathbb{E}_{(-\beta/4, w)}\left(\tau_2(y)\right) &= \mathbb{E}_{(-\beta/4, w)}\left(\tau_2(y)\mathbf{1}_{[\tau_1(0) \ge \tau_2(y)]}\right) + \mathbb{E}_{(-\beta/4, w)}\left(\tau_2(y)\mathbf{1}_{[\tau_1(0) < \tau_2(y)]}\right)\\
&\le \mathbb{E}_{(-\beta/4, w)}\left(\tau_2(y)\mathbf{1}_{[\tau_1(0) \ge \tau_2(y)]}\right) + \mathbb{E}_{(-\beta/4, w)}\left(\tau_1(0)\mathbf{1}_{[\tau_1(0) < \tau_2(y)]}\right)\\
 &\hspace{3cm}+ \prob_{(-\beta/4, w)}\left(\tau_1(0) < \tau_2(y)\right)\sup_{u \in [y,\beta/4]}\mathbb{E}_{(0, u)}\left(\tau_2(y)\right)\\
&\le  2\log\left(\frac{w}{y}\right) + \sup_{u \in (0,\beta/2]}\prob_{(-\beta/4,u)}\left(\tau_1(0) \le \mathcal{C}_1^- \e^{\mathcal{C}_2^-\beta^2}\right) \sup_{u \in [y,\beta/4]}\mathbb{E}_{(0, u)}\left(\tau_2(y)\right)\\
&\le 2\log\left(\frac{w}{y}\right) + \left(\mathcal{D}_1 \e^{-\mathcal{D}_2\beta^2}\right)\left(C\log\left(\frac{\beta}{y}\right)\right),
\end{eq}
where the last line follows from Lemma \ref{OUhit} and Lemma \ref{middletosmall}, and the second to last line can be understood as follows.
Note that $Q_2$ decreases exponentially before $\tau_1(0)$. 
Therefore, it is clear that starting from $(-\beta/4, w)$, $\tau_2(y)\mathbf{1}_{[\tau_1(0) \geq \tau_2(y)]}\leq \log(w/y)$.
Also, if $\tau_1(0) < \tau_2(y)$, then again by the same reasoning $\tau_1(0)\mathbf{1}_{[\tau_1(0) < \tau_2(y)]}\leq \log(w/y)$.

Now, using the above estimate in \eqref{inter4} and recalling $z \in [\beta^{-1}, \beta/4]$,
\begin{eq}\label{inter5}
&\mathbb{E}_{(0,z)}\left(\tau_2(y)\mathbf{1}_{[\tau_2(\beta/4)\wedge \tau_2(y) \ge \tau_1(-\beta/4)]}\right)\\
& \le C + 2\mathbb{E}_{(0,z)}\mathbf{1}_{[\tau_2(y) \ge \tau_1(-\beta/4)]}\log\left(\frac{Q_2(\tau_1(-\beta/4))}{y}\right)
 + \left(\mathcal{D}_1 \e^{-\mathcal{D}_2\beta^2}\right)\left(C\log\left(\frac{\beta}{y}\right)\right)\\
&\le C + 2\mathbb{E}_{(0,z)}\mathbf{1}_{[\tau_2(y) \ge \tau_1(-\beta/4)]}\log\left(\frac{Q_2(\tau_1(-\beta/4))}{y}\right) + C\log\left(\frac{z}{y}\right) + \left(\mathcal{D}_1 \e^{-\mathcal{D}_2\beta^2}\right)\left(C\log\left(\frac{\beta}{z}\right)\right)\\
&\le C + 2\mathbb{E}_{(0,z)}\mathbf{1}_{[\tau_2(y) \ge \tau_1(-\beta/4)]}\log\left(\frac{Q_2(\tau_1(-\beta/4))}{y}\right) + C\log\left(\frac{z}{y}\right) + \left(\mathcal{D}_1 \e^{-\mathcal{D}_2\beta^2}\right)\left(2C\log\left(\beta\right)\right)\\
&\le C' + 2\mathbb{E}_{(0,z)}\mathbf{1}_{[\tau_2(y) \ge \tau_1(-\beta/4)]}\log\left(\frac{Q_2(\tau_1(-\beta/4))}{y}\right) + C\log\left(\frac{z}{y}\right).
\end{eq}
Write $Q_2^* = \sup_{t \le \tau_1(-\beta/4)}Q_2(t)$. Then,
\begin{align*}
&\mathbb{E}_{(0,z)}\mathbf{1}_{[\tau_2(y) \ge \tau_1(-\beta/4)]}\log\left(\frac{Q_2(\tau_1(-\beta/4))}{y}\right)\\
&\le \mathbb{E}_{(0,z)}\log\left(\frac{Q_2^*}{y}\right) = \log\left(\frac{z}{y}\right) + \int_{\log\left(\frac{z}{y}\right)}^{\infty}\prob_{(0,z)}\left(Q_2^* \ge y\e^u\right)du\\
&\le \log\left(\frac{z}{y}\right) + \int_{\log\left(\frac{z}{y}\right)}^{\infty}\e^{-\frac{3\beta}{4}\left(y\e^u - z\right)}du \ \ \text{ (using \eqref{hites})}.
\end{align*}
Substituting $v = y\e^u - z$ and recalling $z \ge \beta^{-1}$, the integral can be estimated as
\begin{multline*}
\int_{\log\left(\frac{z}{y}\right)}^{\infty}\e^{-\frac{3\beta}{4}\left(y\e^u - z\right)}du = \int_0^{\infty}\e^{-\frac{3\beta}{4}v}\frac{dv}{v+z} = \int_0^{\infty}\e^{-\frac{3\beta z}{4}w}\frac{dw}{w+1} \le \int_0^{\infty}\e^{-\frac{3}{4}w}\frac{dw}{w+1} = C
\end{multline*}
where $C$ does not depend on $\beta, y,z$. Thus,
$$
\mathbb{E}_{(0,z)}\mathbf{1}_{[\tau_2(y) \ge \tau_1(-\beta/4)]}\log\left(\frac{Q_2(\tau_1(-\beta/4))}{y}\right) \le \log\left(\frac{z}{y}\right) + C.
$$
Using the above estimate in \eqref{inter5}, we obtain
\begin{equation}\label{inter6}
\mathbb{E}_{(0,z)}\left(\tau_2(y)\mathbf{1}_{[\tau_2(\beta/4)\wedge \tau_2(y) \ge \tau_1(-\beta/4)]}\right) \le C\log\left(\frac{z}{y}\right).
\end{equation}
Using \eqref{inter3} and \eqref{inter6} in \eqref{inter1}, we finally obtain
$$
\mathbb{E}_{(0,z)}\left(\tau_2(y)\right) \le C\log\left(\frac{z}{y}\right),
$$
which completes the proof of the lemma.
\end{proof}

\begin{proof}[Proof of Lemma~\ref{smallval}]
Let $(Q_1(0), Q_2(0)) = (0,y/2)$ for some $y \in [2\beta \e^{-\mathcal{C}_1^-\e^{\mathcal{C}_2^-\beta^2}},2\beta^{-1}]$. Define the stopping times: $\mathbf{f}_0 = 0$ and for $k \ge 0$,
$$
\mathbf{f}_{2k+1} = \inf\{t \ge \mathbf{f}_{2k}: Q_2(s) = y\}, \ \ \mathbf{f}_{2k+2} = \inf\{t \ge \mathbf{f}_{2k+1}: Q_2(s) = y/2 \text{ or } Q_2(s) = 2\beta^{-1}\}.
$$
Let $\mathcal{N}_{\mathbf{f}} = \inf\{k \ge 1: Q_2(\mathbf{f}_{2k}) = 2\beta^{-1}\}$. Using strong Markov property,
\begin{eq}\label{small0}
&\mathbb{E}_{(0,y/2)} \left(\mathbf{f}_{2} - \mathbf{f}_{1}\right) = \mathbb{E}_{(0,y)} \left(\tau_2(2\beta^{-1}) \wedge \tau_2(y/2)\right)\\
 &= \mathbb{E}_{(0,y)} \left(\tau_2(2\beta^{-1}) \wedge \tau_2(y/2)\mathbf{1}_{[\tau_2(2\beta^{-1}) \wedge \tau_2(y/2) < \tau_1(-\beta/4)]}\right)\\
&\hspace{5cm}  + \mathbb{E}_{(0,y)} \left(\tau_2(2\beta^{-1}) \wedge \tau_2(y/2)\mathbf{1}_{[\tau_2(2\beta^{-1}) \wedge \tau_2(y/2) \ge \tau_1(-\beta/4)]}\right)\\
&  \le 2\mathbb{E}_{(0,y)} \left( \tau_1(-\beta/4)\right) + \mathbb{E}_{(0,y)} \left[((\tau_2(2\beta^{-1}) \wedge \tau_2(y/2)) - \tau_1(-\beta/4))\mathbf{1}_{[\tau_2(2\beta^{-1}) \wedge \tau_2(y/2) \ge \tau_1(-\beta/4)]}\right]\\
&  \le 2\mathbb{E}_{(0,y)} \left( \tau_1(-\beta/4)\right) + \sup_{z \in (y/2, \ 2\beta^{-1})}\mathbb{E}_{(-\beta/4, z)}(\tau_2(y/2)).
\end{eq}
For $t \le \tau_1(-\beta/4)$, $Q_1(t) \le S(t) \le S(0) + \sqrt{2}W(t) -3\beta t/4$. Hence, $\displaystyle{\sup_{y \in (0, 2\beta^{-1}]}\mathbb{E}_{(0,y)}\left(\tau_1(-\beta/4)\right) \le C}$. Moreover, for any $y \in [2\beta \e^{-\mathcal{C}_1^-\e^{\mathcal{C}_2^-\beta^2}}, 2\beta^{-1}]$ and any $z \in (y/2, \ 2\beta^{-1})$, by \eqref{smalluse},
\begin{multline*}
\mathbb{E}_{(-\beta/4, z)}(\tau_2(y/2)) \le 2\log\left(\frac{2z}{y}\right) + \left(\mathcal{D}_1 \e^{-\mathcal{D}_2\beta^2}\right)\left(C\log\left(\frac{2\beta}{y}\right)\right)\\
\le 2\log\left(\frac{4}{\beta y}\right) + \left(\mathcal{D}_1 \e^{-\mathcal{D}_2\beta^2}\right)\left(C\log\left(\frac{2}{\beta y}\right)\right) + \left(\mathcal{D}_1 \e^{-\mathcal{D}_2\beta^2}\right)(2C \log \beta)
\le C \log\left(\frac{4}{\beta y}\right)
\end{multline*}
for all $\beta \ge \beta_0$ for sufficiently large $\beta_0$, where $C$ in the final bound does not depend on $\beta, y$. Using these estimates in \eqref{small0}, we obtain $C>0, \beta_0 \ge 1$ such that for all $\beta \ge \beta_0$ and all $y \in [2\beta \e^{-\mathcal{C}_1^-\e^{\mathcal{C}_2^-\beta^2}},2\beta^{-1}]$,
\begin{equation}\label{small1}
\mathbb{E}_{(0,y/2)} \left(\mathbf{f}_{2} - \mathbf{f}_{1}\right) \le C \log\left(\frac{4}{\beta y}\right).
\end{equation}
If $(Q_1(0), Q_2(0)) = (0,y)$, then $Q_2(t) \ge S(t) \ge y + \sqrt{2}W(t) - \beta t$. Furthermore, for any $t \le \beta^{-2}$, $Q_2(t) \ge \e^{-t}y \ge \e^{-\beta^{-2}}y > y/2$ for all $\beta \ge \beta_0$ if $\beta_0$ is chosen large enough. Therefore,
\begin{align*}
\inf_{y \in (0,2\beta^{-1}]}\prob_{(0,y)}\left(Q_2 \text{ hits } 2\beta^{-1} \text{ before } y/2\right) &\ge \inf_{y \in (0,2\beta^{-1}]}\prob_{(0,y)}\Big(\sup_{t \le \beta^{-2}}S(t) \ge 2\beta^{-1}\Big)\\
&\ge \prob\Big(\sup_{t \le \beta^{-2}}\sqrt{2}W(t) \ge 3\beta^{-1}\Big) \ge p_{\mathbf{f}}>0,
\end{align*}
where $p_{\mathbf{f}}$ does not depend on $\beta$. This gives us for $k \ge 1$,
\begin{equation}\label{small2}
\sup_{y \in (0,2\beta^{-1}]}\prob_{(0,y/2)}\left(\mathcal{N}_{\mathbf{f}} \ge k\right) \le (1-p_{\mathbf{f}})^k.
\end{equation}
Using \eqref{small1} and \eqref{small2}, we obtain
\begin{align*}
&\mathbb{E}_{(0,y/2)}\Big(\int_0^{\tau_2(2\beta^{-1})}\mathbf{1}_{[Q_2(s) \ge y]}ds\Big)
\le \mathbb{E}_{(0,y/2)}\Big(\sum_{k=1}^{\infty}\left(\mathbf{f}_{2k}-\mathbf{f}_{2k-1}\right)\mathbf{1}_{[\mathcal{N}_{\mathbf{f}} > k-1]}\Big)\\
&\le \sum_{k=1}^{\infty}\mathbb{E}_{(0,y/2)} \left(\mathbf{f}_{2} - \mathbf{f}_{1}\right)\sup_{y \in (0,2\beta^{-1}]}\prob_{(0,y/2)}\left(\mathcal{N}_{\mathbf{f}} \ge k\right)
\le C \log\left(\frac{4}{\beta y}\right)\sum_{k=1}^{\infty}(1-p_{\mathbf{f}})^k = C' \log\left(\frac{4}{\beta y}\right),
\end{align*}
where $C'$ does not depend on $\beta, y$. This completes the proof of the lemma.
\end{proof}

\section{Proof of Lemma \ref{largebeta}}
\label{app:lemma4.8}
Lemma \ref{Q2gebeta2} gives us an upper bound on the tail probabilities of $Q_2$ in the region $[\beta + y _0(\beta), \infty)$. 
In this appendix, we will extend these estimates to the region $[\beta^{-1}, \infty)$.
We start by recording a corollary to Lemma \ref{Q2gebeta2} which will be useful in proving finer tail estimate.
\begin{lemma}[Corollary to Lemma \ref{Q2gebeta2}]\label{ubQ2}
Take any $\epsilon \in (0, R^+)$, where $R^+$ is the constant in Lemma \ref{Q2gebeta2}. There exist positive constants $\beta_0, C^*_1,$ and $C^*_2$, such that for all fixed $\beta \ge \beta_0$,
\begin{equation*}
\prob_{(0, 2(1 + \epsilon)\beta)}\left(\tau_2(z) \le \tau_2((1+\epsilon)\beta)\right) \le C^*_1\e^{-C^*_2\beta z}
\end{equation*}
for $z \ge 4(1 + \epsilon)\beta$.
\end{lemma}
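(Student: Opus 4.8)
The plan is to bootstrap Lemma~\ref{Q2gebeta2} by a geometric-ladder argument, applying it repeatedly at successively higher levels. First I would reduce the event to one with a lower target threshold. For $\beta \ge \beta_0$ with $\beta_0 \ge \max\{1,\sqrt{R^+/\epsilon}\}$ we have $y_0(\beta)+\beta = R^+/\beta + \beta \le (1+\epsilon)\beta < 2(1+\epsilon)\beta$, so by continuity of the paths of $Q_2$, under $\prob_{(0,2(1+\epsilon)\beta)}$ a trajectory that reaches $z$ before touching $(1+\epsilon)\beta$ cannot have touched the still lower level $y_0(\beta)+\beta$; hence
$$\prob_{(0,2(1+\epsilon)\beta)}\bigl(\tau_2(z)\le\tau_2((1+\epsilon)\beta)\bigr)\le \prob_{(0,2(1+\epsilon)\beta)}\bigl(\tau_2(z)\le\tau_2(y_0(\beta)+\beta)\bigr),$$
and it suffices to bound the right-hand side, whose lower threshold is exactly the one appearing in Lemma~\ref{Q2gebeta2}.

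Next I would set up the ladder of levels $u_0=2(1+\epsilon)\beta$ and $u_{j+1}=2u_j-\beta$, so that $u_j-\beta = 2^j(1+2\epsilon)\beta$, and let $N=\min\{j\ge 1: u_j\ge z\}$; note $N\ge 2$ whenever $z\ge 4(1+\epsilon)\beta$, since then $u_1=(3+4\epsilon)\beta<z$. The key structural point is that $Q_2$ reaches a new running maximum only through the local time $L$, which increases only when $Q_1=0$; consequently, on the event that $y_0(\beta)+\beta$ has not yet been hit, at the first time $Q_2$ hits any $u_j$ the process sits at the state $(0,u_j)$. Applying the strong Markov property at these hitting times together with Lemma~\ref{Q2gebeta2} in the form ``starting from $(0,u)$ with $u\ge y_0(\beta)+\beta$, the probability of hitting $2u-\beta$ before $y_0(\beta)+\beta$ is at most $C_1^*\e^{-C_2^*\beta(u-\beta)}$'' (the hypothesis $u_j\ge u_0\ge y_0(\beta)+\beta$ holds for $\beta\ge\beta_0$), and chaining over the nested events $\{$from $(0,u_{j-1})$ hit $u_j$ before $y_0(\beta)+\beta\}$, $j=1,\dots,N-1$, one gets, writing $A'=\{\tau_2(u_{N-1})\le\tau_2(y_0(\beta)+\beta)\}\supseteq\{\tau_2(z)\le\tau_2(y_0(\beta)+\beta)\}$,
$$\prob_{(0,u_0)}(A')\le\prod_{j=1}^{N-1}C_1^*\e^{-C_2^*\beta(u_{j-1}-\beta)} =(C_1^*)^{N-1}\exp\Bigl(-C_2^*(1+2\epsilon)\beta^2\sum_{j=1}^{N-1}2^{j-1}\Bigr) =(C_1^*)^{N-1}\e^{-C_2^*(1+2\epsilon)\beta^2(2^{N-1}-1)}.$$

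Finally I would convert this into the claimed exponential-in-$\beta z$ bound. From $u_N\ge z$ we get $2^{N-1}(1+2\epsilon)\beta^2=\tfrac12(u_N-\beta)\beta\ge\tfrac12(z-\beta)\beta$, so the exponential factor is at most $\e^{-C_2^*\beta z/2}\e^{C(\epsilon)\beta^2}$ for an explicit constant $C(\epsilon)$; and from $u_{N-1}<z$ we get $N-1\le\log_2\bigl(z/((1+2\epsilon)\beta)\bigr)+O(1)$, so the prefactor $(C_1^*)^{N-1}$ is at most polynomial in $z/\beta$. For $z\ge(6+8\epsilon)\beta$, the hypothesis $z\ge 4(1+\epsilon)\beta$ (together with $\beta\ge\beta_0\ge1$) lets the surplus $\e^{C(\epsilon)\beta^2}$ and then the polynomial prefactor be absorbed into fractions of $\e^{-C_2^*\beta z/2}$, yielding $\prob\le C_1^{**}\e^{-C_2^{**}\beta z}$. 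For the remaining bounded range $4(1+\epsilon)\beta\le z<(6+8\epsilon)\beta$ the ladder length $N$ is bounded by a constant depending only on $\epsilon$, so $(C_1^*)^{N-1}$ is an absolute constant and keeping a single factor $\e^{-C_2^*(1+2\epsilon)\beta^2}$ already dominates $\e^{-c\beta z}$ on this range since $\beta z\le(6+8\epsilon)\beta^2$. Taking the worse of the two constants completes the proof. The only genuinely probabilistic subtlety is the ``new maximum forces $Q_1=0$'' claim that licenses the clean strong-Markov restart at each rung; the remaining difficulty is purely bookkeeping — the correction term $\e^{C(\epsilon)\beta^2}$ coming from the geometric sum must be dominated, and this is precisely where the sharp lower bound $z\ge4(1+\epsilon)\beta$ in the statement is needed, so constants must be tracked rather than hidden.
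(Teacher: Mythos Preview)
Your ladder argument is correct, but it is considerably more elaborate than what the paper actually does. The paper's proof is a \emph{single} application of Lemma~\ref{Q2gebeta2}, not an iterated one. With $\beta_0=\sqrt{R^+/\epsilon}$ one has $y_0(\beta)+\beta\le(1+\epsilon)\beta$ as you noted; then for $z\ge 4(1+\epsilon)\beta$ the paper sets $y=z/2-\beta$, so that $y+\beta=z/2\in[2(1+\epsilon)\beta,\,2y+\beta]$ and $y\ge\epsilon\beta\ge y_0(\beta)$. Since the path from $2(1+\epsilon)\beta$ up to $2y+\beta<z$ must first hit $y+\beta$, and at that hitting time $Q_1=0$ (your own ``new maximum forces $Q_1=0$'' observation), a single strong-Markov restart at $(0,y+\beta)$ gives
\[
\prob_{(0,2(1+\epsilon)\beta)}\bigl(\tau_2(z)\le\tau_2((1+\epsilon)\beta)\bigr)\le \prob_{(0,y+\beta)}\bigl(\tau_2(2y+\beta)\le\tau_2(y_0(\beta)+\beta)\bigr)\le C_1^*\e^{-C_2^*\beta y},
\]
and $y=z/2-\beta\ge z/4$ finishes it. No ladder, no polynomial prefactor to absorb, no case split on the size of $z$.

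The difference is that Lemma~\ref{Q2gebeta2} already allows arbitrarily large $y$, so one can jump directly to the level $z/2$ rather than doubling step by step. Your approach would, in principle, yield a slightly sharper exponent (summing the geometric series recovers roughly $\e^{-C_2^*\beta z/2}$ rather than $\e^{-C_2^*\beta z/4}$), but that gain is irrelevant here and comes at the cost of the bookkeeping you flag at the end: tracking $N$, absorbing $(C_1^*)^{N-1}$, handling the $\e^{C(\epsilon)\beta^2}$ correction, and splitting into the ranges $z\lessgtr(6+8\epsilon)\beta$. All of that evaporates in the paper's one-step version.
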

\begin{proof}
Take $\beta_0\ge 1$ satisfying $\frac{R^+}{\beta_0} = \epsilon \beta_0$. Consider any $\beta \ge \beta_0$. Recalling $y_0(\beta) = R^+\beta^{-1}$ from Lemma \ref{Q2gebeta2}, we have  $y_0(\beta) + \beta \le (1+\epsilon)\beta$. Also, $2y + \beta < 2(y+\beta)$ for any $y$. For any $z \ge 2(1 + \epsilon)\beta$, write $y=\frac{z}{2} - \beta$. Then $y \ge \epsilon \beta \ge R^+\beta^{-1}$ by our choice of $\beta_0$. Therefore, by the strong Markov property and Lemma \ref{Q2gebeta2}, for any $z \ge 4(1 + \epsilon)\beta$,
\begin{align*}
\prob_{(0, 2(1 + \epsilon)\beta)}\left(\tau_2(z) \le \tau_2((1+\epsilon)\beta)\right)  &= \prob_{(0, 2(1 + \epsilon)\beta)}\left(\tau_2(2(y+\beta)) \le \tau_2((1+\epsilon)\beta)\right)\\
&\le\prob_{(0, 2(1 + \epsilon)\beta)}\left(\tau_2(2y+\beta) \le \tau_2(y_0(\beta) + \beta)\right) \\
&\le \prob_{(0, y + \beta)}\left(\tau_2(2y+\beta) \le \tau_2(y_0(\beta) + \beta)\right)
\le C^*_1\e^{-C^*_2\beta y}.
\end{align*}
As $z \ge 4(1 + \epsilon)\beta$, $\beta \le \frac{z}{4}$ and hence, $y = \frac{z}{2} -\beta \ge \frac{z}{4}$, completing the proof of the corollary.
\end{proof}
As mentioned in Remark~\ref{rem:lem4.6} in detail, the diffusion process starting in the region $\{-Q_1 + Q_2 < \beta\}$ shows a different qualitative behavior than the $\{-Q_1+ Q_2 > \beta\}$ region. 
Lemma \ref{Q2gebeta2} exploits the linear drift of $Q_2$ to produce an exponential steady-state tail estimate in the latter region. 
The next lemma studies the tail behavior of $Q_2$ when $\{-Q_1 + Q_2 < \beta\}$. 
\begin{lemma}\label{Q2lebeta}
Fix any $\theta_0 \in (0,1)$ and any $A>\max\left\lbrace\theta_0, \frac{1}{2}\right\rbrace$. There exist constants $C_1, C_2>0$ (depending only on $\theta_0, A$) such that for all $\beta \ge \theta_0^{-1/2}$ and all $\theta \in [\beta^{-2}, \theta_0]$,
\begin{equation*}
\sup_{z \in [\beta^{-1}, \theta \beta]} \prob_{(0,z)}\left(\tau_2(z+y) \le \tau_2\left(\beta^{-1}\right) \right) \le C_1 \e^{-C_2 \beta y}, \  y \in [\theta \beta, A\beta].
\end{equation*}
\end{lemma}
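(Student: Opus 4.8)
Write $w=z+y$ and $p(z,y):=\prob_{(0,z)}\big(\tau_2(w)<\tau_2(\beta^{-1})\big)$, and fix a small $\epsilon\in(0,1-\theta_0)$ depending only on $\theta_0$. The first step is to reduce to the case $w\le(1-\epsilon)\beta$. Indeed, if $w>(1-\epsilon)\beta$ then $\{\tau_2(w)<\tau_2(\beta^{-1})\}\subseteq\{\tau_2((1-\epsilon)\beta)<\tau_2(\beta^{-1})\}$, and since $z\le\theta\beta\le\theta_0\beta$ we have $(1-\epsilon)\beta-z\ge(1-\epsilon-\theta_0)\beta>0$, so a bound of the form $\prob_{(0,z)}(\tau_2(w')<\tau_2(\beta^{-1}))\le C_1\e^{-C_2\beta(w'-z)}$ applied with $w'=(1-\epsilon)\beta$ gives $p(z,y)\le C_1\e^{-C_2'\beta^2}\le C_1\e^{-C_2''\beta y}$, using that $y\le A\beta$ forces $\beta^2\ge\beta y/A$. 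Hence it suffices to treat $w\le(1-\epsilon)\beta$; the crucial consequence is that $Q_2(t)\in[\beta^{-1},(1-\epsilon)\beta]$ for all $t\le\tau:=\tau_2(w)\wedge\tau_2(\beta^{-1})$. In this band, whenever $Q_1\in[-1,0]$ the continuous drift of $-Q_1$ equals $\beta+Q_1-Q_2\ge\epsilon\beta-1$ — of order $\beta$ and pointing \emph{away} from $0$ — while whenever $Q_1<0$ one has $\dif Q_2=-Q_2\dif t$ (pure exponential decay) and $Q_1$, suitably shifted, behaves like an Ornstein--Uhlenbeck process mean-reverting to a level $\le-\epsilon\beta$.

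\textbf{Excursion decomposition.} For $w\le(1-\epsilon)\beta$ I decompose the path by the visits of $Q_1$ to $0$: put $\mathsf G_0=0$ and, for $k\ge0$, let $\mathsf G_{2k+1}$ be the first time after $\mathsf G_{2k}$ that $Q_1=0$ and $\mathsf G_{2k+2}$ the first time after $\mathsf G_{2k+1}$ that $Q_1=-1$. On the ``near-zero leg'' $[\mathsf G_{2k+1},\mathsf G_{2k+2}]$ let $\Delta_k:=L(\mathsf G_{2k+2})-L(\mathsf G_{2k+1})$; on the ``deep leg'' $[\mathsf G_{2k+2},\mathsf G_{2k+3}]$ one has $\dif L\equiv0$. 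Two one-cycle estimates, each proved by stochastic comparison run up to $\tau$, drive the argument. (i) On the near-zero leg $-Q_1$ is reflected at $0$ with drift $\ge\epsilon\beta-1\ge\epsilon\beta/2$ (for $\beta$ large), so $\Delta_k$ is stochastically dominated, given $\mathcal F_{\mathsf G_{2k+1}}$, by the local time at $0$ accumulated before hitting level $1$ by a reflected Brownian motion with constant drift $\epsilon\beta/2$, which is exponentially distributed; hence $\prob(\Delta_k\ge\delta\mid\mathcal F_{\mathsf G_{2k+1}})\le\e^{-c\beta\delta}$ with $c=c(\theta_0)>0$, uniformly. (ii) On the deep leg, started from $Q_1=-1$ with $Q_2\le(1-\epsilon)\beta$ (which persists since $Q_2$ only decreases there), $Q_1$ is dominated above by an Ornstein--Uhlenbeck process mean-reverting to $\le-\epsilon\beta$, so exactly as in the derivation of \eqref{problb} in Lemma~\ref{OUhit} (with $\beta$ replaced by $\epsilon\beta$) the probability that $Q_1$ returns to $0$ within time $T^*:=\log(2\beta^2)$ is at most $\e^{-c\beta^2}$; and if the deep leg lasts $\ge T^*$, then $Q_2$ — which began that leg at a value $\le(1-\epsilon)\beta$ — has been multiplied by $\e^{-T^*}=(2\beta^2)^{-1}$ and so has dropped below $\beta^{-1}$, i.e.\ $\tau_2(\beta^{-1})$ has occurred. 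Call the $k$-th deep leg a \emph{failure} if $Q_1$ returns to $0$ within time $T^*$; by (ii) each failure has conditional probability $\le\e^{-c\beta^2}$.

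\textbf{Combining the cycles.} On $\{\tau_2(w)<\tau_2(\beta^{-1})\}$ there is a random index $K\ge0$ with $\tau_2(w)$ falling in the near-zero leg of cycle $K+1$ (it cannot fall on a deep leg, where $\dif L=0$ and $Q_2$ decreases) and with deep legs $1,\dots,K$ all failures (a success among them would trigger $\tau_2(\beta^{-1})$ earlier). Since $Q_2(t)=ze^{-t}+\int_0^t e^{-(t-s)}\dif L(s)$, reaching $w$ forces $L(\tau_2(w))\ge w-z=y$, hence $\sum_{j=1}^{K+1}\Delta_j\ge y$. Using the strong Markov property to chain the uniform conditional bounds (i) and (ii) (worst-casing the conditioning over $Q_2\in[\beta^{-1},w]$ at cycle starts), the joint law of the relevant quantities is dominated by independent $\mathrm{Exp}(c'\beta)$ increments and $\mathrm{Bernoulli}(\e^{-c\beta^2})$ failures, so
\begin{align*}
p(z,y)\ \le\ \sum_{k\ge0}\e^{-c\beta^2 k}\,\prob\!\Big(\mathrm{Gamma}(k+1,c'\beta)\ge y\Big)
&\ \le\ \sum_{k\ge0}\e^{-c\beta^2 k}\,\e^{-c'\beta y}\sum_{i=0}^{k}\frac{(c'\beta y)^i}{i!}\\
&\ \le\ \frac{1}{1-\e^{-c\beta^2}}\,\e^{-c'\beta y\,(1-\e^{-c\beta^2})}\ \le\ 2\,\e^{-c'\beta y/2}
\end{align*}
for $\beta$ large, after interchanging the two summations and summing the exponential series in $i$. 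Together with the reduction this yields the claim with constants depending only on $\theta_0$ and (through the reduction) on $A$.

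\textbf{Main obstacle.} The delicate point is making the two one-cycle comparisons rigorous in the presence of the feedback of $Q_2$ on the drift of $Q_1$ and of $L$ on $Q_2$: one must argue that, up to $\tau$, $Q_2$ genuinely remains in $[\beta^{-1},(1-\epsilon)\beta]$; that the bounded state-dependent term $-Q_1\in[-1,0]$ on the near-zero leg can be absorbed into the drift lower bound; and that the monotone comparison and the local-time/hitting-time laws of the auxiliary reflected processes give the stated $\Theta(\beta)$ local-time rate and $\Theta(\e^{-c\beta^2})$ return probability uniformly over the conditioning. Handling the regime where $w$ is close to $\beta$ (so that the Ornstein--Uhlenbeck mean for $Q_1$ degrades from $\approx-\beta$ to only $\approx-\epsilon\beta$) is precisely what forces the preliminary reduction and the choice $\epsilon<1-\theta_0$.
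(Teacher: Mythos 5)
Your proposal is correct in outline but follows a genuinely different route from the paper. The paper makes a single two-case split at the stopping time $\tau_1(-(1-\theta)\beta/8)$: while $Q_1$ stays above depth $\Theta(\beta)$, the total rise of $Q_2$ is controlled directly through the identity $Q_2(t)-Q_2(0)=L(t)-\int_0^t Q_2(s)\,ds$ combined with the Skorokhod bound $L(t)\le\sup_{s\le t}\bigl(\sqrt2 W(s)-\tfrac{7+\theta}{8}\beta s\bigr)+\int_0^t Q_2(s)\,ds$, which produces the $\e^{-c\beta y}$ factor in one stroke; and once $Q_1$ has reached depth $-(1-\theta)\beta/8$, a crossing argument between $-(1-\theta)\beta/8$ and $-(1-\theta)\beta/4$ shows that the return of $Q_1$ to $0$ within the time $\log\bigl[(1+\tfrac{1-\theta}{2A})\beta y\bigr]$ needed for $Q_2$ to stay above $\beta^{-1}$ has probability $O(\log(\beta y)\,\e^{-c\beta^2})\le C\e^{-c'\beta y}$ (using $y\le A\beta$), so only one deep excursion ever has to be controlled. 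You instead run a renewal scheme of shallow excursions of $Q_1$ between $0$ and $-1$, dominate the local time earned per near-zero leg by an $\mathrm{Exp}(c\beta)$ variable, bound each deep leg's quick-return probability, and sum a geometric-times-Gamma series; your reduction to $z+y\le(1-\epsilon)\beta$ plays the role of the paper's restriction $Q_2\le z+\tfrac{(1-\theta)y}{2A}\le\tfrac{(1+\theta)\beta}{2}$ in guaranteeing the $\Theta(\beta)$ restoring drift. Both strategies work; the paper's is shorter because it never chains more than one deep excursion.

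One quantitative claim in your step (ii) is wrong as stated: from depth $-1$, the dominating Ornstein--Uhlenbeck process mean-reverting to a level $\le-\epsilon\beta$ has to climb only a distance $1$ against a drift of size $\Theta(\beta)$, so the probability of returning to $0$ (within $T^*$, or indeed ever before sinking further) is of order $\e^{-c\beta}$, not $\e^{-c\beta^2}$; the analogy with \eqref{problb} of Lemma~\ref{OUhit} does not apply, because there the starting depth is $\beta/4$ and the crossing widths are $\Theta(\beta)$, which is precisely what produces the Gaussian-in-$\beta$ exponent. Fortunately this error is not fatal: your final interchange-of-sums computation only needs the per-leg failure probability bounded away from $1$ (it enters as $\e^{-c'\beta y(1-\e^{-\lambda})}/(1-\e^{-\lambda})$), and a bound of order $\e^{-c\beta}$ — or even a constant strictly below one — is provable inside your framework, for instance by splitting the deep leg at the first visit to $-\epsilon\beta/2$ and only then invoking the mechanism of \eqref{problb}. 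Two further points to tidy up: your estimates require $\beta\ge\beta_0(\theta_0)$ large (e.g.\ $\epsilon\beta-1\ge\epsilon\beta/2$), whereas the lemma is asserted for all $\beta\ge\theta_0^{-1/2}$; this is harmless because on the remaining compact range of $\beta$ one has $\beta y\le A\beta_0^2$, so the bound is trivialized by enlarging $C_1$, but it should be said. Also, the per-leg exponential and failure bounds are only valid up to $\tau=\tau_2(z+y)\wedge\tau_2(\beta^{-1})$, so the increments $\Delta_k$ used in the chaining must be the stopped local-time increments, as you implicitly acknowledge.
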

\begin{proof}
Fix any $\theta_0 \in (0,1)$ and any $A>\max\left\lbrace\theta_0, \frac{1}{2}\right\rbrace$. Take any $\beta \ge \theta_0^{-1/2}$ and any $\theta \in [\beta^{-2}, \theta_0]$. Finally, take any $z \in [\beta^{-1}, \theta \beta]$. Define
$
\sigma = \inf\{t \ge \tau_1\left(-(1-\theta)\beta/8\right) : Q_1(t)=0\}.
$
Note that for $y \in [\theta \beta, A\beta]$,
\begin{multline}\label{Q2twopart}
\prob_{(0,z)}\left(\tau_2(z+y) \le \tau_2\left(\beta^{-1}\right) \right) \le \prob_{(0,z)}\Big(\tau_2\Big(z+\frac{(1-\theta)y}{2A}\Big) \le \tau_1\left(-(1-\theta)\beta/8\right) \Big)\\
+ \prob_{(0,z)}\Big(\tau_2\Big(z+\frac{(1-\theta)y}{2A}\Big) > \tau_1\left(-(1-\theta)\beta/8\right), \sigma \le \tau_2\left(\beta^{-1}\right)\Big),
\end{multline}
where the last probability uses the fact that as $A>\frac{1}{2}$, $z + \frac{(1-\theta)y}{2A} < z + y$.
We will estimate the two probabilities separately. Note that for $t \le \tau_1\left(-(1-\theta)\beta/8\right)$,
\begin{multline*}
L(t) = \sup_{s \le t}\left(\sqrt{2}W(s) - \beta s + \int_0^s(-Q_1(u) + Q_2(u))du\right) \le \sup_{s \le t}\left(\sqrt{2}W(s) - \frac{7+\theta}{8}\beta s\right) + \int_0^t Q_2(s)ds.
\end{multline*}
Recall that
$$
Q_2(t) - Q_2(0) = L(t) - \int_0^t Q_2(s)ds.
$$
Therefore, using the fact that the scale function of $\sqrt{2}W(t) - b\beta t$ is $s_b(u) = \e^{bu}$ for any $b >0$ and $u \in \mathbb{R}$,
\begin{multline}\label{first}
\prob_{(0,z)}\Big(\tau_2\Big(z+\frac{(1-\theta)y}{2A}\Big) \le \tau_1\left(-(1-\theta)\beta/8\right) \Big) \le \prob\Big(\sup_{s < \infty}\Big(\sqrt{2}W(s) - \frac{7+\theta}{8}\beta s\Big) > \frac{(1-\theta)y}{2A}\Big)\\ =\e^{-\frac{(7+\theta)(1-\theta)\beta y}{16A}} \le \e^{-\frac{7(1-\theta_0)\beta y}{16A}}.
\end{multline}
Now we estimate the second probability of \eqref{Q2twopart}. Applying the strong Markov property at $\tau_1\left(-(1-\theta)\beta/8\right)$,
\begin{multline}\label{SM}
\prob_{(0,z)}\left(\tau_2\left(z+\frac{(1-\theta)y}{2A}\right) > \tau_1\left(-(1-\theta)\beta/8\right), \sigma \le \tau_2\left(\beta^{-1}\right)\right)\\
\le \sup_{w \in \left[0, \ z+\frac{(1-\theta)y}{2A}\right]}\prob_{(-(1-\theta)\beta/8, w)}\left(\tau_1(0) \le \tau_2(\beta^{-1})\right).
\end{multline}
Therefore, it suffices to estimate the probability appearing in the right hand side above for $Q_1(0)= -(1-\theta)\beta/8$ and $Q_2(0) = w$ for $w \in \left[0, z+\frac{(1-\theta)y}{2A}\right]$. Towards this end, define the following stopping times: $\sigma_0=0$ and for $k \ge 0$,
\begin{align*}
\sigma_{2k+1} &= \inf\{t \ge \sigma_{2k}: Q_1(t) = -(1-\theta)\beta/4 \text{ or } 0\},\\
\sigma_{2k+2} &= \inf\{t \ge \sigma_{2k+1}: Q_1(t) = -(1-\theta)\beta/8 \text{ or } 0\}.
\end{align*}
Let $\mathcal{N}^* = \inf\{k \ge 0: Q_1(\sigma_{2k+1}) = 0\}$. Suppose $\mathcal{N}^*  \ge k$. For $t \in [\sigma_{2k}, \sigma_{2k+1}]$, $Q_1(t) \ge -(1-\theta)\beta/4$ and $Q_2(t) \le z+\frac{(1-\theta)y}{2A}  \le \frac{(1+\theta)\beta}{2}$ (as $\frac{(1-\theta)y}{2A} \le \frac{(1-\theta)\beta}{2}$ and $z \le \theta \beta$). Therefore,
\begin{equation}\label{rev1}
Q_1(t) = -(1-\theta)\beta/8 + \sqrt{2}W(t) - \beta t + \int_0^t(-Q_1(s) + Q_2(s))ds \le -(1-\theta)\beta/8 + \sqrt{2}W(t) - \frac{1- \theta}{4}\beta t.
\end{equation}
Therefore, by the strong Markov property and scale function arguments, for any $k \ge 0$,
\begin{align*}
&\prob_{(-(1-\theta)\beta/8, w)}\left(Q_1(\sigma_{2k+1})=0, \ \mathcal{N}^* \ge k\right)\\
&\le \prob\left(\sqrt{2}W(t) - \frac{1-\theta}{4}\beta t\text{ hits } (1-\theta)\beta/8 \text{ before } -(1-\theta)\beta/4\right)\\
&= \frac{1- \e^{-(1-\theta)^2\beta^2/16}}{\e^{(1-\theta)^2\beta^2/32}-\e^{-(1-\theta)^2\beta^2/16}} \le \e^{-(1-\theta)^2\beta^2/32} \le \e^{-(1-\theta_0)^2\beta^2/32}.
\end{align*}
Consequently, for any $n \ge 0$, $\prob_{(-(1-\theta)\beta/8, w)}(\mathcal{N}^* \le n) \le (n+1) \e^{-(1-\theta)^2\beta^2/32}$. Further, observe that $\tau_1(0) \ge \sum_{k=1}^{\mathcal{N}^*}\left(\sigma_{2k+1} - \sigma_{2k}\right)$. Moreover, if $\mathcal{N}^* \ge k$, then for $t \in [\sigma_{2k}, \sigma_{2k+1}]$, $Q_1(t) \ge -(1-\theta)\beta/8 + \sqrt{2}W(t) - \beta t$ and from \eqref{rev1}, $Q_1(t) \le -(1-\theta)\beta/8 + \sqrt{2}W(t) - \frac{1- \theta}{4}\beta t$. Therefore,
\begin{align*}
&\prob_{(-(1-\theta)\beta/8, w)}\left(\left(\sigma_{2k+1} - \sigma_{2k}) \le (1-\theta)/16\right), \ \mathcal{N}^* \ge k\right)\\
&\le \prob\Big(\inf_{t \le \frac{(1-\theta)}{16}}(\sqrt{2}W(t) - \beta t)\le -(1-\theta)\beta/8\Big) +  \prob\Big(\sup_{t \le \frac{(1-\theta)}{16}}\Big(\sqrt{2}W(t) - \frac{1- \theta}{4}\beta t\Big) \ge (1-\theta)\beta/8\Big)\\
&\le \prob\Big(\inf_{t \le (1-\theta)/16}\sqrt{2}W(t) \le -(1-\theta)\beta/16\Big) +  \prob\Big(\sup_{t \le (1-\theta)/16}\sqrt{2}W(t) \ge (1-\theta)\beta/8\Big)\\
&\le \frac{8\sqrt{2}}{\sqrt{2\pi}\beta \sqrt{1-\theta}} \e^{-(1-\theta)\beta^2/64} + \frac{4\sqrt{2}}{\sqrt{2\pi}\beta \sqrt{1-\theta}} \e^{-(1-\theta)\beta^2/16} \le \frac{12\sqrt{2\theta_0}}{\sqrt{2\pi}\sqrt{1-\theta_0}} \e^{-(1-\theta_0)\beta^2/64},
\end{align*}
where the last inequality follows from $\beta \ge \theta_0^{-1/2}$. Hence, for any $n \ge 0$ (whose value will be appropriately chosen later),
\begin{eq}\label{tauless}
 &\sup_{w \in \left[0, \ z+\frac{(1-\theta)y}{2A}\right]}\prob_{(-(1-\theta)\beta/8, w)}\left(\tau_1(0) \le \frac{n(1-\theta)}{16}\right) \\
& \le \sup_{w \in \left[0, \ z+\frac{(1-\theta)y}{2A}\right]}\prob_{(-(1-\theta)\beta/8, w)}\left(\sum_{k=0}^{n}\left(\sigma_{2k+1} - \sigma_{2k}\right) \le \frac{n(1-\theta)}{16}, \ \mathcal{N}^* \ge n\right)\\
&\hspace{7cm}+\sup_{w \in \left[0, \ z+\frac{(1-\theta)y}{2A}\right]}\prob_{(-(1-\theta)\beta/8, w)}(\mathcal{N}^* \le n)\\
 &\le (n+1) \e^{-(1-\theta_0)^2\beta^2/32} + (n+1) \frac{12\sqrt{2\theta_0}}{\sqrt{2\pi}\sqrt{1-\theta_0}} \e^{-(1-\theta_0)\beta^2/64}.
\end{eq}
If $Q_2(0) \le  z+\frac{(1-\theta)y}{2A}$, then as $Q_1(t) <0$ for all $t < \tau_1(0)$,
$$
Q_2(\tau_1(0)) \le \left(z+\frac{(1-\theta)y}{2A}\right)\e^{-\tau_1(0)} \le \left(\theta \beta +\frac{(1-\theta)y}{2A}\right)\e^{-\tau_1(0)} \le \left(1 +\frac{(1-\theta)}{2A}\right)y\e^{-\tau_1(0)}.
$$
Thus, for $\tau_1(0) \le \tau_2\left(\beta^{-1}\right)$ to hold, we must have $\left(1 +\frac{(1-\theta)}{2A}\right)y\e^{-\tau_1(0)} \ge \beta^{-1}$ or equivalently,
$$
\tau_1(0) \le \log \left[\left(1 +\frac{(1-\theta)}{2A}\right) \beta y\right].
$$
This observation, combined with \eqref{tauless}, taking $n$ to be the greatest integer greater than or equal to $\frac{16}{(1-\theta)}\log\left[\left(1 +\frac{(1-\theta)}{2A}\right) \beta y\right]$, yields the following estimate
\begin{align*}
&\sup_{w \in \left[0, \ z+\frac{(1-\theta)y}{2A}\right]}\prob_{(-(1-\theta)\beta/8, w)}\left(\tau_1(0) \le \tau_2(\beta^{-1})\right)\\
&\le \sup_{w \in \left[0, \ z+\frac{(1-\theta)y}{2A}\right]}\prob_{(-(1-\theta)\beta/8, w)}\left(\tau_1(0) \le \log \left[\left(1 +\frac{(1-\theta)}{2A}\right) \beta y\right]\right)\\
&\le \left(\frac{16}{(1-\theta)}\log\left[\left(1 +\frac{(1-\theta)}{2A}\right) \beta y\right] +2\right)\left(1 + \frac{12\sqrt{2\theta_0}}{\sqrt{2\pi}\sqrt{1-\theta_0}}\right)\e^{-(1-\theta_0)^2\beta^2/64}.
\end{align*}
This, by \eqref{SM}, yields
\begin{multline}\label{second}
\prob_{(0,z)}\left(\tau_2\left(z+\frac{(1-\theta)y}{2A}\right) > \tau_1\left(-(1-\theta)\beta/8\right), \sigma \le \tau_2\left(\beta^{-1}\right)\right)\\
\le \left(\frac{16}{(1-\theta)}\log\left[\left(1 +\frac{(1-\theta)}{2A}\right) \beta y\right]+2\right)\left(1 + \frac{12\sqrt{2\theta_0}}{\sqrt{2\pi}\sqrt{1-\theta_0}}\right)\e^{-(1-\theta_0)^2\beta^2/64}.
\end{multline}
Using the estimates \eqref{first} and \eqref{second} in \eqref{Q2twopart} and noting $y \le A\beta$, we finally obtain
\begin{align*}
&\prob_{(0,z)}\left(\tau_2(z+y) \le \tau_2\left(\beta^{-1}\right) \right)\\
&\le \e^{-\frac{7(1-\theta_0)\beta y}{16A}} +  \left(\frac{16}{(1-\theta)}\log\left[\left(1 +\frac{(1-\theta)}{2A}\right) \beta y\right]+2\right)\left(1 + \frac{12\sqrt{2\theta_0}}{\sqrt{2\pi}\sqrt{1-\theta_0}}\right)\e^{-(1-\theta_0)^2\beta^2/64}\\
&\le \e^{-\frac{7(1-\theta_0)\beta y}{16A}} +  \left(\frac{16}{(1-\theta)}\log\left[\left(1 +\frac{(1-\theta)}{2A}\right) \beta y\right]+2\right)\left(1 + \frac{12\sqrt{2\theta_0}}{\sqrt{2\pi}\sqrt{1-\theta_0}}\right)\e^{-\frac{(1-\theta_0)^2}{A}\beta y/64}\\
&\le \e^{-\frac{7(1-\theta_0)\beta y}{16A}} +  \left(\frac{16}{(1-\theta_0)}\log\left[\left(1 +\frac{1}{2A}\right) \beta y\right]+2\right)\left(1 + \frac{12\sqrt{2\theta_0}}{\sqrt{2\pi}\sqrt{1-\theta_0}}\right)\e^{-\frac{(1-\theta_0)^2}{A}\beta y/64},
\end{align*}
which proves the lemma.
\end{proof}
The following corollary to Lemma~\ref{Q2lebeta} records the tail bound of $Q_2$ in the region $\beta^{-1} \le y \le A \beta$, by taking $\theta_0 = \frac{1}{2}$, $\beta \ge 2$ and $\theta = 2\beta^{-2}$.
\begin{corollary}\label{ls}
Fix any $A>\frac{1}{2}$. Then there exist constants $C_1, C_2>0$ (depending only on $A$) such that for all $\beta \ge 2$,
\begin{equation*}
\prob_{(0,2\beta^{-1})}\left(\tau_2(2\beta^{-1}+y) \le \tau_2\left(\beta^{-1}\right) \right) \le C_1 \e^{-C_2 \beta y}, \  y \in [2\beta^{-1}, A\beta].
\end{equation*}
\end{corollary}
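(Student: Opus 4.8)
The plan is to read off Corollary~\ref{ls} as an immediate specialization of Lemma~\ref{Q2lebeta}. Fix $A > 1/2$ and apply Lemma~\ref{Q2lebeta} with the parameter choices $\theta_0 = 1/2$ and $\theta = 2\beta^{-2}$. Since $\theta_0$ is now a fixed numerical constant and $A > 1/2 = \max\{\theta_0, 1/2\}$, Lemma~\ref{Q2lebeta} supplies constants $C_1, C_2 > 0$ that depend only on $A$.

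First I would check that these choices meet the hypotheses of Lemma~\ref{Q2lebeta} for every $\beta \ge 2$. We have $\theta = 2\beta^{-2} \ge \beta^{-2}$, and $\theta = 2\beta^{-2} \le 1/2 = \theta_0$ precisely because $\beta^2 \ge 4$; thus $\theta \in [\beta^{-2}, \theta_0]$. Also $\beta \ge 2 > \sqrt{2} = \theta_0^{-1/2}$. With this $\theta$ one has $\theta\beta = 2\beta^{-1}$, so the starting point $z = 2\beta^{-1}$ coincides with the right endpoint of the (closed) interval $[\beta^{-1}, \theta\beta]$ appearing in the supremum of Lemma~\ref{Q2lebeta}, and is therefore admissible. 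Finally, the $y$-range in Lemma~\ref{Q2lebeta}, namely $y \in [\theta\beta, A\beta] = [2\beta^{-1}, A\beta]$, is exactly the range stated in the corollary, and $z + y = 2\beta^{-1} + y$ matches the quantity inside $\tau_2(\cdot)$ there.

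Putting these together, Lemma~\ref{Q2lebeta} yields $\prob_{(0,2\beta^{-1})}\left(\tau_2(2\beta^{-1}+y) \le \tau_2(\beta^{-1})\right) \le C_1 \e^{-C_2\beta y}$ for all $\beta \ge 2$ and all $y \in [2\beta^{-1}, A\beta]$, which is precisely the assertion of the corollary. There is no genuine difficulty here: the argument is pure bookkeeping of the parameter constraints, and the one point deserving a second glance is simply that $z = 2\beta^{-1}$ sits at the boundary $z = \theta\beta$, so one records that the interval over which the supremum in Lemma~\ref{Q2lebeta} runs is closed on the right.
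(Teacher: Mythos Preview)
Your proposal is correct and follows exactly the paper's approach: the paper simply records that the corollary follows from Lemma~\ref{Q2lebeta} by taking $\theta_0 = 1/2$, $\beta \ge 2$, and $\theta = 2\beta^{-2}$, and you have carefully verified that all the hypotheses of that lemma are satisfied under these choices. There is nothing to add.
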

The next three lemmas "patch up" the different behaviors in the regions $\{-Q_1 + Q_2 < \beta\}$ and $\{-Q_1 + Q_2 > \beta\}$ to extend Lemma \ref{Q2gebeta2} to the region $Q_2 \in [\beta^{-1}, \infty)$ for large $\beta$. 
To achieve this, we will show that for sufficiently small $\epsilon>0$, starting from $(Q_1(0), Q_2(0))=(0, (1+\epsilon)\beta)$, the probability that $Q_2$ hits the level $2(1+\epsilon)\beta$ before $\beta^{-1}$ is bounded above by $3/4$ for sufficiently large $\beta$.
\begin{lemma}\label{down1}
Take any $\epsilon>0$ satisfying $(1+ 2\epsilon) \e^{-\frac{1}{4(1+2\epsilon)}} < 1$ and any $\psi \in ((1+ 2\epsilon) \e^{-\frac{1}{4(1+2\epsilon)}}, 1)$. Then there exists $\beta_0^* \ge 1$ depending only on $\epsilon$ such that for all $\beta \ge \beta^*_0$,
\begin{equation*}
\prob_{(0, (1+\epsilon)\beta)}\left(\tau_2((1+2\epsilon)\beta) < \tau_2(\psi \beta)\right) \le \frac{1}{2}.
\end{equation*}
\end{lemma}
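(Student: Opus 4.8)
\emph{Setup and the two driving estimates.} Put $\tau^{*}:=\tau_2((1+2\epsilon)\beta)\wedge\tau_2(\psi\beta)$, fix a small constant $\eta>0$, and write $S:=Q_1+Q_2$, so that $S(t)=(1+\epsilon)\beta+\sqrt{2}W(t)-\beta t+\int_0^t(-Q_1(s))\,\dif s$ and $Q_2=S+|Q_1|$. The proof rests on two elementary comparisons. (i) On any time interval during which $Q_1\in[-\eta,0]$ one has $\dif S\le\sqrt{2}\,\dif W-(\beta-\eta)\,\dif t$; hence, if such an interval starts at a value $Q_2=w$, then $Q_2$ stays below $w+\eta$ plus a Brownian motion with drift $-(\beta-\eta)$, so the probability that $Q_2$ reaches $(1+2\epsilon)\beta$ before $Q_1$ leaves $[-\eta,0]$ is at most $\exp\!\big(-(\beta-\eta)[(1+2\epsilon)\beta-w-\eta]\big)$, which is $\le\e^{-c\beta^{2}}$ ($c>0$) as soon as $w\le(1+2\epsilon)\beta-\eta$. (ii) On any interval during which $Q_1<0$ the local time $L$ does not grow, so $\dot Q_2=-Q_2$ and $Q_2$ decays exactly exponentially; since the hypothesis $\psi>(1+2\epsilon)\e^{-1/(4(1+2\epsilon))}$ gives $\log\!\big((1+2\epsilon)/\psi\big)<\tfrac1{4(1+2\epsilon)}$, an excursion of $Q_1$ below $0$ of length at least $\log((1+2\epsilon)/\psi)$ drives $Q_2$, from any level $<(1+2\epsilon)\beta$, down to $\psi\beta$ monotonically, i.e.\ it forces $\tau_2(\psi\beta)<\tau_2((1+2\epsilon)\beta)$.

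\emph{Excursion decomposition.} I would then decompose $[0,\tau^{*}]$ into successive cycles, each an ``attached phase'' on which $Q_1\in[-\eta,0]$ (terminated when $Q_1$ hits $-\eta$) followed by a ``detached excursion'' on which $Q_1<0$ (terminated when $Q_1$ returns to $0$), applying the strong Markov property at the cycle endpoints, where $Q_1=0$ and $Q_2\in[\psi\beta,(1+2\epsilon)\beta]$. Within a cycle begun at $(0,w)$: by (i) the attached phase reaches $(1+2\epsilon)\beta$ with probability $\le\e^{-c\beta^{2}}$ (granted $w$ is bounded away from $(1+2\epsilon)\beta$, which holds inductively), and otherwise the detached excursion begins; by (ii), if that excursion lasts at least $\log((1+2\epsilon)/\psi)$ the game ends favorably, and otherwise $Q_1$ returns to $0$ with $Q_2$ having only decreased. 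Because the ``attached'' increments of $Q_2$ are $O(\eta)$ (by the $S$-comparison) while its ``detached'' decrements are multiplicative, $Q_2$ is pushed strictly downward over a cycle, so with probability $1-\e^{-c\beta^{2}}$ only $O(1)$ cycles occur before $Q_2$ falls to $\psi\beta$; summing the per-cycle failure probabilities $\le\e^{-c\beta^{2}}$ over these $O(1)$ cycles gives $\prob_{(0,(1+\epsilon)\beta)}(\tau_2((1+2\epsilon)\beta)<\tau_2(\psi\beta))\le\e^{-c'\beta^{2}}\le\tfrac12$ for all $\beta\ge\beta^{*}_0$.

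\emph{The main obstacle and an alternative.} The genuine difficulty is to make the cycle bookkeeping uniform in $\beta$: ruling out a slow upward creep of $Q_2$ accumulated over many short cycles, and bounding the number of cycles. When $Q_2$ is appreciably above $\beta$ the drift of $Q_1$ pushes it rapidly back to $0$, so detached excursions are too short for the bound in (ii) to apply with a uniform constant; there one must instead use that, while $Q_2>\beta$, the sandwich for $S$ together with a bound on $|Q_1|$ from the excursion structure makes $Q_2$ behave like a Brownian motion with drift $-\beta$, which drives $Q_2$ below $\beta$ within $O(1)$ time (never reaching $(1+2\epsilon)\beta$ except on an $\e^{-c\beta^{2}}$ event), and only afterwards—when $Q_2<\beta$—does mechanism (ii) take over. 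A cleaner route to a first-order estimate is optional stopping for the supermartingale $2Q_1+Q_2$, whose drift $-(2\beta-Q_2-2|Q_1|)$ is $\le-(1-2\epsilon)\beta+2\eta<0$ on the relevant region: stopping at $\tau^{*}\wedge\inf\{t:|Q_1(t)|=\eta\}$ yields $\prob(\tau_2((1+2\epsilon)\beta)<\tau_2(\psi\beta))\le\frac{1+\epsilon-\psi}{1+2\epsilon-\psi}+O(1/\beta)$; since this is just above $\tfrac12$, one still has to exploit the extra strict negativity of the drift once $Q_2\le\beta$ (where it is $\le-(\beta-2\eta)$)—which is precisely where $\psi>(1+2\epsilon)\e^{-1/(4(1+2\epsilon))}$ enters—to bring the bound below $\tfrac12$.
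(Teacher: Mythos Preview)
Your proposal identifies the right ingredients—the $S$-comparison for the ``attached'' phase and the exponential decay of $Q_2$ during the ``detached'' phase—but the excursion scheme built on a fixed shallow level $-\eta$ has a genuine gap, which you yourself flag as the ``main obstacle'' without resolving. When $Q_2\in[\psi\beta,(1+2\epsilon)\beta]$, the drift of $Q_1$ near zero is $-\beta-Q_1+Q_2\in[(\psi-1)\beta,\,(1+2\epsilon)\beta]$, so once $Q_1$ reaches $-\eta$ the upward pull is of order $\beta$ and the return time to $0$ is $O(\eta/\beta)\to 0$. Thus your detached excursions are far too short to produce the decay factor $\psi/(1+2\epsilon)$, and you cannot guarantee $O(1)$ cycles: the number of cycles before $Q_2$ moves appreciably is itself $\Theta(\beta)$ or worse, so summing per-cycle failure probabilities of order $\e^{-c\beta^2}$ over that many cycles does not obviously close. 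Your proposed patch (first push $Q_2$ below $\beta$ via a BM-with-drift argument, then switch to mechanism (ii)) is not fleshed out, and the optional-stopping alternative you sketch only yields $\tfrac{1+\epsilon-\psi}{1+2\epsilon-\psi}>\tfrac12$, as you note.

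The paper avoids all of this with a single deep excursion at level $-\beta/2$ rather than many shallow ones at $-\eta$. Concretely: (a) for $t\le\tau_1(-\beta/2)$ one has $S(t)\le(1+\epsilon)\beta+\sqrt{2}W(t)-\tfrac{\beta}{2}t$, so $Q_2$ reaches $(1+2\epsilon)\beta$ before $Q_1$ hits $-\beta/2$ with probability $\le\e^{-\epsilon\beta^2/2}$; (b) once at $-\beta/2$, bounding $Q_1$ above by $-\beta/2+\sqrt{2}W+(1+2\epsilon)\beta t$ and below by $-\beta/2+\sqrt{2}W-\beta t$ shows that $Q_1$ stays in $(-\beta,0)$ for at least time $\tfrac{1}{4(1+2\epsilon)}$ except on an event of probability $\le C\e^{-\beta^2/16}$; (c) on that good event $Q_2$ decays from at most $(1+2\epsilon)\beta$ by a factor $\e^{-1/(4(1+2\epsilon))}$, landing below $\psi\beta$ precisely by the hypothesis on $\psi$. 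No iteration is needed. The crucial point you missed is the scale of the excursion depth: choosing it as $\Theta(\beta)$ rather than $O(1)$ makes the return time $\Theta(1)$ uniformly in $\beta$, exactly matching the time required for the exponential decay of $Q_2$.
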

\begin{proof}
Take any $\beta \ge 1$. For $t \le \tau_1(-\beta/2)$, the sum $S(t) = Q_1(t) + Q_2(t)$ is bounded above as
$$
S(t) \le (1 + \epsilon)\beta + \sqrt{2}W(t) - \beta t/2.
$$
For $Q_2$ to hit the level $(1+ 2\epsilon)\beta$, the sum $S$ should also hit the same level as times of increase of $Q_2$ correspond to precisely times when $Q_1=0$. Thus,
\begin{multline*}
\prob_{(0, (1+\epsilon)\beta)}\left(\tau_2((1+ 2\epsilon)\beta) < \tau_1(-\beta/2)\right) \le \prob\left(\sup_{t < \infty}\left(\sqrt{2}W(t) - \beta t/2\right) \ge \epsilon \beta \right) = \e^{-\beta^2\epsilon/2}.
\end{multline*}
Consider the event $\{\tau_1(-\beta/2) \le \tau_2((1+ 2\epsilon)\beta)\}$. Define the stopping time
$$
\sigma^* = \inf\{ t \ge \tau_1(-\beta/2) : Q_1(t) = 0 \text{ or } Q_1(t)=-\beta\}.
$$
Under the event $\{\tau_1(-\beta/2) \le \tau_2((1+ 2\epsilon)\beta)\}$, for $t \in [ \tau_1(-\beta/2), \sigma^*]$,
\begin{align*}
Q_1(t) &= -\beta/2 + \sqrt{2}(W(t) - W(\tau_1(-\beta/2))) - \beta\left(t- \tau_1(-\beta/2)\right) + \int_{\tau_1(-\beta/2)}^t (-Q_1(s) + Q_2(s))ds\\
&\le -\beta/2 + \sqrt{2}(W(t) - W(\tau_1(-\beta/2))) + (1+2\epsilon)\beta\left(t- \tau_1(-\beta/2)\right),
\end{align*}
and
$$
Q_1(t) \ge -\beta/2 + \sqrt{2}(W(t) - W(\tau_1(-\beta/2))) - \beta\left(t- \tau_1(-\beta/2)\right).
$$
Therefore,
\begin{align*}
&\prob_{(0, (1+\epsilon)\beta)}\left(\sigma^*- \tau_1(-\beta/2) \le \frac{1}{4(1+2\epsilon)}, \ \tau_1(-\beta/2) \le \tau_2((1+ 2\epsilon)\beta)\right)\\
&\le \prob\Big(-\beta/2 + \sup_{t \le \frac{1}{4(1+2\epsilon)}} \left(\sqrt{2}W(t) + (1+2\epsilon)\beta t\right) \ge 0\Big) + \prob\Big(-\beta/2 + \sup_{t \le \frac{1}{4(1+2\epsilon)}} \left(\sqrt{2}W(t) -\beta t\right) \le -\beta\Big)\\
&\le \prob\Big(-\beta/2 + \sup_{t \le \frac{1}{4(1+2\epsilon)}} \sqrt{2}W(t) + \beta/4 \ge 0\Big) + \prob\Big(-\beta/2 + \inf_{t \le \frac{1}{4(1+2\epsilon)}} \sqrt{2}W(t) -\frac{\beta}{4(1+2\epsilon)} \le -\beta\Big)\\
&\le \prob\Big(\sup_{t \le \frac{1}{4(1+2\epsilon)}} \sqrt{2}W(t) \ge \beta/4\Big) + \prob\Big(\inf_{t \le \frac{1}{4(1+2\epsilon)}} \sqrt{2}W(t)\le -\beta/4\Big) \le \frac{8\sqrt{2}}{\sqrt{2\pi}\beta\sqrt{1+2\epsilon}} \e^{-\frac{(1+2\epsilon)\beta^2}{16}}\\
&\le \frac{8}{\sqrt{\pi}} \e^{-\frac{\beta^2}{16}}.
\end{align*}
On the event $\left\lbrace\sigma^*- \tau_1(-\beta/2) > \frac{1}{4(1+2\epsilon)}, \ \tau_1(-\beta/2) \le \tau_2((1+ 2\epsilon)\beta)\right\rbrace$,
$$
Q_2(\sigma^*) < (1+2\epsilon)\beta \e^{-\frac{1}{4(1+2\epsilon)}} < \psi \beta.
$$
Therefore,
\begin{align*}
\prob_{(0, (1+\epsilon)\beta)}\left(\tau_2((1+2\epsilon)\beta) < \tau_2(\psi \beta)\right)
&\le \prob_{(0, (1+\epsilon)\beta)}\left(\tau_2((1+ 2\epsilon)\beta) < \tau_1(-\beta/2)\right)\\
 &\hspace{-3cm}+ \prob_{(0, (1+\epsilon)\beta)}\left(\sigma^*- \tau_1(-\beta/2) \le \frac{1}{4(1+2\epsilon)}, \ \tau_1(-\beta/2) \le \tau_2((1+ 2\epsilon)\beta)\right)\\
 &\le \e^{-\beta^2\epsilon/2} + \frac{8}{\sqrt{\pi}} \e^{-\frac{\beta^2}{16}}.
\end{align*}
Therefore, choosing any $\beta^*_0 \ge 1$ satisfying $\e^{-(\beta^*_0)^2\epsilon/2} + \frac{8}{\sqrt{\pi}} \e^{-\frac{(\beta^*_0)^2}{16}} \le 1/2$, we obtain for all $\beta \ge \beta^*_0$,
$$
\prob_{(0, (1+\epsilon)\beta)}\left(\tau_2((1+2\epsilon)\beta) < \tau_2(\psi \beta)\right) \le \frac{1}{2},
$$
proving the lemma.
\end{proof}
\begin{lemma}\label{down2}
Take any $\epsilon>0$ satisfying $(1+ 2\epsilon) \e^{-\frac{1}{4(1+2\epsilon)}} < 1$. Then there exists $\beta^{**}_0 \ge 1$ (depending only on $\epsilon$) such that for all $\beta \ge \beta^{**}_0$,
\begin{equation*}
\prob_{(0, (1+\epsilon)\beta)}\left(\tau_2(2(1+\epsilon)\beta) < \tau_2(\beta^{-1})\right) \le \frac{3}{4}.
\end{equation*}
\end{lemma}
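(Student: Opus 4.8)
The plan is to reduce Lemma~\ref{down2} to Lemma~\ref{down1} by a single restart step. Abbreviate $b=(1+\epsilon)\beta$, $c=(1+2\epsilon)\beta$, $d=2(1+\epsilon)\beta$ and $m=\beta^{-1}$, and fix $\psi\in\big((1+2\epsilon)\e^{-1/(4(1+2\epsilon))},1\big)$ as in Lemma~\ref{down1}; for $\beta$ large one has $m<\psi\beta<b<c<d$ (since $\psi<1<1+\epsilon<1+2\epsilon<2+2\epsilon$). Starting from $(0,b)$, reaching level $d$ forces $Q_2$ to pass through $c$ first, so $\{\tau_2(d)<\tau_2(m)\}\subseteq\{\tau_2(c)<\tau_2(m)\}$, and it suffices to bound $\prob_{(0,b)}(\tau_2(c)<\tau_2(m))$. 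Decomposing at the first exit of $Q_2$ from the interval $(\psi\beta,c)$ and applying the strong Markov property,
\[
\prob_{(0,b)}\big(\tau_2(c)<\tau_2(m)\big)\;\le\;\prob_{(0,b)}\big(\tau_2(c)<\tau_2(\psi\beta)\big)\;+\;\sup_{x\le 0}\prob_{(x,\psi\beta)}\big(\tau_2(c)<\tau_2(m)\big).
\]
By Lemma~\ref{down1} the first term is at most $1/2$ for $\beta\ge\beta_0^*$. Hence the lemma follows once we show that $q:=\sup_{x\le 0}\prob_{(x,\psi\beta)}\big(\tau_2((1+2\epsilon)\beta)<\tau_2(\beta^{-1})\big)\le \tfrac14$ for all $\beta$ large (in fact $q\to 0$ as $\beta\to\infty$).

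To estimate $q$ one exploits that on $\{-Q_1+Q_2<\beta\}$ the drift $-\beta+Q_2-Q_1$ of $Q_1$ points towards $Q_2-\beta<0$, so $Q_2$ decays, and that since $Q_1\le 0$ always while $Q_2$ attains a new running maximum only at instants where $Q_1=0$ (and then $Q_2=S:=Q_1+Q_2$), one has $\sup_{s\le t}Q_2(s)\le\max(Q_2(0),\sup_{s\le t}S(s))$. The argument then runs: (i) while $Q_1\ge -\beta/4$ one has $S(t)\le \psi\beta+\sqrt2\,W(t)-\tfrac34\beta t$, so $Q_2$ reaches $c$ before $Q_1$ reaches $-\beta/4$ only with probability $\e^{-\frac34(1+2\epsilon-\psi)\beta^2}=\e^{-\Omega(\beta^2)}$; if instead $Q_1$ reaches $-\beta/4$ first with $Q_2\le\beta/2$, then by Lemma~\ref{OUhit} $Q_1$ stays strictly negative for time at least $\mathcal C_1^-\e^{\mathcal C_2^-\beta^2}$ with probability $1-\mathcal D_1\e^{-\mathcal D_2\beta^2}$, during which $Q_2$ decays below $\beta^{-1}$. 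Iterating over such excursions shows that, up to an event of probability $\e^{-\Omega(\beta^2)}$, $Q_2$ either first drops to $\beta^{-1}$ or first reaches a state $(0,z)$ with $z\le\beta/2$. (ii) From such a state, Lemma~\ref{Q2lebeta} (applied with $\theta_0=\theta=1/2$, $A=2$; legitimate since $\beta\ge 2$, $z\le\tfrac12\beta=\theta\beta$, and $y:=c-z\in[\tfrac12\beta,2\beta]$) gives $\prob_{(0,z)}(\tau_2(c)<\tau_2(\beta^{-1}))\le\prob_{(0,z)}(\tau_2(z+y)\le\tau_2(\beta^{-1}))\le C_1\e^{-C_2\beta(c-z)}\le C_1\e^{-C_2\beta^2/2}$. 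Combining (i)--(ii) by a union bound yields $q\le\e^{-\Omega(\beta^2)}\le\tfrac14$ for $\beta$ large.

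The outer step above is routine; the real work is the bound on $q$, and the main obstacle there is the ``patching'' of the regimes $\{-Q_1+Q_2<\beta\}$ and $\{-Q_1+Q_2>\beta\}$ in the band $Q_2\in[\beta/2,(1+2\epsilon)\beta]$. When $Q_1$ is near $0$ the drift of $Q_1$ can be nonnegative (it equals $-\beta+Q_2-Q_1$, which is $\ge 0$ once $Q_2\ge\beta$), so a priori $Q_2$ could creep upward through a succession of short excursions each time $Q_1$ touches $0$ and slowly climb from $\psi\beta$ past $\beta$ to $(1+2\epsilon)\beta$. Ruling this out forces one to interleave three estimates with mutually restricted domains of validity --- the Brownian-with-drift comparison for $S$ (valid only while $Q_1\ge -\beta/4$), the exponential escape-time lower bound of Lemma~\ref{OUhit} (valid only once $Q_2\le\beta/2$), and the linear-drift tail bound of Lemma~\ref{Q2lebeta} (valid only once $Q_2$ is a bounded fraction of $\beta$ with $Q_1=0$) --- and to verify that their regions of applicability overlap so that the excursion analysis closes. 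This is precisely the region-by-region bookkeeping anticipated in Remark~\ref{rem:lem4.6}, and it is the main technical burden of the proof.
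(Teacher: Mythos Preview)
Your outer decomposition at level $\psi\beta$ matches the paper's, but the paper's treatment of the restart piece is a one-line application of Lemma~\ref{Q2lebeta} rather than the excursion analysis you sketch. Concretely, the paper applies the strong Markov property at $\sigma_\psi:=\inf\{t>\tau_2(\psi\beta):Q_2(t)\ge\psi\beta\}$ (not at $\tau_2(\psi\beta)$); since $Q_2$ can cross $\psi\beta$ upward only when $Q_1=0$, this forces the restart point to be exactly $(0,\psi\beta)$ and eliminates your $\sup_{x\le 0}$. From there, Lemma~\ref{Q2lebeta} with the choices $\theta_0=\theta=\psi$, $A=2(1+\epsilon)$, $z=\psi\beta$, $y=2(1+\epsilon)\beta-\psi\beta$ gives
\[
\prob_{(0,\psi\beta)}\bigl(\tau_2(2(1+\epsilon)\beta)\le\tau_2(\beta^{-1})\bigr)\le C_1\e^{-C_2\beta^2}\le\tfrac14
\]
for all large $\beta$, and the lemma follows immediately by adding the $\tfrac12$ from Lemma~\ref{down1}.

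Your alternative route to bounding $q$ is more involved and, as written, has a gap. In step~(i) you invoke Lemma~\ref{OUhit} ``if $Q_1$ reaches $-\beta/4$ first with $Q_2\le\beta/2$'', but you have not established that $Q_2(\tau_1(-\beta/4))\le\beta/2$: since $\psi>(1+2\epsilon)\e^{-1/(4(1+2\epsilon))}>\e^{-1/4}>\tfrac12$, you start with $Q_2=\psi\beta>\beta/2$, and on the good event the only a priori control is $Q_2(\tau_1(-\beta/4))<c$. The probability lower bound in Lemma~\ref{OUhit} requires $Q_2(0)\le\beta/2$, so it cannot be applied at that moment, and the iteration you describe does not close without further work to first drive $Q_2$ from $\psi\beta$ down below $\beta/2$. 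All of this is unnecessary: Lemma~\ref{Q2lebeta} permits any $\theta_0\in(0,1)$, so taking $\theta_0=\psi$ lets you start directly at $z=\psi\beta$, which is precisely the paper's shortcut.
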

\begin{proof}
Take $\psi \in \left((1+ 2\epsilon) \e^{-\frac{1}{4(1+2\epsilon)}}, 1\right)$. By Lemma \ref{down1}, there exists $\beta^*_0 \ge 1$ depending only on $\epsilon$ such that for all $\beta \ge \beta^*_0$,
\begin{equation}\label{d21}
\prob_{(0, (1+\epsilon)\beta)}\left(\tau_2(2(1+\epsilon)\beta) < \tau_2(\psi \beta)\right) \le \prob_{(0, (1+\epsilon)\beta)}\left(\tau_2((1+2\epsilon)\beta) < \tau_2(\psi \beta)\right) \le \frac{1}{2}.
\end{equation}
Now, choosing $\theta=\theta_0=\psi$, $A = 2(1+\epsilon)$, $z=\psi \beta$ and $y= 2(1+\epsilon)\beta - \psi \beta$ in Lemma \ref{Q2lebeta}, we obtain positive constants $C_1, C_2$ depending only on $\epsilon$ and $\beta'_0 \ge 1$ such that
\begin{equation}\label{d22}
\prob_{(0, \psi \beta)} \left(\tau_2(2(1+\epsilon)\beta) \le \tau_2(\beta^{-1})\right) \le C_1 \e^{-C_2\beta(2(1+\epsilon)\beta - \psi \beta)} \le C_1 \e^{-C_2\beta^2} \le \frac{1}{4}
\end{equation}
for all $\beta \ge \beta'_0$. Define $\sigma_{\psi} = \inf\{t > \tau_2(\psi \beta): Q_2(t) \ge \psi \beta\}$. Using \eqref{d21}, \eqref{d22} and the strong Markov property at $\sigma_{\psi}$, we obtain $\beta^{**}_0 = \max\{\beta^*_0, \beta'_0, \psi^{-1/2}\}$ such that for all $\beta \ge \beta^{**}_0$,
\begin{align*}
&\prob_{(0, (1+\epsilon)\beta)}\left(\tau_2(2(1+\epsilon)\beta) < \tau_2(\beta^{-1})\right)\\
&\le \prob_{(0, (1+\epsilon)\beta)}\left(\tau_2(2(1+\epsilon)\beta) < \tau_2(\psi \beta)\right) + \prob_{(0, (1+\epsilon)\beta)}\left(\tau_2(\psi \beta) \le \sigma_{\psi} \le \tau_2(2(1+\epsilon)\beta) < \tau_2(\beta^{-1})\right)\\
&\le \prob_{(0, (1+\epsilon)\beta)}\left(\tau_2(2(1+\epsilon)\beta) < \tau_2(\psi \beta)\right) + \prob_{(0, \psi \beta)} \left(\tau_2(2(1+\epsilon)\beta) \le \tau_2(\beta^{-1})\right) \le \frac{3}{4},
\end{align*}
proving the lemma.
\end{proof}

\begin{proof}[Proof of Lemma~\ref{largebeta}]
Take any $\epsilon>0$ satisfying $(1+ 2\epsilon) \e^{-\frac{1}{4(1+2\epsilon)}} < 1$. The result of the lemma with $\beta_0=2$ when $y \in [4\beta^{-1}, 4(1 + \epsilon)\beta]$ is directly implied by Corollary \ref{ls} taking $A = (6+4\epsilon)$. This, along with the strong Markov property, shows that it suffices to prove 
\begin{equation*}
\prob_{(0,(1 + \epsilon)\beta)}\left(\tau_2(y) < \tau_2(\beta^{-1})\right) \le C_L\e^{-C'_L \beta y}
\end{equation*}
for $y \in (4(1 + \epsilon)\beta, \infty)$. Therefore, we assume the starting configuration to be $Q_1(0)=0$ and $Q_2(0)=(1 + \epsilon)\beta$. For any $y \in (4(1 + \epsilon)\beta, \infty)$, define the following stopping times: $\phi_0=0$ and for $k \ge 0$,
\begin{align*}
\phi_{2k+1} &= \inf\{t \ge \phi_{2k}: Q_2(t)=2(1+\epsilon)\beta \text{ or } Q_2(t)=\beta^{-1}\}\\
\phi_{2k+2} &= \inf\{t \ge \phi_{2k+1}: Q_2(t)=(1+\epsilon)\beta \text{ or } Q_2(t)=\beta^{-1}\}.
\end{align*}
Let $\mathcal{N}^{L}= \inf\{k \ge 1: Q_2(\phi_{2k+1}) = \beta^{-1}\}$. By Lemma \ref{down2} and the strong Markov property, for any $\beta \ge \beta^{**}_0$ and any $k \ge 1$,
\begin{equation}\label{nl}
\prob_{(0,(1+\epsilon)\beta)}\left(\mathcal{N}^{L} \ge k\right) \le \left(\frac{3}{4}\right)^k.
\end{equation}
Therefore, for any $\beta \ge \max\left\lbrace \beta_0, \beta^{**}_0\right\rbrace$ (where $\beta_0$ and $\beta^{**}_0$ appear in Lemma~\ref{ubQ2} and Lemma \ref{down2} respectively) and any $y > 4(1 + \epsilon)\beta$,
\begin{align*}
&\prob_{(0, (1+\epsilon)\beta)}\left(\tau_2(y) < \tau_2(\beta^{-1})\right) = \prob_{(0, (1+\epsilon)\beta)}\Big(\sup_{0 \le t \le \phi_{2\mathcal{N}^{L}}}Q_2(t) > y\Big)\\
&\le \sum_{k=1}^{\infty}\prob_{(0, (1+\epsilon)\beta)}\Big(\sup_{\phi_{2k-1} \le t \le \phi_{2k}}Q_2(t) > y, \mathcal{N}^{L} \ge k\Big)\\
&\le \sum_{k=1}^{\infty}\mathbb{E}_{(0, (1+\epsilon)\beta)}\mathbb{I}\left(\mathcal{N}^{L} \ge k\right)\prob_{(0, 2(1+\epsilon)\beta)}\left(\tau_2(y) < \tau_2((1+\epsilon)\beta)\right) \\
&\le  \sum_{k=1}^{\infty}\prob_{(0, (1+\epsilon)\beta)}\left(\mathcal{N}^{L} \ge k\right)C^*_1 \e^{-C^*_2 \beta y} \ \ (\text{by Lemma~\ref{ubQ2}})\\
&\le \sum_{k=1}^{\infty}\left(\frac{3}{4}\right)^kC^*_1 \e^{-C^*_2 \beta y} \ \ (\text{by \eqref{nl}})
= 4C^*_1 \e^{-C^*_2 \beta y},
\end{align*}
where the second inequality uses the strong Markov property.
This completes the proof of the lemma.
\end{proof}

\section{Proofs of hitting time estimates in the small-$\beta$ regime}\label{app:small-aux}
In this Appendix, we will assume $\beta$ to be sufficiently small in many calculations, often without explicitly mentioning it.
\begin{proof}[Proof of Lemma~\ref{qoneexc-i}]
Assume $(Q_1(0), Q_2(0))=(0,y)$ for any fixed $0<y \le \beta^{-1/2}$. Define the following stopping times: $\Delta_0 = 0$ and for $k \ge 0$,
$$
\Delta_{2k+1} = \inf\{t \ge \Delta_{2k}: Q_1(t) = -\beta^{1/4}\}, \ \ \Delta_{2k+2} = \inf\{t \ge \Delta_{2k+1}: Q_1(t) = -\beta\}.
$$
Define $\mathcal{N}_t = \inf\{k \ge 1 : \Delta_{k} \ge t\}$. Then for any $x \ge 2\beta^{1/4}$,
\begin{eq}\label{qoneexc1}
&\mathbb{E}_{(0,y)}\Big(\int_0^{t}\mathbf{1}_{[Q_1(s) \le -x]}ds\Big) = \sum_{k=0}^{\infty}\mathbb{E}_{(0,y)}\Big(\Big(\int_{\Delta_{2k+1}}^{\Delta_{2k+2}}\mathbf{1}_{[Q_1(s) \le -x]}ds\Big)\mathbf{1}_{[\mathcal{N}_t \ge 2k+2]}\Big)\\
&\le \sum_{k=0}^{\infty}\prob_{(0,y)}(\mathcal{N}_t \ge 2k+2)\sup_{z \le 2\beta^{-1/2}}\mathbb{E}_{(-\beta^{1/4},z)}\Big(\int_{0}^{\tau_1(-\beta)}\mathbf{1}_{[Q_1(s) \le -x]}ds\Big)\\
&\le \mathbf{E}_{(0,y)}(\mathcal{N}_t)\sup_{z \le 2\beta^{-1/2}}\mathbb{E}_{(-\beta^{1/4},z)}\Big(\int_{0}^{\tau_1(-\beta)}\mathbf{1}_{[Q_1(s) \le -x]}ds\Big).
\end{eq}
Now, we use the fact that with starting configuration $(Q_1(0), Q_2(0))= (-\beta^{1/4},z)$, for all $t \le \tau_1(0)$, $Q_1(t) + \beta$ is stochastically bounded below by an Ornstein-Uhlenbeck process $\hat{X}_t$ with $\hat{X}_0 = -\beta^{1/4} + \beta$. Denote by $\hat{\mathbb{P}}_u$ and $\hat{\mathbb{E}}_u$ the probability and expectation under the law of an Ornstein-Uhlenbeck process starting from $u$ and $\hat{\tau}(v)$ the hitting time of level $v$ by $\hat{X}$. Using this, we obtain for any $x \ge 2\beta^{1/4}$,
\begin{eq}\label{qoneexc2}
\mathbb{E}_{(-\beta^{1/4},z)}\Big(\int_{0}^{\tau_1(-\beta)}\mathbf{1}_{[Q_1(s) \le -x]}ds\Big) &\le \hat{\mathbb{E}}_{-\beta^{1/4} + \beta}\Big(\int_{0}^{\hat{\tau}(0)}\mathbf{1}_{[\hat{X}(s) \le -x+\beta]}ds\Big)\\
&\le \hat{\mathbb{E}}_{-\beta^{1/4} + \beta}\left(\mathbf{1}(\hat{\tau}(-x + \beta) < \hat{\tau}(0))(\hat{\tau}(0) - \hat{\tau}(-x + \beta))\right)\\
&= \hat{\mathbb{P}}_{-\beta^{1/4} + \beta}\left(\hat{\tau}(-x + \beta) < \hat{\tau}(0))\right)\hat{\mathbb{E}}_{-x + \beta}(\hat{\tau}(0)).
\end{eq}
Recall that the scale function for the Ornstein-Uhlenbeck process is given by $\hat{s}(u) = \int_0^u \e^{v^2/2}dv$. Using this,
\begin{equation}\label{qoneexc3}
\hat{\mathbb{P}}_{-\beta^{1/4} + \beta}\left(\hat{\tau}(-x + \beta) < \hat{\tau}(0))\right) = \frac{\int_{0}^{\beta^{1/4} - \beta} \e^{v^2/2}dv}{\int_{0}^{x - \beta} \e^{v^2/2}dv} \le C\frac{\beta^{1/4}(x-\beta)}{\e^{(x-\beta)^2/2} - 1}.
\end{equation}
From the Doob representation of Ornstein-Uhlenbeck process, it is straightforward to check that there exists a positive constant $C$ not depending on $x$ such that
\begin{equation}\label{qoneexc4}
\hat{\mathbb{E}}_{-x + \beta}(\hat{\tau}(0)) \le C((x-\beta) \wedge \log(2+(x-\beta)^2)).
\end{equation}
Using \eqref{qoneexc3} and \eqref{qoneexc4} in \eqref{qoneexc2}, we obtain for any $x \ge 2\beta^{1/4}$,
\begin{equation}\label{qoneexc5}
\sup_{z \le 2\beta^{-1/2}}\mathbb{E}_{(-\beta^{1/4},z)}\Big(\int_{0}^{\tau_1(0)}\mathbf{1}_{[Q_1(s) \le -x]}ds\Big) \le \frac{C\beta^{1/4}((x-\beta)^2 \wedge (x-\beta)\log(2+(x-\beta)^2))}{\e^{(x-\beta)^2/2} - 1}.
\end{equation}
Next, we produce an estimate on $\mathbf{E}_{(0,y)}(\mathcal{N}_t)$. Note that for each $k \ge 0$, $\Delta_{2k+1} - \Delta_{2k}$ is stochastically bounded below by the hitting time of level $-\beta^{1/4}$ by the process $t \mapsto -\beta + (\sqrt{2}W(t) - \beta t) - \sup_{s \le t}(\sqrt{2}W(s) - \beta s)$. Denote the hitting time of level $-1$ by the process $t \mapsto -\frac{1}{2} + (\sqrt{2}W(t) - \beta^{5/4} t) - \sup_{s \le t}(\sqrt{2}W(s) - \beta^{5/4} s)$ by $\hat{\Delta}$. By Brownian scaling, choosing sufficiently small $\beta_0$ and $\beta \le \beta_0$, $\Delta_{2k+1} - \Delta_{2k}$ is stochastically bounded below by $\beta^{1/2}\hat{\Delta}$. As $\beta \le 1$, $\hat{\Delta}$, in turn, is stochastically bounded below by the hitting time of level $-1$ by the process $t \mapsto -\frac{1}{2} + (\sqrt{2}W(t) - t) - \sup_{s \le t}(\sqrt{2}W(s) - s)$, which we denote by $\hat \Delta^*$. Let $\{\hat\Delta^*_k\}_{k \ge 0}$ be i.i.d. copies of $\hat \Delta^*$. It is easy to check that $\hat \Delta^*$ is a sub-exponential random variable and thus, using Chernoff's inequality (see \cite[Pg.~16, Equation (2.2)]{Massart07}), we obtain for any $n \ge 2\beta^{-1/2}t/(\mathbb{E}(\hat \Delta^*))$,
\begin{multline*}
\prob_{(0,y)}(\mathcal{N}_t \ge 2n+1) \le \prob_{(0,y)}\Big(\sum_{k=0}^{n}(\Delta_{2k+1}-\Delta_{2k}) < t\Big)
\le \prob_{(0,y)}\Big(\sum_{k=0}^{n}\hat \Delta^*_k < \beta^{-1/2}t\Big)\\
 \le \prob_{(0,y)}\Big(\sum_{k=0}^{n}(\hat \Delta^*_k - \mathbb{E}(\hat \Delta^*_k)) < \beta^{-1/2}t - n\mathbb{E}(\hat \Delta^*_k)\Big) \le C\e^{-C'n}.
\end{multline*}
From this, we immediately obtain for $t \ge \mathbb{E}(\hat \Delta^*)\beta^{1/2}$,
\begin{equation}\label{qoneexc6}
\mathbf{E}_{(0,y)}(\mathcal{N}_t) \le C\beta^{-1/2}t.
\end{equation}
Plugging in the estimates \eqref{qoneexc5} and \eqref{qoneexc6} in \eqref{qoneexc1},
\begin{equation}\label{qonemain1}
\mathbb{E}_{(0,y)}\Big(\int_0^{t}\mathbf{1}_{[Q_1(s) \le -x]}ds\Big) \le C\frac{\beta^{-1/4}t((x-\beta)^2 \wedge (x-\beta)\log(2+(x-\beta)^2))}{\e^{(x-\beta)^2/2} - 1},
\end{equation}
where $C$ does not depend on $\beta, t, y$. Now, observe that for any $n \ge 1$,
\begin{eq}\label{qoneexc7}
&\mathbb{E}_{(0,y)}\Big(\int_0^{\tau_2(2\beta^{-1/2})}\mathbf{1}_{[Q_1(s) \le -x]}ds\Big)\\
& \le \mathbb{E}_{(0,y)}\Big(\int_0^{n\beta^{-1}}\mathbf{1}_{[Q_1(s) \le -x]}ds\Big)
 + \sum_{k=n+1}^{\infty}\mathbb{E}_{(0,y)}\Big(\Big(\int_0^{k\beta^{-1}}\mathbf{1}_{[Q_1(s) \le -x]}ds\Big)\mathbf{1}_{[(k-1)\beta^{-1} \le \tau_2(2\beta^{-1/2}) < k\beta^{-1}]}\Big)\\
& \le \mathbb{E}_{(0,y)}\Big(\int_0^{n\beta^{-1}}\mathbf{1}_{[Q_1(s) \le -x]}ds\Big) + \sum_{k=n+1}^{\infty}k\beta^{-1}\prob_{(0,y)}\Big(\tau_2(2\beta^{-1/2}) \ge (k-1)\beta^{-1}\Big).
\end{eq}
To estimate the probability appearing above, we define the stopping times $\mathcal{S}^*_0$ and for $k \ge 0$,
\begin{align*}
\mathcal{S}^*_{2k+1} &= \inf\{ t \ge \mathcal{S}^*_{2k}: S(t) = 2\beta^{-1/2} \text{ or } S(t) \le -\beta^{-1/2}\},\\
\mathcal{S}^*_{2k+2} &= \inf\{ t \ge \mathcal{S}^*_{2k+1}: S(t) = 2\beta^{-1/2} \text{ or } S(t) = -\beta\}.
\end{align*}
Let $N_{\mathcal{S}^*} = \inf\{k \ge 0: S(\mathcal{S}^*_{2k+1}) = 2\beta^{-1/2}\}$. Then proceeding along the same lines as the proofs of \eqref{smallr4} and \eqref{three0}, we obtain constants $p,q \in (0,1), C>0$ not depending on $\beta, t,y$ such that for any $n \ge 1$,
\begin{equation*}
\mathbb{P}_{(0,y)}(N_{\mathcal{S}^*} \ge n) \le (1-p)^n, \ \ \mathbb{P}_{(0,y)}(\mathcal{S}^*_1 \ge n\beta^{-1}) \le (1-q)^n.
\end{equation*}
To see the second bound above, note that along the lines of \eqref{three0},
\begin{align*}
\prob_{(0, y)}\left(\mathcal{S}^*_1 \ge n \beta^{-1}\right)
&\le \mathbb{E}_{(0, y)}\Big(\mathbf{1}_{[\mathcal{S}^*_1 \ge (n-1)\beta^{-1}]}\prob\Big(\sup_{t \le \beta^{-1}}(\sqrt{2}W(t) - \beta t) < 3\beta^{-1/2} \Big) \Big)\\
&\le \prob_{(0, y)}\left(\mathcal{S}^*_1 \ge (n-1) \beta^{-1}\right)\prob\Big(\sup_{t \le \beta^{-1}}(\sqrt{2}W(t)) < 3\beta^{-1/2} + 1\Big).
\end{align*}
Moreover, using the Doob representation for the Ornstein-Uhlenbeck process which can be used to bound $Q_1 + \beta$ from below, it is straightforward to show that for $n \ge 1$,
\begin{equation*}
\prob_{(0,y)}\left(\mathcal{S}^*_2 - \mathcal{S}^*_1 \ge n\beta^{-1/2}\right) \le C\beta^{-1/2}\e^{-n\beta^{-1/2}} \le C'\e^{-(n-1)\beta^{-1/2}}.
\end{equation*}
Observing that $\tau_2(2\beta^{-1/2}) = \mathcal{S}^*_{2N_{\mathcal{S}^*}+1}$ and using the above estimates, we obtain for any $k, n \ge 1$ satisfying $k \ge 2n+1$,
\begin{align*}
\prob_{(0,y)}\left(\tau_2(2\beta^{-1/2}) \ge k\beta^{-1}\right) &= \prob_{(0,y)}\left(\mathcal{S}^*_{2N_{\mathcal{S}^*}+1} \ge k\beta^{-1}\right)\\
 &\le \mathbb{P}_{(0,y)}(N_{\mathcal{S}^*} \ge n) + \sum_{i=1}^{2n+1}\prob_{(0,y)}\left(\mathcal{S}^*_{i+1} - \mathcal{S}^*_i \ge k\beta^{-1}/(2n+1)\right)\\
 &\le (1-p)^n + (2n+1)\left((1-q)^{k/(2n+1)}\right) + C'\e^{-((k/(2n+1)) - 1)\beta^{-1/2}}.
\end{align*}
Choosing any $k \ge 9$ and taking $n=(\sqrt{k}-1)/2$ in the above,
\begin{equation}\label{qonemain2}
\prob_{(0,y)}\left(\tau_2(2\beta^{-1/2}) \ge k\beta^{-1}\right) \le C\e^{-C'\sqrt{k}}.
\end{equation}
Using \eqref{qonemain1} and \eqref{qonemain2} in \eqref{qoneexc7}, we have positive constants $C, C', C"$ such that for any $y \in (0, \beta^{-1/2}), x \ge 2\beta^{1/4}$ and $n \ge 1$,
\begin{align*}
&\mathbb{E}_{(0,y)}\Big(\int_0^{\tau_2(2\beta^{-1/2})}\mathbf{1}_{[Q_1(s) \le -x]}ds\Big)\\
 &\le C\frac{n((x-\beta)^2 \wedge (x-\beta)\log(2+(x-\beta)^2))}{\beta^{5/4}(\e^{(x-\beta)^2/2} - 1)} + \sum_{k=n+1}^{\infty}k\beta^{-1}C\e^{-C'\sqrt{k}}\\
& \le C\frac{n((x-\beta)^2 \wedge (x-\beta)\log(2+(x-\beta)^2))}{\beta^{5/4}(\e^{(x-\beta)^2/2} - 1)} + C\e^{-C"\sqrt{n}} \le Cn\beta^{-5/4}\e^{-(x-\beta)^2/4} + C\e^{-C"\sqrt{n}}.
\end{align*}
Choosing $n = (1+(x-\beta)^4)/(8C")^2$ in the above, we obtain a positive constants $C$ such that for any $y \in (0, \beta^{-1/2}), x \ge 2\beta^{1/4}$,
\begin{equation}
\mathbb{E}_{(0,y)}\left(\int_0^{\tau_2(2\beta^{-1/2})}\mathbf{1}_{[Q_1(s) \le -x]}ds\right) \le C\beta^{-5/4}\e^{-(x-\beta)^2/8}
\end{equation}
completing the proof of Lemma~\ref{qoneexc-i}.
\end{proof}

\begin{proof}[Proof of Lemma~\ref{qoneexc-ii}]
Let $(Q_1(0), Q_2(0))=(0,4M_0\beta^{-1})$. Define the following stopping times: $\Delta^*_0 = 0$ and for $k \ge 0$,
\begin{align*}
\Delta^*_{2k+1} &= \inf\{t \ge \Delta^*_{2k}: Q_1(t) = -\beta \text{ or } Q_2(t) = 2M_0 \beta^{-1}\},\\
\Delta^*_{2k+2} &= \inf\{t \ge \Delta^*_{2k+1}: Q_1(t) = 0 \text{ or } Q_2(t) = 2M_0 \beta^{-1}\}.
\end{align*}
Define $\mathcal{N}^*_t = \inf\{k \ge 1 : \Delta^*_{k} \ge t \text{ or } Q_2(\Delta^*_k) = 2M_0\beta^{-1}\}$. Observe that for any $x \ge 2\beta$,
\begin{multline}\label{p21}
\sup_{y \ge 2M_0\beta^{-1}}\mathbb{E}_{(-\beta, y)}\left(\int_{0}^{\tau_1(0) \wedge \tau_2(2M_0\beta^{-1})}\mathbf{1}_{[Q_1(s) \le -x]}ds\right)\\
\le  \sup_{y \ge 2M_0\beta^{-1}}\mathbb{P}_{(-\beta,y)}(\tau_1(-x) < \tau_1(0)\wedge \tau_2(2M_0\beta^{-1})) \sup_{y \ge 2M_0\beta^{-1}}\mathbb{E}_{(-x,y)}\left(\tau_1(0) \wedge \tau_2(2M_0\beta^{-1})\right).
\end{multline}
On the time interval $[0, \tau_1(0) \wedge \tau_2(2M_0\beta^{-1})]$, $Q_1$ is stochastically bounded below by the process $t \mapsto \sqrt{2}W(t) + (2M_0 \beta^{-1} - \beta)t$. Using this and scale function arguments we obtain for $\beta \le \beta_0$ for sufficiently small $\beta_0 \in(0,1)$ and $x \ge 2\beta$,
\begin{equation*}
 \sup_{y \ge 2M_0\beta^{-1}}\mathbb{P}_{(-\beta,y)}(\tau_1(-x) < \tau_1(0)\wedge \tau_2(2M_0\beta^{-1})) \le \frac{\e^{(2M_0\beta^{-1} - \beta)\beta} - 1}{\e^{(2M_0\beta^{-1} - \beta)x} - 1} \le C\e^{-C'\beta^{-1}x},
\end{equation*}
where $C,C'$ are positive constants not depending on $\beta, x$. Denoting the hitting time of $0$ by process $t \mapsto -x + \sqrt{2}W(t) + (2M_0 \beta^{-1} - \beta)t$ by $\tau^x$,
$$
\sup_{y \ge 2M_0\beta^{-1}}\mathbb{E}_{(-x,y)}\left(\tau_1(0) \wedge \tau_2(2M_0\beta^{-1})\right) \le \mathbb{E}(\tau^x) \le C"x\beta,
$$
where $C"$ does not depend on $x,\beta$. Using these estimates in \eqref{p21}, we obtain
\begin{equation}\label{p22}
\sup_{y \ge 2M_0\beta^{-1}}\mathbb{E}_{(-\beta, y)}\left(\int_{0}^{\tau_1(0) \wedge \tau_2(2M_0\beta^{-1})}\mathbf{1}_{[Q_1(s) \le -x]}ds\right) \le Cx\beta \e^{-C'\beta^{-1}x}.
\end{equation}
Using a similar argument as that used to derive \eqref{qoneexc6} stochastically bounding $\beta^{-2}(\Delta^*_{2k+1} - \Delta^*_{2k})$ from below by sub-exponential random variables and using Chernoff's inequality, we obtain $n_0>0$ not depending on $\beta,t$ such that for any $n \ge n_0t\beta^{-2}$,
\begin{equation}\label{p23}
\mathbb{P}_{(0,4M_0\beta^{-1})}(\mathcal{N}^*_t \ge 2n+1) \le C'\e^{-C"n}.
\end{equation}
Using \eqref{p23}, and part (ii) of Lemma \ref{lem:q2regeneration} (recalling $c_1'=M_0$ and taking $y=3M_0\beta^{-1}$), we obtain $k_0>0$ such that for all $k \ge k_0$,
\begin{align*}
&\mathbb{P}_{(0,4M_0\beta^{-1})}(\mathcal{N}^*_{\tau_2(2M_0\beta^{-1})} \ge 2(n_0k\beta^{-4})+1)\\
& \le \mathbb{P}_{(0,4M_0\beta^{-1})}(\tau_2(2M_0\beta^{-1}) \ge k\beta^{-2}) + \mathbb{P}_{(0,4M_0\beta^{-1})}(\mathcal{N}^*_{k\beta^{-2}} \ge 2n_0(k\beta^{-2})\beta^{-2}+1)\\
 &\le c'_3\left(\exp(-c'_2\beta^{-2/5}(k\beta^{-2})^{1/5}) + \exp(-c'_2\beta^2 (k\beta^{-2})) + \beta^{-2}\exp(-c'_2 (k\beta^{-2}))\right) + C'\e^{-C"n_0k\beta^{-4}}
\end{align*}
From the above estimate, it follows by summing both sides over $k \ge k_0$ that
\begin{equation}\label{p24}
\mathbb{E}_{(0,4M_0\beta^{-1})}\left(\mathcal{N}^*_{\tau_2(2M_0\beta^{-1})}\right) \le C\beta^{-4}.
\end{equation}
Using \eqref{p22}, \eqref{p24} and the strong Markov property at stopping times $\Delta^*_{2k+1}$ in the upper bound
\begin{multline*}
\mathbb{E}_{(0,4M_0\beta^{-1})}\Big(\int_{0}^{\tau_2(2M_0\beta^{-1})}\mathbf{1}_{[Q_1(s) \le -x]}ds\Big)\\
\le \sum_{k=0}^{\infty}\mathbb{E}_{(0,4M_0\beta^{-1})}\Big(\Big(\int_{ \Delta^*_{2k+1}}^{\Delta^*_{2k+2}}\mathbf{1}_{[Q_1(s) \le -x]}ds\Big)\mathbf{1}_{[\mathcal{N}^*_{\tau_2(2M_0\beta^{-1})} \ge 2k+1]}\Big),
\end{multline*}
we obtain
$$
\mathbb{E}_{(0,4M_0\beta^{-1})}\Big(\int_{0}^{\tau_2(2M_0\beta^{-1})}\mathbf{1}_{[Q_1(s) \le -x]}ds\Big) \le Cx\beta^{-3}\e^{-C'\beta^{-1}x}
$$
for all $x \ge 2\beta$, which proves Lemma~\ref{qoneexc-ii}.
\end{proof}

\begin{proof}[Proof of Lemma~\ref{qoneexc-iii}]
We proceed as in Lemma~\ref{qoneexc-ii} and define the stopping times $\Delta^{**}_0 = 0$ and for $k \ge 0$,
\begin{align*}
\Delta^{**}_{2k+1} &= \inf\{t \ge \Delta^{**}_{2k}: Q_1(t) = -\sqrt{\beta} \text{ or } Q_2(t) = 4 M_0 \beta^{-1} \text{ or } Q_2(t)=\beta^{-1/2}\},\\
\Delta^{**}_{2k+2} &= \inf\{t \ge \Delta^{**}_{2k+1}: Q_1(t) = 0 \text{ or } Q_2(t) =  4M_0\beta^{-1} \text{ or } Q_2(t)=\beta^{-1/2}\}.
\end{align*}
Define $\mathcal{N}^{**}_t = \inf\{k \ge 1 : \Delta^{**}_{k} \ge t \text{ or } Q_2(\Delta^{**}_k) = 4 M_0\beta^{-1} \text{ or } Q_2(t)=\beta^{-1/2}\}$.
By the same argument used in Lemma~\ref{qoneexc-ii} by bounding $Q_1$ from below by a Brownian motion with drift $\beta^{-1/2} - \beta$ for $t \le \tau_1(0) \wedge \tau_2(\beta^{-1/2})$, we can conclude for any $x \ge 2\beta^{1/2}$,
\begin{equation}\label{31}
\sup_{\beta^{-1/2} \le y \le 4M_0\beta^{-1}}\mathbb{E}_{(-\beta^{1/2}, y)}\left(\int_{0}^{\tau_1(0) \wedge \tau_2(\beta^{-1/2})}\mathbf{1}_{[Q_1(s) \le -x]}ds\right) \le Cx\beta^{1/2} \e^{-C'\beta^{-1/2}x}.
\end{equation}
Moreover, using the same approach as the one used to derive \eqref{qonemain2}, for any $k \ge 9$,
\begin{equation}\label{4use}
\sup_{\beta^{-1/2} \le y \le 4M_0\beta^{-1}}\prob_{(0,y)}\left(\tau_2(4M_0\beta^{-1}) \ge k \beta^{-2}\right) \le C\e^{-C'\sqrt{k}}.
\end{equation}
Using a similar argument as that used to derive \eqref{qoneexc6} stochastically bounding $\beta^{-1}(\Delta^{**}_{2k+1} - \Delta^{**}_{2k})$ from below by sub-exponential random variables and using Chernoff's inequality, we obtain $n'_0>0$ not depending on $\beta,t$ such that for any $n \ge n'_0t\beta^{-1}$,
\begin{equation}\label{31.5}
\sup_{\beta^{-1/2} \le y \le 4M_0\beta^{-1}}\mathbb{P}_{(0,y)}(\mathcal{N}^{**}_t \ge 2n+1) \le C'\e^{-C"n}.
\end{equation}
Using \eqref{4use} and \eqref{31.5} and the calculation leading to \eqref{p24}, we obtain
\begin{equation}\label{32}
\sup_{\beta^{-1/2} \le y \le 4M_0\beta^{-1}}\mathbb{E}_{(0,y)}\left(\mathcal{N}^*_{\tau_2(\beta^{-1/2}) \wedge \tau_2(4M_0\beta^{-1})}\right) \le C\beta^{-3}.
\end{equation}
Using \eqref{31}, \eqref{32} and the strong Markov property at stopping times $\Delta^{**}_{2k+1}$ in the upper bound
\begin{multline*}
\sup_{\beta^{-1/2} \le y \le 4M_0\beta^{-1}}\mathbb{E}_{(0,y)}\Big(\int_{0}^{\tau_2(\beta^{-1/2}) \wedge \tau_2(4M_0\beta^{-1})}\mathbf{1}_{[Q_1(s) \le -x]}ds\Big)\\
\le \sum_{k=0}^{\infty}\mathbb{E}_{(0,4M_0\beta^{-1})}\Big(\Big(\int_{ \Delta^{**}_{2k+1}}^{\Delta^{**}_{2k+2}}\mathbf{1}_{[Q_1(s) \le -x]}ds\Big)\mathbf{1}_{[\mathcal{N}^*_{\tau_2(\beta^{-1/2}) \wedge \tau_2(4M_0\beta^{-1})} \ge 2k+1]}\Big),
\end{multline*}
we obtain
$$
\sup_{\beta^{-1/2} \le y \le 4M_0\beta^{-1}}\mathbb{E}_{(0,y)}\Big(\int_{0}^{\tau_2(\beta^{-1/2}) \wedge \tau_2(4M_0\beta^{-1})}\mathbf{1}_{[Q_1(s) \le -x]}ds\Big) \le Cx\beta^{-5/2}\e^{-C'\beta^{-1/2}x}
$$
for all $x \ge 2\beta^{1/2}$, which completes the proof of Lemma~\ref{qoneexc-iii}.
\end{proof}

\begin{proof}[Proof of Lemma~\ref{qoneexc-iv}]
Write
$$
I_{\beta}(x) = \int_{\tau_2(\beta^{-1/2})}^{\sigma\left(\tau_2(\beta^{-1/2})\right)}\mathbf{1}_{[Q_1(s) \le -x]}ds.
$$
Observe that for any $x>0$, $y \in [\beta^{-1/2}, 4M_0\beta^{-1}]$,
\begin{multline}\label{41}
\mathbb{E}_{(0,y)}\left(I_{\beta}(x)\mathbf{1}_{[\tau_2(\beta^{-1/2}) < \tau_2(4M_0\beta^{-1})]}\right)
\le \mathbb{E}_{(0,y)}\left(I_{\beta}(x)\mathbf{1}_{[Q_1(\tau_2(\beta^{-1/2})) \ge -x/2]}\right)\\
+ \mathbb{E}_{(0,y)}\left(I_{\beta}(x)\mathbf{1}_{[Q_1(\tau_2(\beta^{-1/2})) < -x/2, \ \tau_2(\beta^{-1/2}) < \tau_2(4M_0\beta^{-1}) ]}\right).
\end{multline}
To estimate the first term above, apply the strong Markov property at $\tau_2(\beta^{-1/2})$ and recall that $Q_1 + \beta$ is bounded below by an Ornstein-Uhlenbeck process $\hat{X}$ for $t \le \tau_1(0)$. Using this observation and proceeding as in the proof of \eqref{qoneexc5}, we obtain for any $x \ge 4\beta$,
\begin{eq}\label{42}
&\mathbb{E}_{(0,y)}\left(I_{\beta}(x)\mathbf{1}_{[Q_1(\tau_2(\beta^{-1/2})) \ge -x/2]}\right)\\ 
&\hspace{2cm}\le \sup_{z \in [-x/2,0]}\mathbb{E}_{(z,\beta^{-1/2})}\left(\int_{0}^{\tau_1(0)}\mathbf{1}_{[Q_1(s) \le -x]}ds\right)\\
&\hspace{2cm}\le  \sup_{z \in [-x/2,0]}\hat{\mathbb{E}}_{z + \beta}\left(\int_{0}^{\hat{\tau}(\beta)}\mathbf{1}_{[\hat{X}(s) \le -x+\beta]}ds\right)\\
&\hspace{2cm}\le \sup_{z \in [-x/2,0]}\hat{\mathbb{E}}_{z + \beta}\left(\mathbf{1}(\hat{\tau}(-x + \beta) < \hat{\tau}(\beta))(\hat{\tau}(\beta) - \hat{\tau}(-x + \beta))\right)\\
&\hspace{2cm}= \sup_{z \in [-x/2,0]}\hat{\mathbb{P}}_{z + \beta}\left(\hat{\tau}(-x + \beta) < \hat{\tau}(\beta))\right)\hat{\mathbb{E}}_{-x + \beta}(\hat{\tau}(\beta))
\le C'\e^{-(x-2\beta)^2/4}.
\end{eq}
In the above, we used
\begin{align*}
\sup_{z \in [-x/2,0]}\hat{\mathbb{P}}_{z + \beta}\left(\hat{\tau}(-x + \beta) < \hat{\tau}(\beta))\right) &\le \frac{\int_{-\beta}^{\frac{x}{2} - \beta}\e^{v^2/2}dv}{\int_{-\beta}^{x - \beta}\e^{v^2/2}dv} \le \frac{x(x-\beta)\e^{(\frac{x}{2} - \beta)^2/2}}{2(\e^{(x - \beta)^2/2} - 1)}\\
& \le \frac{2(x-\beta)^2\e^{-3(x - 2\beta)^2/8}}{3(1-\e^{-(x - \beta)^2/2})} \le C\e^{-3(x - 2\beta)^2/8}.
\end{align*}
To estimate $\hat{\mathbb{E}}_{-x + \beta}(\hat{\tau}(\beta))$, we decompose the path of $\hat{X}$ on $[0, \hat{\tau}(\beta)]$ into excursions: the first one from $-x+\beta$ to $0$, and then from $0$ to $\pm\beta$ and then $\pm \beta$ to $0$ until the first time $\beta$ is hit. Using this decomposition and standard estimates on Ornstein-Uhlenbeck processes, we obtain
$$
\hat{\mathbb{E}}_{-x + \beta}(\hat{\tau}(\beta)) \le C[(x-\beta) \wedge \log(2+ (x-\beta)^2) + \beta^2].
$$
The calculations are analogous to the ones used repeatedly in the article and we omit the details.

To estimate the second term in the right hand side of \eqref{41}, note that on the event $\{\tau_2(\beta^{-1/2}) < \tau_2(4M_0\beta^{-1})\}$, $\sigma(\tau_2(\beta^{-1/2})) < \tau_2(4M_0\beta^{-1})$. Thus, on this event, $I_{\beta}(x) \le \tau_2(4M_0\beta^{-1})$. Using this observation and the Cauchy-Schwarz inequality,
\begin{eq}\label{444}
&\mathbb{E}_{(0,y)}\left(I_{\beta}(x)\mathbf{1}_{[Q_1(\tau_2(\beta^{-1/2})) < -x/2, \ \tau_2(\beta^{-1/2}) < \tau_2(4M_0\beta^{-1}) ]}\right)\\
&\le \mathbb{E}_{(0,y)}\left(\tau_2(4M_0\beta^{-1})\mathbf{1}_{[Q_1(\tau_2(\beta^{-1/2})) < -x/2, \ \tau_2(\beta^{-1/2}) < \tau_2(4M_0\beta^{-1}) ]}\right)\\
&\le \left(\mathbb{E}_{(0,y)}\left(\tau_2(4M_0\beta^{-1})\right)^2\right)^{1/2}\left(\mathbb{P}_{(0,y)}\left(Q_1(\tau_2(\beta^{-1/2})) < -x/2, \ \tau_2(\beta^{-1/2}) < \tau_2(4M_0\beta^{-1})\right)\right)^{1/2}
\end{eq}
From \eqref{4use},
\begin{equation}\label{43}
\sup_{\beta^{-1/2} \le y \le 4M_0\beta^{-1}}\mathbb{E}_{(0,y)}\left(\tau_2(4M_0\beta^{-1})\right)^2 \le C\beta^{-4}.
\end{equation}
Using the stopping times $\Delta^{**}_k$ defined in the proof of Lemma~\ref{qoneexc-iii}, for any $y \in [\beta^{-1/2}, 4M_0\beta^{-1}]$ and any $x \ge 4\sqrt{\beta}$,
\begin{eq}\label{44}
&\mathbb{P}_{(0,y)}\left(Q_1(\tau_2(\beta^{-1/2})) < -x/2, \ \tau_2(\beta^{-1/2}) < \tau_2(4M_0\beta^{-1})\right)\\
& \le \mathbb{E}_{(0,y)}\Big(\sum_{k=0}^{\infty}\mathbf{1}_{[\Delta^{**}_{2k+1} < \tau_1(-x/2) < \Delta^{**}_{2k+2}, \ \tau_2(\beta^{-1/2}) < \tau_2(4M_0\beta^{-1})]}\mathbf{1}_{[\mathcal{N}^*_{\tau_2(\beta^{-1/2}) \wedge \tau_2(4M_0\beta^{-1})} \ge 2k+2]}\Big)\\
& \le \sup_{\beta^{-1/2} \le z \le 4M_0\beta^{-1}}\mathbb{P}_{(-\beta^{1/2}, z)}\left(\tau_1(-x/2) < \tau_1(0) \wedge \tau_2(\beta^{-1/2})\right)\mathbb{E}_{(0,y)}\left(\mathcal{N}^*_{\tau_2(\beta^{-1/2}) \wedge \tau_2(4M_0\beta^{-1})}\right)\\
& \le C\beta^{-3}\e^{-C'\beta^{-1/2}x},
\end{eq}
where for the bound on the last line, we used \eqref{32} and the fact that starting from $(Q_1(0), Q_2(0)) = (-\beta^{1/2},z)$, $\beta^{-1/2} \le z \le 4M_0\beta^{-1}$, $Q_1$ is bounded from below by a Brownian motion with drift $\beta^{-1/2} - \beta$ for $t \le \tau_1(0) \wedge \tau_2(\beta^{-1/2})$. Also, $C,C'$ appearing in the above bound do not depend on $y$.
Using \eqref{43} and \eqref{44} in \eqref{444},
\begin{equation}\label{45}
\sup_{\beta^{-1/2} \le y \le 4M_0\beta^{-1}}\mathbb{E}_{(0,y)}\left(I_{\beta}(x)\mathbf{1}_{[Q_1(\tau_2(\beta^{-1/2})) < -x/2, \ \tau_2(\beta^{-1/2}) < \tau_2(4M_0\beta^{-1}) ]}\right) \le C\beta^{-7/2}\e^{-C'\beta^{-1/2}x}.
\end{equation}
Using \eqref{42} and \eqref{45} in \eqref{41} completes the proof of Lemma~\ref{qoneexc-iv}.
\end{proof}

\begin{proof}[Proof of Lemma~\ref{qoneexc-v}]
We will proceed similarly as in Lemma~\ref{qoneexc-iv}. Write
$$
J_{\beta}(x) = \int_{\tau_2(2M_0\beta^{-1})}^{\sigma\left(\tau_2(2M_0\beta^{-1})\right)}\mathbf{1}_{[Q_1(s) \le -x]}ds.
$$
\begin{multline}\label{51}
\mathbb{E}_{(0,4M_0\beta^{-1})}\left(J_{\beta}(x)\right)
\le \mathbb{E}_{(0,4M_0\beta^{-1})}\left(J_{\beta}(x)\mathbf{1}_{[Q_1(\tau_2(2M_0\beta^{-1})) \ge -x/2]}\right)\\
+ \mathbb{E}_{(0,4M_0\beta^{-1})}\left(J_{\beta}(x)\mathbf{1}_{[Q_1(\tau_2(2M_0\beta^{-1})) < -x/2]}\right).
\end{multline}
The first term is estimated as in \eqref{42} yielding for any $x \ge 4\beta$,
\begin{equation}\label{52}
\mathbb{E}_{(0,y)}\left(J_{\beta}(x)\mathbf{1}_{[Q_1(\tau_2(2M_0\beta^{-1})) \ge -x/2]}\right) \le C'\e^{-(x-2\beta)^2/4}.
\end{equation}
To estimate the second term, recall $\alpha_1 = \tau_2(2M_0\beta^{-1})$. Note that by Cauchy-Schwarz inequality and strong Markov property,
\begin{eq}\label{53}
&\mathbb{E}_{(0,4M_0\beta^{-1})}\left(J_{\beta}(x)\mathbf{1}_{[Q_1(\tau_2(2M_0\beta^{-1})) < -x/2]}\right)\\
 &\le \left(\mathbb{E}_{(0,4M_0\beta^{-1})}(\sigma(\alpha_1) - \alpha_1)^2\right)^{1/2}\left(\mathbb{P}_{(0,4M_0\beta^{-1})}\left(Q_1(\tau_2(2M_0\beta^{-1})) < -x/2\right)\right)^{1/2}\\
 &=\left(\mathbb{E}_{(0,4M_0\beta^{-1})}\left(\mathbb{E}_{(Q_1(\alpha_1),2M_0\beta^{-1})}(\tau_1(0))^{2}\right)\right)^{1/2}\left(\mathbb{P}_{(0,4M_0\beta^{-1})}\left(Q_1(\tau_2(2M_0\beta^{-1})) < -x/2\right)\right)^{1/2}
\end{eq}
To estimate the first term in the product above, we again bound $Q_1 + \beta$ from below by an Ornstein-Uhlenbeck process $\hat{X}$ for $t \le \tau_1(0)$. For any $u > 0$, decompose the path of $\hat{X}$ starting from $-u$ on $[0, \hat{\tau}(\beta)]$ into excursions: the first one from $-x+\beta$ to $0$, and then from $0$ to $\pm\beta$ and from $\pm \beta$ to $0$ until the first time $\beta$ is hit. The number of excursions is distributed as $1 + \operatorname{Geometric}(1/2)$. Using this in a standard calculation to obtain the second moment (we omit the details), we obtain for sufficiently small $\beta$,
$$
\hat{\mathbb{E}}_{-u}\left(\hat{\tau}(\beta)\right)^2 \le C(u \wedge (\log(2+u^2))^2 + \beta).
$$
Using this and the estimate for $\mathbb{E}_{(0,4M_0\beta^{-1})}(-Q_1(\alpha_1))$ obtained in \eqref{three5}, we deduce for sufficiently small $\beta$,
\begin{multline}\label{54}
\mathbb{E}_{(0,4M_0\beta^{-1})}\left(\mathbb{E}_{(Q_1(\alpha_1),2M_0\beta^{-1})}(\tau_1(0))^{2})\right)\\
 \le C\mathbb{E}_{(0,4M_0\beta^{-1})}\left(|Q_1(\alpha_1) + \beta| \wedge (\log(2 + |Q_1(\alpha_1) + \beta|^2))^2 + \beta\right) \le C'\beta^{-4}.
\end{multline}
To estimate the second term in the product \eqref{52}, we proceed similarly as in \eqref{44}, but now using the stopping times $\Delta^{*}_k$ defined in the proof of Lemma~\ref{qoneexc-ii}. For any $x \ge 4\beta$,
\begin{eq}\label{55}
&\mathbb{P}_{(0,4M_0\beta^{-1})}\left(Q_1(\tau_2(2M_0\beta^{-1})) < -x/2\right)\\
& \le \mathbb{E}_{(0,4M_0\beta^{-1})}\Big(\sum_{k=0}^{\infty}\mathbf{1}_{[\Delta^{*}_{2k+1} < \tau_1(-x/2) < \Delta^{*}_{2k+2}]}\mathbf{1}_{[\mathcal{N}^*_{\tau_2(2M_0\beta^{-1})} \ge 2k+2]}\Big)\\
 &\le \sup_{z \ge 2M_0\beta^{-1}}\mathbb{P}_{(-\beta, z)}\left(\tau_1(-x/2) < \tau_1(0) \wedge \tau_2(2M_0\beta^{-1})\right)\mathbb{E}_{(0,y)}\left(\mathcal{N}^*_{\tau_2(2M_0\beta^{-1})}\right)
 \le C\beta^{-4}\e^{-C'\beta^{-1}x},
\end{eq}
where for the bound on the last line, we used \eqref{p24} and the fact that starting from $(Q_1(0), Q_2(0)) = (-\beta,z)$, $z \ge 2M_0\beta^{-1}$, $Q_1$ is bounded from below by a Brownian motion with drift $2M_0\beta^{-1} - \beta$ for $t \le \tau_1(0) \wedge \tau_2(2M_0\beta^{-1})$.

Using \eqref{54} and \eqref{55} in \eqref{53},
\begin{equation}\label{56}
\mathbb{E}_{(0,4M_0\beta^{-1})}\left(J_{\beta}(x)\mathbf{1}_{[Q_1(\tau_2(2M_0\beta^{-1})) < -x/2]}\right) \le C\beta^{-4}\e^{-C'\beta^{-1}x}.
\end{equation}
Using \eqref{52} and \eqref{56} in \eqref{51} completes the proof of (v).
\end{proof}

\section{Proof of Lemma \ref{smallbetafluc}}
\label{app:lem5.6}

We begin with the following estimate, which is a consequence of Lemma \ref{Q2gebeta2}.
\begin{lemma}\label{smallint}
There exist positive constants $C_S, C'_S, C''_S$ such that for any $\beta \in (0,\e^{-1})$,
\begin{equation*}
\prob_{(0,2C_S\beta^{-1} \log \beta^{-1})}\left(\tau_2(y) < \tau_2(C_S\beta^{-1} \log \beta^{-1})\right) \le C'_S \e^{-C''_S \beta y}
\end{equation*}
for all $y \ge 4C_S\beta^{-1} \log \beta^{-1}$.
\end{lemma}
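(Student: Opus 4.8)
The plan is to deduce Lemma~\ref{smallint} from Lemma~\ref{Q2gebeta2} by a short reduction, very much in the spirit of the proof of Lemma~\ref{ubQ2} (its large-$\beta$ counterpart). Since $\beta < \e^{-1}$ we have $\log\beta^{-1} > 1$, so $y_0(\beta) = R^-\beta^{-1}\log\beta^{-1}$ and $\beta < \beta^{-1} < \beta^{-1}\log\beta^{-1}$. First I would fix $C_S := R^- + 2$ and set $L_\beta := C_S\beta^{-1}\log\beta^{-1}$; then for every $\beta \in (0,\e^{-1})$ one has $y_0(\beta)+\beta \le (R^-+1)\beta^{-1}\log\beta^{-1} \le L_\beta$ and $\beta \le L_\beta$. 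It then remains to prove that for $y \ge 4L_\beta$,
\begin{equation*}
\prob_{(0,2L_\beta)}\big(\tau_2(y) < \tau_2(L_\beta)\big) \le C^*_1 \e^{-C^*_2\beta y/4},
\end{equation*}
which gives the lemma with $C'_S = C^*_1$ and $C''_S = C^*_2/4$.

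The key step is to push the starting level up from $2L_\beta$ to $y/2$ by an up-crossing argument. Because $2L_\beta \le y/2 < y$, on the event $\{\tau_2(y) < \tau_2(L_\beta)\}$ the process started at $(0,2L_\beta)$ must reach level $y/2$ before it hits $y$ or $L_\beta$; moreover $Q_2$ can increase only while $Q_1 = 0$ (since $dQ_2 = dL - Q_2\,dt$ with $L$ nondecreasing and $Q_2 > 0$), so $Q_1 = 0$ at the first time $Q_2$ reaches $y/2$ from below --- the same observation used in the proof of Lemma~\ref{linfall}. Applying the strong Markov property at $\tau_2(y/2)$, whose post-jump state is the deterministic configuration $(0,y/2)$, gives $\prob_{(0,2L_\beta)}(\tau_2(y) < \tau_2(L_\beta)) \le \prob_{(0,y/2)}(\tau_2(y) < \tau_2(L_\beta))$. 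Next, using $y_0(\beta)+\beta \le L_\beta < y/2 < y-\beta < y$, a path started at $y/2$ must cross $L_\beta$ before it can reach $y_0(\beta)+\beta$ and must cross $y-\beta$ before it can reach $y$, so (hitting times of distinct levels being a.s.\ distinct)
\begin{equation*}
\{\tau_2(y) < \tau_2(L_\beta)\} \subseteq \{\tau_2(y) < \tau_2(y_0(\beta)+\beta)\} \subseteq \{\tau_2(y-\beta) < \tau_2(y_0(\beta)+\beta)\}.
\end{equation*}

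Finally I would put $w := y/2 - \beta$, so that $w+\beta = y/2$ and $2w+\beta = y-\beta$, and check $w \ge 2L_\beta - \beta \ge L_\beta \ge y_0(\beta)$. Lemma~\ref{Q2gebeta2} applied with this $w$ then yields
\begin{equation*}
\prob_{(0,y/2)}\big(\tau_2(y-\beta) < \tau_2(y_0(\beta)+\beta)\big) = \prob_{(0,w+\beta)}\big(\tau_2(2w+\beta) \le \tau_2(y_0(\beta)+\beta)\big) \le C^*_1\e^{-C^*_2\beta w},
\end{equation*}
and chaining the last three displays together with $\beta w = \beta y/2 - \beta^2 \ge \beta y/4$ (valid since $y \ge 4L_\beta \ge 4\beta$) gives the desired estimate. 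The only point that genuinely needs care is the up-crossing step, namely verifying that $Q_1 = 0$ at $\tau_2(y/2)$ so that Lemma~\ref{Q2gebeta2}, which is stated for initial configurations with $Q_1(0) = 0$, may legitimately be invoked after the restart; beyond this, and the routine bookkeeping of level comparisons, I do not anticipate any real obstacle.
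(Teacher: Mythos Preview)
Your proposal is correct and follows essentially the same approach as the paper's proof: both push the starting level up via the strong Markov property (using that $Q_1=0$ at the first up-crossing of $Q_2$ to a higher level) and then invoke Lemma~\ref{Q2gebeta2}. The only cosmetic differences are that the paper takes $C_S = R^-+1$ and sets $z=(y-\beta)/2$ so that $2z+\beta=y$ directly (avoiding your extra inclusion $\{\tau_2(y)<\cdots\}\subseteq\{\tau_2(y-\beta)<\cdots\}$), and the paper is somewhat looser in tracking the exponent constant, whereas you carefully obtain $C''_S=C^*_2/4$.
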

\begin{proof}
Take $C_S = R^- + 1, C'_S = C^*_1$ and $C''_S = C^*_2$ (where $R^-, C^*_1, C^*_2$ are the constants appearing in Lemma \ref{Q2gebeta2}). $\beta < \e^{-1}$ ensures $\beta^{-1} \log \beta^{-1} > \beta^{-1}$. Write $z= (y-\beta)/2$. Then $z + \beta =(y+\beta)/2 \ge 2C_S\beta^{-1} \log \beta^{-1}$ for all $y \ge 4C_S\beta^{-1} \log \beta^{-1}$. Therefore, we can derive the lemma from Lemma \ref{Q2gebeta2} by noting that $y_0(\beta)$ defined in the lemma satisfies $y_0(\beta) + \beta \le C_S\beta^{-1} \log \beta^{-1}$ and observing that for all $y \ge 4C_S\beta^{-1} \log \beta^{-1}$,
\begin{align*}
&\prob_{(0,2C_S\beta^{-1} \log \beta^{-1})}\left(\tau_2(y) < \tau_2(C_S\beta^{-1} \log \beta^{-1})\right)\\ 
&\hspace{2cm}= \prob_{(0,2C_S\beta^{-1} \log \beta^{-1})}\left(\tau_2(2z + \beta) < \tau_2(C_S\beta^{-1} \log \beta^{-1})\right)\\
& \hspace{2cm}\le \prob_{(0,2C_S\beta^{-1} \log \beta^{-1})}\left(\tau_2(2z + \beta) < \tau_2(y_0(\beta) + \beta)\right)\\
&\hspace{2cm}\le  \prob_{(0,z+\beta)}\left(\tau_2(2z + \beta) < \tau_2(y_0(\beta) + \beta)\right) \le C'_S \e^{-C''_S \beta y},
\end{align*}
where the second to last inequality follows from the strong Markov property and the last one follows from Lemma \ref{Q2gebeta2}.
\end{proof}
The following lemma gives an estimate analogous to that in Lemma \ref{smallint} in the region $y \in [8M_0 \beta^{-1}, 4C_S\beta^{-1} \log \beta^{-1}]$, where $M_0 > 0 $ does not depend on $\beta$.
\begin{lemma}\label{smallsmall}
Recall the constant $C_S$ defined in Lemma \ref{smallint}. There exist positive constants $M_0, C_S^1, C_S^2, \beta^{S'}_0$ such that for all $\beta \le \beta^{S'}_0$ and for all $y \in [8M_0 \beta^{-1}, 4C_S\beta^{-1} \log \beta^{-1}]$,
$$
\prob_{(0,y/2)}\left(\tau_2(y) < \tau_2(2M_0 \beta^{-1})\right) \le C_S^1 \e^{-C_S^2 \beta y}.
$$ 
\end{lemma}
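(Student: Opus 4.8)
Fix $M_0$ to be a large constant, at least the constant $c_1'$ of Lemmas~\ref{lem:q1integral} and~\ref{lem:q2regeneration}, so that $2M_0\beta^{-1}\ge c_1'\beta^{-1}+\beta$ for all small $\beta$. Fix $\beta$ small, $y\in[8M_0\beta^{-1},4C_S\beta^{-1}\log\beta^{-1}]$, and take $(Q_1(0),Q_2(0))=(0,y/2)$, so $S(0)=y/2\in[4M_0\beta^{-1},2C_S\beta^{-1}\log\beta^{-1}]$ with $S:=Q_1+Q_2$. The starting point is the standard observation (used repeatedly in the paper): since $Q_2$ increases only through $L$, which increases only when $Q_1=0$, we have $\tau_2(y)=\inf\{t:S(t)=y\}$; moreover $S(t)=y/2+\sqrt2\,W(t)-\beta t+I(t)$ with $I(t):=\int_0^t(-Q_1(s))\,ds\ge 0$, so $y/2+\sqrt2\,W(t)-\beta t\le S(t)\le y/2+\sqrt2\,W(t)-\beta t+I(t)$. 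Observe also that on the event $\{\tau_2(y)<\tau_2(2M_0\beta^{-1})\}$ one has $Q_2(s)>2M_0\beta^{-1}\ge c_1'\beta^{-1}+\beta$ for all $s<\tau_2(y)$, which is precisely the regime in which Lemma~\ref{lem:q1integral} controls $I$. The heart of the argument is that, on a high-probability event, $S$ is dominated up to $\tau_2(y)$ by the drifted Brownian motion $B(t):=y/2+\sqrt2\,W(t)-\tfrac{1-\delta}{2}\beta t$ (driven by the same $W$, with $\delta>0$ small); granting this, $\{\tau_2(y)<\tau_2(2M_0\beta^{-1})\}$ forces $B$ to reach $y$, i.e.\ to increase by $y/2$, and since the scale function of $\sqrt2\,W(t)-\tfrac{1-\delta}{2}\beta t$ is $s(x)=\e^{(1-\delta)\beta x/2}$, the running-maximum computation gives $\prob(\sup_{t\ge 0}B(t)\ge y)=\e^{-(1-\delta)\beta y/4}$, which is $\le\e^{-C_S^2\beta y}$ once $C_S^2<(1-\delta)/4$.

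To obtain the domination I would split the time axis. For $t\le\beta^{-2}$ I use only the crude bound $I(\beta^{-2})\le\beta^{-1}/2$, obtained from a single application of Lemma~\ref{lem:q1integral} at $t=\beta^{-2}$ (valid since $\inf_{s\le\beta^{-2}}Q_2(s)\ge c_1'\beta^{-1}+\beta$ on the relevant event), with failure probability $\le\exp(-c_2'\beta^{-4/5})$; combined with the reflection-principle identity $\sup_{t\le\beta^{-2}}\sqrt2\,W(t)\overset{d}{=}\sqrt2\,\beta^{-1}|Z|$ and the bound $y/2\ge 4M_0\beta^{-1}$, this shows $\prob(\sup_{t\le\beta^{-2}}S(t)\ge y)\le 2\exp(-c(\beta y)^2)\le\e^{-C_S^2\beta y}$ for $M_0$ large (using $\beta y\ge 8M_0$). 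For $t\in[\beta^{-2},T]$, with $T=T(\beta)$ a deterministic horizon of order $\beta^{-2}\log\beta^{-1}$, I take a dyadic grid $t_j=\beta^{-2}2^j$ and apply Lemma~\ref{lem:q1integral} at each $t_j\le\tau_2(y)$ (the hypothesis $\inf_{s\le t_j}Q_2(s)\ge c_1'\beta^{-1}+\beta$ holds there on $\{\tau_2(y)<\tau_2(2M_0\beta^{-1})\}$); the resulting failure probabilities are summable and dominated by their smallest term $\exp(-c_2'\beta^{-4/5})$ times a logarithmic number of grid points, hence negligible, and on the complement $I(t)\le\tfrac{1+\delta}{2}\beta t$ (after refining the grid ratio to $1+\delta$) for all $t\in[\beta^{-2},\tau_2(y)]$, which yields $S(t)\le B(t)$ on that range; since $\tau_2(y)>\beta^{-2}$ in this case, $y=S(\tau_2(y))\le B(\tau_2(y))\le\sup_{t\ge 0}B(t)$, as needed.

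It remains to dispose of the event $\{\tau_2(y)>T\}\cap\{\tau_2(y)<\tau_2(2M_0\beta^{-1})\}$, on which the grid does not reach the relevant time. There $\inf_{s\le T}Q_2(s)>2M_0\beta^{-1}>c_1'\beta^{-1}$, and choosing $T\ge c_4'\bigl(\tfrac{y/2-c_1'\beta^{-1}}{\beta}\vee\beta^{-2}\bigr)$ (note $y/\beta\ge 8M_0\beta^{-2}\ge\beta^{-2}$) Lemma~\ref{lem:q2regeneration}(ii) bounds this probability by $c_3'\bigl(\exp(-c_2'\beta^{-2/5}T^{1/5})+\exp(-c_2'\beta^2T)+\beta^{-2}\exp(-c_2'T)\bigr)$, which for $T$ of order $\beta^{-2}\log\beta^{-1}$ is super-polynomially small in $\beta$; since $\beta y\le 4C_S\log\beta^{-1}$ this is $\le\e^{-C_S^2\beta y}$ for $\beta$ small. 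Collecting the four contributions — the main term $\e^{-(1-\delta)\beta y/4}$ together with three error terms, each $\le\e^{-C_S^2\beta y}$ for a suitable $C_S^2<(1-\delta)/4$ and $M_0$ large — and absorbing constants into $C_S^1,C_S^2$ completes the proof; this also yields the improvement over Lemma~\ref{Q2gebeta2} recorded in Remark~\ref{rem:extn}. \emph{The main obstacle} I anticipate is reconciling the fixed-time statement of Lemma~\ref{lem:q1integral} with the random horizon $\tau_2(y)\wedge\tau_2(2M_0\beta^{-1})$ while keeping its hypothesis $\inf_{s\le t}Q_2(s)\ge c_1'\beta^{-1}+\beta$ in force; this is exactly why one works before $\tau_2(2M_0\beta^{-1})$ with $M_0\ge c_1'$, and why the short window $t\le\beta^{-2}$, where Lemma~\ref{lem:q1integral} only yields an $O(1)$ bound, must be treated separately by a direct Gaussian estimate exploiting $y/2\ge 4M_0\beta^{-1}$.
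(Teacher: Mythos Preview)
Your strategy is the paper's strategy: control $I(t)=\int_0^t(-Q_1(s))\,ds$ via Lemma~\ref{lem:q1integral} on the event $\{\tau_2(y)<\tau_2(2M_0\beta^{-1})\}$, dominate $S$ by a drifted Brownian motion, and bound the running maximum. But the two places you flag as routine are exactly where your outline breaks, and where the paper's implementation differs.

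\textbf{The grid step.} With a geometric grid $t_j=\beta^{-2}(1+\delta)^j$, getting $I(\tau_2(y))\le\tfrac{1+\delta}{2}\beta\tau_2(y)$ needs the lemma at $t_{j^*+1}$, the first grid point \emph{after} $\tau_2(y)$. The hypothesis $\inf_{s\le t_{j^*+1}}Q_2(s)\ge c_1'\beta^{-1}+\beta$ can fail there: nothing prevents $\tau_2(2M_0\beta^{-1})\in(\tau_2(y),t_{j^*+1}]$, and since the spacing $\delta t_{j^*}$ grows without bound, the exponential lower bound $Q_2(s)\ge 2M_0\beta^{-1}e^{-(s-\tau_2(2M_0\beta^{-1}))}$ does not keep $Q_2$ above $c_1'\beta^{-1}+\beta$ across one grid interval. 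The paper takes an \emph{arithmetic} grid $t_k=k\beta^2$; one step then decays $Q_2$ by at most $e^{-\beta^2}$, so $\tau_2(2M_0\beta^{-1})>t_k$ forces $\inf_{s\le t_{k+1}}Q_2(s)\ge 2M_0\beta^{-1}e^{-\beta^2}>c_1'\beta^{-1}+\beta$, and the lemma applies at $t_{k+1}$. This yields the uniform bound $\prob(\exists\,t\ge k_1(\beta y)^5\beta^2:\ I(t)>3\beta t/4,\ \tau_2(2M_0\beta^{-1})>t)\le\sum_{k\ge\lfloor k_1(\beta y)^5\rfloor}e^{-c_2'k^{1/5}}\le e^{-c_2''\beta y}$, with no horizon $T$ and no appeal to Lemma~\ref{lem:q2regeneration}.

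\textbf{The short-time step.} Invoking Lemma~\ref{lem:q1integral} at $t=\beta^{-2}$ has the same problem: on $\{\tau_2(y)\le\beta^{-2},\ \tau_2(y)<\tau_2(2M_0\beta^{-1})\}$ you only know $Q_2\ge 2M_0\beta^{-1}$ up to $\tau_2(2M_0\beta^{-1})$, which need not exceed $\beta^{-2}$, so the lemma's hypothesis at $t=\beta^{-2}$ is not guaranteed. The paper instead takes a \emph{tiny} threshold $t_*=k_1(\beta y)^5\beta^2\le C\beta^2(\log\beta^{-1})^5$ and, over $[0,t_*]$, uses the pathwise Skorohod bound $Q_1(t)\ge\sqrt2\,W(t)-\beta t-\sup_{s\le t}(\sqrt2\,W(s)-\beta s)$ to get $\inf_{t\le t_*}Q_1(t)\ge-\sqrt\beta$ with failure probability $\le e^{-c/(\beta(\log\beta^{-1})^5)}$, requiring no hypothesis on $Q_2$. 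Then $I(t_*)\le\sqrt\beta\,t_*$ is negligible and $\sup_{t\le t_*}S(t)\ge y$ forces $\sup_t(\sqrt2\,W(t)-\beta t)\ge y/4$, probability $\le e^{-\beta y/4}$.

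Your anticipated ``main obstacle'' is precisely the issue, but working before $\tau_2(2M_0\beta^{-1})$ alone does not carry the $Q_2$-hypothesis across one grid step nor to a deterministic time; the paper's fine arithmetic grid and the Skorohod short-window bound are what close these gaps.
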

\begin{proof}
We recall from Lemma~\ref{lem:q1integral} that for $\beta \le 1$, there exist positive constants $c'_1, c'_2, c^*_3$ not depending on $\beta$ such that for any $y > c'_1\beta^{-1}$,
$$\prob_{(0, y)}\Big(\int_0^t(-Q_1(s))\dif s>\frac{\beta t}{2},\ \inf_{s\leq t}Q_2(s) \geq c'_1\beta^{-1} + \beta\Big)\leq \exp(-c'_2t^{1/5}\beta^{-2/5})$$
for $t \ge c^*_3\beta^{2}$. Take $M_0 = c_1'$. By the explicit choice of constants made in Lemma~\ref{lem:q1integral}, $c'_1, c'_2$ are the same constants as the ones appearing in Lemma \ref{lem:q2regeneration}. Write $\mathcal{I}_t = \int_0^t(-Q_1(s))\dif s$ and $t_k = k \beta^2$, $k = 0,1,2,\dots$. Take any $k \ge (\beta y)^5$. Note that if $\frac{k}{k+1} \ge \frac{2}{3}$, then the event $\{\mathcal{I}_{t} > 3\beta t/4 \text{ for some } t \in [t_k, t_{k+1}]\}$ implies
$$
\mathcal{I}_{t_{k+1}} > 3\beta t_k/4 \ge \beta t_{k+1}/2.
$$
Moreover, there exists $\beta_1>0$ such that for all $\beta \le \beta_1$, the event $\{\tau_2(2M_0 \beta^{-1}) > t \text{ for some } t \in [t_k, t_{k+1}]\}$ implies
$$
Q_2(t) \ge 2M_0 \beta^{-1} \e^{-\beta^2} > M_0\beta^{-1} + \beta
$$
for all $t \in [\tau_2(2M_0 \beta^{-1}), t_{k+1}]$ which, in turn, implies $\tau_2(M_0 \beta^{-1} + \beta) > t_{k+1}$. Therefore, there exists $k_1 \ge 1$ and $\beta_1>0$ such that for all $y \in [8M_0 \beta^{-1}, 4C_S\beta^{-1} \log \beta^{-1}]$ and all $\beta \le \beta_1$,
\begin{eq}\label{driftunif}
&\prob_{(0,y/2)}\left(\mathcal{I}_t > 3\beta t/4 \text{ and } \tau_2(2M_0 \beta^{-1}) > t \text{ for some } t  \ge k_1(\beta y)^5\beta^2 \right)\\
&\le \sum_{k=\lfloor k_1(\beta y)^5\rfloor}^{\infty}\prob_{(0,y/2)}\left(\mathcal{I}_t > 3\beta t/4 \text{ and } \tau_2(2M_0 \beta^{-1}) > t \text{ for some } t  \in [t_k, t_{k+1}] \right)\\
&\le \sum_{k=\lfloor k_1(\beta y)^5\rfloor}^{\infty}\prob_{(0,y/2)}\left(\mathcal{I}_{t_{k+1}} > \beta t_{k+1}/2 \text{ and } \tau_2(M_0 \beta^{-1} + \beta) \ge t_{k+1}\right)\\
&\le \sum_{k=\lfloor k_1(\beta y)^5\rfloor}^{\infty}\e^{-c_2'k^{1/5}} \le \e^{-c_2''\beta y}
\end{eq}
for some positive constant $c_2''$ that does not depend on $\beta$. For any $y \in [8M_0 \beta^{-1}, 4C_S\beta^{-1} \log \beta^{-1}]$,
\begin{align*}
&\prob_{(0,y/2)}\left(\tau_2(y) < \tau_2(2M_0 \beta^{-1})\right) \\
&\le \prob_{(0,y/2)}\left(\tau_2(y) \le k_1(\beta y)^5\beta^2\right)\\
&+ \prob_{(0,y/2)}\left(\mathcal{I}_t > 3\beta t/4 \text{ and } \tau_2(2M_0 \beta^{-1}) > t \text{ for some } t  \ge k_1(\beta y)^5\beta^2 \right)\\
&+ \prob_{(0,y/2)}\Big(k_1(\beta y)^5\beta^2 < \tau_2(y) < \tau_2(2M_0 \beta^{-1}), \ \mathcal{I}_t \le 3\beta t/4 \text{ for all } t \in [k_1(\beta y)^5\beta^2, \tau_2(2M_0 \beta^{-1})]\Big).
\end{align*}
Note that with starting configuration $(Q_1(0), Q_2(0)) = (0,y/2)$, $\tau_2(y)$ in the above expression also corresponds to the hitting time of the level $y$ by the sum $S(t) = Q_1(t) + Q_2(t)= S(0) + \sqrt{2}W(t) - \beta t + \mathcal{I}_t = \frac{y}{2} + \sqrt{2}W(t) - \beta t + \mathcal{I}_t$. Further, note that as $y \le 4C_S\beta^{-1} \log \beta^{-1}$, $k_1(\beta y)^5\beta^2 \le k_1 (4C_S)^5\beta^2 \left(\log \beta^{-1}\right)^5$. Thus,
\begin{eq}\label{fund}
&\prob_{(0,y/2)}\left(\tau_2(y) < \tau_2(2M_0 \beta^{-1})\right)\\
& \le \prob_{(0,y/2)}\Big(\sup_{t \le k_1 (4C_S)^5\beta^2 \left(\log \beta^{-1}\right)^5}S(t) > y\Big)\\
&\hspace{3cm}+ \prob_{(0,y/2)}\left(\mathcal{I}_t > 3\beta t/4 \text{ and } \tau_2(2M_0 \beta^{-1}) > t \text{ for some } t  \ge k_1(\beta y)^5\beta^2 \right)\\
&\hspace{3cm}+ \prob_{(0,y/2)}\left(\sup_{t < \infty} (\sqrt{2}W(t) - \beta t/4) > y/2\right)\\
&\le \prob_{(0,y/2)}\Big(\sup_{t \le k_1 (4C_S)^5\beta^2 \left(\log \beta^{-1}\right)^5}S(t) > y\Big) + \e^{-c_2''\beta y} + \e^{-\beta y/8},
\end{eq}
where the last line is a consequence of \eqref{driftunif} and the fact that $\sup_{t < \infty} (\sqrt{2}W(t) - \beta t/4)$ is exponentially distributed with mean $4\beta^{-1}$. To complete the proof, we need to estimate the first probability on the right hand side above. To do this, first note that
$$
Q_1(t) \ge \sqrt{2}W(t) - \beta t - \sup_{s \le t} (\sqrt{2}W(s) - \beta s).
$$
Therefore, there exists $\beta_2>0$ such that for any $\beta \le \beta_2$ and any $y \in [8M_0 \beta^{-1}, 4C_S\beta^{-1} \log \beta^{-1}]$,
\begin{align*}
&\prob_{(0,y/2)}\Big(\inf_{t \le k_1 (4C_S)^5\beta^2 \left(\log \beta^{-1}\right)^5} Q_1(t) < -\sqrt{\beta}\Big)\\
& \le \prob_{(0,y/2)}\Big(\inf_{t \le k_1 (4C_S)^5\beta^2 \left(\log \beta^{-1}\right)^5} (\sqrt{2}W(t) - \beta t) < -\sqrt{\beta}/2\Big)\\
& \hspace{2cm}+ \prob_{(0,y/2)}\Big(\sup_{t \le k_1 (4C_S)^5\beta^2 \left(\log \beta^{-1}\right)^5} (\sqrt{2}W(t) - \beta t) > \sqrt{\beta}/2\Big)\\
& \le \prob_{(0,y/2)}\Big(\inf_{t \le k_1 (4C_S)^5\beta^2 \left(\log \beta^{-1}\right)^5} (\sqrt{2}W(t)) < -\sqrt{\beta}/4\Big)\\
& \hspace{2cm}+ \prob_{(0,y/2)}\Big(\sup_{t \le k_1 (4C_S)^5\beta^2 \left(\log \beta^{-1}\right)^5} (\sqrt{2}W(t) ) > \sqrt{\beta}/4\Big)
  \le \e^{-\frac{1}{32 k_1 (4C_S)^5\beta (\log \beta^{-1})^5}}.
\end{align*}
If $\inf_{t \le k_1 (4C_S)^5\beta^2 \left(\log \beta^{-1}\right)^5} Q_1(t) \ge -\sqrt{\beta}$, then for any $t \le  k_1 (4C_S)^5\beta^2 \left(\log \beta^{-1}\right)^5$,
$$
S(t) \le S(0) + \sqrt{2}W(t) - \beta t + k_1 (4C_S)^5\beta^{5/2} \left(\log \beta^{-1}\right)^5.
$$
Therefore, there exists $\beta_3 >0$ such that for all $\beta \le \beta_3$ and any $y \in [8M_0 \beta^{-1}, 4C_S\beta^{-1} \log \beta^{-1}]$,
\begin{eq}\label{firstprob}
&\prob_{(0,y/2)}\Big(\sup_{t \le k_1 (4C_S)^5\beta^2 \left(\log \beta^{-1}\right)^5}S(t) > y\Big)\\
&\le \prob_{(0,y/2)}\Big(\inf_{t \le k_1 (4C_S)^5\beta^2 \left(\log \beta^{-1}\right)^5} Q_1(t) < -\sqrt{\beta}\Big) + \prob\Big(\sup_{t < \infty}\left(\sqrt{2}W(t) - \beta t\right) > y/4\Big)\\
&\le \e^{-\frac{1}{32 k_1 (4C_S)^5\beta (\log \beta^{-1})^5}} + \e^{-\beta y/ 4} \le 2 \e^{-\beta y/4},
\end{eq}
where in the last inequality, we used the information that $y \le 4C_S\beta^{-1} \log \beta^{-1}$. Using \eqref{firstprob} in \eqref{fund}, the proof of the lemma is completed by choosing $\beta^{S'}_0 = \min\{\beta_1, \beta_2, \beta_3\}$.
\end{proof}
Now we ``patch up" the estimates obtained in Lemma \ref{smallint} and Lemma \ref{smallsmall} to prove Lemma~\ref{smallbetafluc}.
\begin{proof}[Proof of Lemma~\ref{smallbetafluc}]
Choose $M_0$ as in Lemma \ref{smallsmall}. If $y \in [8M_0 \beta^{-1}, 4C_S\beta^{-1} \log \beta^{-1}]$, then the bound is furnished by Lemma \ref{smallsmall}. Therefore, it suffices to consider $y > 4C_S\beta^{-1} \log \beta^{-1}$. Define the following stopping times: $S_0 = 0$ and for $k \ge 0$,
\begin{align*}
S_{2k+1} &= \inf\{t \ge S_{2k}: Q_2(t) = 2M_0\beta^{-1} \text{ or } Q_2(t) = 2C_S\beta^{-1} \log \beta^{-1}\},\\
S_{2k+2} &= \inf\{t \ge S_{2k+1}: Q_2(t) = 2M_0\beta^{-1} \text{ or } Q_2(t) = C_S\beta^{-1} \log \beta^{-1}\}.
\end{align*}
Let $N_S = \inf\{ k \ge 0: Q_2(S_{2k+1}) = 2M_0\beta^{-1}$\}.
Note that by Lemma \ref{smallsmall}, there exists $\beta_1^*>0$ such that for all $\beta \le \beta_1^*$,
\begin{multline*}
\sup_{x \ge 0} \prob_{(-x, C_S\beta^{-1} \log \beta^{-1})}\left(\tau_2(2C_S\beta^{-1} \log \beta^{-1}) < \tau_2(2M_0\beta^{-1})\right)\\
 \le \prob_{(0, C_S\beta^{-1} \log \beta^{-1})}\left(\tau_2(2C_S\beta^{-1} \log \beta^{-1}) < \tau_2(2M_0\beta^{-1})\right) \le C_S^1\e^{-2C_S^2 C_S \log \beta^{-1}} < \frac{1}{2},
\end{multline*}
where the first inequality above follows from the strong Markov property applied at the stopping time $\inf\{t > 0: Q_2(t) = C_S\beta^{-1} \log \beta^{-1}\}$. This immediately gives us
\begin{equation}\label{ns}
\mathbb{E}_{(0,4M_0\beta^{-1})}\left(N_S\right) = \sum_{k=0}^{\infty} \prob_{(0,4M_0\beta^{-1})}(N_S \ge k) \le 1 + \sum_{k=1}^{\infty}2^{-k} = 2.
\end{equation}
For $y > 4C_S\beta^{-1} \log \beta^{-1}$, by applying the strong Markov property at $S_{2k+1}$ for $k \ge 0$ and using Lemma \ref{smallint}, we obtain $\beta_2^*>0$ such that for all $\beta \le \beta_2^*$,
\begin{align*}
&\prob_{(0,4M_0\beta^{-1})}\left(\tau_2(y) < \tau_2(2M_0\beta^{-1})\right)\\
&\hspace{2cm} \le \sum_{k=0}^{\infty}\prob_{(0,4M_0\beta^{-1})}\left(\sup_{t \in [S_{2k+1}, S_{2k+2}]} Q_2(t) > y, N_S > k\right)\\
&\hspace{2cm} \le \sum_{k=0}^{\infty}\mathbb{E}_{(0,4M_0\beta^{-1})}\mathbf{1}_{[N_S >k]}\prob_{(0,2C_S\beta^{-1} \log \beta^{-1})}\left(\tau_2(y) < \tau_2(C_S\beta^{-1} \log \beta^{-1})\right)\\
&\hspace{2cm} \le \mathbb{E}_{(0,4M_0\beta^{-1})}\left(N_S\right)C'_S \e^{-C''_S \beta y} \le 2C'_S \e^{-C''_S \beta y}.
\end{align*}
This completes the proof of the lemma by choosing $\beta_0 = \min\{\beta_1^*, \beta_2^*\}$.
\end{proof}

{\small
\bibliographystyle{plain}
\bibliography{bibl}
}

\end{document}